\documentclass[11pt]{article}
\usepackage{subfiles}
\usepackage{calligra}
\usepackage{mathrsfs}
\usepackage{algorithm}
\usepackage{algpseudocode}
\usepackage{amsmath}
\usepackage{amssymb}
\usepackage{bm}
\usepackage{booktabs}
\usepackage{color}
\usepackage{paralist}
\usepackage{epsfig}
\usepackage{epstopdf}
\usepackage{flafter}
\usepackage{float}
\usepackage{ifthen}
\usepackage{multirow}
\usepackage{amsfonts}
\usepackage{makecell} 
\usepackage{pifont}
\usepackage{subfigure}
\usepackage{times}
\usepackage{threeparttable}
\usepackage{url} 
\usepackage{subfigure,graphicx}
\usepackage{float}
\usepackage{color,multirow}
\usepackage{makecell}
\usepackage{setspace}
\usepackage{cite} 
\usepackage{url} 
\usepackage{ifthen} 
\usepackage[T1]{fontenc}
\usepackage{graphicx}
\usepackage{changepage}
\usepackage{enumitem}

\usepackage{amsmath,amsfonts,amssymb, amsthm, euscript,makeidx,color,mathrsfs}
\usepackage{esint}
\usepackage[numbers,sort&compress]{natbib}
\urlstyle{rm}
\usepackage[colorlinks,linkcolor=red,anchorcolor=gray,citecolor=green,urlcolor=blue]{hyperref}
\usepackage{graphics}
\usepackage{booktabs}

\usepackage{amsfonts}
\usepackage{color}
\usepackage{amssymb}
\usepackage{graphics}
\usepackage{indentfirst}
\newcommand{\norm}[1]{\left\Vert#1\right\Vert}

\oddsidemargin  = 0pt \evensidemargin = 0pt \marginparwidth = 1in
\marginparsep   = 0pt \leftmargin     = 1.25in \topmargin =0pt
\headheight     = 0pt \headsep        = 0pt \topskip =0pt
\footskip       =0.25in \textheight     = 9in \textwidth      =
6.5in

\def\sqr#1#2{{\vcenter{\vbox{\hrule height.#2pt
\hbox{\vrule width.#2pt height#1pt \kern#1pt \vrule width.#2pt}
\hrule height.#2pt}}}}

\def\dbR{{\mathop{\rm l\negthinspace R}}}

\def\3n{\negthinspace \negthinspace \negthinspace }
\def\2n{\negthinspace \negthinspace }
\def\1n{\negthinspace }

\def\ds{\displaystyle}

\def\dbN{{\mathop{\rm l\negthinspace N}}}

\def\dbR{{\mathop{\rm l\negthinspace R}}}
\def\={\buildrel \triangle \over =}


\def\g{\gamma}

\def\si{\sigma}

\def\f{\varphi}

\def\ns{\noalign{\ss} }

\def\deq{\mathop{\buildrel\D\over=}}

\def\D{\Delta}

%
%

\def\cF{{\cal F}}

\def\ss{\smallskip}
\def\ms{\medskip}

\def\qq{\qquad}

\def\bfA{\mathbf{A}}

\def\bfH{\mathbf{H}}

\def\bfX{\mathbf{X}}  
\def\bfY{\mathbf{Y}}
\def\bfZ{\mathbf{Z}}

\def\obfX{\overline{\bfX}}

\def\bfh{\mathbf{h}}
\def\bfi{\mathbf{i}}

\def\bfn{\mathbf{n}}

\def\bfu{\mathbf{u}}

\def\bfx{\mathbf{x}}

%
%

\def\lan{\big\langle}
\def\ran{\big\rangle}

\def\max{\mathop{\rm max}}
\def\min{\mathop{\rm min}}
\def\sup{\mathop{\rm sup}}
\def\inf{\mathop{\rm inf}}

\def\pa{\partial}

\def\wt{\widetilde}
\def\cd{\cdot}
\def\cds{\cdots}

\def\dist{\hbox{\rm dist$\,$}}

\def\bel{\begin{equation}\label}
\def\ee{\end{equation}}
\def\bt{\begin{theorem}}
\def\bcd{\begin{condition}}
\def\ecd{\end{condition}}
\def\et{\end{theorem}}
\def\bc{\begin{corollary}}
\def\ec{\end{corollary}}
\def\bde{\begin{definition}}
\def\ede{\end{definition}}
\def\bl{\begin{lemma}}
\def\el{\end{lemma}}
\def\bp{\begin{proposition}}
\def\ep{\end{proposition}}
\def\br{\begin{remark}}
\def\er{\end{remark}}
\def\ba{\begin{array}}
\def\ea{\end{array}}
\def\ed{\end{document}}
\def\ns{\noalign{\ms}}
\def\ds{\displaystyle}

\def\square#1{\vbox{\hrule\hbox{\vrule height#1%
\kern#1\vrule}\hrule}}
\def\rectangle#1#2{\vbox{\hrule\hbox{\vrule height#1%
\kern#2\vrule}\hrule}}

%
%
%
%
%
\font\tenbb=msbm10 \font\sevenbb=msbm7 \font\fivebb=msbm5

\newfam\bbfam
\scriptscriptfont\bbfam=\fivebb \textfont\bbfam=\tenbb
\scriptfont\bbfam=\sevenbb

\newtheorem{lemma}{Lemma}[section]
\newtheorem{remark}{Remark}[section]
\newtheorem{example}{Example}[section]
\newtheorem{theorem}{Theorem}[section]
\newtheorem{corollary}{Corollary}[section]

\newtheorem{definition}{Definition}[section]
\newtheorem{proposition}{Proposition}[section]
\newtheorem{condition}{Condition}[section]
\newtheorem{assumption}{Assumption}[section]

\newcommand{\nmycite}[2]{[\citealp{#1}, #2]}
\newcommand{\medcup}{\mathbin{\scalebox{1.5}{$\cup$}}}

\makeatletter

\@addtoreset{equation}{section}
\makeatother

\allowdisplaybreaks

\begin{document}

\title{\bf  Dynamic Programming Principle  for Stochastic Control Problems on Riemannian Manifolds \footnote{This work is supported by the NSF of China under grant 12025105.}}

\author{
Dingqian Gao
\footnote{School of Mathematics, Sichuan University, Chengdu, P. R. China. Email: 
Email: dingqiangao@outlook.com.}
~~~ and ~~~
Qi L\"{u}
\footnote{School of Mathematics, Sichuan University, Chengdu, P. R. China. Email: lu@scu.edu.cn. }
}

\date{}

\maketitle

\begin{abstract}
In this paper, we first establish the dynamic programming principle for stochastic optimal control problems defined on  compact Riemannian manifolds without boundary. 
Subsequently, we derive the associated Hamilton-Jacobi-Bellman (HJB) equation for the value function.  
We then prove the existence, uniqueness of viscosity solutions to the HJB equation, along with their continuous dependence on initial data and model parameters. Finally, under appropriate regularity conditions on the value function, we establish a verification theorem that characterizes optimal controls.
\end{abstract}

\noindent\bf AMS Mathematics Subject Classification.
\rm 93E20, 35D40.

\noindent{\bf Keywords}. Dynamic programming principle, viscosity solution, stochastic control, Hamilton-Jacobi-Bellman equation, Riemannian manifold.

\section{Introduction and main results}

We begin by reviewing some fundamental concepts and notations from Riemannian geometry. For a comprehensive treatment of these topics, we refer the reader to \cite{Lee,doCarmo}.

Let $(M,g)$ be an $n$-dimensional compact Riemannian manifold without boundary ($n\in \mathbb{N}$). We denote by $TM$ and $T^*M$ the tangent and cotangent bundles of $M$, respectively. For $x\in M$,  
$T_x M$ and $T_x^*M$ represents the tangent space  and the cotangent space of $M$ at $x$, respectively.  Given a tangent vector $v\in T_x M$, we define its norm as: $$\|v\|_g=\sqrt{\lan v,v\ran _g},$$ where $\lan \cdot,\cdot\ran _g\deq g(\cdot,\cdot)$ denotes the inner product induced by the metric $g$.

For integers $r,s\in \mathbb{N}$, we denote by $\mathcal{T}_s^r(M)$
the $(r,s)$ type tensor bundle over $M$ and by $\mathcal{T}_s^r(x)$ the vector space of all $(r,s)$-type tensors at the point $x\in M$. Furthermore, we let $\mathsf{X}^{k}(M)$ ($k\in\mathbb{N}$) denote the space of $C^k$-smooth vector fields on $M$. 

Let $D$ be the Levi-Civita connection on $M$. Recall that for any tensor field  $\tau\in \mathcal{T}_s^r(M)$, its covariant derivative $D\tau\in \mathcal{T}_{s+1}^r(M)$. In local coordinates $(\mathscr{U}, x^i)$ on $M$, with the natural tangent space basis $\{\tfrac{\partial}{\partial x_i}\}_{i=1}^n$ and cotangent space basis $\{dx_i\}_{i=1}^n$, the covariant derivative acts on $\tau$ as follows:
\begin{align}
&	D\tau\big(dx_{j_1},...,dx_{j_r},\tfrac{\partial}{\partial x_{i_1}},...,\tfrac{\partial}{\partial x_{i_s}},\tfrac{\partial}{\partial x_{k}}\big) \nonumber\\ &:= \tfrac{\partial}{\partial x_{k}}\tau\big(dx_{j_1},...,dx_{j_r},\tfrac{\partial}{\partial x_{i_1}},...,\tfrac{\partial}{\partial x_{i_s}}\big) -\sum_{l=1}^{r}\tau\Big(...,D_{\frac{\partial}{\partial x_{k}}}dx_{j_l},...\Big)-\sum_{l=1}^{s}\tau\Big(...,D_{\frac{\partial}{\partial x_{k}}}\tfrac{\partial}{\partial x_{i_l}},...\Big),\label{fml0}
\end{align} 
Let $\phi\in C^{\infty}(M)$. The differential $D\phi$ acts on vector fields $\bfX\in \mathsf{X}^0(M)$ as $(D\phi)(\bfX)=\bfX\phi$.  The gradient vector field $\nabla \phi$ is  the unique vector field satisfying 

\begin{equation}\label{DDf}
\lan \nabla \phi,\bfX\ran _g=(D\phi)(\bfX),\qq \forall \bfX\in \mathsf{X}^0(M).
\end{equation}
%

Applying the covariant derivative formula \eqref{fml0} to $\tau=D\f\in \mathcal{T}_1^0(M)$, we obtain the Hessian operator: 
\begin{equation}\label{fml}
D^2\phi(\bfX,\bfY):=D(D\phi)(\bfX,\bfY)=\bfY\bfX \phi-D_{\bfX}\bfY \phi,\quad \bfX,\bfY\in \mathsf{X}^1(M).
\end{equation}

The curvature operator $\mathcal{R} \in \mathcal{T}^1_3(M)$ is defined in local coordinates $\{\tfrac{\partial}{\partial x_i}\}_{i=1}^n$ by
$$\mathcal{R}\Big(\tfrac{\partial}{\partial x_i},\tfrac{\partial}{\partial x_j}\Big)\tfrac{\partial}{\partial x_k}:=D_{\tfrac{\partial}{\partial x_i}}D_{\tfrac{\partial}{\partial x_j}}\tfrac{\partial}{\partial x_k}-D_{\tfrac{\partial}{\partial x_j}}D_{\tfrac{\partial}{\partial x_i}}\tfrac{\partial}{\partial x_k}-D_{[\tfrac{\partial}{\partial x_i},\tfrac{\partial}{\partial x_j}]}\tfrac{\partial}{\partial x_k},$$
where  $[\cdot,\cdot]$ denotes the Lie bracket of vector fields. The corresponding curvature tensor $R \in \mathcal{T}^0_4(M)$ is given by
\begin{equation*}
R(\tfrac{\partial}{\partial x_i},\tfrac{\partial}{\partial x_j},\tfrac{\partial}{\partial x_k},\tfrac{\partial}{\partial x_l}) 
:= \big\langle \mathcal{R}(\tfrac{\partial}{\partial x_i},\tfrac{\partial}{\partial x_j})\tfrac{\partial}{\partial x_k},\tfrac{\partial}{\partial x_l} \big\rangle_g.
\end{equation*}

For the smooth, compact manifold $M$, there exists $C_R$ such that the curvature tensor satisfies the following uniform bounds:
\begin{align}\label{eqrr}
\|R\|+\|DR\|+\|D^2R\|\le C_{R},
\end{align}
where $\|\cdot\|$ denote the tensor norm.

Let $\mathcal{L}(T_xM):=\mathcal{T}_1^0(x)$ and denote by $\mathcal{L}_s^2(T_xM)\subset \mathcal{T}_2^0(x)$  the set of symmetric tensors on $T_xM$.
%

Let $\rho: M \times M \to \dbR$ be the Riemannian distance function induced by the metric on $M$. For any $\epsilon >0$, we define the tangent ball of radius $\epsilon$ at $x\in M$ as $B(x,\epsilon):=\big\{v\in T_x M,\|v\|_g< \epsilon\big\}$ and the geodesic ball of radius $\epsilon$  at $x\in M$ as $B_x^{\rho}(\epsilon):=\big\{y\in M,\rho(x,y)< \epsilon\big\}$.

The injectivity radius at $x \in M $ is defined as
$$
\mathbf{i}(x):=\sup\big\{\epsilon>0; \mbox{ the map }\exp_x:B(x,\epsilon)\rightarrow B_x^{\rho}(\epsilon) \mbox{  is a diffeomophism} \big\}.
$$
Since $M$ is compact and $x \mapsto \mathbf{i}(x)$ is continuous, the global injectivity radius 
\begin{equation}\label{6.3-eq1}
\mathbf{i}_M:=\min\big\{\mathbf{i}(x),x\in M\big\}
\end{equation}
exists and satisfies  $\mathbf{i}_M>0$. 

For any $x,y\in M$ with $\rho(x,y)< \mathbf{i}_M$, there exists a unique minimizing geodesic $\gamma$ connecting $x$ and $y$. We denote by $L_{x y}: T_x M \to T_y M$ the parallel transport along $\gamma$, which preserves the Riemannian metric:%
\begin{equation}\label{LXY}
\lan L_{x y}v,L_{x y}v\ran _g=\lan v,v\ran _g.
\end{equation}
%

%
%

For two  matrices $\mathbb{Y} = (y_{ij})_{n \times m}\in\dbR^{n \times m}$ 
and $\mathbb{Z} = (z_{ij})_{n \times m}\in\dbR^{n \times m}$, we write $\mathbb{Y} = O_C(\mathbb{Z})$ if there exists a constant $C > 0$ such that $|y_{ij}| \leq C|z_{ij}|$ for all $i = 1, \dots, n$ and $j = 1, \dots, m$.

Suppose $\mathbb{Y}_1 = O_{C_1}(\mathbb{Z})$ and $\mathbb{Y}_2 = O_{C_2}(\mathbb{Z})$. Then the following properties hold: 
$$\mathbb{Y}_1+\mathbb{Y}_2=O_{C_1+C_2}(\mathbb{Z}),\quad \mathbb{Y}_1\mathbb{Y}_2=O_{C_1C_2}(\mathbb{Z}^2).$$

Let $T>0$, and let $(\Omega, \mathcal{F}, \{\cF_t\}_{t\geq 0},
\mathbb{P})$ be a complete filtered probability space, on which an
$m$-dimensional Brownian motion
$W(\cdot)$ is defined and $\mathbf{F}\deq \{\cF_t\}_{t\geq 0}$ is
the natural filtration generated by $W(\cdot)$. Denote by
$\mathbb{F}$ the progressive $\bm{\sigma}$-algebra with respect to
$\mathbf{F}$.

For any $t\in [0,T]$, let $L_{\mathcal{F}_t}(\Omega,M)$ denote the space of all $\mathcal{F}_t$-measurable random variables taking values in the manifold $M$.
Let $U\subset \dbR^k$ be a compact set for some $k\in \mathbb{N}$. For any $0<t\le T$, the admissible control set  $\mathcal{U}[t,T]$ is the completion of  the set 
$$ \mathcal{U}_{piece}[t,T]:=\Big\{u(\cdot)=\sum_{i=0}^{N_1}u_i1_{[t_i,t_{i+1}]},t=t_0\le  ...\le t_{N_1}=T,u_i \in L_{\mathcal{F}_{t_i}}(\Omega,U),N_1\in \mathbb{N}\Big\}.$$
with respect to the norm of $L_{\mathbb{F}}^{2}(t,T;\dbR^k)$.

Consider the control system governed by the  following  Stratonovich stochastic differential equation:
\begin{equation}\label{system1}
\left\{
\begin{aligned}
&dX(t)=b(t,X(t),u(t))dt+\sum_{i=1}^m\sigma_{i}(t,X(t),u(t))\circ dW^{i}(t),&\quad t\in (0,T],\\
&X(0)=x \in M,
\end{aligned}
\right.
\end{equation}
with the cost functional 
\begin{equation}\label{cf1}
\mathcal{J}(x;u(\cdot))=\mathbb{E}\bigg(\int_0^Tf(s,X(s),u(s))ds+h(X(T))\bigg).
\end{equation}
In this paper, for the simplicity of notations and without loss of generality, we assume that $m\le n$.  Further, we assume 
%
$$
\begin{cases}
b(\cdot,x,\cdot) \colon  [0,T] \times U \to T_x M,  \quad x \in M, \\
\sigma_{i}(\cdot, x, \cdot) \colon  [0,T] \times U \to T_x M, \quad x \in M, \quad i = 1, \dots, m, \\
f \colon  [0,T] \times M \times U \to \dbR,
\end{cases}
$$
satisfy the following assumption:
\begin{assumption}\label{condp}
For all $x \in M$, $t \in [0,T]$, and $u \in U$, there exist positive constants $C_B$ and $C_L$ such that the following hold: 

\begin{itemize} 
\item[1)] (Uniform Boundedness)
The vector fields $b(\cdot,\cdot,\cdot)$, $\si_i(\cdot,\cdot,\cdot)$ ($i=1,\cds,m$), and the function $f(\cdot,\cdot,\cdot)$ are jointly continuous in $(t,x,u)$, satisfying
$$\max_{t\in [0,T],x\in M,u\in U}\bigg\{\|b(t,x,u)\|_g+\sum_{i=1}^m\|\sigma_i(t,x,u)\|_g+|f(t,x,u)|\bigg\}\le C_{B},$$
where $\|\cd\|_g$ denotes the norm in the tangent space induced by the Riemannian metric $g$.

\item[2)] (Smoothness Requirements) For each $u \in U$ and $t \in [0,T]$, $b(t,\cdot,u)\in \mathsf{X}^1(M)$  and $\sigma_i(t,\cdot,u)\in \mathsf{X}^2(M)$  for $i=1,...,m$.

\item[3)] (Control Lipschitz Condition) For any $(t,x) \in [0,T] \times M$ and unit vector $v \in T_xM$ ($\|v\|_g = 1$), 
\begin{align}
&\|b(t,x,u_1)-b(t,x,u_2)\|_g+\sum_{i=1}^m\|\sigma_i(t,x,u_1)-\sigma_i(t,x,u_2)\|_g\nonumber\\
&+\sum_{i=1}^m\norm{D_{v}\sigma_i(t,x,u_1)-D_{v}\sigma_i(t,x,u_2)}_g\le C_L \|u_1-u_2\|_U,\nonumber
\end{align}
where $\|\cd\|_U$ is the norm on the control space.
\item[4)] (Spatial Lipschitz Continuity) The functions $h$ and $f$ satisfy
$$\max\limits_{x,y\in M,t\in [0,T],u\in U}\big(|h(x)-h(x)|+|f(t,x,u)-f(t,y,u)|\big)\le C_L\rho(x,y),$$
with $\rho$ the Riemannian distance function.
\end{itemize}
\end{assumption}

Let us recall the definition of the solution of the control system \eqref{system1}.
\begin{definition}
An $M$-valued $\{\mathcal{F}_t\}_{0 \leq t \leq T}$-adapted continuous stochastic process $X$ is said to be a solution of \eqref{system1} if, for every test function $\varphi \in C^{1,2}([0,T] \times M)$ and all $t \in [0,T]$, the following holds almost surely:

\begin{equation}\label{df61-1}
\f(t,X(t))=\f(0,x)+ \int_0^t\Big(\frac{\partial }{\partial r}+b+\frac{1}{2}\sum_{i=1}^m \sigma_{i}^2\Big)\f(r,X(r))dr+\int_s^t\sum_{i=1}^m\sigma_{i}\varphi dW^i(r).
\end{equation}
\end{definition}

Under Assumption \ref{condp}, according to Proposition \ref{pSExistence} given below, the controlled stochastic system \eqref{system1} admits a unique solution $X(\cd)$. Consequently, the cost functional \eqref{cf1} is well-defined.

Consider the following optimal control problem:

\ss

\textbf{Problem (OP)}. Given any $x\in M$, find a control $\bar{u}(\cdot)\in \mathcal{U}[0,T]$ such that
$$\mathcal{J}(x,\bar{u}(\cdot))=\inf_{u(\cdot)\in \mathcal{U}[0,T]}\mathcal{J}(x,u(\cdot)).$$
\ss  

The optimal control problem (OP) emerges naturally in a wide range of practical control applications across multiple disciplines. These include: 
Robotic attitude control (e.g., spacecraft stabilization on $SO(3)$),  
Phase demodulation in signal processing, 
spherical mechanics and constrained dynamical systems,  
autonomous vehicles \& path planning (e.g., motion planning on $SE(2)$),  
Quantum systems (e.g., state preparation on $\mathbb{C}P^n$),  geometric Heston models in mathematical finance and other physical and engineering scenarios where state variables are intrinsically constrained to a manifold (e.g., \cite{Altafini2012,Bismut1981,Duncan76,Duncan79,Gao2023,Lelievre,Samiei2015,van Handel}).

While stochastic differential equations (SDEs for short) in $\dbR^n$
and ordinary differential equations  (ODEs for short) on manifolds have been extensively studied in control theory (see, e.g., \cite{Agrachev2004,Fleming2006,Jurdjevic1997,Krylov2009,Yong1999} and references therein), optimal control problems for SDEs evolving on manifolds remain largely unexplored. Consequently, Problem (OP) is still far from being well understood.

In \cite{Duncan76}, the author investigates an optimal control problem governed by the stochastic differential equation.

In this paper, we investigate Problem (OP) using the dynamic programming approach. To the best of our knowledge, only two prior works have  addressed this problem\cite{Duncan76,Zhu2014}.  

In \cite{Duncan76}, the author investigates an optimal control problem governed by the stochastic differential equation
\begin{equation}\label{DCE}
dX(t) = b(t,X(t),u(t))dt + dB(t),
\end{equation}
where $b(t,X(t),u(t)) \in T_{X(t)}M$ represents the controlled drift term and $dB(t) \in T_{X(t)}M$ corresponds to a manifold-valued Brownian motion $B$. 
By establishing a manifold analogue of the Girsanov theorem, the author demonstrated that the cost functional can be expressed through the Wiener measure associated with $B(\cd)$, which subsequently leads to the dynamic programming principle (DPP) for this framework.

However, our system \eqref{system1} presents a crucial difference: the diffusion term depends explicitly on the control variable. This control-dependent diffusion structure introduces potential degeneracy issues that were absent in the formulation \eqref{DCE}, requiring substantially different analytical techniques.

In   \cite{Zhu2014}, the author  establishes DPP  for affine-type stochastic control systems on Riemannian manifolds as follows:
\begin{align}\label{deq1}
\left\{
\begin{aligned}
&dX(t) = u_0(t)b(t,X(t))dt + \sum_{i=1}^m u_i(t)\sigma_{i}(t,X(t))\circ dW^{i}(t), & t \in (0,T],\\
&X(0) = x. 
\end{aligned}
\right.
\end{align}
The analysis was conducted under the following key geometric and regularity assumptions on the vector fields:
\begin{itemize}[nosep]\label{it}
\item   Smoothness Condition: The vector fields $b(t,\cdot)$ and $\{\sigma_i(t,\cdot)\}_{i=1}^m$ are $C^\infty$ smooth; 
\item   Lipschitz-type Condition: For any $x,y \in M$ with $\rho(x,y) < \mathbf{i}_M$
(the injectivity radius), there exists $\mu > 0$ such that 
\begin{equation*}
\|L_{xy}b(t,x) - b(t,y)\|_g \leq \mu\rho(x,y),
\end{equation*}
where $L_{xy}$ denotes parallel transport along the minimal geodesic connecting $x$ and $y$.
\item   Parallel Condition: For any $x,y \in M$ with $\rho(x,y) < \mathbf{i}_M$,
%
\begin{equation*}
L_{xy}\sigma_i(t,x) = \sigma_i(t,y) \quad (i=1,\ldots,m).
\end{equation*}
\end{itemize}

In this work, we make the following extensions to Zhu's work\cite{Zhu2014}:

\ss

(i) Relaxation of Geometric Constraints: We extend prior frameworks by relaxing the parallel transport condition for the vector fields $\{\sigma_i\}_{i=1}^m$ in the Parallel Condition. This extension is essential for two reasons.
First, in Euclidean space, the Parallel Condition restricts diffusion matrices to constant values, representing a special class of stochastic control systems. Second, the constraint imposed by the Parallel Condition may be unsatisfied on curved manifolds---for instance, on $S^2$ the holonomy phenomenon (where $L_{y z}L_{x y} \neq L_{x z}$ for parallel transports along geodesic triangles) prevents non-trivial vector fields from satisfying the exact transport condition $L_{x y}\sigma_i(t,x) = \sigma_i(t,y)$ for $i=1,\dots,m$.

\ss

(ii) Generalization of the Control System Framework:  Our framework enables the analysis of control systems described by equation \eqref{system1} featuring completely control-dependent vector fields, thereby substantially broadening the scope of tractable systems compared to the conventional affine-type framework.

\ss


In their work \cite{Zhu2014}, the authors follow the methodology presented in \nmycite{Azagra2008}{Theorem 4.2} to investigate the corresponding Hamilton-Jacobi-Bellman (HJB) equation (cf. \nmycite {Zhu2011}{Theorem 3.1}). To ensure the HJB equation satisfies the required assumptions in \nmycite{Azagra2008}{Theorem 4.2(2)}, they specifically examine affine-type stochastic control systems subject to the parallel transport condition by utilizing an important result \nmycite{Azagra2008}{Proposition 4.9} satisfied for compact Riemannian manifold. In our current study, we deliberately avoid the application of \nmycite{Azagra2008}{Proposition 4.9}.

\ss

Our contributions are threefold:
\begin{enumerate}
\item Dynamic Programming Principle (DPP): We establish the DPP for optimal control of SDEs evolving on a manifold.
\item Hamilton-Jacobi-Bellman (HJB) Equation: We derive the associated HJB equation and rigorously analyze its structure.
\item Viscosity Solutions: We prove the existence, uniqueness, and continuous dependence of the viscosity solution to the HJB equation. 
\end{enumerate}

Compared to optimal control problems for SDEs in $\dbR^n$, the nonlinear structure of the tangent bundle in a manifold prevents the direct application of many standard techniques--particularly a priori estimates--to the control system \eqref{system1} evolving on a Riemannian manifold.

Furthermore, unlike optimal control problems for ODEs on manifolds, the unbounded nature of Brownian motion introduces additional challenges. Specifically, when perturbing the optimal control, we cannot rely solely on local charts to estimate deviations between the optimal and perturbed states. This makes Problem (OP) inherently global in nature, in contrast to the more localized analysis possible for deterministic systems.

To overcome these challenges, we employ techniques from stochastic analysis on manifolds and Riemannian geometry. Our approach proceeds through three key steps samilar to \cite{Zhu2014}:
\begin{enumerate}
\item Manifold Embedding:
Utilizing the Nash embedding theorem, we isometrically embed the Riemannian manifold  $M$ into a higher-dimensional Euclidean space $\dbR^{n_1}$  (where $n_1\in\dbN$ and $n_1\geq n$). This embedding preserves the geometric structure of $M$ while enabling analysis in a more tractable Euclidean framework.

\item System Extension and Analysis:
We then extend the control system \eqref{system1} to the ambient Euclidean space $\dbR^{n_1}$ in a carefully constructed manner (as guaranteed by Lemma \ref{lm62}). This extension allows us to apply established results and techniques for controlled SDEs in Euclidean spaces, from which we derive the necessary a priori estimates.

\item Metric Equivalence and Translation:
For the compact manifold $M$, we establish the equivalence between the Euclidean distance (induced by the embedding) and the intrinsic Riemannian distance. This crucial equivalence enables us to rigorously transfer our Euclidean-based estimates back to the original Riemannian setting.
\end{enumerate}
%

%

To study \textbf{Problem (OP)}, we introduce a family of stochastic optimal control problems. For any initial condition $\xi\in L_{\mathcal{F}_s}(\Omega,M)$, consider the controlled stochastic differential equation: 
\begin{align}\label{eqve0}
\left\{
\begin{aligned}
&dX(s)=b(s,X(s),u(s))dt+\sum_{i=1}^m\sigma_i(s,X(s),u(s))\circ dW^i(s),&\quad s\in (t,T],\\
&X(t)=\xi,
\end{aligned}
\right.
\end{align}
The definition of the solution to \eqref{eqve0} is similar to the one for \eqref{system1}. We present it here for the convenience of readers. An $M$-valued $\{\mathcal{F}_s\}_{t\le s\le T}$ adapted continuous  stochastic process $X$ is called solution of $\eqref{eqve0}$, if the following equation holds  almost surely:
\begin{equation}\label{df61}
\f(s,X(s))=\f(t,\xi)+ \int_t^s\Big(\frac{\partial }{\partial r}+b+\frac{1}{2}\sum_{i=1}^m \sigma_{i}^2\Big)\f(r,X(r))dr+\int_s^t\sum_{i=1}^m\sigma_{i}\varphi dW^i(r).
\end{equation}
for every function $\f\in C^{1,2}([t,T]\times M)$ and $s\in [t,T]$.

Let $t\in [0,T]$, $\xi\in L_{\mathcal{F}_t}(\Omega,M)$ and  $u(\cdot)\in \mathcal{U}[t,T]$.  Then the equation \eqref{eqve0} exists a unique solution $X(\cd;t,\xi,u(\cdot))$  (see in Proposition \ref{pExistence}).

The associated cost functional is given by
$$\mathcal{J}(t,\xi;u(\cdot))=\mathbb{E}_{t}\bigg\{\int_t^Tf(s,X(s),u(s))ds+h(X(T))\bigg\},$$
where $\mathbb{E}_t=\mathbb{E}(\cdot|\mathcal{F}_t)$ denotes conditional expectation, and $X(s)$ is short
for $X(s;t,\xi,u(\cdot))$. 

We formulate the augmented value function of \textbf{Problem (OP)$_t$} as: 
\begin{equation}\label{defpJ-1}
\mathbb{V}(t,\xi):=\mathop{\rm essinf}_{u(\cdot)\in \mathcal{U}[t,T]} \mathcal{J}(t,\xi;u(\cdot)).
\end{equation}

Since $\mathcal{J}(t,\xi;u(\cdot))$  is a random variable, the right-hand side of \eqref{defpJ-1} may not exist. However, by Lemmas \ref{prop33} and \ref{effJ}, the essential infimum is ensured to exist, which implies that $\mathbb{V}$ is well-defined.

The value function $V$ of \textbf{Problem (OP)$_t$} is defined as follows:  
\begin{equation}\label{defpJ}
V(t,x):=\mathbb{V}(t,x),\quad t\in [0,T],x\in M.
\end{equation}

For fixed $t$ and $x$, the value function  $V(t,x)$ would appear to be a random variable due to its definition through the conditional expectation. However, by utilizing the intrinsic structure of the optimal control problem and adapting techniques from \cite{yong_stochastic_2022} and \cite{yu}, we establish the following result: 
\begin{proposition} \label{Detm}
The value function $V$ is a deterministic function.
\end{proposition}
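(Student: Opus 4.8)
The plan is to show that $V(t,x)=\mathbb V(t,x)$, which a priori is an $\mathcal F_t$-measurable random variable, is in fact $\mathbb P$-a.s. equal to a constant for each fixed $(t,x)$. The essential point is that the control system \eqref{eqve0} started at the \emph{deterministic} point $x\in M$ enjoys a flow property and, more importantly, an invariance under the shift of the Brownian filtration: the law of the solution (and hence of the cost) starting from the deterministic initial datum $x$ at time $t$ does not see the initial $\sigma$-algebra $\mathcal F_t$ in any essential way. Concretely, I would first record the measurability/stability facts supplied by Lemmas \ref{prop33} and \ref{effJ}: the family $\{\mathcal J(t,x;u(\cdot)):u(\cdot)\in\mathcal U[t,T]\}$ is directed downward (closed under pasting/minimum of controls), so the essential infimum is attained along a minimizing sequence $u_k(\cdot)$ with $\mathcal J(t,x;u_k(\cdot))\downarrow \mathbb V(t,x)$ a.s.

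Next I would exploit the independence of increments of $W$. Fix $(t,x)$. For any $A\in\mathcal F_t$ with $\mathbb P(A)>0$, I want to show $\mathbb V(t,x)$ is a.s. constant on $A$; it suffices to show that for any two $A_1,A_2\in\mathcal F_t$ of positive probability the ``restricted'' infima agree. The mechanism: given a control $u(\cdot)\in\mathcal U[t,T]$, one can construct a new admissible control that reproduces on $A_2$ the behaviour that $u(\cdot)$ has on $A_1$, by transporting via the Brownian shift. More precisely, following the technique of \cite{yong_stochastic_2022} and \cite{yu}, one uses that $\mathcal F_t$ and $\mathcal F_T^t:=\sigma(W(s)-W(t):t\le s\le T)$ are independent, and that the solution $X(\cdot;t,x,u(\cdot))$ together with the running cost is a measurable functional of $(x,u(\cdot),W(\cdot)-W(t))$ — here the \emph{deterministic} initial condition is crucial, since then no $\mathcal F_t$-randomness enters through $\xi$. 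Approximating an arbitrary $u(\cdot)\in\mathcal U[t,T]$ by piecewise controls in $\mathcal U_{piece}[t,T]$ (the dense set in the definition of $\mathcal U[t,T]$), one builds for each $A_1,A_2$ a control that ``swaps'' the conditional laws, giving $\mathbb E[\mathcal J(t,x;u(\cdot))\mathbf 1_{A_1}]/\mathbb P(A_1)\ge \mathbb V(t,x)$-type inequalities in both directions, hence $\mathbb V(t,x)$ has a.s. constant conditional expectation on every $\mathcal F_t$-atom, i.e.\ is a.s.\ constant. Finally, continuity/stability of $\mathcal J$ in the control (Assumption \ref{condp}, parts 3)–4), and the a priori estimates invoked for Proposition \ref{pExistence}) lets one pass from $\mathcal U_{piece}$ to all of $\mathcal U[t,T]$ and conclude $V(t,x)=\mathbb V(t,x)$ is deterministic.

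The main obstacle I anticipate is the rigorous construction of the ``filtration-shifted'' admissible control and the verification that it lies in $\mathcal U[t,T]$: one must check that pasting/relabeling controls along $\mathcal F_t$-measurable sets and composing with the Brownian time-shift produces a genuinely $\mathbb F$-progressively measurable, $U$-valued process in the $L^2_{\mathbb F}(t,T;\dbR^k)$-closure, and that the resulting state process solves \eqref{eqve0} with the same (conditional) law. This is where the compactness of $U$, the density of $\mathcal U_{piece}[t,T]$, and the pathwise uniqueness/stability of \eqref{eqve0} all get used; everything else is a fairly standard independence-of-increments argument. A secondary technical point is ensuring the essential infimum commutes with the conditioning — handled by the downward-directedness from Lemmas \ref{prop33}–\ref{effJ}, which guarantees a countable minimizing sequence so that no nonmeasurable uncountable infimum appears.
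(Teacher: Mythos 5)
Your plan rests on exactly the ingredients the paper uses: the deterministic initial point, the independence of $\mathcal F_t$ from the post-$t$ Brownian increments, the pasting/downward-directedness of Lemmas \ref{prop33} and \ref{effJ}, and the Lipschitz stability of $\mathcal J$ in the control. The paper's actual execution, however, is organized differently and more cleanly: it proves the chain
$\mathop{\rm essinf}_{\mathcal U[t,T]}\mathcal J=\mathop{\rm essinf}_{\mathcal W[t,T]}\mathcal J=\mathop{\rm essinf}_{\mathcal V[t,T]}\mathcal J$,
where $\mathcal V[t,T]$ consists of controls adapted to the shifted filtration $\mathbb F^t$ (for which $\mathcal J(t,x;\cdot)$ is genuinely deterministic, since the cost is then independent of $\mathcal F_t$ and $\mathbb E_t$ reduces to $\mathbb E$) and $\mathcal W[t,T]$ consists of finite $\mathcal F_t$-measurable mixtures of such controls. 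The first equality is Lemma \ref{Appro0} combined with Lemma \ref{Lmcd1}; the second is Lemma \ref{prop33} plus the observation that a mixture of deterministic costs is bounded below by their common infimum. This avoids any conditioning-on-$A_1,A_2$ argument altogether.

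The genuine weak point in your write-up is precisely the step you flag as the "main obstacle": the construction of admissible controls that separate the $\mathcal F_t$-randomness from the $\mathbb F^t$-randomness. Approximating by $\mathcal U_{piece}[t,T]$ alone does not achieve this, because the coefficients $u_i\in L_{\mathcal F_{t_i}}(\Omega,U)$ there are measurable with respect to the \emph{full} filtration; the whole content of Lemma \ref{Appro0} (Subsection \ref{s24}) is to further decompose each $\mathcal F_{t_i}$-set into intersections of $\mathcal F_t$-sets with $\mathcal F^t_{t_i}$-sets, producing a control in $\mathcal W[t,T]$ within $L^2$-distance $\epsilon$. Without that lemma (or an equivalent construction) your "swap the conditional laws between $A_1$ and $A_2$" step has no rigorous footing, and the concluding phrase "constant conditional expectation on every $\mathcal F_t$-atom, i.e.\ a.s.\ constant" does not parse literally, since $\mathcal F_t$ is atomless for $t>0$; what one actually needs (and what the paper's route delivers) is that the essential infimum over $\mathcal U[t,T]$ coincides with an ordinary infimum of real numbers. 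Since Lemma \ref{Appro0} is already established in the paper, your plan is repairable by invoking it, at which point it collapses into the paper's argument.
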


Similar to stochastic control problems in Euclidean space, the value function maintains Lipschitz continuity with respect to spatial variables. More precisely, we have the following quantitative estimate:  
\begin{proposition} \label{pLip}
For any $x,y\in M,t\in [0,T]$, there exists an uniform constant $C\ge 0$ such that
$$|V(t,x)-V(t,y)|\le C\rho(x,y).$$
\end{proposition}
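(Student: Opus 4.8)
The plan is to transfer the problem to Euclidean space via the Nash embedding and use the standard Euclidean a priori estimates. First I would fix $t\in[0,T]$ and $x,y\in M$. Since $V(t,x)=\mathbb V(t,x)=\operatorname{essinf}_{u\in\mathcal U[t,T]}\mathcal J(t,x;u(\cdot))$ and $V$ is deterministic by Proposition \ref{Detm}, it suffices to show that for every $\e>0$ and every control $u(\cdot)\in\mathcal U[t,T]$ there is a control $\tilde u(\cdot)\in\mathcal U[t,T]$ (which I will simply take equal to $u(\cdot)$) with $\mathcal J(t,y;u(\cdot))\le \mathcal J(t,x;u(\cdot))+C\rho(x,y)$, and then swap the roles of $x$ and $y$. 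Thus everything reduces to the key estimate
\begin{equation}\label{pLip-key}
\mathbb E_t\bigl[\rho\bigl(X(s;t,x,u(\cdot)),X(s;t,y,u(\cdot))\bigr)\bigr]\le C\rho(x,y),\qquad s\in[t,T],
\end{equation}
uniformly over $u(\cdot)$, together with the spatial Lipschitz bounds on $f$ and $h$ from Assumption \ref{condp}(4): indeed, once \eqref{pLip-key} holds,
\[
|\mathcal J(t,x;u)-\mathcal J(t,y;u)|\le \mathbb E_t\!\int_t^T|f(s,X^x(s),u(s))-f(s,X^y(s),u(s))|\,ds+\mathbb E_t|h(X^x(T))-h(X^y(T))|\le C_L(T+1)\,C\,\rho(x,y).
\]

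To prove \eqref{pLip-key} I would invoke the embedding strategy outlined in the introduction. Using the Nash embedding theorem, isometrically embed $(M,g)\hookrightarrow\dbR^{n_1}$, and use Lemma \ref{lm62} to extend the vector fields $b,\sigma_i$ to a controlled SDE on $\dbR^{n_1}$ whose solutions started on $M$ stay on $M$ and coincide with the intrinsic solutions of \eqref{eqve0}. On $\dbR^{n_1}$ the coefficients of the extended system are bounded with bounded first (and, for the $\sigma_i$, second) derivatives by Assumption \ref{condp}(1)--(2) and the compactness of $M$, so the classical $L^2$ (or $L^1$) a priori estimate for Itô SDEs with Lipschitz coefficients gives
\[
\mathbb E_t\Bigl[\sup_{t\le s\le T}|X^x(s)-X^y(s)|_{\dbR^{n_1}}^2\Bigr]\le C\,|x-y|_{\dbR^{n_1}}^2
\]
(here one must first convert the Stratonovich system \eqref{eqve0} into its Itô form, which only adds a further bounded, Lipschitz drift built from the $\sigma_i$ and $D\sigma_i$ — this is why Assumption \ref{condp}(2),(3) require $C^2$ regularity of the $\sigma_i$). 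Finally, the third step of the program — the equivalence of the Euclidean distance $|\cdot|_{\dbR^{n_1}}$ restricted to $M$ and the intrinsic Riemannian distance $\rho$, valid because $M$ is compact and smoothly embedded, i.e. there is $C_0\ge1$ with $C_0^{-1}\rho(p,q)\le|p-q|_{\dbR^{n_1}}\le\rho(p,q)$ for all $p,q\in M$ — lets me translate the Euclidean estimate back:
\[
\mathbb E_t\bigl[\rho(X^x(s),X^y(s))\bigr]\le C_0\,\mathbb E_t\bigl[|X^x(s)-X^y(s)|_{\dbR^{n_1}}\bigr]\le C_0\Bigl(\mathbb E_t|X^x(s)-X^y(s)|_{\dbR^{n_1}}^2\Bigr)^{1/2}\le C\,|x-y|_{\dbR^{n_1}}\le C\,\rho(x,y),
\]
which is \eqref{pLip-key}. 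Combining with the $f,h$ estimate above and taking essential infima over $u(\cdot)$ (using the elementary fact $|\operatorname{essinf}_u A_u-\operatorname{essinf}_u B_u|\le\sup_u|A_u-B_u|$) yields $|V(t,x)-V(t,y)|\le C\rho(x,y)$ with $C$ independent of $t$.

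The main obstacle is making the embedding/extension step fully rigorous: one must check that the extended Euclidean SDE really does admit $M$ as an invariant manifold and that its solution from a point of $M$ agrees (in law, hence in the relevant expectations) with the intrinsic solution of \eqref{eqve0} — this is exactly the content that Lemma \ref{lm62} and Proposition \ref{pExistence} are there to supply, so I would lean on them — and one must handle the Stratonovich-to-Itô correction carefully on the manifold so that no unbounded terms appear. Everything else (Gronwall, the distance equivalence from compactness, the essinf manipulation) is routine. A minor point worth noting: Assumption \ref{condp}(4) as written contains the typo $|h(x)-h(x)|$, which should read $|h(x)-h(y)|$; the proof uses the latter.
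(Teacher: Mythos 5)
Your proposal is correct and follows essentially the same route as the paper: the paper reduces the claim to the Lipschitz continuity of $u\mapsto\mathcal J(t,\cdot\,;u)$ in the initial point (its Lemma \ref{Lmcd1}), which in turn rests on the continuous-dependence estimate of Proposition \ref{prop41} proved exactly by your embedding/extension/distance-equivalence scheme (Lemmas \ref{lm62}, \ref{lm61}). The only cosmetic difference is that the paper passes to minimizing control sequences and an $\epsilon$-argument rather than quoting the elementary essential-infimum inequality directly; the content is identical.
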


Most significantly, the value function $V$ satisfies the following dynamic programming principle:
\begin{theorem}\label{pDPP}
For any $t\in [0,T)$, $s\in (t,T]$ and $\xi\in L_{\mathcal{F}_t}(\Omega,M)$, we have
\begin{align*}
V(t,\xi)=\inf_{u(\cdot)\in \mathcal{U}[t,s]}\mathbb{E}_t\bigg\{\int_t^{s}f(r,X(r),u(r))dr+ V(s,X(s))\bigg\}.
\end{align*}
\end{theorem}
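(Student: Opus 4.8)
\textbf{Proof proposal for Theorem \ref{pDPP} (Dynamic Programming Principle).}

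The plan is to follow the classical two-inequality scheme for the DPP, adapted to the manifold setting via the machinery already set up in the paper (the value function is deterministic by Proposition \ref{Detm}, spatially Lipschitz by Proposition \ref{pLip}, and the controlled SDE \eqref{eqve0} is well-posed with a flow $X(\cdot;t,\xi,u(\cdot))$). Write $\cJ(t,\xi;u(\cdot))$ and $V$ as above, and fix $t<s\le T$ and $\xi\in L_{\cF_t}(\Omega,M)$. Denote the right-hand side by
\begin{equation*}
W(t,\xi):=\inf_{u(\cdot)\in\cU[t,s]}\dbE_t\Big\{\int_t^s f(r,X(r),u(r))\,dr+V(s,X(s))\Big\}.
\end{equation*}
The first step is the flow (semigroup) property of the controlled dynamics: for $u(\cdot)\in\cU[t,T]$ and $t\le s\le T$, uniqueness for \eqref{eqve0} gives $X(r;t,\xi,u(\cdot))=X(r;s,X(s;t,\xi,u(\cdot)),u(\cdot))$ for $r\in[s,T]$ a.s., and the cost decomposes as
\begin{equation*}
\cJ(t,\xi;u(\cdot))=\dbE_t\Big\{\int_t^s f(r,X(r),u(r))\,dr+\cJ\big(s,X(s;t,\xi,u(\cdot));u(\cdot)\big)\Big\},
\end{equation*}
using the tower property $\dbE_t=\dbE_t\dbE_s$. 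For the inequality $V(t,\xi)\ge W(t,\xi)$, restrict the infimum defining $V$ to controls in $\cU[t,T]$, split as above, and bound $\cJ(s,X(s);u(\cdot))\ge \mathbb{V}(s,X(s))=V(s,X(s))$ (here Proposition \ref{Detm} is what lets us treat $V(s,\cdot)$ as an honest deterministic Borel function evaluated at the random point $X(s)$); taking the infimum over $u(\cdot)\in\cU[t,T]$ and noting its restriction to $[t,s]$ lies in $\cU[t,s]$ yields $V(t,\xi)\ge W(t,\xi)$.

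For the reverse inequality $V(t,\xi)\le W(t,\xi)$ one has to paste an $\varepsilon$-optimal control on $[s,T]$ after an arbitrary control on $[t,s]$. Fix $\eta>0$. First cover $M$ by finitely many geodesic balls $B^\rho_{x_j}(\delta)$ (compactness of $M$), pick in each a control $u^j(\cdot)\in\cU[s,T]$ that is $\eta$-optimal for the deterministic initial point $x_j$, i.e. $\cJ(s,x_j;u^j(\cdot))\le V(s,x_j)+\eta$. Then, given any $u_1(\cdot)\in\cU[t,s]$, let $X(s)=X(s;t,\xi,u_1(\cdot))\in L_{\cF_s}(\Omega,M)$, partition $\Omega$ into the $\cF_s$-measurable sets $A_j=\{X(s)\in B^\rho_{x_j}(\delta)\}$ (disjointified), and define the concatenated control $u(\cdot)=u_1(\cdot)\mathbf 1_{[t,s)}+\sum_j u^j(\cdot)\mathbf 1_{A_j}\mathbf 1_{[s,T]}$; this is $\mathbb F$-adapted, $U$-valued, hence (after the usual approximation by elements of $\cU_{piece}$) admissible. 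Using the flow property, the Lipschitz estimate on the cost functional with respect to the initial point (a consequence of Assumption \ref{condp} and the a priori estimates, as in the proof of Proposition \ref{pLip}) and the spatial Lipschitz bound on $V(s,\cdot)$, one gets on each $A_j$ that $\dbE_s[\cJ(s,X(s);u^j(\cdot))\mid A_j]\le V(s,X(s))+\eta+C\delta$ up to terms controlled by $\delta$; summing over $j$ and using the decomposition of $\cJ$ gives
\begin{equation*}
V(t,\xi)\le \cJ(t,\xi;u(\cdot))\le \dbE_t\Big\{\int_t^s f(r,X(r),u_1(r))\,dr+V(s,X(s))\Big\}+\eta+C\delta.
\end{equation*}
Taking the infimum over $u_1(\cdot)\in\cU[t,s]$ and then $\eta,\delta\to0$ yields $V(t,\xi)\le W(t,\xi)$, completing the proof.

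The main obstacle is the measurable-selection / pasting argument in the second inequality: on a manifold one cannot use a single global chart, so the $\eta$-optimal feedback must be built by a finite localization (geodesic balls of radius below $\mathbf i_M$) together with the continuity of $x\mapsto V(s,x)$ and of the cost in the initial datum, and one must check that the concatenated control, defined piecewise over $\cF_s$-measurable sets, is genuinely in the completed admissible class $\cU[t,T]$ — this is where the definition of $\cU[t,T]$ as the $L^2_{\mathbb F}$-closure of piecewise controls and the continuous dependence of the state and cost on the control are used. A secondary technical point is justifying the exchange of $\mathrm{essinf}$ and conditional expectation and the stability of the essential infimum under concatenation; this is handled by the lattice (directed downward) property of the family $\{\cJ(t,\xi;u(\cdot))\}_{u}$, which is exactly what Lemmas \ref{prop33} and \ref{effJ} are invoked for, and by the fact (Proposition \ref{Detm}) that the resulting essential infimum is a constant.
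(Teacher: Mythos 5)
Your proposal is correct in substance and follows the same two-inequality scheme, but it handles the harder direction ($V\le W$, the pasting step) by a genuinely different route than the paper. The paper factors Theorem \ref{pDPP} through two lemmas: Lemma \ref{SDPP0} proves the DPP for the random-initial-state value function $\mathbb{V}$, and Lemma \ref{LM54} identifies $\mathbb{V}(s,X(s))$ with $V(s,X(s))$. In Lemma \ref{SDPP0} the pasting is done abstractly: by Lemma \ref{effJ} the family $\{\mathcal{J}(s,X(s);u(\cdot))\}_u$ is directed downward (via the $\sqcap$ operation of Lemma \ref{prop33}), so there is a nonincreasing minimizing sequence $u_k(\cdot)\in\mathcal{U}[s,T]$ with $\mathcal{J}(s,X(s);u_k(\cdot))\to\mathbb{V}(s,X(s))$ a.s.; concatenating each $u_k$ after the given control on $[t,s]$ and passing to the limit gives $\mathbb{V}(t,\xi)\le\mathbb{E}_t\{\int_t^s f\,dr+\mathbb{V}(s,X(s))\}$ with no localization at all. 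You instead build an explicit $\eta$-optimal quasi-feedback by covering the compact $M$ with finitely many geodesic balls, choosing an $\eta$-optimal control for each center, and gluing over the $\mathcal{F}_s$-partition $\{A_j\}$, controlling the error via the Lipschitz continuity of $\mathcal{J}$ in the initial datum (Lemma \ref{Lmcd1}) and of $V(s,\cdot)$ (Proposition \ref{pLip}). Both arguments are valid; yours is the classical constructive route (as in Euclidean treatments) and makes the measurable-selection issue concrete, while the paper's essential-infimum machinery sidesteps the cover entirely and is what its Lemmas \ref{prop33} and \ref{effJ} were designed for. Two small points: the identification $\mathbb{V}(s,X(s))=V(s,X(s))$ that you use in the first inequality is Lemma \ref{LM54} rather than Proposition \ref{Detm} (the latter only treats deterministic initial points), and the admissibility of your concatenated control $\sum_j u^j(\cdot)\mathbf{1}_{A_j}$ is exactly the content of the composite-control construction underlying Lemma \ref{prop33}, so it is available but should be cited rather than left as ``the usual approximation.''
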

With the help of Theorem \ref{pDPP}, we can prove the following time H{\"o}lder property of value function:
\begin{proposition} \label{pHold}
For any $t,s\in M,t\in [0,T]$, there exists an uniform constant $C\ge 0$ such that
$$|V(t,x)-V(s,y)|\le C|t-s|^{\frac{1}{2}}.$$
\end{proposition}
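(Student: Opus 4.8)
The plan is to combine the dynamic programming principle (Theorem \ref{pDPP}) with the a priori estimate on the flow of the controlled SDE, in exactly the way one does in the Euclidean theory, but carrying out all distance estimates through the Nash embedding so that we may use the established Euclidean moment bounds. Fix $t<s$ in $[0,T]$ and $x\in M$. Note first that the statement as written ("$t,s\in M$") is a typo for $t,s\in[0,T]$ and that spatial Lipschitz continuity (Proposition \ref{pLip}) already handles the $x$–$y$ part, so it suffices to bound $|V(t,x)-V(s,x)|$ by $C|t-s|^{1/2}$. By the triangle inequality together with Proposition \ref{pLip} this will follow once we show $|V(t,x)-V(s,x)|\le C|t-s|^{1/2}$, and the two inequalities $V(t,x)\le V(s,x)+C|t-s|^{1/2}$ and $V(s,x)\ge V(t,x)-C|t-s|^{1/2}$ are proved separately from the two halves of the DPP.

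The key step is the one-sided flow estimate: for any $u(\cdot)\in\mathcal U[t,s]$ and $X(\cdot)=X(\cdot;t,x,u(\cdot))$,
\begin{equation*}
\mathbb E_t\,\rho\big(X(s),x\big)^2\le C|t-s|.
\end{equation*}
I would obtain this by embedding $M\hookrightarrow\dbR^{n_1}$ isometrically (Nash), extending the control system to the ambient space as in Lemma \ref{lm62}, applying the standard Euclidean estimate $\mathbb E|\,\widetilde X(s)-x|_{\dbR^{n_1}}^2\le C(|t-s|+|t-s|)$ — which uses only Assumption \ref{condp}(1), the uniform boundedness of $b$ and $\sigma_i$, and the Stratonovich-to-Itô correction (whose drift is again bounded on compact $M$) — and then invoking the metric equivalence on the compact manifold, $c\,\rho(p,q)\le|p-q|_{\dbR^{n_1}}\le C\,\rho(p,q)$, to transfer the bound back. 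Once this is in hand, take $u(\cdot)$ $\epsilon$-optimal for $V(s,x)$ extended trivially (or by any fixed admissible control) on $[t,s]$; the DPP gives
\begin{equation*}
V(t,x)\le \mathbb E_t\Big\{\int_t^s f(r,X(r),u(r))\,dr+V(s,X(s))\Big\}
\le C|t-s|+\mathbb E_t\big[V(s,X(s))-V(s,x)\big]+V(s,x),
\end{equation*}
and the middle term is $\le C\,\mathbb E_t\rho(X(s),x)\le C\big(\mathbb E_t\rho(X(s),x)^2\big)^{1/2}\le C|t-s|^{1/2}$ by Proposition \ref{pLip}, Jensen, and the flow estimate; the $\int_t^s f$ term is $\le C_B|t-s|\le C|t-s|^{1/2}$ on $[0,T]$. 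For the reverse inequality, apply the DPP to $V(t,x)$, pick a $u(\cdot)$ that is $\epsilon$-optimal for it, and estimate $V(s,X(s))\ge V(s,x)-C\rho(X(s),x)$ the same way, concluding $V(s,x)\le V(t,x)+C|t-s|^{1/2}$ after taking expectations and using the flow bound.

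The main obstacle is the flow estimate $\mathbb E_t\rho(X(s),x)^2\le C|t-s|$: the nonlinearity of the tangent bundle means one cannot directly write an Itô/Stratonovich formula for $\rho(X(\cdot),x)^2$ with controlled coefficients, which is precisely why the embedding–extension–equivalence route of Lemma \ref{lm62} and Step 3 of the program is needed. Modulo that lemma the argument is routine; a minor point to check is that the conditional expectations $\mathbb E_t$ and essential infima behave well, but this is already covered by the measurability/essinf lemmas referenced before the DPP, and the constants produced are uniform in $(t,s,x)$ because all the geometric quantities ($C_R$, $\mathbf i_M$, the metric-equivalence constants, $C_B$) are uniform on the compact manifold $M$.
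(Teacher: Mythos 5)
Your argument is correct and is essentially the paper's own proof: both directions follow from the dynamic programming principle (Theorem \ref{pDPP}) combined with the short-time flow estimate $\mathbb{E}_t\sup_{r\in[t,s]}\rho^2(X(r),x)\le C(s-t)$, the spatial Lipschitz continuity of $V$ (Proposition \ref{pLip}), Jensen's inequality, and the boundedness of $f$. The only cosmetic difference is that you re-derive the flow estimate via the embedding--extension--metric-equivalence route, whereas the paper simply cites it as the second inequality of Proposition \ref{prop41} (which is itself proved exactly by that route), so nothing new is needed.
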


Based on the Theorem \ref{pDPP} and Proposition \ref{Detm}, we can derive a deterministic HJB equation as follows.
\begin{theorem} \label{phjbl}
Suppose the value function $V\in C^{1,2}([0,T]\times M)$. Then $V$ satisfies the following  HJB equation:
\begin{equation}\label{phjb}
\begin{cases}
\pa_tV +\bfH(t,x,DV,D^2V)=0, & (t,x)\in [0,T]\times M,\\
V(T,x)=h(x), &  x \in M,
\end{cases}
\end{equation}
where
$$\bfH(t,x,\chi,A)=\inf_{u\in U}\mathbb{H}(t,x,u,\chi,A),$$
and
$$\mathbb{H}(t,x,u,\chi,A)=\frac{1}{2}\sum_{i=1}^m A(\sigma_i,\sigma_i)+\chi\Big(b+\frac{1}{2}\sum_{i=1}^m D_{\sigma_i}\sigma_i\Big)+f.$$
\end{theorem}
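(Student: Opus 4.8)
\textbf{Proof proposal for Theorem \ref{phjbl}.}

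The plan is to derive the HJB equation from the dynamic programming principle (Theorem \ref{pDPP}) by a standard two-sided argument, carried out pointwise in $(t,x)\in[0,T)\times M$ and using the assumed regularity $V\in C^{1,2}([0,T]\times M)$. The terminal condition $V(T,x)=h(x)$ is immediate from the definition of the value function and the cost functional. For the interior equation, fix $(t,x)$ and, for small $\theta>0$, apply Theorem \ref{pDPP} with $s=t+\theta$ and deterministic initial datum $\xi=x$:
$$
V(t,x)=\inf_{u(\cdot)\in\mathcal{U}[t,t+\theta]}\mathbb{E}_t\bigg\{\int_t^{t+\theta}f(r,X(r),u(r))\,dr+V(t+\theta,X(t+\theta))\bigg\}.
$$

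First I would prove the ``$\le$'' direction of the HJB equation (i.e. $\pa_tV+\bfH\le 0$). Take an arbitrary constant control $u(r)\equiv u\in U$ on $[t,t+\theta]$, let $X(\cdot)=X(\cdot;t,x,u)$ be the corresponding state process, and apply the It\^o formula \eqref{df61-1} to the test function $\varphi=V\in C^{1,2}([0,T]\times M)$ between $t$ and $t+\theta$. The stochastic integral term has zero conditional expectation (using the uniform bounds in Assumption \ref{condp} and the moment estimates for $X$ from Proposition \ref{pExistence}/\ref{pSExistence}), so
$$
\mathbb{E}_t\big[V(t+\theta,X(t+\theta))\big]-V(t,x)=\mathbb{E}_t\int_t^{t+\theta}\Big(\pa_r+b+\tfrac12\textstyle\sum_{i=1}^m\sigma_i^2\Big)V(r,X(r))\,dr.
$$
Here the second-order Stratonovich generator, when expressed via the Levi-Civita connection, is $\big(\pa_r+b+\tfrac12\sum\sigma_i^2\big)V=\pa_rV+DV(b+\tfrac12\sum_i D_{\sigma_i}\sigma_i)+\tfrac12\sum_i D^2V(\sigma_i,\sigma_i)=\pa_rV+\mathbb{H}(r,X(r),u,DV,D^2V)$; I would include a short lemma or remark identifying $\sigma_i^2\varphi$ with $D^2\varphi(\sigma_i,\sigma_i)+D_{\sigma_i}\sigma_i\,\varphi$, which is exactly the formula \eqref{fml} for the Hessian plus the connection correction coming from the Stratonovich-to-It\^o conversion. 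Dividing the DPP inequality $V(t,x)\le \mathbb{E}_t\{\int_t^{t+\theta}f\,dr+V(t+\theta,X(t+\theta))\}$ by $\theta$, letting $\theta\downarrow0$, and using joint continuity of $b,\sigma_i,f$ (Assumption \ref{condp}(1)), continuity of $DV,D^2V$, and a.s. continuity of $X$ at $r=t$ with $X(t)=x$, dominated convergence gives $0\le \pa_tV(t,x)+\mathbb{H}(t,x,u,DV,D^2V)$. Taking the infimum over $u\in U$ yields $\pa_tV+\bfH\ge 0$ — wait, this is the $\ge$ side; I should be careful with signs. Since the DPP gives $V(t,x)\le$ (the expression with a fixed control $u$), the limiting inequality is $0\le \pa_tV+\mathbb{H}(t,x,u,\cdot)$ for every $u$, hence $0\le \pa_tV+\bfH$.

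For the reverse inequality $\pa_tV+\bfH\le0$, fix $\e>0$; by the DPP there is an $\e\theta$-optimal control $u^\theta(\cdot)\in\mathcal{U}[t,t+\theta]$ with
$$
V(t,x)+\e\theta\ge\mathbb{E}_t\bigg\{\int_t^{t+\theta}f(r,X^\theta(r),u^\theta(r))\,dr+V(t+\theta,X^\theta(t+\theta))\bigg\}.
$$
Applying the It\^o formula to $V$ along $X^\theta$ as before, the right-hand side equals $V(t,x)+\mathbb{E}_t\int_t^{t+\theta}\big[\pa_rV+\mathbb{H}(r,X^\theta(r),u^\theta(r),DV,D^2V)\big]dr\ge V(t,x)+\mathbb{E}_t\int_t^{t+\theta}\big[\pa_rV+\bfH(r,X^\theta(r),DV,D^2V)\big]dr$, using $\mathbb{H}\ge\bfH$. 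Hence $\e\ge\frac1\theta\mathbb{E}_t\int_t^{t+\theta}\big[\pa_rV+\bfH(r,X^\theta(r),DV,D^2V)\big]dr$. To pass to the limit $\theta\downarrow0$ I would show $X^\theta(r)\to x$ (in probability, uniformly over $r\in[t,t+\theta]$) as $\theta\to0$ uniformly in the choice of control — this follows from the uniform bounds in Assumption \ref{condp}(1) and the standard Burkholder–Davis–Gundy estimate $\mathbb{E}_t\sup_{r\le t+\theta}\rho(X^\theta(r),x)^2\le C\theta$, which holds uniformly in $u^\theta(\cdot)$ because the coefficients are uniformly bounded (this estimate is obtained via the Nash embedding and metric equivalence, as indicated in the three-step strategy, or directly from Proposition \ref{pExistence}). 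Combined with the continuity of $(t,x,\chi,A)\mapsto\bfH(t,x,\chi,A)$ (which follows from joint continuity of $b,\sigma_i,f$ and compactness of $U$) and continuity of $\pa_rV,DV,D^2V$, dominated convergence gives $\e\ge\pa_tV(t,x)+\bfH(t,x,DV(t,x),D^2V(t,x))$; letting $\e\downarrow0$ finishes the proof.

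The main obstacle I anticipate is the uniform-in-control small-time estimate $\mathbb{E}_t\sup_{t\le r\le t+\theta}\rho(X^\theta(r),x)^2=O(\theta)$ needed for the second inequality: unlike in $\dbR^n$, the state lives on $M$ and the $\e\theta$-optimal controls $u^\theta$ are a priori arbitrary $\mathcal{F}$-adapted processes, so one cannot localize in a single chart. This is precisely why the embedding/extension machinery (Lemma \ref{lm62} and the metric equivalence on compact $M$) is invoked — I would apply the corresponding Euclidean a priori estimate for the extended system and then transfer it back using $c_1|\iota(x)-\iota(y)|\le\rho(x,y)\le c_2|\iota(x)-\iota(y)|$ for the Nash embedding $\iota$. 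A secondary technical point is justifying that the Stratonovich second-order term in \eqref{df61-1} is genuinely the coordinate-free operator $\tfrac12\sum_i[D^2V(\sigma_i,\sigma_i)+D_{\sigma_i}\sigma_i\,V]$; this is a direct computation from \eqref{fml} and the Stratonovich correction, but it is the place where the curvature-free form of the Hamiltonian $\mathbb{H}$ — in particular the appearance of $D_{\sigma_i}\sigma_i$ rather than just $\sigma_i\sigma_i$ — must be pinned down carefully.
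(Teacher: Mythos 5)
Your proposal is correct and follows essentially the same route as the paper: both directions are obtained from the dynamic programming principle with, respectively, a constant control and an $\epsilon\theta$-optimal control, combined with the It\^o/Stratonovich expansion of $V$ along the state and the uniform small-time estimate $\mathbb{E}_t\sup_{r\in[t,t+\theta]}\rho^2(X(r),x)\le C\theta$ (which is exactly \eqref{apeq65-2} of Proposition \ref{prop41}, proved via the embedding and metric-equivalence machinery you anticipate). Your mid-proof sign correction lands on the right inequalities, so there is no gap.
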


The value function enables the construction of optimal feedback control through the following result: 

\begin{corollary} \label{pVe}
Let $(t,x)\in [0,T)\times M$, and suppose the value function $V(\cdot,\cdot)\in C^{1,2}([0,T]\times M)$. Assume that there is a measurable mapping
$$\Phi(t,x,\cdot,\cdot):\mathcal{L}(T_{x}M)\times  \mathcal{L}_s^2(T_{x}M)\rightarrow U$$
satisfing the following minimization condition for the Hamiltonian $\mathbb{H}$: 
$$\mathbb{H}(t,x,\Phi(t,x,DV(t,x),D^2V(t,x)),DV(t,x),D^2V(t,x)) = \inf_{u\in U}\mathbb{H}(t, x, u,DV(t,x),D^2V(t,x)).$$
Furthermore, suppose that under the feedback control law
\begin{equation}\label{pVe-eq1}
u(t) = \Phi(t, X(t),DV(t, X(t)),D^2V(t, X(t))),\quad t\in [0, T],
\end{equation}
the associated control system \eqref{system1} admits a unique solution $X(\cdot;0,x,u(\cdot))$, then $u(\cdot)$ is an
optimal (feedback) control.
\end{corollary}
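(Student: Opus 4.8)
The plan is to follow the classical verification argument, adapted to the Riemannian setting. First I would fix an arbitrary admissible control $\tilde u(\cdot)\in\mathcal{U}[0,T]$ with corresponding state trajectory $\tilde X(\cdot)=X(\cdot;0,x,\tilde u(\cdot))$, and apply the It\^o formula \eqref{df61-1} to the function $(t,y)\mapsto V(t,y)$ — this is legitimate because $V\in C^{1,2}([0,T]\times M)$ by hypothesis. This yields, almost surely,
\begin{align*}
\mathbb{E}\,h(\tilde X(T))
&= V(0,x) + \mathbb{E}\int_0^T\Big(\partial_t V + \tfrac12\sum_{i=1}^m D^2V(\sigma_i,\sigma_i) + DV\big(b+\tfrac12\sum_{i=1}^m D_{\sigma_i}\sigma_i\big)\Big)(r,\tilde X(r))\,dr,
\end{align*}
where each vector field is evaluated at $(r,\tilde X(r),\tilde u(r))$ and the stochastic integral against $dW^i$ vanishes in expectation (its integrand is bounded on the compact manifold under Assumption \ref{condp}). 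Recognizing the integrand as $\partial_t V + \mathbb{H}(r,\tilde X(r),\tilde u(r),DV,D^2V) - f(r,\tilde X(r),\tilde u(r))$, and using $\mathbb{H}\ge \bfH = -\partial_t V$ from the HJB equation \eqref{phjb}, I get
\[
V(0,x)\le \mathbb{E}\Big(\int_0^T f(r,\tilde X(r),\tilde u(r))\,dr + h(\tilde X(T))\Big)=\mathcal{J}(x;\tilde u(\cdot)).
\]
Taking the infimum over $\tilde u(\cdot)$ gives $V(0,x)\le \inf_{u(\cdot)}\mathcal{J}(x;u(\cdot))$.

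Next I would run the same computation for the specific feedback control $u(\cdot)$ defined by \eqref{pVe-eq1} and its associated trajectory $X(\cdot)=X(\cdot;0,x,u(\cdot))$, which exists and is unique by assumption. Along this trajectory the minimization property of $\Phi$ forces
\[
\mathbb{H}\big(r,X(r),u(r),DV(r,X(r)),D^2V(r,X(r))\big)=\bfH\big(r,X(r),DV(r,X(r)),D^2V(r,X(r))\big)=-\partial_t V(r,X(r))
\]
for a.e.\ $r$, so the inequality above becomes an equality: $V(0,x)=\mathcal{J}(x;u(\cdot))$. Combined with the first part, $\mathcal{J}(x;u(\cdot))=V(0,x)=\inf_{u'(\cdot)\in\mathcal{U}[0,T]}\mathcal{J}(x;u'(\cdot))$, which is exactly the statement that $u(\cdot)$ is optimal. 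Note that $V(0,x)$ is deterministic by Proposition \ref{Detm}, so all the conditional expectations $\mathbb{E}_0$ reduce to ordinary expectations and there is no measurability subtlety in the final identification.

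The main obstacle is justifying the application of It\^o's formula \eqref{df61-1} to $V$: the defining identity \eqref{df61-1} is stated for test functions $\varphi\in C^{1,2}([0,T]\times M)$, which $V$ is by hypothesis, so formally this is immediate — but one must check that the local martingale $\int_0^\cdot \sum_i \sigma_i V\,dW^i$ is a true martingale so that its expectation vanishes. This follows because $DV$ is continuous on the compact manifold $M\times[0,T]$, hence bounded, and each $\sigma_i$ is uniformly bounded in the $g$-norm by Assumption \ref{condp}(1), giving a bounded (hence square-integrable) integrand. A secondary technical point is measurability of the feedback control $r\mapsto u(r)=\Phi(r,X(r),DV(r,X(r)),D^2V(r,X(r)))$: this is the composition of the measurable map $\Phi$ with the continuous-in-$(r,x)$ maps $DV, D^2V$ and the progressively measurable process $X(\cdot)$, so $u(\cdot)$ is progressively measurable and $U$-valued, hence admissible — though the corollary in fact hypothesizes well-posedness of the closed-loop system outright, so this is only needed to confirm $u(\cdot)\in\mathcal{U}[0,T]$. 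Everything else is a routine substitution; the geometric content is entirely absorbed into the It\^o formula \eqref{df61-1} and the form of $\mathbb{H}$, both of which are supplied by the earlier parts of the paper.
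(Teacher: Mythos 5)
Your proposal is correct and follows essentially the same route as the paper: apply the It\^o formula \eqref{df61-1} with $V$ as the test function along the closed-loop trajectory, use the minimization property of $\Phi$ together with the HJB equation to identify the drift with $-f$, and conclude $V(0,x)=\mathcal{J}(x;u(\cdot))$. The only difference is that your first half (showing $V(0,x)\le\mathcal{J}(x;\tilde u(\cdot))$ for every admissible control) is redundant here, since $V$ is by hypothesis the value function and hence already the infimum of the cost; the paper accordingly omits it.
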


Since the value function is not necessarily smooth, it is standard to study the HJB equation \eqref{phjb} within the framework of viscosity solutions. To this end, we first introduce the notion of parabolic semijets.

\begin{definition}
Let $\bm{v}\in C([0, T]\times M)$. The second-order parabolic superjet of $\bm{v}$ at a point $(t, x)\in (0, T)\times M$ is
\begin{align*}
\mathcal{P}^{2,+}\bm{v}(t,x):=\big\{&(\phi_t(t,x), D\phi(t,x), D^2\phi(t,x)),\phi\in C^{1,2}([0, T]\times M),\bm{v}-\phi \text{ attains}\\
&\quad\text{a local maximum at (t, $x$)}\big\},
\end{align*}
and second-order  parabolic subjet $\mathcal{P}^{2,-}\bm{v}(t,x):=-\mathcal{P}^{2,+}(-\bm{v}(t,x))$.
\end{definition}

Now we give the definition of the viscosity solution to the HJB equation \eqref{phjb}.

\begin{definition}\label{dfvss}
A function $\bm{v} \in C((0,T] \times M)$ is called a {\it viscosity supersolution} of \eqref{phjb} if for every $(t,x) \in (0,T) \times M$, the following conditions hold:
\begin{align*}
\left\{
\begin{aligned}
&p+\bfH(t,x,\chi,A)\le 0,&\quad (p,\chi,A)\in \mathcal{P}^{2,-}\bm{v}(t,x),\\
&\bm{v}(T,x)\ge h(x),
\end{aligned}
\right.
\end{align*}
Similarly, a function $\bm{v} \in C((0,T] \times M)$ is called a  {\it viscosity subsolution} of \eqref{phjb} if for every $(t,x) \in (0,T) \times M$, the following conditions hold:
\begin{align*}
\left\{
\begin{aligned}
&p+\bfH(t,x,\chi,A)\ge 0,&\quad(p,\chi,A)\in \mathcal{P}^{2,+}\bm{v}(t,x),\\
&\bm{v}(T,x)\le h(x).
\end{aligned}
\right.
\end{align*}
A function $\bm{v} \in C([0,T] \times M)$ is called a  {\it viscosity solution} of \eqref{phjb} if it is both a viscosity subsolution and a viscosity supersolution.
\end{definition}
\begin{theorem}\label{pExistence}
The value function $V(\cdot,\cdot)$ is the unique viscosity solution to \eqref{phjb}.
\end{theorem}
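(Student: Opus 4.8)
The plan is to prove Theorem \ref{pExistence} in two parts: (a) the value function $V$ is a viscosity solution of \eqref{phjb}, and (b) any viscosity solution is unique, i.e.\ a comparison principle holds. For part (a), I would fix $(t,x)\in(0,T)\times M$ and a test function $\phi\in C^{1,2}([0,T]\times M)$ such that $V-\phi$ has a local maximum at $(t,x)$. Applying the dynamic programming principle of Theorem \ref{pDPP} on a short interval $[t,t+\varepsilon]$ with an arbitrary constant control $u\in U$ gives $V(t,x)\le \mathbb{E}_t\big[\int_t^{t+\varepsilon}f(r,X(r),u)\,dr + V(t+\varepsilon,X(t+\varepsilon))\big]$; substituting $V\le \phi + (V-\phi)(t,x)$ near $(t,x)$, applying the manifold It\^o formula \eqref{df61} to $\phi$, dividing by $\varepsilon$ and letting $\varepsilon\downarrow 0$ yields $\partial_t\phi(t,x) + \mathbb{H}(t,x,u,D\phi,D^2\phi)\ge 0$; taking the infimum over $u$ gives the subsolution inequality. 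The supersolution inequality follows by a symmetric argument using a near-optimal (rather than arbitrary) control in the DPP, which requires a measurable selection / $\varepsilon$-optimal control argument on the small interval. The terminal condition $V(T,x)=h(x)$ is immediate from the definition of $\mathcal{J}$. One has to be careful that the local maximum may not be strict and that the process $X(r)$ stays in a coordinate neighborhood of $x$ for small time with high probability, but the a priori estimates from the embedding approach (Step 2 of the outlined strategy, via Lemma \ref{lm62}) control $\mathbb{E}_t[\rho(X(r),x)^2]\le C(r-t)$, so the standard localization goes through.

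For part (b), uniqueness, I would establish a comparison principle: if $\bm{v}_1$ is a viscosity subsolution and $\bm{v}_2$ a viscosity supersolution with $\bm{v}_1(T,\cdot)\le h\le \bm{v}_2(T,\cdot)$, then $\bm{v}_1\le \bm{v}_2$ on $[0,T]\times M$. The natural route, and the one the paper signals it wants to follow (avoiding \nmycite{Azagra2008}{Proposition 4.9}), is to embed $M$ isometrically into $\dbR^{n_1}$ via Nash, extend the Hamiltonian and the functions $\bm{v}_1,\bm{v}_2$ suitably to a neighborhood of $M$ in $\dbR^{n_1}$ (or work directly with the distance function on $M$), and run the Crandall--Ishii doubling-of-variables argument. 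Concretely, assume for contradiction $\sup_{[0,T]\times M}(\bm{v}_1-\bm{v}_2)>0$, introduce the penalized functional $\Psi(t,x,s,y)=\bm{v}_1(t,x)-\bm{v}_2(s,y)-\tfrac{1}{2\alpha}\big(|\iota(x)-\iota(y)|^2 + |t-s|^2\big)-\lambda(T-t)$ with $\iota$ the embedding (or $\tfrac{1}{2\alpha}\rho(x,y)^2$ intrinsically), let $(t_\alpha,x_\alpha,s_\alpha,y_\alpha)$ be a maximizer (exists by compactness of $M$ and continuity), use the standard lemma that $\tfrac{1}{\alpha}\rho(x_\alpha,y_\alpha)^2\to 0$ and the max stays away from $t=T$, apply the parabolic Crandall--Ishii--Lions lemma to get matrices $A_1\in \mathcal{P}^{2,+}\bm{v}_1$, $A_2\in \mathcal{P}^{2,-}\bm{v}_2$ with the usual matrix inequality, plug into the sub/supersolution inequalities, and derive a contradiction from the structural estimate on $\bfH$.

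The main obstacle, and where the bulk of the work lies, is controlling the difference $\bfH(t,x,\chi_1,A_1)-\bfH(t,y,\chi_2,A_2)$ along the doubling argument in a way that survives the limit $\alpha\to 0$. On a curved manifold the penalization term $\rho(x,y)^2$ is not smooth everywhere and its Hessian involves the curvature tensor, so one must use $\rho(x,y)<\mathbf{i}_M$, parallel transport $L_{xy}$, and the uniform curvature bounds \eqref{eqrr} to compare the second-order data at $x$ and at $y$; the parallel transport is an isometry (see \eqref{LXY}), which lets us match up $\sigma_i(t,x,u)$ with $L_{xy}\sigma_i(t,y,u)$ and control the mismatch by $C\rho(x,y)$ using Assumption \ref{condp}(3) together with the $D_v\sigma_i$ Lipschitz bound. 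The term $\tfrac12\sum_i A(\sigma_i,\sigma_i)$ is the delicate one: one needs $\tfrac12\sum_i\big(A_1(\sigma_i(x),\sigma_i(x))-A_2(L_{xy}\sigma_i(x),L_{xy}\sigma_i(x))\big)\le \tfrac{C}{\alpha}\rho(x_\alpha,y_\alpha)^2 + (\text{curvature error})\cdot o(1)$, which is exactly where the Crandall--Ishii matrix inequality, transported through $L_{xy}$, is used. I expect that once this structural inequality on $\bfH$ is in place — essentially the manifold analogue of the $\dbR^n$ condition that $\bfH$ satisfies the $(H3)$-type modulus-of-continuity hypothesis — the contradiction $\lambda\le 0$ follows routinely, and then (a) plus uniqueness identifies $V$ as the unique viscosity solution. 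A cleaner alternative for (b), consistent with the paper's embedding strategy, is to transfer the whole problem to $\dbR^{n_1}$ via Lemma \ref{lm62}, invoke a known $\dbR^{n_1}$ comparison theorem for the extended (possibly degenerate) HJB equation, and pull the conclusion back using the metric equivalence of Step 3; I would present the intrinsic argument as the main line and remark on the extrinsic shortcut.
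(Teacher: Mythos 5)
Your proposal follows essentially the same route as the paper: the dynamic programming principle plus the manifold It\^o formula \eqref{df61} for the sub/supersolution properties (constant controls for one inequality, $\varepsilon$-optimal controls for the other), and a doubling-of-variables comparison argument with the $\rho^2(x,y)$ penalization, the manifold Crandall--Ishii lemma (Lemma \ref{pmaxp}), and curvature-controlled estimates for the Hessian of $\rho^2$ transported by $L_{xy}$ (the paper's Lemmas \ref{mlemma1} and \ref{SecondOrderLemma}) --- you correctly single out the term $\sum_i A(\sigma_i,\sigma_i)$ as the crux. The only caveat is that the ``extrinsic shortcut'' you mention in passing (extending sub/supersolutions off $M$ and invoking a Euclidean comparison theorem) is considerably less routine than suggested and is not the paper's route, but since you keep the intrinsic argument as your main line this does not affect the proposal.
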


Furthermore, we establish the continuous dependence of viscosity solutions on parameters, a property that is central to numerical analysis and applications(  \cite{jakobsen_continuous_2002,jakobsen_continuous_2005,barles_error_2007,barles_convergence_2002}).
For simplicity of presentation, for $j=1,2$ and functions

$$
\begin{cases}
b_j(\cdot,x,\cdot) \colon &[0,T] \times U \to T_x M,  \quad x \in M, \\
\sigma_{ji}(\cdot, x, \cdot) \colon &[0,T] \times U \to T_x M, \quad x \in M, \quad i = 1, \dots, m, \\
f_j \colon &[0,T] \times M \times U \to \dbR,
\end{cases}
$$
we formally define the Hamiltonian
$$
\bfH_{j}(t,x,\chi,A):=\inf_{u\in U}\Big\{\frac{1}{2}\sum_{i=1}^m A(\sigma_{ji},\sigma_{ji})+\chi\big(b+\frac{1}{2}\sum_{i=1}^m D_{\sigma_{ji}}\sigma_{ji}\big)+f_j\Big\}.
$$
The last result is formulated as follows:

\begin{theorem}\label{pThmCont}
For $j=1,2$,  suppose  $b_j$, $\{\sigma_{ji}\}_{i=1}^m$, $f_j$  and $h_j$ satisfy Assumption \ref{condp}, and assume $V_j$ is the viscosity solution to
\begin{equation}\label{pExistence-eq1}
\begin{cases}
\pa_t V_{j} + \mathbf{H}_{j}(t, x, DV_j, D^2V_j) = 0, \\
\ns\ds V_j(T, x) = h_j(x).
\end{cases}
\end{equation}
Then we have 
\begin{equation}\label{pExistence-eq2}
\begin{aligned}
\max_{x \in M} |V_1 - V_2| \leq  & C\sqrt{T - t} \max_{s \in [t, T], x \in M, u \in U} \bigg\{ \sum_{i=1}^m \Big\{ \|\sigma_{1i} - \sigma_{2i}\|_g + \|D_{\sigma_{1i}}\sigma_{1i} - D_{\sigma_{2i}}\sigma_{2i}\|_g \Big\} + \|b_1 - b_2\|_g \bigg\} \\
&  + C(T - t) \max_{s \in [t, T], x \in M, u \in U} |f_1 - f_2| + \max_{x \in M} |h_1-h_2|,
\end{aligned}
\end{equation}
where the constant $C$ depends on the vector fields $b_j$, $\{\sigma_{ji}\}_{i=1}^m$, and functions $f_j$ and $h_j$ for $j=1,2$ .
\end{theorem}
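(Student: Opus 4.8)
\textbf{Proof proposal for Theorem \ref{pThmCont}.}

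The plan is to reduce the estimate \eqref{pExistence-eq2} to the continuous dependence theory for viscosity solutions of HJB equations in Euclidean space, using the three-step program (Nash embedding, system extension, metric equivalence) already outlined in the introduction. First I would fix the isometric embedding $\iota:M\hookrightarrow\dbR^{n_1}$ and, invoking Lemma \ref{lm62}, extend each pair of data $(b_j,\{\sigma_{ji}\},f_j,h_j)$ to vector fields and functions on a tubular neighborhood of $\iota(M)$ in $\dbR^{n_1}$, in such a way that (a) the extended systems still satisfy the Euclidean analogue of Assumption \ref{condp} with constants controlled by $C_B,C_L,C_R$, and (b) the restriction of the extended Euclidean value function to $\iota(M)$ coincides with $V_j$ (this uses Theorem \ref{pExistence}, i.e.\ that $V_j$ is the \emph{unique} viscosity solution, together with the fact that the extended problem restricted to $M$ has the same value function). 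The key point is that the Euclidean drift/diffusion differences $\|\tilde b_1-\tilde b_2\|$, $\|\tilde\sigma_{1i}-\tilde\sigma_{2i}\|$, and the difference of the second-order coefficients $\|\tilde\sigma_{1i}\tilde\sigma_{1i}^{\!\top}-\tilde\sigma_{2i}\tilde\sigma_{2i}^{\!\top}\|$ can be bounded, via the chain rule for the extension and the metric equivalence from Step 3 of the program, by the intrinsic quantities $\|\sigma_{1i}-\sigma_{2i}\|_g$ and $\|D_{\sigma_{1i}}\sigma_{1i}-D_{\sigma_{2i}}\sigma_{2i}\|_g$ appearing on the right of \eqref{pExistence-eq2}; note the covariant term $D_{\sigma_i}\sigma_i$ is exactly what controls the second-order part of the generator, which is why it, rather than a derivative of $\sigma_i$ alone, is the natural object.

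Once both value functions are realized as restrictions of Euclidean viscosity solutions, I would run the standard doubling-of-variables argument directly in $\dbR^{n_1}$ (or rather on the compact set $\iota(M)$, handling the boundary of the tube by the usual localization since $M$ is compact and the solutions extend continuously). Concretely, for $\lambda>0$ consider
\begin{align*}
\Psi(t,x,y)=V_1(t,x)-V_2(t,y)-\frac{\lambda}{2}|x-y|^2-\text{(terminal/penalty terms)},
\end{align*}
take a maximum point, apply the parabolic theorem on sums (Crandall--Ishii lemma) to produce matrices $X\ge Y$ in the semijets of $V_1,V_2$, and subtract the two viscosity inequalities. The difference of the Hamiltonians splits into (i) a term comparing $\mathbf{H}_1$ and $\mathbf{H}_2$ at the \emph{same} arguments, which is bounded by $\sup\{\sum_i(\|\sigma_{1i}-\sigma_{2i}\|_g+\|D_{\sigma_{1i}}\sigma_{1i}-D_{\sigma_{2i}}\sigma_{2i}\|_g)+\|b_1-b_2\|_g\}$ times $(1+|DV|+\|D^2V\|)$, plus $\sup|f_1-f_2|$; and (ii) a term comparing $\mathbf{H}_1$ at $(x,p,X)$ with $\mathbf{H}_1$ at $(y,q,Y)$, which is the usual term absorbed using $X\le Y+O(\lambda^{-1})$, the Lipschitz bound on the coefficients, and the a priori Lipschitz/Hölder bounds on $V_j$ from Propositions \ref{pLip} and \ref{pHold}. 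Integrating the resulting differential inequality in time from $t$ to $T$, using the terminal condition to pick up $\max_M|h_1-h_2|$, and then sending $\lambda\to\infty$ yields \eqref{pExistence-eq2} with the $\sqrt{T-t}$ and $(T-t)$ weights coming respectively from the Gronwall/square-root estimate on the first-order coefficient error and the linear-in-time accumulation of the $f$-error.

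The main obstacle, as in the DPP and existence parts of the paper, is Step 1--2: constructing the extension of the two systems to $\dbR^{n_1}$ so that the \emph{difference} of the extended coefficients is controlled by the \emph{intrinsic} difference of the original coefficients, uniformly in the relevant constants, and so that the Euclidean and Riemannian viscosity solutions genuinely agree on $\iota(M)$. The delicate point is that Lemma \ref{lm62} gives an extension of each system separately, but the estimate requires that the \emph{same} extension procedure be applied to both, so that cancellations are preserved — i.e.\ the map (intrinsic data) $\mapsto$ (extended Euclidean data) must be shown to be Lipschitz in the appropriate norms, not merely well-defined. Once that compatibility is in hand, the comparison argument itself is the standard Jakobsen--Karlsen-type continuous-dependence estimate and presents no essential new difficulty; in particular the curvature bounds \eqref{eqrr} enter only through the constants in the extension and the metric-equivalence step, and do not appear explicitly in the final inequality.
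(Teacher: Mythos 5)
Your strategy is genuinely different from the paper's. The paper proves Theorem \ref{pThmCont} intrinsically on $M$: it doubles variables with the test function $\varphi(x,y)=\frac{\alpha}{2}\rho^2(x,y)$ plus the penalizations $\frac{\epsilon}{t-s}$ and $\theta\mathfrak{m}\frac{T-t}{T-s}$, applies the manifold maximum principle (Lemma \ref{pmaxp}), and controls the second-order terms through the Hessian expansions of $A=D^2_{(x,y)}\rho^2(x,y)$ and $A^2$ (Lemmas \ref{mlemma1} and \ref{SecondOrderLemma}). The Nash embedding is used in the paper only for well-posedness and the a priori SDE estimates, never in the viscosity arguments. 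Your plan to transfer the whole comparison to $\dbR^{n_1}$ is not unreasonable in principle, but as written it has two genuine gaps.

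First, the ``compatibility'' issue you flag is the heart of the matter, and you do not close it. The Euclidean continuous-dependence estimate sees the supremum over the whole tubular neighborhood of the differences of the \emph{extended} coefficients, in particular of the corrected drifts $D^E_{\wt\sigma_{ji}}\wt\sigma_{ji}$. The naive decomposition $D^E_{X_1}X_1-D^E_{X_2}X_2=D^E_{X_1-X_2}X_1+D^E_{X_2}(X_1-X_2)$ produces the derivative of the difference, $D(\sigma_{1i}-\sigma_{2i})$, which is \emph{not} among the quantities on the right-hand side of \eqref{pExistence-eq2}; one must instead combine the Gauss formula with the rearrangement $\wt D_{X_2}(X_1-X_2)=(\wt D_{X_1}X_1-\wt D_{X_2}X_2)-\wt D_{X_1-X_2}X_1$ and the explicit form \eqref{eqzzv1} of the extension to verify that $\|\sigma_{1i}-\sigma_{2i}\|_g$ and $\|D_{\sigma_{1i}}\sigma_{1i}-D_{\sigma_{2i}}\sigma_{2i}\|_g$ alone suffice, uniformly over the tube. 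That is a new lemma (Lipschitz dependence of the extension on the data) which your proposal asserts is needed but does not prove, and which the paper avoids entirely by staying on $M$. Second, your mechanism for producing the $\sqrt{T-t}$ weight is incorrect: you cannot ``send $\lambda\to\infty$'' at the end. In a continuous-dependence argument the doubled maximum gives a bound of the form $(T-t)S_1\alpha+O(\alpha^{-1})+S_2$ (cf.\ \eqref{eqphixy}), where $S_1$ collects the \emph{squared} coefficient differences; letting the penalization parameter tend to infinity destroys this bound, and the factor $\sqrt{T-t}\,\sqrt{S_1}$ emerges only by optimizing $\alpha$ at a finite value of order $((T-t)S_1)^{-1/2}$, as in \eqref{eqveps}. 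The remaining steps you sketch (extending $f_j$ and $h_j$ off $M$, and identifying the restricted Euclidean value function with $V_j$ via the invariance of $M$ under the extended dynamics from Proposition \ref{pSExistence}) are plausible but also need to be written out before the Euclidean theory can be invoked.
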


The rest of the paper is organized as follows.  In section \ref{s2}, for the reader's convenience, we give some basic notations and results in Riemannian Geometry and viscosity solution in short. Then we prove the technical estimate in our work. In section \ref{s4}, we give the proofs of the main results.

\section{Some prelimilary results}\label{s2}

In this section, we begin by introducing essential notations and reviewing preliminary results that will serve as foundational tools for our subsequent analysis. This preparatory material is followed by the development of key auxiliary results in Subsections \ref{s21} and \ref{IE}, which provide the technical framework for our main contributions.

Subsection \ref{s23} then examines fundamental properties of the control system described in \eqref{eqve0}, building directly upon the theoretical infrastructure established in Subsection \ref{s21}. In Subsection \ref{s24}, we derive an approximation result for controls in the space $\mathcal{U}[t,T]$, a crucial step for our analytical approach.

Finally, Subsection \ref{s25} is devoted to establishing important qualitative and quantitative properties of the value function, culminating our theoretical development.

\begin{definition} \nmycite{Zhu2011}{ Definition 2.5}\label{zhudf}
Let  $\bm{v}\in C([0,T]\times M)$ and $(t, x)\in (0, T)\times M$, define
\begin{align*}
\bar{\mathcal{P}}^{2,-}\bm{v}(t, x):=\Big\{&(p,\chi, A)\in R\times \mathcal{L}(T_x M)\times\mathcal{L}_s^2(T_x M):
\exists (t_n, x_n, p_n, \chi_n, A_n)\in (0,T)\times M\times R\\
&\quad\times \mathcal{L}(T_{x_n}M)\times \mathcal{L}_s^2(T_{x_n}M), s.t.  (p_n, \chi_n, A_n)\in \mathcal{P}^{2,-}\bm{v}(t_n,x_n),\\
&\quad(t_n, x_n, \bm{v}(t_n, x_n), p_n, \chi_n, A_n)\rightarrow (t, x, \bm{v}(t, x), p, \chi, A)\Big\},
\end{align*}
and $\bar{\mathcal{P}}^{2,+}\bm{v}(t, x):=-\bar{\mathcal{P}}^{2,-}\{-\bm{v}(t, x)\}$.
\end{definition}

Recall that $\rho: M \times M \to \dbR$ denotes the Riemannian distance function induced by the metric on $M$. Under the framework of Definition \ref{zhudf}, we present a  maximum principle for systems of two continuous functions.

\begin{lemma} \label{pmaxp}  \nmycite{Azagra20081}{Theorem 3.8}
Let $\mathcal{O}_1,\mathcal{O}_2$ be the open sets of $M$ and $\mathcal{O}:=\mathcal{O}_1\times\mathcal{O}_2$, and let $\bm{v}_1,\bm{v}_2\in C([0,T]\times M),\varphi\in C^{1,2}((0,T)\times \mathcal{O})$ and define
$$\bm{w}(t,x_1,x_2) := \bm{v}_1(t,x_1)+\bm{v}_2(t,x_2 )-\varphi(t,x_1,x_2),\quad (t,x_1,x_2 )\in(0,T)\times \mathcal{O}.$$
Assume that $(\hat{t},\hat{x}_1,\hat{x}_2 )\in (0,T)\times \mathcal{O}$ such that
$$\bm{w}(\hat{t},\hat{x}_1,\hat{x}_2 )\ge \bm{w}(t,x_1,x_2),\quad (t,x_1,x_2)\in (0,T)\times \mathcal{O}.$$
Assume, moreover, that there is a $\delta> 0$ s.t. for every $N>0$  there is a constant $C$ such that
for $i = 1,2$
\begin{align}\label{condmaxp}
&p_i\le C \quad \text{whenever}\quad  (p_i,\chi_i,A_i)\in \mathcal{P}^{2,+}\bm{v}_i(t,x_i),\nonumber\\
&\rho(x_i, \hat{x}_i)+|t-\hat{t}|\le \delta \quad {\rm and}\quad |\bm{v}_i(t,x_i)|+\|\chi_i\|+\|A_i\|\le N.
\end{align}
Then for each $\epsilon>0$, there are $A_i\in \mathcal{L}_s^2(T_{\hat{x}_i}M)$, $ i=1,2$ such that
$$
\left\{
\begin{aligned}
&(p_i,D_{x_i}\varphi(\hat{t},\hat{x}),A_i)\in \bar{\mathcal{P}}^{2,+}\bm{v}_{i}(\hat{t},\hat{x_i}),\quad i=1,2\cr
&-(\tfrac{1}{\epsilon}+||A||)I\le 
\begin{bmatrix}
A_1& 0\\
0 &A_2
\end{bmatrix}
\le A+\epsilon A^2,\cr
&p_1+p_2 = \varphi_t(\hat{t},\hat{x}_1,\hat{x}_2),\cr
\end{aligned}
\right.
$$
for $A=D_{(\hat{x_1},\hat{x}_2)}^2\varphi(\hat{t},\hat{x}_1,\hat{x}_2)\in \mathcal{L}_s^2(T_{\hat{x}_1}M\times T_{\hat{x}_2}M)$.
\end{lemma}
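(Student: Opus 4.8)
The statement is purely local in $M$---it concerns open sets $\mathcal{O}_1,\mathcal{O}_2$ and a local maximum---so the plan is to reduce it to the parabolic version of the classical Crandall--Ishii ``theorem on sums'' in Euclidean space and then translate the conclusion back using normal coordinates centered at the base points. First I would, for $i=1,2$, fix $r\in(0,\mathbf{i}_M)$ and introduce the normal coordinate chart $\psi_i:=\exp_{\hat x_i}|_{B(\hat x_i,r)}\colon B(\hat x_i,r)\subset T_{\hat x_i}M\cong\dbR^n\to M$, so that $\psi_i(0)=\hat x_i$, $d\psi_i|_0=\mathrm{id}$, and all Christoffel symbols of $g$ in these coordinates vanish at $0$. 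Setting $\tilde{\bm v}_i(t,y):=\bm v_i(t,\psi_i(y))$, $\tilde\varphi(t,y_1,y_2):=\varphi(t,\psi_1(y_1),\psi_2(y_2))$ and $\tilde{\bm w}:=\tilde{\bm v}_1+\tilde{\bm v}_2-\tilde\varphi$, the hypothesis transfers to: $\tilde{\bm w}$ attains a local maximum over $(0,T)\times B(\hat x_1,r)\times B(\hat x_2,r)$ at $(\hat t,0,0)$.

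Next I would record the correspondence between the manifold semijets and the Euclidean parabolic semijets in these charts. Since $\psi_i$ is a diffeomorphism, $\phi\in C^{1,2}$ touches $\bm v_i$ from above at $(t,\psi_i(y))$ if and only if $\phi\circ\psi_i$ touches $\tilde{\bm v}_i$ from above at $(t,y)$; the first-order data correspond via $d\psi_i$ and the second-order data via $d\psi_i$ plus a correction term involving the Christoffel symbols at $\psi_i(y)$, which is precisely the discrepancy between the Euclidean Hessian of $\phi\circ\psi_i$ and $(D^2\phi)(d\psi_i\cdot,d\psi_i\cdot)$ (cf. \eqref{fml}). At the center $y=0$ that correction vanishes and $d\psi_i|_0=\mathrm{id}$, so $\mathcal{P}^{2,+}\tilde{\bm v}_i(\hat t,0)$ (Euclidean) is canonically identified with $\mathcal{P}^{2,+}\bm v_i(\hat t,\hat x_i)$; transporting the varying base points of a defining sequence into the chart, using $d\psi_i$ near $0$, yields the same identification for the closures $\bar{\mathcal{P}}^{2,+}$. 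Under this identification, the one-sided bound on the time-component required by the parabolic theorem on sums is exactly hypothesis \eqref{condmaxp}.

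With this in place I would invoke the Euclidean parabolic Crandall--Ishii maximum principle for $\tilde{\bm v}_1,\tilde{\bm v}_2$ and $\tilde{\bm w}$: for every $\epsilon>0$ it produces $p_1,p_2\in\dbR$ and symmetric matrices $X_1,X_2$ (which we reinterpret as elements $A_i\in\mathcal{L}_s^2(T_{\hat x_i}M)$) with $(p_i,D_{y_i}\tilde\varphi(\hat t,0,0),X_i)\in\bar{\mathcal{P}}^{2,+}\tilde{\bm v}_i(\hat t,0)$, $p_1+p_2=\tilde\varphi_t(\hat t,0,0)$, and
$$-\Big(\tfrac1\epsilon+\|B\|\Big)I\le\begin{bmatrix}X_1&0\\0&X_2\end{bmatrix}\le B+\epsilon B^2,$$
where $B$ is the Euclidean Hessian of $\tilde\varphi$ in $(y_1,y_2)$ at $(\hat t,0,0)$. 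Because both charts are normal, the product chart is normal for the product metric on $T_{\hat x_1}M\times T_{\hat x_2}M$, hence $B$ coincides with $A=D^2_{(\hat x_1,\hat x_2)}\varphi(\hat t,\hat x_1,\hat x_2)$, the operator norms agree, $D_{y_i}\tilde\varphi(\hat t,0,0)=D_{x_i}\varphi(\hat t,\hat x)$ and $\tilde\varphi_t=\varphi_t$; combined with $\bar{\mathcal{P}}^{2,+}\tilde{\bm v}_i(\hat t,0)=\bar{\mathcal{P}}^{2,+}\bm v_i(\hat t,\hat x_i)$ this is exactly the asserted conclusion.

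I expect the main obstacle to be the jet correspondence of the second paragraph: making it precise, confirming that the Christoffel correction terms genuinely vanish at the base points so that the matrix inequality of the Euclidean theorem transfers verbatim (rather than with curvature-dependent error terms), and checking that the closure operation $\bar{\mathcal{P}}^{2,+}$---which involves a sequence of \emph{varying} base points $x_n\to\hat x_i$, and hence varying tangent spaces---is compatible with the chart, which requires uniform control of $d\psi_i$ near $0$. The remaining ingredients (existence of normal coordinate balls of radius $\le\mathbf{i}_M$, guaranteed by \eqref{6.3-eq1}, and the Euclidean parabolic theorem on sums) are standard.
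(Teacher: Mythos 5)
You should note at the outset that the paper itself offers no proof of this lemma: it is quoted directly from \cite{Azagra20081} (Theorem 3.8), so there is no in-paper argument to compare against. Your proposal is, in substance, a reconstruction of how that reference (and the companion Azagra--Ferrera--Sanz work) proves the Riemannian theorem on sums: pass to normal coordinates $\psi_i=\exp_{\hat x_i}$, use that the Christoffel symbols vanish at the centers so the chart identification of parabolic semijets is exact there, apply the Euclidean parabolic Crandall--Ishii theorem (whose one-sided condition on the time component is precisely what \eqref{condmaxp} becomes under the chart, with constants adjusted by the bounded frame change), and transport the conclusion back, noting that the Euclidean Hessian of $\tilde\varphi$ at the center equals $A$ and that the product chart is again normal. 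The outline is sound, and the issues you flag are exactly the ones requiring care, all routine: (i) the paper's semijets use test functions in $C^{1,2}([0,T]\times M)$, so the locally built chart test functions must be cut off and extended globally, which is harmless since membership in $\mathcal{P}^{2,+}$ depends only on local behaviour near the maximum; (ii) for the closures $\bar{\mathcal{P}}^{2,+}$ the base points vary, so the jet correspondence carries a correction of the form $\Gamma^k_{ij}(y_n)(\chi_n)_k$, which vanishes in the limit because $\Gamma(0)=0$ and the first-order components stay bounded --- this is why the matrix inequality transfers with $B=A$ and no curvature-dependent error; (iii) the convergence of covectors and bilinear forms at varying base points in Definition \ref{zhudf} must be interpreted in a fixed local frame, which the normal chart provides. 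With these verifications written out your argument is complete, and it follows the same route as the cited source rather than a genuinely different one.
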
 

Next, we recall some fundamental properties of the Riemannian distance function $\rho(\cd,\cd)$.
\begin{lemma}\label{rho}\nmycite{deng_2016}{Lemma 2.2}
Let $x,y\in M$ such that $0<\rho(x,y)< \mathbf{i}_M$ (recall \eqref{6.3-eq1} for the definition of $\mathbf{i}_M$). Then we have 
\begin{align*}
&\nabla_x\rho^2(x,y)=-2\exp_x^{-1}(y),\quad\nabla_y\rho^2(x,y)=-2\exp_y^{-1}(x),\\
&L_{x y}\exp_x^{-1}(y)=-\exp_y^{-1}(x),\quad|\exp_x^{-1}(y)|=\rho(x,y).
\end{align*}
\end{lemma}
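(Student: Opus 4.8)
All four identities live on the open set where $0<\rho<\mathbf{i}_M$, on which the minimizing geodesic between two points is unique and the squared distance is smooth; I would derive everything from two standard ingredients: the description of the minimizing geodesic through the exponential map, and the first variation formula for the \emph{energy} functional $E(\sigma)=\int_0^1\|\sigma'(t)\|_g^2\,dt$. I use energy rather than length because the energy of the minimizing geodesic parametrized on $[0,1]$ is exactly $\rho^2$, which avoids the $\|\sigma'\|_g^{-1}$ factor appearing in the first variation of length.

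\emph{Steps 1--2: the last two identities.} Since $0<\rho(x,y)<\mathbf{i}_M$, the map $\exp_x$ restricts to a diffeomorphism of $B(x,\mathbf{i}_M)$ onto $B_x^\rho(\mathbf{i}_M)$, so $v:=\exp_x^{-1}(y)$ is well defined and $\gamma(t):=\exp_x(tv)$, $t\in[0,1]$, is the unique minimizing geodesic from $x$ to $y$. A geodesic has constant speed, so $\rho(x,y)=L(\gamma)=\|\gamma'(0)\|_g=\|v\|_g$, which is the identity $|\exp_x^{-1}(y)|=\rho(x,y)$. For the parallel-transport identity, the reversed curve $t\mapsto\gamma(1-t)$ is the unique minimizing geodesic from $y$ to $x$, so $\exp_y^{-1}(x)=-\gamma'(1)$; and since the velocity field of a geodesic is parallel along it, and parallel transport along the minimizing geodesic is precisely $L_{xy}$, we get $\gamma'(1)=L_{xy}\gamma'(0)=L_{xy}\exp_x^{-1}(y)$, whence $L_{xy}\exp_x^{-1}(y)=-\exp_y^{-1}(x)$.

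\emph{Step 3: the gradient identities.} Fix $y$ and a point $x_0$ with $0<\rho(x_0,y)<\mathbf{i}_M$; given $w\in T_{x_0}M$ choose a curve $c$ with $c(0)=x_0$ and $\dot c(0)=w$. For $|s|$ small one still has $\rho(c(s),y)<\mathbf{i}_M$, so $\gamma_s(t):=\exp_{c(s)}\!\big(t\,\exp_{c(s)}^{-1}(y)\big)$ is the minimizing geodesic from $c(s)$ to $y$ and depends smoothly on $s$; its variation field $V(t):=\partial_s|_{s=0}\gamma_s(t)$ satisfies $V(0)=w$ and $V(1)=0$, the endpoint $y$ being fixed. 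Since $E(\gamma_s)=\rho^2(c(s),y)$ for all such $s$ and $\tfrac{D}{dt}\gamma_0'=0$, the first variation formula for energy yields
$$\frac{d}{ds}\Big|_{s=0}\rho^2(c(s),y)=2\langle V(1),\gamma_0'(1)\rangle_g-2\langle V(0),\gamma_0'(0)\rangle_g=-2\langle w,\exp_{x_0}^{-1}(y)\rangle_g .$$
As $w\in T_{x_0}M$ is arbitrary, the defining property of the gradient gives $\nabla_x\rho^2(x,y)=-2\exp_x^{-1}(y)$, and $\nabla_y\rho^2(x,y)=-2\exp_y^{-1}(x)$ follows by the symmetry $\rho(x,y)=\rho(y,x)$.

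\emph{Main obstacle.} The only delicate point is justifying the smooth dependence on $s$ of the geodesic family $\gamma_s$ --- equivalently, the smoothness of $(x,y)\mapsto\rho^2(x,y)$ away from the diagonal --- which is exactly where the hypothesis $\rho<\mathbf{i}_M$ is used, via the smoothness of $\exp$ and of its inverse on the balls of radius $\mathbf{i}_M$. Once this is granted, the computation is a routine application of the first variation formula, which I would quote from a standard reference on Riemannian geometry rather than reprove.
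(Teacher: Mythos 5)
Your argument is correct, but note that the paper itself contains no proof of this lemma: it is quoted verbatim from the cited reference \nmycite{deng_2016}{Lemma 2.2}, so there is no in-paper derivation to compare against. Your route is the standard one and all steps check out: within the injectivity radius $\exp_x$ is a diffeomorphism, so $\gamma(t)=\exp_x\!\big(t\exp_x^{-1}(y)\big)$ is the unique minimizing geodesic, constant speed gives $\|\exp_x^{-1}(y)\|_g=\rho(x,y)$, parallelism of the velocity field along the geodesic together with uniqueness of the reversed geodesic gives $L_{xy}\exp_x^{-1}(y)=-\exp_y^{-1}(x)$, and the first variation of the energy functional (with $E=\rho^2$ for the $[0,1]$-parametrized geodesic and vanishing endpoint variation at $y$) yields $\nabla_x\rho^2(x,y)=-2\exp_x^{-1}(y)$, with the $y$-gradient following by symmetry. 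The one point you flag as delicate, the smooth dependence of the geodesic family on $s$, is indeed exactly where $\rho<\mathbf{i}_M$ enters; it is justified by the standard observation that $(x,v)\mapsto(x,\exp_x v)$ is a diffeomorphism from $\{(x,v):\|v\|_g<\mathbf{i}_M\}$ onto its image (fibrewise injectivity plus invertibility of the differential), so $(x,y)\mapsto\exp_x^{-1}(y)$ is smooth on $\{\rho(x,y)<\mathbf{i}_M\}$. With that remark made explicit, your proof is complete and self-contained, which is if anything more than the paper provides.
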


Next we give an estimate for $C^1$-smooth vector fields. 
\begin{lemma} \label{LIPV}
Let $X\in \mathsf{X}^1(M)$, $x,y \in M$ and $l:=\rho(x,y)$. Assume $0<l< \mathbf{i}_M$, and let $\gamma$ be the unique geodesic
connecting $x,y$ and parameterized by arc length with $\gamma(0) = x$ and $\gamma(l) = y$. Then
$$\|L_{y x}X(y)-X(x)\|_g\le \max_{\|v\|_g=1,z\in M}\|D_{v}X(z)\|_g\rho(x,y), \quad v\in T_zM.$$
\end{lemma}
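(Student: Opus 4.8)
\textbf{Proof proposal for Lemma \ref{LIPV}.}

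The plan is to reduce the bound to an integration along the geodesic $\gamma$ of the covariant derivative of $X$. First I would introduce the vector field $t\mapsto V(t):=L_{\gamma(t)\,x}X(\gamma(t))\in T_xM$, that is, I parallel-transport the value $X(\gamma(t))$ back to the fixed tangent space $T_xM$ along $\gamma$. Since parallel transport is a linear isometry from $T_{\gamma(t)}M$ to $T_xM$, the quantity we want to estimate is exactly $\|V(l)-V(0)\|_g = \|L_{yx}X(y)-X(x)\|_g$, and by the fundamental theorem of calculus in the fixed Euclidean space $T_xM$ we have $V(l)-V(0)=\int_0^l V'(t)\,dt$, so $\|L_{yx}X(y)-X(x)\|_g\le \int_0^l \|V'(t)\|_g\,dt$.

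The key step is the identification $V'(t) = L_{\gamma(t)\,x}\big(D_{\dot\gamma(t)}X\big)(\gamma(t))$. This is the standard compatibility of the Levi-Civita connection with parallel transport: if $\{e_i(t)\}$ is a parallel orthonormal frame along $\gamma$ (so $D_{\dot\gamma}e_i=0$ and $L_{\gamma(t)\,x}e_i(t)=e_i(0)$), then writing $X(\gamma(t))=\sum_i a^i(t)e_i(t)$ we get $V(t)=\sum_i a^i(t)e_i(0)$, hence $V'(t)=\sum_i \dot a^i(t)e_i(0)$, while $(D_{\dot\gamma}X)(\gamma(t))=\sum_i\dot a^i(t)e_i(t)$, and these agree after transporting back by $L_{\gamma(t)\,x}$. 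Taking norms and using that $L_{\gamma(t)\,x}$ is an isometry (recall \eqref{LXY}) gives $\|V'(t)\|_g = \|(D_{\dot\gamma(t)}X)(\gamma(t))\|_g$. Since $\gamma$ is parameterized by arc length, $\|\dot\gamma(t)\|_g=1$, so $(D_{\dot\gamma(t)}X)(\gamma(t)) = D_v X(z)$ with $z=\gamma(t)$, $v=\dot\gamma(t)$, $\|v\|_g=1$; thus $\|V'(t)\|_g\le \max_{\|v\|_g=1,\,z\in M}\|D_vX(z)\|_g$ for every $t\in[0,l]$.

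Combining the two displays yields $\|L_{yx}X(y)-X(x)\|_g\le \int_0^l \max_{\|v\|_g=1,\,z\in M}\|D_vX(z)\|_g\,dt = \max_{\|v\|_g=1,\,z\in M}\|D_vX(z)\|_g\cdot l = \max_{\|v\|_g=1,\,z\in M}\|D_vX(z)\|_g\,\rho(x,y)$, which is the claim; note the maximum is finite since $X\in\mathsf{X}^1(M)$ and $M$ is compact. I do not anticipate a serious obstacle here: the only slightly delicate point is making the frame argument rigorous (existence of a global parallel orthonormal frame along $\gamma$, which holds because $\gamma$ is a curve and parallel transport along it is well-defined and orthogonal by \eqref{LXY}), and checking that the hypothesis $0<l<\mathbf{i}_M$ guarantees $\gamma$ is the unique minimizing geodesic so that $L_{yx}$ is unambiguously defined — both are already set up in the statement and in Lemma \ref{rho}.
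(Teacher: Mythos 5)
Your proposal is correct and follows essentially the same route as the paper: both arguments hinge on the identity that the derivative of $t\mapsto L_{\gamma(t)x}X(\gamma(t))$ equals $L_{\gamma(t)x}D_{\gamma'(t)}X(\gamma(t))$, and then bound by the supremum of $\|D_vX\|_g$ times the arc length. The only cosmetic difference is that you integrate the vector-valued derivative directly via the fundamental theorem of calculus, whereas the paper works with the scalar function $\zeta_1(t)=\|L_{\gamma(t)x}X(\gamma(t))-X(x)\|_g$ and applies the mean value theorem; both are valid.
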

\begin{proof}
First, we define the following quantities along the geodesic $\gamma$:
\begin{align*}
X(t) &:= X(\gamma(t)), \quad t \in [0,l], \\
\zeta(t) &:= \lan L_{\gamma(t) x}X(t) - X(0), L_{\gamma(t) x}X(t) - X(0)\ran _g, \quad t \in [0,l].
\end{align*}	
The derivative of $\zeta$ is computed as:
\begin{align*}
\zeta'(t)&=\lim_{\Delta t\rightarrow 0}\frac{\zeta(t+\Delta t)-\zeta(t)}{\Delta t}\\
&=\lim_{\Delta t\rightarrow 0}\bigg\{\lan L_{\gamma(t+\Delta t) x}X(t)+L_{\gamma(t) x}X(t),\frac{L_{\gamma(t+\Delta t) x}X(t+\Delta t)-L_{\gamma(t) x}X(t)}{\Delta t}\ran _g\\
&\qquad\qquad -2\lan X(0),\frac{L_{\gamma(t+\Delta t) x}X(t+\Delta t)-L_{\gamma(t) x}X(t)}{\Delta t}\ran _g\bigg\}\\
&=2\lan L_{\gamma(t) x}X(t)-X(0),L_{\gamma(t) x}D_{\gamma'(t)}X(t)\ran _g.
\end{align*}
Define the norm function
$$\zeta_1(t):=\|L_{\gamma(t) x}X(t)-X(0)\|_g =\sqrt{\zeta(t)},$$ 
which satisfies $\zeta_1(0) = 0$. Its derivative satisfies 
$$|\zeta_1'(t)|=\Big|\frac{\zeta'(t)}{\zeta_1(t)}\Big|\le \|L_{\gamma(t) x}D_{\gamma'(t)}X(t)\|_g.$$
Using equality \eqref{LXY}, we obtain
$$\zeta_1(l)=|\zeta_1(l)-\zeta_1(0)|\le \max_{\theta\in [0,l]}|\zeta_1'(\theta)| l\le \max_{\|v\|_g=1,z\in M}\|D_{v}X(z)\|_g\rho(x,y),\quad v\in T_zM,$$
which completes the proof.
\end{proof}

\subsection{Equivalence and extension lemmas on submanifolds}\label{s21}

This subsection establishes preliminary results for proving the continuous dependence of solutions to equation \eqref{eqve0} on initial data and control terms. Following our methodological approach (outlined in the introduction), we organize the key technical steps through the following lemmas:
\begin{itemize}[itemsep=0pt, parsep=0pt, topsep=0pt] 
\item
Isometric embedding of $M$ into Euclidean space $\dbR^{n_1}$ ($n_1 \geq n$) via Lemmas \ref{Ne} and \ref{eqrho};
\item Derivation of key equality in the Euclidean framework (Lemma \ref{LMXX});
\item Extension of associated vector fields to $\dbR^{n_1}$ (Lemma \ref{lm62});
\item Projection of Euclidean estimates back to the manifold $M$ (Lemma \ref{lm61}).
\end{itemize}

The central results of this subsection, presented in Lemmas \ref{lm62} and \ref{lm61}, constitute key technical components of our analysis. While these results are likely part of the existing mathematical literature, we have been unable to locate explicit references stating them in the precise form required for our purposes. To ensure rigorous treatment and maintain self-containment of our work, we furnish complete proofs with detailed derivations.

First, we recall the notation of isometric embedding:

\begin{definition}\label{isoembed} 
We say that an $n$-dimensional Riemannian manifold $(\wt M,\wt g)$ is isometrically embedded into $(\dbR^{n_1},g^{E})$ ($n_1 \in \mathbb{N}$, $g^{E}$ the standard Euclidean metric), if there exists a smooth map $\mathcal{E}: \wt M \rightarrow \dbR^{n_1}$ satisfying:
\begin{enumerate}
\item Smooth Embedding:
\begin{itemize}
\item $\mathcal{E}$ is a smooth immersion: 
$\forall x \in \wt M,\text{the tangent map } d\mathcal{E}_x: T_x\wt M \rightarrow T_{\mathcal{E}(x)}\dbR^{n_1} \text{ is injective}$;
\item $\mathcal{E}: \wt M \to \mathcal{E}(\wt M)$ is a homeomorphism with subspace topology;
\end{itemize}

\item Metric Isometry:
$\left\langle d\mathcal{E}_x(v), d\mathcal{E}_x(w) \right\rangle_{g^{E}} = \lan v,w\ran_{\wt g},  \forall x \in \wt M, \, \forall v,w \in T_x\wt M$.
\end{enumerate}
\end{definition}

Denoted by $g^{E}$ the standard Euclidean metric on Euclidean space(the dimension of the Euclidean space is sometimes omitted for simpliciy).

\begin{lemma} \nmycite{Han2006}{Theorem 1.0.3} \label{Ne} There exists an isometric embedding $\mathcal{E}: (M ,g)\rightarrow (\dbR^{n_1},g^E)$  for $n_1=\max\big\{\frac{n(n+5)}{2},\frac{n(n+3)}{2}+5\big\}$. 
\end{lemma}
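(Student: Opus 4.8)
The statement is exactly the Nash isometric embedding theorem for compact manifolds, with the sharpened dimension bound due to G\"{u}nther, and the plan is to invoke it via \nmycite{Han2006}{Theorem 1.0.3}; here I outline the architecture of the proof one would reproduce. Writing the sought embedding as $\mathcal{E}=(\mathcal{E}^1,\dots,\mathcal{E}^{n_1})\colon M\to\dbR^{n_1}$, the isometry condition $\mathcal{E}^{*}g^{E}=g$ is, in any coordinate chart $(x^i)$, the first-order system
$$\sum_{\alpha=1}^{n_1}\frac{\partial\mathcal{E}^\alpha}{\partial x^i}\,\frac{\partial\mathcal{E}^\alpha}{\partial x^j}=g_{ij},\qquad 1\le i\le j\le n.$$
First I would fix a smooth \emph{free} map $\mathcal{E}_0\colon M\to\dbR^{N}$ --- one for which the $\tfrac{n(n+3)}{2}$ vectors $\{\partial_i\mathcal{E}_0,\ \partial_i\partial_j\mathcal{E}_0\}$ are pointwise linearly independent --- obtained from a Whitney-type embedding followed by a generic perturbation; after an overall rescaling one may assume the smooth symmetric $2$-tensor $g-\mathcal{E}_0^{*}g^{E}$ is arbitrarily small.

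Next I would write $\mathcal{E}=\mathcal{E}_0+v$ with $v$ small and reduce the problem to a functional equation $\mathcal{Q}(v)=g-\mathcal{E}_0^{*}g^{E}$ with $\mathcal{Q}$ quadratic in $Dv$, whose linearization at $\mathcal{E}_0$ is surjective precisely because $\mathcal{E}_0$ is free. There are two classical routes from here: Nash's original one, which solves the nonlinear equation by the hard (Nash--Moser) implicit function theorem so as to absorb the derivative loss in the inverse of the linearized operator through a smoothing iteration; and G\"{u}nther's, which is the one that delivers the stated dimension $n_1=\max\{\tfrac{n(n+5)}{2},\,\tfrac{n(n+3)}{2}+5\}$. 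I would follow G\"{u}nther: introduce the ambient Laplacian $\Delta$ and recast the equation as a fixed point $v=\Delta^{-1}\mathcal{F}(v,v)$ for a bilinear-type nonlinearity $\mathcal{F}$ in first derivatives, so that the two-derivative gain of $\Delta^{-1}$ exactly offsets the two-derivative loss of $\mathcal{F}$; a contraction-mapping argument in $C^{2,\alpha}(M;\dbR^{n_1})$ then produces $v$, and elliptic regularity promotes it to a smooth, hence isometric, embedding.

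The only genuinely hard step --- and the reason this is a theorem rather than an exercise --- is precisely the derivative loss in the linearized problem: Nash needed the Nash--Moser machinery to overcome it, and G\"{u}nther's contribution is to rewrite the system so that ordinary elliptic theory plus a Banach fixed point suffice. The remaining work is bookkeeping: checking that the stated $n_1$ is large enough for the pointwise linear-algebra constructions (finding correction directions transverse to the image of $d\mathcal{E}_0$) and verifying the contraction estimates. Since \nmycite{Han2006}{Theorem 1.0.3} carries all of this out in full and records exactly this value of $n_1$, I would take the statement as established and, in the sequel, use only the existence of the isometric embedding $\mathcal{E}$ together with its defining isometry property.
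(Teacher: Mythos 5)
Your proposal is correct and matches the paper's treatment: the paper does not prove this lemma but simply invokes \nmycite{Han2006}{Theorem 1.0.3} (the Nash--G\"unther isometric embedding theorem for compact manifolds), exactly as you ultimately do. Your added sketch of G\"unther's contraction-mapping reformulation and the role of free maps is accurate but not needed for the paper's purposes, since only the existence of $\mathcal{E}$ with the stated dimension $n_1$ is used in the sequel.
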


In the rest of this paper, we fix $n_1:=\max\big\{\frac{n(n+5)}{2},\frac{n(n+3)}{2}+5\big\}$.

\begin{definition}\nmycite{Lee}{p.98}\label{dfemb}
An embedded submanifold of Euclidean space $\dbR^k (k\in \mathbb{N})$ is a subset $S$ that is a manifold in the subspace topology, endowed with a smooth structre with respect to which the inclusion map $\iota:S\rightarrow \dbR^k$ is a smooth embedding.  
\end{definition}

\begin{lemma} \nmycite{Lee}{Proposition 5.2}\label{lee52}
Suppose $M$ is isometrically embedded into $(\dbR^{n_1},g^E)$ through $\mathcal{E}$, then $\mathcal{E}(M)$ is an embeded submanifold of $(\dbR^{n_1},g^E)$. 
\end{lemma}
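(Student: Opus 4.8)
The statement is the standard fact that the image of a smooth embedding carries a canonical embedded-submanifold structure, and the proof is purely formal: one transports the smooth structure of $M$ along $\mathcal{E}$ and checks that the inclusion map is again a smooth embedding in the sense of Definition \ref{dfemb}. I organize it as follows. Throughout, set $S:=\mathcal{E}(M)$ endowed with the subspace topology inherited from $\dbR^{n_1}$, and write $\mathcal{E}_0\colon M\to S$ for $\mathcal{E}$ regarded as a map onto $S$.

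First I would record the consequences of Definition \ref{isoembed}: $\mathcal{E}$ is a smooth immersion and $\mathcal{E}_0$ is a homeomorphism (in particular $\mathcal{E}$ is injective). Since $\dbR^{n_1}$ is Hausdorff and second countable, so is its subspace $S$; and since $\mathcal{E}_0$ is a homeomorphism and $M$ is locally Euclidean of dimension $n$, the space $S$ is a topological $n$-manifold. Next I would transport the smooth structure: for every chart $(\mathscr{U},\varphi)$ of $M$, the pair $(\mathcal{E}_0(\mathscr{U}),\varphi\circ\mathcal{E}_0^{-1})$ is a chart of $S$, because $\mathcal{E}_0$ being a homeomorphism makes $\mathcal{E}_0(\mathscr{U})$ open in $S$. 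Given two such charts, the transition map is $(\varphi_2\circ\mathcal{E}_0^{-1})\circ(\varphi_1\circ\mathcal{E}_0^{-1})^{-1}=\varphi_2\circ\varphi_1^{-1}$, which is smooth because the atlas of $M$ is smooth. Hence these charts form a smooth atlas on $S$, and with the resulting smooth structure $\mathcal{E}_0\colon M\to S$ is by construction a diffeomorphism.

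It then remains to verify that the inclusion $\iota\colon S\hookrightarrow\dbR^{n_1}$ is a smooth embedding. For smoothness, note that as maps $M\to\dbR^{n_1}$ one has $\iota\circ\mathcal{E}_0=\mathcal{E}$, which is smooth, so $\iota=\mathcal{E}\circ\mathcal{E}_0^{-1}$ is smooth since $\mathcal{E}_0$ is a diffeomorphism. For immersivity, at $p=\mathcal{E}_0(x)$ the chain rule gives $d\iota_p\circ d(\mathcal{E}_0)_x=d\mathcal{E}_x\colon T_xM\to T_p\dbR^{n_1}$, which is injective by the immersion hypothesis in Definition \ref{isoembed}; since $d(\mathcal{E}_0)_x\colon T_xM\to T_pS$ is a linear isomorphism, $d\iota_p$ is injective, so $\iota$ is an immersion. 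Finally, $\iota$ is trivially a topological embedding because $S$ carries the subspace topology. Therefore $\iota$ is a smooth embedding, and by Definition \ref{dfemb} the set $S=\mathcal{E}(M)$ is an embedded submanifold of $(\dbR^{n_1},g^{E})$.

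The only point requiring any care — and the nearest thing to an obstacle — is that the entire argument hinges on the topological-embedding half of Definition \ref{isoembed}: it is precisely this hypothesis that makes the subspace topology on $S$ coincide with the manifold topology carried over from $M$, hence makes the transported charts genuinely open in $S$ and the inclusion a homeomorphism onto its image. For a merely injective immersion the conclusion would fail, so I would make explicit where this assumption is used.
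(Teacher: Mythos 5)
Your proof is correct, but note that the paper offers no proof of this lemma at all: it is stated as a quoted result, cited to Lee's \emph{Introduction to Smooth Manifolds}, Proposition 5.2. Your argument (transport the smooth structure of $M$ to $\mathcal{E}(M)$ via the homeomorphism onto the image, check the transition maps, and verify that the inclusion is an injective immersion and a topological embedding in the subspace topology, with the topological-embedding half of Definition \ref{isoembed} doing the essential work) is precisely the standard textbook proof of that cited proposition, so it is consistent with the paper's treatment.
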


Following the setup in Lemma \ref{lee52}, the inclusion map  $\iota:\mathcal{E}(M)\rightarrow \dbR^{n_1}$ is a smooth embedding. The space $\mathcal{E}(M)$ inherits a Riemannian metric $\wt g$ from $g^{E}$, given by 
\begin{align}
&\wt g:\mathsf{X}(\mathcal{E}(M)) \times \mathsf{X}(\mathcal{E}(M)) \to C^{\infty}(\mathcal{E}(M)),\nonumber\\
&\wt g_x(u, v) := \langle u,v\rangle_{g^{E}(x)}, \quad x\in \mathcal{E}(M),\quad u,v\in T_x\mathcal{E}(M). \nonumber
\end{align}
This endows $(\mathcal{E}(M),\wt g)$ with the structure of a Riemannian manifold.

\begin{lemma}\nmycite{Lee}{Exercise 13.27}\label{eqrho}
Let $\rho$ denote the Riemannian distance on $M$, and let $\wt\rho$ be the Riemannian distance on $\mathcal{E}(M)$ induced by $\wt g$. Then the following equality holds:
$$\wt \rho(\mathcal{E}(x),\mathcal{E}(y)))=\rho(x,y),\quad \forall x,y\in M.$$ 
\end{lemma}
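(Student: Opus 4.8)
The plan is to show that $\mathcal{E}\colon (M,g)\to(\mathcal{E}(M),\wt g)$ is a Riemannian isometry; the equality of distance functions then follows at once, because the Riemannian distance between two points is by definition the infimum of lengths of admissible (say piecewise $C^1$) curves joining them, and a Riemannian isometry carries such curves to such curves of the same length.

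First I would verify that $\mathcal{E}$ is a diffeomorphism onto $\mathcal{E}(M)$. By Definition \ref{isoembed}, $\mathcal{E}$ is a smooth immersion that is a homeomorphism onto its image; by Lemma \ref{lee52}, $\mathcal{E}(M)$ is an embedded submanifold of $\dbR^{n_1}$, so $\mathcal{E}\colon M\to\mathcal{E}(M)$ is a smooth bijective immersion between manifolds of the same dimension $n$. An injective immersion between equidimensional manifolds is a local diffeomorphism, and a bijective local diffeomorphism is a diffeomorphism; hence $\mathcal{E}^{-1}\colon\mathcal{E}(M)\to M$ is smooth.

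Next I would record the metric isometry. For $x\in M$ and $v,w\in T_xM$, the metric isometry clause of Definition \ref{isoembed} together with the definition of $\wt g$ preceding the lemma gives
$$\wt g_{\mathcal{E}(x)}\big(d\mathcal{E}_x(v),d\mathcal{E}_x(w)\big)=\big\langle d\mathcal{E}_x(v),d\mathcal{E}_x(w)\big\rangle_{g^E}=\langle v,w\rangle_g,$$
so $\mathcal{E}^*\wt g=g$. Consequently, for any piecewise $C^1$ curve $c\colon[0,1]\to M$, the curve $\mathcal{E}\circ c$ is piecewise $C^1$ in $\mathcal{E}(M)$ and, using $(\mathcal{E}\circ c)'(t)=d\mathcal{E}_{c(t)}(c'(t))$,
$$L_{\wt g}(\mathcal{E}\circ c)=\int_0^1\big\|d\mathcal{E}_{c(t)}(c'(t))\big\|_{\wt g}\,dt=\int_0^1\|c'(t)\|_g\,dt=L_g(c).$$

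Finally, fix $x,y\in M$. The assignment $c\mapsto\mathcal{E}\circ c$ is a bijection from the piecewise $C^1$ curves in $M$ joining $x$ to $y$ onto the piecewise $C^1$ curves in $\mathcal{E}(M)$ joining $\mathcal{E}(x)$ to $\mathcal{E}(y)$, with inverse $\wt c\mapsto\mathcal{E}^{-1}\circ\wt c$ (this is where the diffeomorphism property is used). Since this bijection preserves length by the previous display, taking infima gives
$$\rho(x,y)=\inf_c L_g(c)=\inf_{\wt c}L_{\wt g}(\wt c)=\wt\rho(\mathcal{E}(x),\mathcal{E}(y)),$$
as claimed. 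There is no genuinely hard step here; the only points that require care are confirming that $\mathcal{E}$ is a bijective diffeomorphism onto $\mathcal{E}(M)$ (so the curve correspondence is invertible on both sides) and that the metric $\wt g$ defining $\wt\rho$ is exactly the pullback of $g^E$, which is built into the construction preceding the statement.
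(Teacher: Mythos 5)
Your proposal is correct and follows essentially the same route as the paper: both arguments use the isometric-embedding property to show that $c\mapsto\mathcal{E}\circ c$ is a length-preserving bijection between the sets of piecewise smooth curves joining $x$ to $y$ and those joining $\mathcal{E}(x)$ to $\mathcal{E}(y)$, and then take infima. The only difference is that you spell out in more detail why $\mathcal{E}$ is a diffeomorphism onto its image, which the paper takes for granted from the definition of an embedding.
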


\begin{proof}
By definition, the Riemannian distances are given by
\begin{align*}
\rho(x, y) &= \inf \left\{ \mathsf{L}_g(\gamma) \mid \gamma: [t_0, t_1] \to M \text{ piecewise smooth}, \gamma(t_0) = x, \gamma(t_1) = y \right\}, \\
\widetilde{\rho}(\wt x, \wt y) &= \inf \left\{ \mathsf{L}_{\widetilde{g}}(\wt\gamma) \mid \wt \gamma: [\wt t_0, \wt t_1] \to \mathcal{E}(M) \text{ piecewise smooth}, \wt \gamma(\wt t_0) = \wt x, \gamma(\wt t_1) = \wt y \right\},
\end{align*}
where the curve lengths are defined as
$$\mathsf{L}_g(\gamma) = \int_{t_0}^{t_1} \|\gamma'(t)\|_g dt, \quad 
\mathsf{L}_{\widetilde{g}}(\wt \gamma) = \int_{\wt t_0}^{\wt t_1} \|\wt\gamma'(t)\|_{\widetilde{g}}dt.$$
Since $\mathcal{E}$ is an isometric embedding, for any piecewise smooth curve $\gamma:[t_0, t_1] \to M$, the composition $\wt\gamma = \mathcal{E} \circ \gamma$ satisfies $\wt\gamma'(t) = d\mathcal{E}_{\gamma(t)}(\gamma'(t))$ for all $t\in [t_0,t_1]$.

The isometry condition is expressed by
\begin{align*}
\lan v, w\ran _{g(x)}&=\lan d\mathcal{E}_x(v),d\mathcal{E}_x(w)\ran _{g^{E}(\mathcal{E}(x))} 
=\wt g_{\mathcal{E}(x)}(d\mathcal{E}_x(v), d\mathcal{E}_x(w)),\quad x\in M,v,w\in T_xM
\end{align*}
which implies the length preservation property:
\begin{equation}\label{lgg}
\mathsf{L}_g(\gamma) = \int_{t_0}^{t_1} \|\gamma'(t)\|_g dt = \int_{t_0}^{t_1} \|\wt\gamma'(t)\|_{\widetilde{g}}dt=\mathsf{L}_{\widetilde{g}}(\wt\gamma).
\end{equation}

Furthermore, since $\mathcal{E}$ is an embedding (and hence a diffeomorphism onto its image), the mapping between two piecewise smooth path sets
\begin{align*}
\{\gamma \mid \gamma \text{ connects } x \text{ to } y \text{ in } M\} \to \big\{\wt\gamma \mid \wt\gamma \text{ connects } \mathcal{E}(x) \text{ to } \mathcal{E}(y) \text{ in } \widetilde{M}\big\},\quad \gamma \rightarrow \mathcal{E} \circ \gamma
\end{align*}
is bijective. From equation \eqref{lgg}, we see that corresponding curves have equal lengths: $\mathsf{L}_g(\gamma) = \mathsf{L}_{\widetilde{g}}(\mathcal{E} \circ \gamma)$. 
This establishes the equality between the sets of lengths:
$$
\left\{ \mathsf{L}_g(\gamma) \mid \gamma \text{ connects } x \text{ to } y \right\} = \left\{ \mathsf{L}_{\widetilde{g}}(\wt\gamma) \mid \wt\gamma \text{ connects } \mathcal{E}(x) \text{ to } \mathcal{E}(y) \right\}.
$$
and consequently their infima must coincide:
$$
\widetilde{\rho}(\mathcal{E}(x), \mathcal{E}(y)) = \inf \mathsf{L}_{\wt g}(\wt\gamma) = \inf \mathsf{L}_g(\gamma) = \rho(x, y).
$$
\end{proof}

The  lemma below establishes the equivalence between the control system \eqref{eqve0} and a control system in $\dbR^{n_1}$.

\begin{lemma}\label{LMXX}
Suppose $X$ is the solution to equation \eqref{eqve0}, and suppose $M$ is isometriclly embedded into
$\dbR^{n_1}$ through $\mathcal{E}$. Define
\begin{equation}\label{dfbs}
\wt b(s,\mathcal{E}(x),u):=d\mathcal{E}_{x}(b(s,x,u)),\quad 
\wt \sigma_i(s,\mathcal{E}(x),u):=d\mathcal{E}_{x}(\sigma_i(s,x,u)),
\quad \wt \xi:=\mathcal{E}(\xi),
\end{equation}
for $x\in M,u\in U,s\in [t,T]$ and $i=1,...,m$. Then  $\wt X:=\mathcal{E}(X)$  solves
\begin{align}\label{eqve11}
\left\{
\begin{aligned}
&d\wt X(s)=\wt b(s,\wt X(s),u(s))ds+\sum_{i=1}^m\wt \sigma_i(s,\wt X(s),u(s))\circ dW^i(s), &\quad s\in (t,T),\\
&\wt X(t)=\wt \xi,  
\end{aligned}
\right.
\end{align}
in $\mathcal{E}(M)$. 
\end{lemma}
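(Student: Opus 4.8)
The plan is to verify that $\wt X := \mathcal{E}(X)$ satisfies the defining integral identity \eqref{df61} for the system \eqref{eqve11}, namely that for every test function $\psi \in C^{1,2}([t,T]\times \mathcal{E}(M))$ and every $s\in[t,T]$, almost surely
\begin{equation*}
\psi(s,\wt X(s)) = \psi(t,\wt\xi) + \int_t^s\Big(\frac{\partial}{\partial r} + \wt b + \frac12\sum_{i=1}^m \wt\sigma_i^2\Big)\psi(r,\wt X(r))\,dr + \int_t^s\sum_{i=1}^m \wt\sigma_i\psi\,dW^i(r).
\end{equation*}
The natural device is composition: given such a $\psi$, set $\varphi := \psi \circ (\mathrm{id}\times \mathcal{E}) \in C^{1,2}([t,T]\times M)$, which is legitimate since $\mathcal{E}$ is a smooth embedding. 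Then $\psi(r,\wt X(r)) = \varphi(r,X(r))$ for all $r$, so the left-hand side and the $\varphi(t,\xi)$ term match immediately, and it remains only to check that the three integrands transform correctly under the substitution.

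First I would record the basic chain-rule facts. For a vector field $V \in \mathsf{X}(M)$ and its pushforward $\wt V$ defined (at points of $\mathcal{E}(M)$) by $\wt V(\mathcal{E}(x)) := d\mathcal{E}_x(V(x))$, one has $(\wt V\psi)(\mathcal{E}(x)) = (V\varphi)(x)$ by definition of the differential. Applying this twice, together with the fact that $\mathcal{E}$ being a totally-geodesic-free isometric \emph{Riemannian} embedding means the Levi-Civita connection of $(\mathcal{E}(M),\wt g)$ is exactly the tangential part of the ambient connection, one gets $(\wt V^2\psi)(\mathcal{E}(x)) = (V^2\varphi)(x)$, where $V^2\varphi := V(V\varphi)$ in the sense used in \eqref{df61}. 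Hence
\begin{equation*}
\Big(\frac{\partial}{\partial r} + \wt b + \frac12\sum_i \wt\sigma_i^2\Big)\psi(r,\wt X(r)) = \Big(\frac{\partial}{\partial r} + b + \frac12\sum_i \sigma_i^2\Big)\varphi(r,X(r)),
\end{equation*}
and similarly $(\wt\sigma_i\psi)(r,\wt X(r)) = (\sigma_i\varphi)(r,X(r))$ for each $i$. Substituting these equalities into the target identity reduces it exactly to \eqref{df61} for $X$ with test function $\varphi$, which holds almost surely by hypothesis that $X$ solves \eqref{eqve0}. Since the Itô/Stratonovich integrals on both sides are then literally the same processes, the identity for $\wt X$ follows.

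The one point requiring genuine care — and the step I expect to be the main obstacle — is the interpretation of the second-order operator $\sum_i \sigma_i^2$ in \eqref{df61} and its compatibility with pushforward. The symbol $\sigma_i^2\varphi$ there means $\sigma_i(\sigma_i\varphi)$, i.e. iterated directional derivative along the vector field $\sigma_i(r,\cdot,u)$ extended smoothly off the support; this is precisely the generator appearing in the Stratonovich-to-Itô conversion, and it is a genuinely second-order differential operator that is \emph{not} coordinate-free in the naive sense. What makes the argument go through is that the formula for a solution is stated intrinsically via \eqref{df61} rather than via a connection, so I only need that $\psi\mapsto \wt V(\wt V\psi)$ at $\mathcal{E}(x)$ equals $\varphi\mapsto V(V\varphi)$ at $x$ — and this is immediate from $(\wt V\psi)\circ\mathcal{E} = V(\psi\circ\mathcal{E})$ applied twice, because $\psi\circ\mathcal{E}$ is again $C^{1,2}$. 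I would spell this out explicitly to avoid any confusion with the $D_{\sigma_i}\sigma_i$ terms that appear elsewhere (those are artifacts of expressing the generator through the Levi-Civita connection and play no role here). Finally, I would note that all regularity hypotheses needed — $\mathcal{E}$ smooth, $b,\sigma_i$ of the smoothness class in Assumption \ref{condp}, $X$ continuous and adapted so $\wt X$ is too — are in force, so \eqref{eqve11} is satisfied in the sense of the stated definition, completing the proof.
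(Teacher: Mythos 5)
Your proposal is correct and follows essentially the same route as the paper: pull a test function on $\mathcal{E}(M)$ back to $M$ via $\mathcal{E}$, note that the pushforward vector fields act on $\psi$ exactly as the original fields act on $\psi\circ\mathcal{E}$ (iterated twice for the $\sigma_i^2$ terms), and reduce the defining identity for $\wt X$ to \eqref{df61} for $X$. Your parenthetical appeal to the Levi-Civita connection of $(\mathcal{E}(M),\wt g)$ being the tangential ambient connection is superfluous, as you yourself correctly observe later — the second-order identity is just the chain rule applied twice — so no gap results.
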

We borrow the idea for the proof of \nmycite{hsu}{Proposition 1.2.4} to prove Lemma \ref{LMXX}.

\begin{proof}
Let $\varphi(\cdot,\cdot) \in C^{1,2}([t,T]\times \dbR^{\bfn_1})$. Applying equation \eqref{df61} to  the composition $\varphi(\cdot,\mathcal{E}(\cdot)) \in C^{1,2}([t,T]\times M)$ 
and using the definition \eqref{dfbs}, we obtain for any $t_1\in [t,T]$: 
\begin{align}\label{vpx}
\varphi(t_1,\wt X(t_1))& = \varphi(t,\mathcal{E}(\xi))\! +\! \int_{t}^{t_1}\!\Big(\frac{\partial}{\partial s}\!+\! b\!+\!\frac{1}{2}\sum_{i=1}^m\!\sigma_i^2\Big)\varphi(s,\mathcal{E}(X(s))ds\!+\!\int_{t}^{t_1}\!\sum_{i=1}^m\!\sigma_i\varphi(s,\mathcal{E}( X(s)))\circ dW^{i}(s) \nonumber\\
&=\varphi(t,\wt \xi) + \int_{t}^{t_1}\Big(\frac{\partial}{\partial s}+ \wt b+\frac{1}{2}\sum_{i=1}^m\wt \sigma_i^2\Big)\varphi(s,\wt X(s))ds+\int_{t}^{t_1}\sum_{i=1}^m\wt \sigma_i\varphi(s,\wt X(s))\circ dW^{i}(s).
\end{align}
The first equation of \eqref{vpx} treats $\varphi(\cdot,\mathcal{E}(\cdot))$ as a function on $[t,T]\times M$, whereas the second equation considers $\varphi(\cdot,\cdot)$ as a function on $[t,T]\times\dbR^{\bfn_1}$. This demonstrates that $\wt X = \mathcal{E}(X)$ satisfies equation \eqref{eqve11}.
\end{proof}

By Lemma \ref{lee52}, $\mathcal{E}(M)$ is a submanifold of $\dbR^{n_1}$, making it natural to employ concepts and theorems related to submanifolds. Throughout the following definitions and lemmas in this subsection, we consistently assume that:
$$
\wt M \text{ is a compact embedded submanifold of an Euclidean space, equipped with the induced metric } \wt g.
$$

(as seen in Proposition \ref{pSExistence} and \ref{prop41}, we will take $\wt M$ to be $\mathcal{E}(M)$.) 

While the subsequent definitions and lemmas remain valid for a general manifold $\wt M$ and its ambient Euclidean space of arbitrary finite dimension, we adopt the following notational conventions for simplicity: when the dimension of $\wt M$ is needed, we denote it by $n$; when referring to the dimension of the ambient Euclidean space containing $\wt M$, we typically denote it by $n_1$. We will employ both the Euclidean norm $\|\cdot\|$ and tensor norm interchangeably throughout this subsection when the context makes the distinction clear.

\begin{definition}\nmycite{Lee}{p.138}
Assume that $\wt M$ is an embedded submanifold of Euclidean space $\dbR^{n_1}$.
The normal bundle of  $\wt M$, denoted by $\mathcal{N}(\wt M)$, is the subset of $T\dbR^{n_1}$ consisting of vectors that are normal to $\wt M$
$$\mathcal{N}(\wt M):=\{(x,v_x)\in \dbR^{n_1}\times \dbR^{n_1},x\in \wt M,v_x\in \mathcal{N}_x\},$$
where $\mathcal{N}_x:=(T_x\wt M)^{\perp}$ is the normal space of $\wt M$ at $x$. Denote by $\mathcal{N}(\mathcal{O})$ the restriction of $\mathcal{N}(\wt M)$ to $\mathcal{O}\subset \wt M$ without ambiguity.
Denote by $\mathsf{N}^k(\wt M)(k\in \mathbb{N})$ the space of the $C^k$-smooth normal vector fields on $\wt M$. 
\end{definition}

\begin{definition}\nmycite{Lee}{p.139}
Assume that $\wt M$ is an embedded submanifold of Euclidean space $\dbR^{n_1}$.
A tubular neighborhood of $\wt M$ is a neighborhood $\mathscr{U}$ of $\wt M$ in $\dbR^{n_1}$ that is the diffeomorphic image under $E_{\mathcal{N}}$ of an open subset $\mathcal{N}_{\delta}\subset \mathcal{N}(\wt M)$ of the form
\begin{equation}\label{ND}
\mathcal{N}_{\delta}:=\{(x,v_x)\in \mathcal{N}(\wt M):\|v_x\|< \delta(x)\},
\end{equation}
for some positive continuous function $\delta:\wt M\rightarrow \dbR$, and for $E_{\mathcal{N}}:\mathcal{N}(\wt M)\rightarrow \dbR^{n_1}$,
\begin{equation}\label{EN}
E_{\mathcal{N}}(x,v_x):=x+v_x.
\end{equation}
\end{definition}

The next lemma guarantees the existence of the tubular neighborhood,

\begin{lemma}\label{Tubular}\nmycite{Lee}{Theorem 6.24}  Assume that $\wt M$ is an embedded submanifold of Euclidean space, then $\wt M$ has a tubular neighborhood. 
\end{lemma}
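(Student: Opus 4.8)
The plan is to follow the classical construction via the normal exponential map, specialised to the ambient Euclidean setting, where the ``exponential'' of a normal vector is simply $v_x\mapsto x+v_x$. First I would recall the smooth structure on the normal bundle: $\mathcal{N}(\wt M)$ is a smooth embedded submanifold of $\dbR^{n_1}\times\dbR^{n_1}$, being the total space of a rank $(n_1-n)$ vector bundle over the $n$-dimensional $\wt M$ (the orthogonal complement of $T\wt M$ inside the restricted trivial bundle $\wt M\times\dbR^{n_1}$), so $\dim\mathcal{N}(\wt M)=n_1$. The map $E_{\mathcal{N}}\colon\mathcal{N}(\wt M)\to\dbR^{n_1}$, $E_{\mathcal{N}}(x,v_x)=x+v_x$, defined in \eqref{EN}, is smooth, and the zero section $\mathcal{N}_0:=\{(x,0):x\in\wt M\}$ is diffeomorphic to $\wt M$ with $E_{\mathcal{N}}$ restricting to the inclusion $\wt M\hookrightarrow\dbR^{n_1}$ on it.

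Next I would show that $dE_{\mathcal{N}}$ is a linear isomorphism at every point $(x,0)$ of the zero section. The tangent space $T_{(x,0)}\mathcal{N}(\wt M)$ splits canonically into horizontal and vertical parts as $T_x\wt M\oplus\mathcal{N}_x$, and differentiating $(x,v_x)\mapsto x+v_x$ shows that $dE_{\mathcal{N}}|_{(x,0)}$ acts as the identity on $T_x\wt M$ and as the canonical inclusion $\mathcal{N}_x\hookrightarrow\dbR^{n_1}$ on $\mathcal{N}_x$; since $T_x\wt M\oplus\mathcal{N}_x=\dbR^{n_1}$ by the very definition of the normal space, $dE_{\mathcal{N}}|_{(x,0)}$ is an isomorphism between $n_1$-dimensional spaces. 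By the inverse function theorem, each $(x,0)$ has an open neighbourhood in $\mathcal{N}(\wt M)$ on which $E_{\mathcal{N}}$ is a diffeomorphism onto an open subset of $\dbR^{n_1}$; taking the union over $x\in\wt M$ produces an open set $\mathcal{W}\supset\mathcal{N}_0$ on which $E_{\mathcal{N}}$ is a local diffeomorphism, and in particular an open map.

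The remaining step, which I expect to be the only delicate point, is to upgrade a local diffeomorphism near the zero section to a globally injective one on a set of the form \eqref{ND}. Here the compactness of $\wt M$ enters: I claim there is a uniform $\delta>0$ with $E_{\mathcal{N}}$ injective on $\mathcal{N}_{\delta}:=\{(x,v_x)\in\mathcal{N}(\wt M):\|v_x\|<\delta\}$. If not, there are $(x_k,v_k)\neq(y_k,w_k)$ in $\mathcal{N}(\wt M)$ with $\|v_k\|,\|w_k\|\to 0$ and $x_k+v_k=y_k+w_k$; passing to subsequences by compactness, $x_k\to x$ and $y_k\to y$, and letting $k\to\infty$ gives $x=y$, so for large $k$ both points lie in a neighbourhood of $(x,0)$ on which $E_{\mathcal{N}}$ is a diffeomorphism — contradicting $E_{\mathcal{N}}(x_k,v_k)=E_{\mathcal{N}}(y_k,w_k)$.

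Finally, after shrinking $\delta$ so that $\mathcal{N}_{\delta}\subset\mathcal{W}$, the restriction $E_{\mathcal{N}}\colon\mathcal{N}_{\delta}\to\dbR^{n_1}$ is an injective local diffeomorphism between equidimensional manifolds, hence a diffeomorphism onto its image $\mathscr{U}:=E_{\mathcal{N}}(\mathcal{N}_{\delta})$, which is open in $\dbR^{n_1}$ and contains $\wt M=E_{\mathcal{N}}(\mathcal{N}_0)$. Thus $\mathscr{U}$, together with the constant positive function $\delta(\cdot)\equiv\delta$, is a tubular neighbourhood of $\wt M$ in the sense of the preceding definition, completing the proof. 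I would add the remark that compactness is used only to take $\delta$ constant; for a general non-compact embedded submanifold one patches the local choices with a partition of unity subordinate to a locally finite cover of $\wt M$ to obtain a positive continuous $\delta(\cdot)$.
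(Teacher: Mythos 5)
The paper offers no proof of this lemma at all: it is quoted verbatim from \nmycite{Lee}{Theorem 6.24}, so there is nothing internal to compare against. Your argument is the standard proof and it is correct in the setting the paper actually uses, namely under the standing assumption of this subsection that $\wt M$ is \emph{compact}: the normal bundle is a smooth $n_1$-dimensional manifold, $dE_{\mathcal{N}}$ is an isomorphism along the zero section because $T_x\wt M\oplus\mathcal{N}_x=\dbR^{n_1}$, the inverse function theorem gives a local diffeomorphism on a neighborhood $\mathcal{W}$ of the zero section, the sequential-compactness argument yields a uniform injectivity radius $\delta$ (and a tube-lemma type argument, which you only gesture at with ``after shrinking $\delta$ so that $\mathcal{N}_{\delta}\subset\mathcal{W}$'', gives $\mathcal{N}_{\delta}\subset\mathcal{W}$), and an injective local diffeomorphism is a diffeomorphism onto its open image. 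That chain of reasoning exactly produces the constant-$\delta$ tubular neighborhood that Remark \ref{rktu} invokes, so for the purposes of this paper your proof is complete.

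One caveat: the lemma as stated does not assume compactness, and your closing remark that the non-compact case follows by ``patching the local choices with a partition of unity'' is too glib. Gluing local radii into a continuous $\delta(\cdot)$ does not by itself give \emph{global} injectivity of $E_{\mathcal{N}}$ on $\mathcal{N}_{\delta}$: two far-apart points of $\wt M$ could a priori have overlapping normal slabs. The proof in Lee handles this by taking $\delta(x)$ to be (half of) a local injectivity radius function that is shown to be continuous, and then ruling out collisions $E_{\mathcal{N}}(x,v)=E_{\mathcal{N}}(y,w)$ by a triangle-inequality estimate $\|x-y\|\le\|v\|+\|w\|$ forcing both points into a single slab where $E_{\mathcal{N}}$ is already known to be injective. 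Since the paper only ever applies the lemma to the compact $\mathcal{E}(M)$, this gap does not affect anything here, but if you want your proof to cover the lemma as literally stated you should replace the partition-of-unity remark by that argument.
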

\begin{remark}\label{rktu} Since $\wt M$ is compact, we can choose the function $\delta$ defined in \eqref{ND} to be a small positive constant $\epsilon$.
\end{remark}

Thus, for simplicity of presentation, denote by 
\begin{equation}\label{epsm}
\epsilon_{\wt M}:=\sup\{\epsilon \mid E_{\mathcal{N}} \text{ is diffeomorphsim between } \mathcal{N}_{\epsilon}\text{ and } E_{\mathcal{N}}(\mathcal{N}_{\epsilon})\}
\end{equation}
the maximum radius of tubular neighborhood. Denote by 
\begin{equation}\label{meps}
\wt M_{\epsilon}:=E_{\mathcal{N}}(\mathcal{N}_{\epsilon}),\quad \epsilon\in (0,\epsilon_{\wt M})
\end{equation}
the $\epsilon$- tubular neighborhood of $\wt M$
in the rest of this paper.

Denote by $D^{E}$ the Levi-Civita connection on the ambient Euclidean space.
\begin{lemma} \label{Gauss}\cite[pp.125--134]{doCarmo}
Assume that $\wt M$ is an embedded submanifold of Euclidean space.
Let $\bfX,\bfY\in \mathsf{X}^1(\wt M),\bfn\in \mathsf{N}^1(\wt M)$. Then we have the following  Gauss's formula
$$ D_\bfX^E\bfY=\wt D_\bfX \bfY+\mathbf{h}(\bfX,\bfY),$$
for $\wt D_\bfX\bfY:=(D_\bfX^E\bfY)^{\top}$ the tangential component and $\bfh(\bfX,\bfY):=(D_\bfX^E\bfY)^{\perp}$ the normal component,  and the Weingarten's formula
$$D_\bfX^E\bfn=\mathbf{A}(\bfn,\bfX)+D_\bfX^{\perp}\bfn,$$
for $\mathbf{A}(\xi,\bfX):=(D_{\bfX}^E\bfn)^{\top}$ the tangential component and $D_\bfX^{\perp}\bfn:=(D_\bfX^E\bfn)^{\perp}$ the normal component.
\end{lemma}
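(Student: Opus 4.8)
The plan is to obtain both formulas from the single elementary fact that at every point $x\in\wt M$ one has the orthogonal splitting $T_x\dbR^{n_1}=T_x\wt M\oplus\mathcal{N}_x$, and then to identify the tangential and normal pieces of the ambient covariant derivative with the intrinsic connection of $(\wt M,\wt g)$ and the normal connection, respectively. First I would fix $\bfX,\bfY\in\mathsf{X}^1(\wt M)$ and extend them to $C^1$ vector fields $\bar\bfX,\bar\bfY$ on an open neighborhood of $\wt M$ in $\dbR^{n_1}$ (possible since $\wt M$ is an embedded submanifold). Because the Euclidean covariant derivative $D^E_{\bar\bfX}\bar\bfY$ at a point $x\in\wt M$ depends only on $\bar\bfX(x)=\bfX(x)$ and on the values of $\bar\bfY$ along any curve through $x$ with velocity $\bfX(x)$, choosing such a curve lying in $\wt M$ shows that $D^E_\bfX\bfY|_x$ is independent of the extensions. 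Hence $\wt D_\bfX\bfY:=(D^E_\bfX\bfY)^{\top}$ and $\mathbf{h}(\bfX,\bfY):=(D^E_\bfX\bfY)^{\perp}$ are well defined vector fields on $\wt M$ (tangential and normal, respectively), and $D^E_\bfX\bfY=\wt D_\bfX\bfY+\mathbf{h}(\bfX,\bfY)$ is just the orthogonal decomposition; this is Gauss's formula.

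Next I would verify that $\wt D$ is the Levi-Civita connection of $(\wt M,\wt g)$, so that the name is justified. Since $D^E$ is $\dbR$-bilinear, $C^\infty$-linear in the first slot, and satisfies $D^E_\bfX(f\bfY)=(\bfX f)\bfY+f\,D^E_\bfX\bfY$, and since the orthogonal projection onto $T\wt M$ is $C^\infty(\wt M)$-linear, $\wt D$ inherits all of these, hence is an affine connection on $\wt M$. Metric compatibility follows from that of $D^E$ with $g^E$ together with $\mathbf{h}(\bfX,\bfY)\perp\bfZ$ for $\bfZ\in\mathsf{X}^1(\wt M)$, giving $\bfX\,\wt g(\bfY,\bfZ)=\wt g(\wt D_\bfX\bfY,\bfZ)+\wt g(\bfY,\wt D_\bfX\bfZ)$; torsion-freeness follows from that of $D^E$ and the fact that $[\bfX,\bfY]$ is tangent to $\wt M$, whence $\wt D_\bfX\bfY-\wt D_\bfY\bfX=(D^E_\bfX\bfY-D^E_\bfY\bfX)^{\top}=[\bfX,\bfY]^{\top}=[\bfX,\bfY]$. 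By uniqueness in the fundamental theorem of Riemannian geometry, $\wt D$ is the Levi-Civita connection. As byproducts one reads off that $\mathbf{h}$ is $C^\infty(\wt M)$-bilinear (the Leibniz terms cancel after projecting onto the normal bundle) and symmetric (again since $[\bfX,\bfY]\in\mathsf{X}(\wt M)$ is tangential).

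For Weingarten's formula I would take $\bfn\in\mathsf{N}^1(\wt M)$ and $\bfX\in\mathsf{X}^1(\wt M)$, extend them to a neighborhood, and set $\mathbf{A}(\bfn,\bfX):=(D^E_\bfX\bfn)^{\top}$ and $D^{\perp}_\bfX\bfn:=(D^E_\bfX\bfn)^{\perp}$, so that $D^E_\bfX\bfn=\mathbf{A}(\bfn,\bfX)+D^{\perp}_\bfX\bfn$ is immediate from orthogonal decomposition (an argument identical to the one above shows these are well defined). The content is the link between $\mathbf{A}$ and $\mathbf{h}$: for any $\bfY\in\mathsf{X}^1(\wt M)$ the function $\langle\bfn,\bfY\rangle$ vanishes identically on $\wt M$, so differentiating along $\bfX$ and using compatibility of $D^E$ with $g^E$ gives $0=\bfX\langle\bfn,\bfY\rangle=\langle D^E_\bfX\bfn,\bfY\rangle+\langle\bfn,D^E_\bfX\bfY\rangle=\langle\mathbf{A}(\bfn,\bfX),\bfY\rangle+\langle\bfn,\mathbf{h}(\bfX,\bfY)\rangle$, i.e. $\langle\mathbf{A}(\bfn,\bfX),\bfY\rangle=-\langle\mathbf{h}(\bfX,\bfY),\bfn\rangle$; since $\mathbf{h}$ is tensorial this shows $\mathbf{A}(\bfn,\cdot)$ is $C^\infty(\wt M)$-linear in $\bfX$ and tensorial in $\bfn$. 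Finally, a direct check (again using that projections are $C^\infty(\wt M)$-linear and the Leibniz rule for $D^E$) shows $D^{\perp}$ is a connection on $\mathcal{N}(\wt M)$.

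The routine part is the linearity/Leibniz bookkeeping needed to promote the algebraic properties of $D^E$ to $\wt D$, $\mathbf{h}$, $\mathbf{A}$ and $D^{\perp}$ through the orthogonal projections. The one step deserving genuine care — and the main obstacle — is the independence of $D^E_\bfX\bfY|_x$ (and of $D^E_\bfX\bfn|_x$) from the choice of ambient extension, which is exactly what makes $\wt D$, $\mathbf{h}$, $\mathbf{A}$, $D^{\perp}$ intrinsic objects on $\wt M$; everything else is a consequence of the orthogonal splitting together with the standard axioms for the flat connection $D^E$.
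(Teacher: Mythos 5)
Your proof is correct, and it is exactly the standard argument from do Carmo that the paper itself invokes: the paper offers no proof of this lemma, citing \cite[pp.125--134]{doCarmo}, so there is nothing to diverge from — the orthogonal splitting $T_x\dbR^{n_1}=T_x\wt M\oplus\mathcal{N}_x$, the extension-independence of $D^E_\bfX\bfY|_x$ and $D^E_\bfX\bfn|_x$, and the verification that $\wt D$ is the Levi-Civita connection of $\wt g$ are precisely the expected ingredients. One remark on conventions: with the definition $\mathbf{A}(\bfn,\bfX):=(D^E_\bfX\bfn)^{\top}$ as written in the lemma, your identity $\langle\mathbf{A}(\bfn,\bfX),\bfY\rangle=-\langle\mathbf{h}(\bfX,\bfY),\bfn\rangle$ is the correct one, obtained by differentiating $\langle\bfn,\bfY\rangle\equiv 0$; note that the paper's later Lemma 2.9, item 3), states this relation with a plus sign, which corresponds to the opposite (shape-operator) convention $\mathbf{A}(\bfn,\bfX)=-(D^E_\bfX\bfn)^{\top}$ — a sign-convention mismatch internal to the paper rather than a flaw in your derivation.
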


In classic references \cite[p.126]{doCarmo}, it can be proved that $\wt D$ is the Levi-Civita connection induced by $\wt g$, and $\mathbf{h}$ is always called the second fundamental form, $\mathbf{A}_{\xi} := \mathbf{A}(\xi, \cdot)$ is called the Weingarten transform, and $D^{\perp}$ is called the normal connection. In the next lemma, it can be found that $\mathbf{h}$, $\mathbf{A}$ are tensors and $D^{\perp}$ is a connection.

\begin{lemma}\label{basicprop}\cite[pp.125--134]{doCarmo}  Let $\bfX,\bfY,\bfZ\in \mathsf{X}^1(\wt M)$, $\bfn_1,\bfn_2\in \mathsf{N}^1(\wt M)$, $\lambda\in C^{1}(\wt M)$. Then the following formulas hold.
\begin{itemize}
\item[1)] $\wt D:\mathsf{X}^0(\wt M)\times \mathsf{X}^1(\wt M)\rightarrow \mathsf{X}^0(\wt M)$ satisfies
$$\wt D_{\lambda \bfX+\bfY}\bfZ =\lambda \wt D_{\bfX}\bfZ+\wt D_{\bfY}\bfZ,\quad \wt D_\bfX(\bfY+\bfZ)=\wt D_\bfX\bfY+\wt D_\bfX\bfZ,\quad \wt D_{\bfX}(\lambda \bfY)=\bfX(\lambda)\bfY+\lambda \wt D_{\bfX}^{\perp}\bfY.$$
\item[2)] $\mathbf{h}: \mathsf{X}^0(\wt M)\times \mathsf{X}^0(\wt M)\rightarrow \mathsf{N}^0(\wt M)$ satisfies 
$$\mathbf{h}(\lambda \bfX+\bfY,\bfZ)=\lambda\mathbf{h}(\bfX,\bfZ)+\mathbf{h}(\bfY,\bfZ),\quad \mathbf{h}(\bfX,\bfY)=\mathbf{h}(\bfY,\bfX).$$
\item[3)] $\mathbf{A}:\mathsf{N}^0(\wt M)\times \mathsf{X}^0(\wt M)\rightarrow \mathsf{X}^0(\wt M)$  satisfies
$$\mathbf{A}(\lambda \bfn_1+\bfn_2,\bfX)=\lambda\mathbf{A}(\bfn_1,\bfX)+\mathbf{A}(\bfn_2,\bfX),\quad \lan \mathbf{A}(\bfn_1,\bfX),\bfY\ran =\lan \mathbf{h}(\bfX,\bfY),\bfn_1\ran .$$
\item[4)] $D^{\perp}:\mathsf{X}^0(\wt M)\times \mathsf{N}^1(\wt M)\rightarrow \mathsf{N}^0(\wt M)$ satisfies
$$D_{\lambda \bfX+\bfY}^{\perp}\bfn_1 =\lambda D_{\bfX}^{\perp}\bfn_1+D_{\bfY}^{\perp}\bfn_1,\quad D_\bfX^{\perp}(\bfn_1+\bfn_2)=D_\bfX^{\perp}\bfn_1+D_\bfX^{\perp}\bfn_2,\quad D_{\bfX}^{\perp}(\lambda \bfn_1)=\bfX(\lambda)\bfn_1+\lambda D_{\bfX}^{\perp}\bfn_1.$$
\end{itemize}
\end{lemma}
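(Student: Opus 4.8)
The plan is to derive every identity in 1)--4) as an immediate consequence of Gauss's and Weingarten's formulas (Lemma~\ref{Gauss}), after first recording three structural facts. \emph{Fact one:} since $D^E$ is the flat Levi--Civita connection on $\dbR^{n_1}$, in Cartesian coordinates it is plain componentwise differentiation; hence it is $C^0(\wt M)$-linear in its subscript slot, additive and $\dbR$-linear in its argument slot, obeys the Leibniz rule $D^E_{\bfX}(\lambda\bfY)=\bfX(\lambda)\bfY+\lambda D^E_{\bfX}\bfY$ for $\lambda\in C^1(\wt M)$, is torsion-free, i.e. $D^E_{\bfX}\bfY-D^E_{\bfY}\bfX=[\bfX,\bfY]$, and is compatible with $g^E$, i.e. $\bfX\lan\bfY,\bfZ\ran=\lan D^E_{\bfX}\bfY,\bfZ\ran+\lan\bfY,D^E_{\bfX}\bfZ\ran$. \emph{Fact two:} the Lie bracket of two vector fields tangent to $\wt M$ is again tangent to $\wt M$ (seen from a slice chart, or from the characterization that $\bfX$ is tangent to $\wt M$ iff $\bfX f\equiv 0$ on $\wt M$ for every $f$ vanishing on $\wt M$, since then $[\bfX,\bfY]f=\bfX\bfY f-\bfY\bfX f=0$). \emph{Fact three:} the pointwise orthogonal projections $(\cdot)^\top$ and $(\cdot)^\perp$ onto $T_x\wt M$ and $\mathcal{N}_x$ are additive and $C^0(\wt M)$-linear, and act as the identity, resp. zero, on tangent fields and as the identity, resp. zero, on normal fields.

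Next I would obtain 1)--4) by applying $(\cdot)^\top$ or $(\cdot)^\perp$ to the identities of Fact one. For $\wt D_{\bfX}\bfY=(D^E_{\bfX}\bfY)^\top$: $C^0$-linearity in $\bfX$ and additivity in $\bfZ$ are immediate from Facts one and three; the Leibniz rule follows by taking tangential parts in $D^E_{\bfX}(\lambda\bfY)=\bfX(\lambda)\bfY+\lambda D^E_{\bfX}\bfY$ and noting $\bfX(\lambda)\bfY$ is tangent, so $\wt D_{\bfX}(\lambda\bfY)=\bfX(\lambda)\bfY+\lambda\wt D_{\bfX}\bfY$ (the superscript $\perp$ printed on the last term of part~1) should read $\wt D_{\bfX}\bfY$). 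For $\mathbf{h}(\bfX,\bfY)=(D^E_{\bfX}\bfY)^\perp$: linearity in the first slot is as above, and symmetry is $\mathbf{h}(\bfX,\bfY)-\mathbf{h}(\bfY,\bfX)=(D^E_{\bfX}\bfY-D^E_{\bfY}\bfX)^\perp=[\bfX,\bfY]^\perp=0$ by torsion-freeness and Fact two. For $\mathbf{A}(\bfn,\bfX)=(D^E_{\bfX}\bfn)^\top$: linearity in $\bfn$ follows by taking tangential parts in $D^E_{\bfX}(\lambda\bfn_1+\bfn_2)=\bfX(\lambda)\bfn_1+\lambda D^E_{\bfX}\bfn_1+D^E_{\bfX}\bfn_2$, the term $\bfX(\lambda)\bfn_1$ being normal; and the duality $\lan\mathbf{A}(\bfn_1,\bfX),\bfY\ran=\lan\mathbf{h}(\bfX,\bfY),\bfn_1\ran$ comes from differentiating $\lan\bfn_1,\bfY\ran\equiv 0$ along $\wt M$ with metric compatibility and then decomposing $D^E_{\bfX}\bfn_1$ and $D^E_{\bfX}\bfY$ into tangential and normal parts (reproducing the stated identity up to the sign convention fixed in Lemma~\ref{Gauss}). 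For $D^\perp_{\bfX}\bfn=(D^E_{\bfX}\bfn)^\perp$: $C^0$-linearity in $\bfX$, additivity in $\bfn$, and the Leibniz rule follow by taking normal parts exactly as for $\wt D$, now with $\bfn$ normal so that $(\bfX(\lambda)\bfn_1)^\perp=\bfX(\lambda)\bfn_1$.

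I expect no serious obstacle: the lemma is essentially bookkeeping once Facts one--three are in place, and everything reduces to the $C^0$-linearity of $D^E$ in its subscript slot together with the action of the two projections. The only inputs that are not purely formal are the torsion-freeness and metric-compatibility of $D^E$ (both immediate for the flat connection) and Fact two---closure of tangent fields under the Lie bracket---which is precisely what yields the symmetry of the second fundamental form. I would therefore keep the write-up short, citing \cite[pp.~125--134]{doCarmo} for the classical statements and spelling out only the projection computations above.
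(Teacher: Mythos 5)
Your proof is correct and follows exactly the standard argument that the paper relies on: the paper gives no proof of this lemma, citing \cite[pp.~125--134]{doCarmo}, and the route there is precisely yours --- apply the tangential and normal projections to the flatness properties of $D^E$ (function-linearity in the subscript, additivity, Leibniz rule, torsion-freeness, metric compatibility), obtain symmetry of $\mathbf{h}$ from $[\bfX,\bfY]$ being tangent to $\wt M$, and the duality $\lan \mathbf{A}(\bfn_1,\bfX),\bfY\ran=\lan \mathbf{h}(\bfX,\bfY),\bfn_1\ran$ from differentiating $\lan \bfn_1,\bfY\ran\equiv 0$ along $\wt M$. Your remark that the superscript $\perp$ in the last identity of part 1) is a misprint (it should read $\lambda\,\wt D_{\bfX}\bfY$) is also correct.
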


Define the index sets as follows: 
$$
\mathcal{I}_1 := \big\{1,\, 2,\, \dots,\, n\big\}, \quad 
\mathcal{I}_2 := \big\{n+1,\, \dots,\, n_1\big\}, \quad 
\mathcal{I} := \mathcal{I}_1 \cup \mathcal{I}_2.
$$
Denote by $B_k(r):=\big\{y\in \dbR^k,\|y\|< r\big\}$  the standard $k$-dimensional Euclidean ball of radius $r>0$ centered at the origin; by $\wt B_{w}^{\wt\rho}(r):=\big\{y\in \wt M,\wt\rho(w,y)< r\big\}$ the geodesic ball of radius $r>0$ centered at $w\in \wt M$ under the distance function $\wt\rho$ on $\wt M$ without ambiguity, and  by $\bfi_{\wt M}$ the global injectivity radius of $\wt M$.

Since we need to perform calculations in local coordinates, it is necessary to construct a local smooth orthonormal normal frame field. The following lemma guarantees the existence of such a local smooth orthonormal normal frame field for the normal bundle over sufficiently small geodesic balls. 

\begin{lemma}\label{lmn}
Assume that $\wt M$ is an $n$-dimensional compact embedded submanifold of $\dbR^{n_1}$. 
For any $w\in \wt M$, there exists a smooth orthonormal normal frame field $\{\bfn^{k}\}_{k=1}^{n_1-n}$ on $\mathcal{N}(\wt B_{w}^{\wt \rho}(\bfi_{\wt M}))$.
\end{lemma}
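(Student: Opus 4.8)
The plan is to construct the frame locally on a coordinate neighborhood and then extend it globally over the geodesic ball by parallel transport along radial geodesics, using the normal connection $D^{\perp}$. First I would fix $w\in\wt M$ and work inside the geodesic ball $\wt B_w^{\wt\rho}(\bfi_{\wt M})$, which by definition of the injectivity radius is the diffeomorphic image under $\exp_w$ of the Euclidean ball $B(w,\bfi_{\wt M})\subset T_w\wt M$. Thus every point $y$ in this ball is joined to $w$ by a unique minimizing geodesic $\gamma_y$ lying entirely in the ball. I would start by choosing an orthonormal basis $\{\bfn_0^1,\dots,\bfn_0^{n_1-n}\}$ of the normal space $\mathcal{N}_w=(T_w\wt M)^\perp$, which exists since $\mathcal{N}_w$ is an $(n_1-n)$-dimensional inner product subspace of $\dbR^{n_1}$.

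Next I would define, for each $y\in\wt B_w^{\wt\rho}(\bfi_{\wt M})$, the vectors $\bfn^k(y)$ by parallel transporting $\bfn_0^k$ along $\gamma_y$ using the normal connection $D^{\perp}$ (which is a genuine connection on the normal bundle by Lemma \ref{basicprop}(4)). Since $\gamma_y$ depends smoothly on $y$ (smoothness of $\exp_w$ and its inverse on the ball), and solutions of the linear ODE defining $D^{\perp}$-parallel transport depend smoothly on the curve and initial data, the resulting sections $\bfn^k$ are smooth on the whole ball. Orthonormality is preserved because $D^{\perp}$ is a metric connection: one checks $\frac{d}{dt}\langle \bfn^k(\gamma_y(t)),\bfn^l(\gamma_y(t))\rangle = \langle D^{\perp}_{\gamma_y'}\bfn^k,\bfn^l\rangle + \langle \bfn^k,D^{\perp}_{\gamma_y'}\bfn^l\rangle = 0$ along each radial geodesic, so $\langle\bfn^k,\bfn^l\rangle\equiv\delta_{kl}$ on the ball. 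Hence $\{\bfn^k\}_{k=1}^{n_1-n}$ is a smooth orthonormal normal frame field on $\mathcal{N}(\wt B_w^{\wt\rho}(\bfi_{\wt M}))$, as required.

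The main obstacle I anticipate is verifying \emph{smoothness} of the frame at the center $w$ itself and, more generally, the joint smoothness of the parallel-transport map $(y,v)\mapsto$ (parallel transport of $v$ along $\gamma_y$). The standard way around the apparent singularity of radial geodesics at $w$ is to pass to polar-type or normal coordinates: parametrize the ball by $\exp_w$, lift $D^{\perp}$-parallel transport to a linear ODE on $B(w,\bfi_{\wt M})\times\dbR^{n_1-n}$ whose coefficients are smooth functions of the base point (the Christoffel-type symbols of $D^{\perp}$ in a smooth local orthonormal normal frame obtained, e.g., from Gram--Schmidt applied to coordinate normal fields near $w$), and invoke smooth dependence of ODE solutions on parameters. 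Alternatively, and perhaps more cleanly, I could first obtain \emph{some} smooth (not necessarily orthonormal) local normal frame on a coordinate chart around $w$ by Gram--Schmidt on $\partial_{n+1},\dots,\partial_{n_1}$ projected to the normal bundle, then observe that this already gives the frame on a small ball, and finally note that the geodesic ball $\wt B_w^{\wt\rho}(\bfi_{\wt M})$ is contractible so the $O(n_1-n)$-bundle $\mathcal{N}(\wt B_w^{\wt\rho}(\bfi_{\wt M}))$ is trivial, yielding a global smooth orthonormal frame by choosing a trivialization and applying Gram--Schmidt fiberwise; the parallel-transport construction above is just an explicit realization of this triviality. I would present the parallel-transport argument as the primary proof since it is constructive and makes the smoothness verification a direct appeal to the smooth dependence theorem for linear ODEs.
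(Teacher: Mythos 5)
Your proof is correct, but it takes a genuinely different route from the paper's. The paper's argument is soft and topological: the geodesic ball $\wt B_w^{\wt\rho}(\bfi_{\wt M})$ is contractible, so by the standard triviality theorem for vector bundles over contractible bases the normal bundle admits a smooth global trivialization; the coordinate sections of that trivialization give a smooth (not necessarily orthonormal) frame, and Gram--Schmidt applied fiberwise produces the orthonormal one. Your primary argument is instead constructive: radially $D^{\perp}$-parallel transport an orthonormal basis of $\mathcal{N}_w$ along the unique minimizing geodesics from $w$. This buys you orthonormality for free (once you verify that $D^{\perp}$ is metric, which follows in one line from $\langle D^E_X\bfn_1,\bfn_2\rangle=\langle D^{\perp}_X\bfn_1,\bfn_2\rangle$ for normal fields, since the tangential part $\mathbf{A}$ is orthogonal to normal vectors --- worth stating explicitly, as Lemma \ref{basicprop}(4) records only the connection axioms), and it avoids citing bundle theory; the price is the smoothness verification at the center and the need for a preliminary local smooth normal frame just to write the transport ODE in coordinates. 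You correctly identify both issues and resolve them in the standard way (normal coordinates, smooth dependence of linear ODE solutions on parameters, local Gram--Schmidt bootstrap), and your second ``alternative'' paragraph is essentially the paper's proof verbatim. Either version is acceptable; the paper's is shorter, yours is more self-contained and explicit.
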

Although Lemma \ref{lmn} should be considered a standard result, we include a proof here for completeness as we were unable to locate a precise reference in the literature.

\begin{proof}
Fix a point $w \in \wt M$ and consider the geodesic ball $\mathscr{U}_{w} := \wt B_w^{\wt\rho}(\bfi_{\wt M})$. Since $\mathscr{U}_{w}$ is diffeomorphic to the Euclidean ball $B_n(\bfi_{\wt M})$, which is contractible via the homotopy $\mathsf{h}: B_n(\bfi_{\wt M}) \times [0,1] \to B_n(\bfi_{\wt M})$, $\mathsf{h}(x,t) = (1-t)x$, contracting the ball to the origin, it follows that $\mathscr{U}_{w}$ is also contractible. 

By a standard result in vector bundle theory (see \nmycite{Husemoller}{Corollary 4.8}), any vector bundle over a contractible base space is trivial. Therefore, the normal bundle $\mathcal{N}(\mathscr{U}_{w})$ admits a smooth trivialization
$$\Psi: \mathcal{N}(\mathscr{U}_{w}) \to \mathscr{U}_{w} \times \dbR^{n_1-n}.$$
Let ${ e_1, \dots, e_{n_1-n} }$ denote the standard basis of $\dbR^{n_1-n}$. We define smooth sections
$$
s_i: \mathscr{U}_{w} \to \mathcal{N}(\mathscr{U}_{w}), \quad s_i(p) = \Psi^{-1}(p, e_i), \quad i = 1, \dots, n_1-n.
$$
Since $\Psi$ is a smooth bundle isomorphism, each $s_i$ is a smooth normal vector field. Moreover, for each $p \in \mathscr{U}_{w}$, the fiber map $\Psi(p,\cdot): \mathcal{N}_p \to \dbR^{n_1-n}$ is a linear isomorphism. Consequently, the sections ${ s_1(p), \dots, s_{n_1-n}(p) }$ form a basis for $\mathcal{N}_p$, as they correspond to the basis $e_1, \dots, e_{n_1-n}$ under $\Psi(p,\cdot)$. Applying the Gram-Schmidt orthogonalization process to these sections yields a smooth orthonormal normal frame field $\{\bfn^{k}\}_{k=1}^{n_1-n}$ for $\mathcal{N}(\mathscr{U}_{w})$.
\end{proof}

We now present the first main result of this subsection:

\begin{lemma} \label{lm62}
Let $\wt M$ be an $n$-dimensional compact embedded submanifold of $\dbR^{n_1}$. Consider a family of vector fields $\bfX(\cdot,u) \in \mathsf{X}^2(\wt M)$ parameterized by $u \in U$ that satisfies the Lipschitz condition:
$$\max_{x\in \wt M,\|v\|=1}\Big\{\|\bfX(x,u_1)-\bfX(x,u_2)\|+\|\wt D_{v}\bfX(x,u_1)-\wt D_{v}\bfX(x,u_2)\|\Big\}\le C_L\|u_1-u_2\|,$$
for all $v \in T_x \wt M$. Then there exists an extension $\overline{\bfX}(\cdot,u) \in \mathsf{X}^2(T\dbR^{n_1})$ with compact support on $\dbR^{n_1}$ such that:

1. $\overline{\bfX}\big|_{\wt M} = \bfX$ (the restriction agrees with the original vector field);

2. The following uniform bound holds for some constant $C_E > 0$:
\begin{align*}
\|\obfX(z_1,u_1)-\obfX(z_2,u_2)\|+\|D_{\obfX}^E \obfX(z_1,u_1)-D_{\obfX}^E \obfX(z_2,u_2)\|&\le C_{E}\big(\|u_1-u_2\|+\|z_1-z_2\|\big).\nonumber\\
\end{align*}
\end{lemma}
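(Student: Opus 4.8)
The plan is to construct $\obfX$ by first extending $\bfX(\cdot,u)$ off $\wt M$ using the tubular neighborhood structure and then cutting off smoothly. Concretely, recall from Lemma \ref{Tubular} and Remark \ref{rktu} that there is $\epsilon_{\wt M}>0$ so that $E_{\mathcal{N}}:\mathcal{N}_{\epsilon_{\wt M}}\to \wt M_{\epsilon_{\wt M}}$ is a diffeomorphism; write its inverse as $z\mapsto (\pi(z),\nu(z))$, where $\pi(z)\in\wt M$ is the nearest-point projection and $\nu(z)=z-\pi(z)\in\mathcal{N}_{\pi(z)}$. On the half-width tube $\wt M_{\epsilon_{\wt M}/2}$ I would set $\obfX(z,u):=\bfX(\pi(z),u)$, so that $\obfX(\cdot,u)$ is $C^2$ there (since $\pi$ is smooth and $\bfX(\cdot,u)\in\mathsf{X}^2(\wt M)$), restricts to $\bfX$ on $\wt M$, and $D^E_{\obfX}\obfX$ likewise extends $\wt D_{\bfX}\bfX$ modulo a normal Gauss-formula term $\mathbf{h}(\bfX,\bfX)\circ\pi$, which is itself $C^1$ by Lemma \ref{basicprop}. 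Then pick $\chi\in C^\infty_c(\dbR^{n_1})$ with $\chi\equiv 1$ on $\wt M_{\epsilon_{\wt M}/4}$ and $\supp\chi\subset \wt M_{\epsilon_{\wt M}/2}$, and define the global extension as $\chi(z)\obfX(z,u)$; this has compact support, is $C^2$ on all of $\dbR^{n_1}$, and still agrees with $\bfX$ on $\wt M$.

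For the uniform estimate in part 2, the key observation is that all the relevant maps are $C^2$ (hence $C^1$ with bounded derivatives) on the compact set $\cl{\wt M_{\epsilon_{\wt M}/2}}$, uniformly in $u\in U$ because $U$ is compact and the data are jointly continuous. I would argue in two regimes. When both $z_1,z_2$ lie in $\wt M_{\epsilon_{\wt M}/2}$: by the chain rule, $D^E_{\obfX}\obfX(z,u)$ is a $C^1$ function of $(\pi(z),\nu(z))$ composed with the smooth chart $E_{\mathcal{N}}^{-1}$; its dependence on $u$ is controlled by differentiating the identity $\obfX(z,u)=\bfX(\pi(z),u)$ and invoking the hypothesized Lipschitz bounds on $\bfX(\cdot,u_1)-\bfX(\cdot,u_2)$ and $\wt D_v\bfX(\cdot,u_1)-\wt D_v\bfX(\cdot,u_2)$, pulled back through the smooth projection $\pi$ whose derivatives are uniformly bounded on the compact tube. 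The $z$-Lipschitz part follows from the mean value theorem applied to the $C^1$ map $z\mapsto\chi(z)\obfX(z,u)$ with derivative bounded uniformly in $u$. When at least one of $z_1,z_2$ lies outside $\supp\chi$: then $\chi$ vanishes there, and the difference is controlled because $\chi$ is Lipschitz and the distance $\|z_1-z_2\|$ is bounded below by the gap between $\wt M_{\epsilon_{\wt M}/4}$ and the complement of $\supp\chi$, or one simply uses that $\chi\obfX$ is globally $C^1$ with compact support hence globally Lipschitz in $z$. Combining, one gets a single constant $C_E$ depending only on $\wt M$, $\epsilon_{\wt M}$, $C_L$, and the sup-bounds of $\bfX$ and its first covariant derivative.

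The main obstacle I anticipate is bookkeeping the $u$-dependence of the composite second-order object $D^E_{\obfX}\obfX$: this involves the Hessian-type quantity $\wt D_{\bfX}\bfX$ plus the second fundamental form term $\mathbf{h}(\bfX,\bfX)$, and one must check that the hypothesized Lipschitz control on $\wt D_v\bfX(x,u_1)-\wt D_v\bfX(x,u_2)$ (which is phrased with a unit tangent vector $v$, not with $v=\bfX$ itself) does propagate to a Lipschitz bound on $\wt D_{\bfX(x,u_1)}\bfX(x,u_1)-\wt D_{\bfX(x,u_2)}\bfX(x,u_2)$. This is handled by the telescoping decomposition
\begin{align*}
\wt D_{\bfX(x,u_1)}\bfX(x,u_1)-\wt D_{\bfX(x,u_2)}\bfX(x,u_2)
&=\wt D_{\bfX(x,u_1)-\bfX(x,u_2)}\bfX(x,u_1)\\
&\quad+\wt D_{\bfX(x,u_2)}\big(\bfX(x,u_1)-\bfX(x,u_2)\big),
\end{align*}
together with the uniform sup-bounds on $\bfX$ and $\wt D\bfX$ over the compact $\wt M$ and the tensoriality of $\wt D$ in its lower argument from Lemma \ref{basicprop}. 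A secondary technical point is verifying that $\pi=\pi_{\wt M}$ and $E_{\mathcal{N}}^{-1}$ are genuinely $C^\infty$ (standard, from the tubular neighborhood theorem) so that no regularity is lost in the extension; I would simply cite Lemma \ref{Tubular}. Everything else is routine multivariable calculus on compact sets.
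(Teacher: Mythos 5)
Your construction --- pull $\bfX$ back along the nearest-point projection of a tubular neighborhood, cut off smoothly in the normal direction, and then obtain the Lipschitz bounds from uniform $C^1$/$C^2$ bounds on the compact closed tube together with the telescoping decomposition of $\wt D_{\bfX(\cdot,u_1)}\bfX(\cdot,u_1)-\wt D_{\bfX(\cdot,u_2)}\bfX(\cdot,u_2)$ and the Gauss formula for the normal part --- is essentially identical to the paper's proof, which realizes the same extension as $\bfX(x,u)\lambda(\|v\|^2)$ in a finite atlas of normal-frame coordinates and carries out the derivative estimates explicitly via the Gauss--Weingarten formulas. The argument is correct; the only difference is presentational (invariant via $\pi$ versus coordinate computations), and your appeal to uniform-in-$u$ bounds on the second covariant derivatives of $\bfX$ is implicit at exactly the same point where the paper also takes such maxima over $u\in U$ without further comment.
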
 

\begin{proof} We divide the proof into five steps. 

\textbf{Step 1.} Let $\mathscr{A}$ be the smooth structure on $\wt M$. Fix $r\in (0,\bfi_{\wt M})$. For any $w \in \wt M$, define $\mathscr{U}_{w} := \wt B_w^{\wt\rho}(r)$ and let $\varphi_w := \exp_w^{-1}: \mathscr{U}_{w} \rightarrow B_{n}(r)$ be the inverse exponential map. Given any chart $(\mathscr{U}',\phi) \in \mathscr{A}$ with $\mathscr{U}_w \cap \mathscr{U}' \neq \emptyset$, the transition maps
$$\phi\circ\varphi_{w}^{-1}:\varphi_{w}(\mathscr{U}_w\cap \mathscr{U}')\rightarrow \phi(\mathscr{U}_w\cap \mathscr{U}'),\quad\varphi_w\circ\phi^{-1}:\phi(\mathscr{U}_w\cap \mathscr{U}')\rightarrow \varphi_{w}(\mathscr{U}_w\cap \mathscr{U}')$$
are smooth since $\varphi_w$ is a diffeomorphism. This shows that $(\mathscr{U}_w, \varphi_w) \in \mathscr{A}$.

By the compactness of $\wt M$, there exists a finite cover $\{\mathscr{U}_{\alpha}\}_{\alpha \in \mathcal{S}}$ where $\mathscr{U}_{\alpha} := \mathscr{U}_{w_{\alpha}}$ and $\varphi_{\alpha} := \varphi_{w_{\alpha}}$. The collection $\mathscr{A}_1 := \{(\mathscr{U}_{\alpha}, \varphi_{\alpha}) : \alpha \in \mathcal{S}\}$ forms a smooth atlas on $\wt M$.

Using Lemma \ref{Tubular},Remark \ref{rktu}, and the notations in \eqref{epsm}-\eqref{meps}, there exists $\epsilon_{\wt M}>0$ such that the normal exponential map $E_{\mathcal{N}}$ restricts to a diffeomorphism between $\mathcal{N}_{\epsilon} \subset \mathcal{N}(\wt M)$ and $\wt M_{\epsilon} := E_{\mathcal{N}}(\mathcal{N}_{\epsilon})$ for $\epsilon\in (0,\epsilon_{\wt M})$. Fix some $\epsilon \in (0,\epsilon_{\wt M})$, then we construct a smooth atlas on $\wt M_{\epsilon} \subset \dbR^{n_1}$ as follows:

For each $\alpha \in \mathcal{S}$, Lemma \ref{lmn} provides a smooth orthonormal normal frame field $\{\bfn_{\alpha}^k\}_{k \in \mathcal{I}_2}$ on $\mathcal{N}(\mathscr{U}_{\alpha})$. Let $I_{n_1}$ denote the standard Euclidean coordinates on $\dbR^{n_1}$ and define $\psi^0_{\alpha} := I_{n_1} \circ \iota \circ \varphi_{\alpha}^{-1}: B_{n}(r) \rightarrow \dbR^{n_1}$ as the coordinate representation of the inclusion $\iota$ on $\mathscr{U}_{\alpha}$. For $\mathscr{W} := B_{n}(r) \times B_{n_1-n}(\epsilon)$, define 
$$\psi_{\alpha}:\mathscr{W}\rightarrow \wt M_{\epsilon},\quad \psi_{\alpha}(x,v):=\psi_{\alpha}^0(x)+\sum_{i\in \mathcal{I}_2}v_i \bfn_{\alpha}^i(x).$$
Since $E_{\mathcal{N}}$ is a diffeomorphism between $\wt M_{\epsilon}$ and $\mathcal{N}_{\epsilon}$, each $z \in \mathscr{V}_{\alpha} := \psi_{\alpha}(\mathscr{W}) \subset \wt M_{\epsilon}$ corresponds to a unique $(z_x, z_v) \in \mathcal{N}_{\epsilon}$ with $\|z_v\| \leq \epsilon$. The coordinates
$$x:=(\psi_{\alpha}^0)^{-1}(z_x)\in B_{n}(r),\quad v:=[\lan z_v,\bfn_{\alpha}^{n+1}\ran ,...,\lan z_v,\bfn_{\alpha}^{n_1}\ran ]\in B_{n_1-n}(\epsilon)$$
are uniquely determined, making $\psi_{\alpha}$ a bijection between $\mathscr{V}_{\alpha}$ and $\mathscr{W}$. The smoothness of $\iota$ implies that $\psi_{\alpha}^0$ and consequently $\psi_{\alpha}$ are smooth in both $x$ and $v$. Therefore, both $\psi_{\alpha}$ and its inverse are smooth diffeomorphisms.

Since $\wt M_{\epsilon}$ is an embedded submanifold, for every $z \in \wt M_{\epsilon}$, there exists a unique $(z_x,z_v)\in \mathcal{N}_{\epsilon}$ and some $\alpha\in \mathcal{S}$ such that $z_x\in \psi_{\alpha}^0(B_n(r))$. Thus $z \in \mathscr{V}_{\alpha}$, which implies $\wt M_{\epsilon} = \medcup_{\alpha \in \mathcal{S}} \mathscr{V}_{\alpha}$. For any $\alpha, \beta \in \mathcal{S}$ with $\mathscr{V}_{\alpha} \cap \mathscr{V}_{\beta} \neq \emptyset$, the transition map
$$\psi_{\alpha}^{-1}\circ\psi_{\beta}:\psi_{\beta}^{-1}(\mathscr{V}_{\alpha}\cap \mathscr{V}_{\beta})\subset\dbR^{n_1}\rightarrow \psi_{\alpha}^{-1}(\mathscr{V}_{\alpha}\cap \mathscr{V}_{\beta})\subset\dbR^{n_1}$$
is smooth. Thus, $\mathscr{A}_2 := \{(\mathscr{V}_{\alpha}, \psi_{\alpha}^{-1}) : \alpha \in \mathcal{S}\}$ forms a smooth atlas on $\wt M_{\epsilon}$.  

The smoothness of $\psi_{\alpha}^{-1} \circ I_{n_1}$ and $I_{n_1} \circ \psi_{\alpha}$ shows that $\mathscr{A}_2$ and the standard Euclidean atlas $I_{n_1}$ belong to the same smooth structure on $\wt M_{\epsilon}$.

Let $z\in \mathscr{V}_{\alpha}$ and let $\{\frac{\partial}{\partial z_j}\big|_z\}_{j \in \mathcal{I}}$ be the natural basis of $T_z\wt M_{\epsilon}$ induced by $I_{n_1}$, and let $\{\frac{\partial}{\partial x_i}|_z\}_{i\in \mathcal{I}_1}\cup\{\frac{\partial}{\partial v_i}\big|_z\}_{i\in \mathcal{I}_2}$ be the basis induced by $(\mathscr{V}_{\alpha},\psi_{\alpha}^{-1})$. The basis transformation is given by: 
\begin{equation}\label{eqzv}
\frac{\partial}{\partial z_i}=\sum_{j\in \mathcal{I}_1} a_{ij}(z)\frac{\partial}{\partial x_j}+\sum_{j\in \mathcal{I}_2} a_{ij}(z)\frac{\partial}{\partial v_j},\quad  z\in \mathscr{V}_{\alpha}, 
\end{equation}
where the coefficients $a_{ij},i,j\in \mathcal{I}$ are smooth functions depending on $\alpha \in \mathcal{S}$. In matrix form, with $\mathcal{Z}:=[\frac{\partial }{\partial z_1},...,\frac{\partial }{\partial z_{n_1}}]^{\top}$ and $\mathcal{X}:=[\frac{\partial }{\partial x_1},...,\frac{\partial }{\partial v_{n_1-n}}]^{\top}$, we have:
\begin{equation}\label{eqzvm}
\mathcal{Z}=\mathbb{A}_{\alpha} \mathcal{X},\quad z\in\mathscr{V}_{\alpha},\quad \alpha\in \mathcal{S}
\end{equation}
where $\mathbb{A}_{\alpha}:=(a_{ij}(z))_{i,j\in \mathcal{I}}$. Since $r\in (0,\bfi_{\wt M})$ and $\epsilon\in (0,\epsilon_{\wt M})$ are fixed, $\psi_{\alpha}^{-1}$ extends to a diffeomorphism  between $\overline{\mathscr{V}_{\alpha}}$ and $\overline{\mathscr{W}}$, Consequently, the functions $a_{ij}$ can be extended to smooth functions on $\overline{\mathscr{V}_{\alpha}}$ for $i,j\in \mathcal{I}$, 
 which implies that $\mathbb{A}_{\alpha}$ and its inverse are uniformly bounded:
\begin{equation}\label{AZB}
\big\|\tfrac{\partial }{\partial x_i}\big\|,\big\|\tfrac{\partial }{\partial v_j}\big\|
\le \|\mathcal{X}\|\le \max_{z\in \overline{\mathscr{V}_{\alpha}}}\|\mathbb{A}_{\alpha}^{-1}\| \|\mathcal{Z}\|\le \sqrt{n_1} \max_{z\in \overline{\mathscr{V}_{\alpha}}}\|\mathbb{A}_{\alpha}^{-1}\|,\quad i\in\mathcal{I}_1, j\in\mathcal{I}_2.
\end{equation}

\ss

\textbf{Step 2.} In this step, we extend the vector field $\bfX$ to $\wt M_{\epsilon}$  and study the properties of the extension. 

Extend $\bfX$ to $\wt M_{\epsilon}$ by defining:
\begin{align}\label{stp2}
\obfX(z,u):=\left\{
\begin{aligned}
& \bfX(x,u)\lambda(\|v\|^2),& & z\in \wt M_{\epsilon},\cr
& 0,& &z\not \in \wt M_{\epsilon},
\end{aligned}
\right. {\rm for\quad }
\lambda(s):=\left\{
\begin{aligned}
& \frac{(s-\epsilon^2)^2}{\epsilon^4},& & 0\le s\le \epsilon,\cr
& 0,& &x>\epsilon.
\end{aligned}
\right.
\end{align}
It can be seen $|\lambda|\le 1$ and $|\lambda'|\le \frac{C}{\epsilon^2}$. 
Note that $ \lambda $ is supported on $ [0, \epsilon] $, and thus $ \obfX $ is supported on $ \wt M_{\epsilon}$.  

Using \eqref{eqzv}, we obtain in local coordinates:
\begin{align}\label{eqdzv}
D_{\frac{\partial }{\partial z_i}}^E \obfX(z,u)
&=\sum_{j\in \mathcal{I}_1}a_{ij}(z)D_{\frac{\partial }{\partial x_j}}^E\bfX(x,u)\lambda(\|v\|^2)+\bfX(x,u)\sum_{j\in \mathcal{I}_2}a_{ij}(z)\partial_{v_j}\lambda(\|v\|^2),
\end{align}
for $z\in \mathscr{V}_{\alpha}$. 

Let us first
estimate the term $\partial_{v_j}\lambda(||v||^2)$:
\begin{equation}\label{eqlambda}
|\partial_{v_j}\lambda(||v||^2)|=2\lambda'(||v||^2)|\lan v,v_j\ran |\le 2\lambda'(||v||^2)\|v\|\big\|\tfrac{\partial}{\partial v_i}\big\|. 
\end{equation}
By \eqref{AZB} and \eqref{stp2}, the second term in \eqref{eqdzv} is bounded on $\overline{\mathscr{V}_{\alpha}}$. 

From Lemma \ref{Gauss}, we have:
\begin{equation}\label{eqdxv}
D_{\frac{\partial }{\partial x_j}}^E \bfX(x,u)=\wt D_{\frac{\partial }{\partial x_j}} \bfX(x,u)+\bfh\big(\tfrac{\partial }{\partial x_j},\bfX(x,u)\big).
\end{equation}
Since $\bfX(\cdot,u)\in \mathsf{X}^2(\wt M)$, then
\begin{equation}\label{eqdxV}
\big\|\wt D_{\frac{\partial }{\partial x_j}} \bfX(x,u)\big\|\le \max_{x\in \overline{\mathscr{U}_{\alpha}},u\in U}\big\|\wt D_{\frac{\partial }{\partial x_j}} \bfX(x,u)\big\|<\infty.
\end{equation}
The second fundamental form satisfies:
\begin{equation}\label{eqhh}
\big\|\bfh(\tfrac{\partial }{\partial x_j},\bfX(x,u))\big\|\le \max_{x\in \overline{\mathscr{U}_{\alpha}},u\in U}\big\{\|\bfh\| \big\|\tfrac{\partial}{\partial x_j}\big\| \|\bfX\|\big\}<\infty
\end{equation}
where finiteness follows from smoothness of $\bfh$, assumption on $\bfX$ and \eqref{AZB}.

Combining these results with \eqref{AZB}, we establish that \eqref{eqdzv} is bounded on $\overline{\mathscr{V}_{\alpha}}$ for all $\alpha\in\mathcal{S}$. The Cauchy-Schwarz inequality yields:
\begin{equation}\label{VG}
\max_{\|v\|=1,z\in M_{\epsilon}}\|D_{v}^{E}\obfX(z,u)\|\le \max_{\alpha\in\mathcal{S}}\max_{z\in \overline{\mathscr{V}_{\alpha}},u\in U}\sqrt{\sum_{i\in \mathcal{I}}\big\|D_{\tfrac{\partial}{\partial z_i}}^E \obfX(z,u)\big\|^2}< \infty.
\end{equation}
The extension satisfies
$$\|\obfX(z_1,u)-\obfX(z_2,u)\|\le \sup_{\|v\|=1,z\in \dbR^{n_1}}\|D_{v}^E \obfX(z,u)\| \|z_1-z_2\|= \max_{\|v\|=1,z\in M_{\epsilon}}\|D_{v}^E \obfX(z,u)\| \|z_1-z_2\|,$$
proving $\obfX$ is Lipschitz in the spatial variable.

\ss 

\textbf{Step 3.} Since 
$$\|\obfX(z,u_1)-\obfX(z,u_2)\|\le \|\obfX(x,u_1)-\obfX(x,u_2)\|\lambda(\|v\|^2)\le C_L\|u_1-u_2\|,$$
there exists bounded $C_E$ depending on $C_L,C_B,C$ such that
\begin{align}\label{CX1}
\|\obfX(z_1,u_1)-\obfX(z_2,u_2)\|&\le \|\obfX(z_1,u_1)-\obfX(z_2,u_1)\|+\|\obfX(z_2,u_1)-\obfX(z_2,u_2)\|\nonumber\\
&\le C_E\big(\|z_1-z_2\|+ \|u_1-u_2\|\big).
\end{align}

\textbf{Step 4.} In this step, following the approach in Step 2, we analyze the term
$$D_{\frac{\partial }{\partial z_k}}^E\{D_{\obfX}^E \obfX(z,u)\}.$$

For fixed $\alpha\in\mathcal{S}$, we compute in $\mathscr{V}_{\alpha}$:
\begin{align}\label{eqzvv}
&D_{\frac{\partial }{\partial z_k}}^E\Big\{\obfX_j(z,u)D_{\frac{\partial}{\partial z_j}}^E \obfX(z,u)\Big\}=\partial_{z_k}\obfX_j(z,u)D_{\frac{\partial}{\partial z_j}}^E \obfX(z,u)+\obfX_j(z,u)D_{\frac{\partial}{\partial z_k}}^ED_{\frac{\partial}{\partial z_j}}^E \obfX(z,u).
\end{align}
The first term in \eqref{eqzvv} is bounded by \eqref{eqdzv}.

By applying  the formula \eqref{eqzv} twice, we get 
\begin{align}\label{eqzzv}
D_{\frac{\partial}{\partial z_k}}^ED_{\frac{\partial}{\partial z_j}}^E \obfX(z,u)=&
\sum_{i\in \mathcal{I}_1}\partial_{z_k}a_{ji}(z)D_{\frac{\partial }{\partial x_i}}^E\bfX(x,u)\lambda(||v||^2)+
\sum_{i\in \mathcal{I}_1}a_{ji}(z)D_{\frac{\partial}{\partial z_k}}^ED_{\frac{\partial }{\partial x_i}}^E\bfX(x,u)
\lambda(||v||^2)\nonumber\\
&+\sum_{i\in \mathcal{I}_1}a_{ji}(z)D_{\frac{\partial }{\partial x_i}}^E\bfX(x,u)\partial_{z_k}\lambda(||v||^2)+
D_{\frac{\partial }{\partial z_k}}^E\bfX(x,u)\sum_{i\in \mathcal{I}_2}a_{ji}(z)\partial_{v_i}\lambda(||v||^2)\nonumber\\
&+\bfX(x,u)\sum_{i\in \mathcal{I}_2}\partial_{z_k}a_{ji}(z)\partial_{v_i} \lambda(||v||^2)+\bfX(x,u)\sum_{i\in \mathcal{I}_2}a_{ji}(z)\partial_{v_iz_k}^2 \lambda(\|v\|^2).
\end{align}
Smoothness of $a_{ij}$ ensures $\partial_{z_k}a_{ij}$ are bounded on $\overline{\mathscr{V}_{\alpha}}$ for all $i,j,k\in \mathcal{I}$. The derivatives of $\lambda$ satisfy 
\begin{equation}\label{DZL}
\partial_{z_k}\lambda(||v||^2)=\sum_{j\in \mathcal{I}_2}a_{kj}(z)\partial_{v_j}\lambda(||v||^2),\qquad \partial_{z_kv_i}^2\lambda(||v||^2)=\sum_{j\in \mathcal{I}_2}a_{kj}(z)\partial_{v_jv_i}^2\lambda(||v||^2).
\end{equation}
From \eqref{stp2} and \eqref{AZB}, we obtain the boundedness of \eqref{DZL} in $\overline{\mathscr{V}_{\alpha}}$, which consequently yields the boundedness of each term in \eqref{eqzzv} except for the second term on the right-hand side in $\overline{\mathscr{V}_{\alpha}}$.

Next, we compute the term $T:=D_{\frac{\partial}{\partial z_k}}^ED_{\frac{\partial }{\partial x_i}}^E\bfX(x,u)$ by using the formulas in Lemma \ref{Gauss} and equation \eqref{eqzv}:
\begin{align}\label{eqT}
T&=\sum_{j\in \mathcal{I}_1}a_{ij}(z)D_{\frac{\partial }{\partial x_k}}^ED_{\frac{\partial }{\partial x_j}}^E\bfX(x,u)\nonumber\\
&=\sum_{j\in \mathcal{I}_1}a_{ij}(z)\bigg\{\wt D_{\frac{\partial }{\partial x_k}}\wt D_{\frac{\partial }{\partial x_j}} \bfX+\bfA\Big(\bfh\big(\tfrac{\partial }{\partial x_j},\bfX\big),\tfrac{\partial }{\partial x_k}\Big)
+\bfh\Big(\tfrac{\partial }{\partial x_k},\wt D_{\frac{\partial }{\partial x_j}}\bfX\Big)+D_{\frac{\partial }{\partial x_k}}^{\perp}\Big(\bfh\big(\tfrac{\partial }{\partial x_j},\bfX\big)\Big)\bigg\}.
\end{align}
By using the tensor properties of $\bfA$ and $\bfh$,  the second and third terms in \eqref{eqT} satisfy
\begin{equation}\label{equA1}
\Big\|\bfA\big(\bfh\big(\tfrac{\partial }{\partial x_j},\bfX\big),\tfrac{\partial }{\partial x_k}\big)\Big\|\le \|\bfA\|\big\|\bfh\big(\tfrac{\partial }{\partial x_j},\bfX\big)\big\| \big\|\tfrac{\partial }{\partial x_k}\big\|\le \max_{x\in \overline{\mathscr{U}_{\alpha}},u\in U}\Big\{\|\bfA\|\|\bfh\|\|\bfX(x,u)\|\big\|\tfrac{\partial }{\partial x_i}\big\|\big\|\tfrac{\partial }{\partial x_k}\big\|\Big\}, 
\end{equation}
and
\begin{equation}\label{equh2}
\Big\|\bfh\Big(\tfrac{\partial }{\partial x_k},\wt D_{\frac{\partial }{\partial x_j}}\bfX\Big)\Big\|\le 
\max_{x\in \overline{\mathscr{U}_{\alpha}},u\in U}\Big\{\big\|\bfh\big\| \big\|\tfrac{\partial }{\partial x_k}\big\|\Big\| \wt D_{\tfrac{\partial }{\partial x_j}}\bfX\Big\|\Big\}.
\end{equation}
For the last term in \eqref{eqT}, by \eqref{fml0}, we have
$$D_{\frac{\partial }{\partial x_k}}^{\perp}\Big(\bfh\big(\tfrac{\partial }{\partial x_j},\bfX\big)\Big)=
\big(D\bfh\big)\Big(\tfrac{\partial }{\partial x_j},\bfX,\tfrac{\partial }{\partial x_k}\Big)+\bfh\Big(\wt D_{\frac{\partial }{\partial x_k}}\tfrac{\partial }{\partial x_j},\bfX\Big)+
\bfh\Big(\tfrac{\partial }{\partial x_i},\wt D_{\frac{\partial }{\partial x_k}}\bfX\Big),$$
which yields the estimate
\begin{align}\label{equDh}
\Big\|D_{\frac{\partial }{\partial x_k}}^{\perp}\Big(\bfh\big(\tfrac{\partial }{\partial x_j},X\big)\Big)\Big\| \le \max_{x\in \overline{\mathscr{U}_{\alpha}},u\in U}\bigg\{&\|D\bfh\|\big\|\tfrac{\partial }{\partial x_j}\big\|\big\|\tfrac{\partial }{\partial x_k}\big\|\|\bfX\|+\sum_{i\in \mathcal{I}_1}\Gamma_{jk}^i\|\bfh\|\|\tfrac{\partial }{\partial x_i}\|\|\bfX\|\nonumber\\
&+\|\bfh\|\|\tfrac{\partial }{\partial x_j}\|\Big\|\wt D_{\tfrac{\partial }{\partial x_k}}\bfX\Big\|\bigg\},
\end{align}
where $\Gamma_{ij}^k(i,j,k\in \mathcal{I})$ are the corresponding Christoffel symbols.
Combining \eqref{AZB}, \eqref{VG}, and estimates \eqref{equA1}-\eqref{equDh} with the assumption $\bfX(\cdot,u)\in\mathsf{X}^2(\wt M)$, we conclude that each term in \eqref{eqzzv} is bounded on $\overline{\mathscr{V}_{\alpha}}$.  Consequently, \eqref{eqzvv} is bounded on $\overline{\mathscr{V}_{\alpha}}$.  Similarly to inequality \eqref{VG}, we have
\begin{equation}\label{DXXz}
\|D_{\obfX}^E \obfX(z_1,u)-D_{\obfX}^E \obfX(z_2,u)\|\le\max_{\alpha \in \mathcal{S}}\max_{z\in\overline{\mathscr{V}_{\alpha}},u\in U}\sqrt{\sum_{i\in \mathcal{I}}\Big\|D_{\frac{\partial}{\partial z_i}}^ED_{\obfX}^E \obfX(z,u)\Big\|^2}\|z_1-z_2\|.
\end{equation}

\textbf{Step 5.} In this final step, we investigate the Lipschiz properties of the vector field $D_{\obfX}^E \obfX$ with respect to parameter $u$ to complete our proof. 

We first proceed to compute the vector field $D_{\obfX}^E \obfX$ in the neighborhood $\mathscr{V}_{\alpha}$:
\begin{align}\label{eqzzv1}
D_{\obfX}^E \obfX(z,u)&=\sum_{i\in \mathcal{I}}\obfX_i(z,u)D_{\frac{\partial}{\partial z_i}}^E \obfX(z,u)\nonumber\\
&=\sum_{i\in \mathcal{I}}\sum_{j\in \mathcal{I}_1}a_{ij}(z)\bfX_i(z,u)\wt D_{\frac{\partial }{\partial x_j}}\bfX(x,u)\lambda^2(||v||^2)\nonumber\\
&\quad+\bfX(x,u)\sum_{i\in \mathcal{I}}\sum_{j\in \mathcal{I}_2}a_{ij}(z)\bfX_i(x,u)\lambda(||v||^2)\partial_{v_j}\lambda(||v||^2).
\end{align}
Let 
$$ C_{a}:=\max_{\alpha\in \mathcal{S},i,j\in \mathcal{I}}\max_{z\in \overline{\mathscr{V}_{\alpha}}}|a_{ij}(z)|,\quad C_{x}:=\max_{x\in \wt M,u\in U}\|\bfX(x,u)\|+\max_{x\in \wt M,u\in U,\|v\|= 1}\big\|\wt D_{v}\bfX(x,u)\big\|.$$
Applying the expression from \eqref{eqzzv1}, we obtain the following bound:
\begin{equation}\label{DXXu}
\|D_{\obfX}^E \obfX(z,u_1)-D_{\obfX}^E \obfX(z,u_2)\|\le T_1+T_2
\end{equation}
for
\begin{align}\label{T2T3}
T_1&:=\sum_{i\in \mathcal{I}}\sum_{j\in \mathcal{I}_1}a_{ij}(z)\big\|\bfX_i(x,u_1)\wt D_{\frac{\partial }{\partial x_j}}\bfX(x,u_1)-\bfX_i(x,u_2)\wt D_{\frac{\partial }{\partial x_j}}\bfX(x,u_2)\big\|\lambda^2(||v||^2)\nonumber\\
&\le n_1 n C_LC_aC_x^2\big\|\tfrac{\partial}{\partial x_j}\big\|\|u_1-u_2\|,\nonumber\\
T_2&:=\sum_{i\in \mathcal{I}}\sum_{j\in \mathcal{I}_2}a_{ij}(z)\big\|\bfX(x,u_1)\bfX_i(x,u_1)-\bfX(x,u_2)\bfX_i(x,u_2)\big\|\lambda(||v||^2)\partial_{v_j}\lambda(||v||^2)\nonumber\\
&\le n_1(n_1-n) C_LC_aC_x^2\frac{C}{\epsilon}\|u_1-u_2\|.
\end{align}
From the estimates in \eqref{AZB}, \eqref{T2T3}, and \eqref{DXXu}, we conclude that the mapping $D_{\obfX}^E \obfX(z,\cdot)$ is Lipschitz continuous. Moreover, using \eqref{DXXz}, there exists a constant $C_{E} = C_E(C_B,C_L,C_a,C_x)$ such that
\begin{align*}
\|D_{\obfX}^E \obfX(z_1,u_1)-D_{\obfX}^E \obfX(z_2,u_2)\|&\le \|D_{\obfX}^E \obfX(z_1,u_1)-D_{\obfX}^E \obfX(z_2,u_1)\|+ \|D_{\obfX}^E \obfX(z_2,u_1)-D_{\obfX}^E \obfX(z_2,u_2)\|\nonumber\\
&\le C\|z_1-z_2\|+\|D_{\obfX}^E \obfX(z_2,u_1)-D_{\obfX}^E \obfX(z_2,u_2)\|.\nonumber\\
&\le C_{E}\big(\|z_1-z_2\|+\|u_1-u_2\|\big).
\end{align*}
The desired inequality is therefore established by combining this result with \eqref{CX1}. 
\end{proof}

For clarity and to aid understanding of the extension process, we now give specific examples of vector field extensions as defined in \eqref{stp2}.

\begin{example}
Consider the subspace $\wt M :=\{(z_1,0,\ldots,0) \mid z_1 \in \dbR\} \subset \dbR^n$ and let $\bfX \in \mathsf{X}(\wt M)$ be a vector field on $\wt M$. For any point $z = (z_1,\ldots,z_n) \in \dbR^{n}$, we define the decomposition:
$$x=(z_1,0,...),\quad v=(0,z_2,....,z_n).$$
The extension $\obfX$ of $\bfX$ to $\dbR^n$, supported on $\wt M_{\epsilon}$ (the $\epsilon$-tubular neighborhood of $\wt M$, see in \eqref{meps}), is then given by:
$$\obfX(z)=\bfX(x)\lambda(\|v\|).$$
for $\lambda$ defined in \eqref{stp2}.
\end{example}
\begin{example}
Consider the unit circle $\wt M := S^1 \subset \dbR^2$ and let $\bfX \in \mathsf{X}(\wt M)$ be a smooth vector field on $\wt M$. For any point $z = (z_1, z_2) \in \dbR^2 \setminus {0}$, we define the projection onto the unit circle and radial deviation respectively as:
$$x=\frac{z}{\|z\|},\quad v=\|z\|-1,$$
The extension $\obfX$ of $\bfX$ to $\dbR^2$,supported on $\wt M_{\epsilon}$, is then given by 
$$\obfX(z)=\bfX(x)\lambda(\|v\|).$$
for $\lambda$ defined in \eqref{stp2}. 
\end{example}

The following lemma is employed in establishing the well-posedness of the equations \eqref{eqve0} and \eqref{eqve11}.

\begin{lemma} \label{lmf}
Let $\varphi_{\wt M}(z) := \dist^2(z, \wt M)$ denote the squared distance function from a point $z \in \wt M_{\epsilon} \subset \dbR^{n_1}$ to the submanifold $\wt M$. Given vector fields $\bfX, \bfY \in \mathsf{X}^1(\wt M)$ that have been extended to $\dbR^{n_1}$ (still denoted by $\bfX, \bfY$) via the construction in \eqref{lm62}, there exists a positive constant $C > 0$ such that the following estimates hold for all $z \in \wt M_{\epsilon}$:

1. First-order  estimate:
$|\bfX(z)\varphi_{\wt M}(z)|\le C\varphi_{\wt M}(z)$, for all $z\in M_{\epsilon}$. 

2. Second-order estimate: $|\bfY(z)\bfX(z)\varphi_{\wt M}(z)|\le C\varphi_{\wt M}(z)$, for all  $z\in M_{\epsilon}$.
\end{lemma}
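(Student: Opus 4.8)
The plan is to reduce both estimates to the basic fact that $\varphi_{\wt M}$ vanishes on $\wt M$ together with a control on its first and second derivatives on the tubular neighborhood $\wt M_\epsilon$. First I would record the structure of $\varphi_{\wt M}$ near $\wt M$: for $z \in \wt M_\epsilon$ the normal exponential map $E_{\mathcal N}$ gives a unique decomposition $z = z_x + z_v$ with $z_x \in \wt M$ and $z_v \in \mathcal N_{z_x}$, $\|z_v\| < \epsilon$, and one has $\varphi_{\wt M}(z) = \dist^2(z,\wt M) = \|z_v\|^2$. In the local coordinates $(x,v) = \psi_\alpha^{-1}(z)$ built in the proof of Lemma \ref{lm62}, this reads $\varphi_{\wt M} = \|v\|^2 = \sum_{i \in \mathcal I_2} v_i^2$, a smooth function with $D\varphi_{\wt M} = 2\sum_{i\in\mathcal I_2} v_i\, dv_i$ and $D^2\varphi_{\wt M}$ bounded; in particular $\varphi_{\wt M}$, $\|D\varphi_{\wt M}\| \le C\sqrt{\varphi_{\wt M}}$, and $\|D^2\varphi_{\wt M}\| \le C$ uniformly on $\overline{\wt M_\epsilon}$ by the uniform bounds \eqref{AZB} on the change of frame. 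A cleaner route that avoids coordinates: since $\wt M$ is a compact embedded submanifold, $\varphi_{\wt M} \in C^\infty(\wt M_\epsilon)$, it attains its minimum value $0$ on all of $\wt M$, hence $D\varphi_{\wt M} = 0$ on $\wt M$; then the mean value inequality applied along the normal segment from $z$ to its nearest point gives $\|D\varphi_{\wt M}(z)\| \le C\,\dist(z,\wt M) = C\sqrt{\varphi_{\wt M}(z)}$, using the uniform bound on $D^2\varphi_{\wt M}$ over the compact set $\overline{\wt M_\epsilon}$.

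For the first-order estimate, write $\bfX(z)\varphi_{\wt M}(z) = \langle \bfX(z), D\varphi_{\wt M}(z)\rangle$ (in the Euclidean metric). Since $\bfX$ is the compactly supported extension from Lemma \ref{lm62}, it is bounded: $\|\bfX(z)\| \le C$ for all $z$. Combining with $\|D\varphi_{\wt M}(z)\| \le C\sqrt{\varphi_{\wt M}(z)}$, we would get $|\bfX(z)\varphi_{\wt M}(z)| \le C\sqrt{\varphi_{\wt M}(z)}$. To upgrade $\sqrt{\varphi_{\wt M}}$ to $\varphi_{\wt M}$, observe that $\bfX$ itself is tangent to $\wt M$ on $\wt M$, so the normal component of $\bfX(z)$ is $O(\dist(z,\wt M))$ while $D\varphi_{\wt M}(z)$ is purely normal (it is $2 z_v$), hence the inner product is a product of two quantities each $O(\sqrt{\varphi_{\wt M}})$. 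Concretely, in the $(x,v)$ coordinates, $\bfX(z) = \bfX(x,u)\lambda(\|v\|^2)$ has $\frac{\partial}{\partial v_i}$-components that vanish at $v=0$ (since $\bfX(x,\cdot)$ is independent of $v$ and only the scalar $\lambda$ carries $v$-dependence, with $\lambda'(0)$-type terms), so pairing against $D\varphi_{\wt M} = 2\sum v_i\,dv_i$ produces a genuine $\|v\|^2 = \varphi_{\wt M}$. I would carry this out in the local chart to make the cancellation transparent.

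For the second-order estimate, apply $\bfY$ to the identity $\bfX\varphi_{\wt M} = \langle \bfX, D\varphi_{\wt M}\rangle$, obtaining $\bfY(\bfX\varphi_{\wt M}) = \langle D^E_{\bfY}\bfX, D\varphi_{\wt M}\rangle + \langle \bfX, D^E_{\bfY}D\varphi_{\wt M}\rangle = \langle D^E_{\bfY}\bfX, D\varphi_{\wt M}\rangle + D^2\varphi_{\wt M}(\bfY,\bfX)$. The first term is bounded by $\|D^E_{\bfY}\bfX\|\,\|D\varphi_{\wt M}\| \le C\sqrt{\varphi_{\wt M}}$ using that $D^E_{\bfY}\bfX$ is bounded (Lemma \ref{lm62} gives the needed regularity and compact support) — and again one extracts the full $\varphi_{\wt M}$ because in the chart, after differentiating, the surviving term still pairs the normal covector $D\varphi_{\wt M} \sim 2v$ against quantities of order $\|v\|$; for the second term, $D^2\varphi_{\wt M}(\bfY,\bfX)$ is bounded by $C\|\bfX\|\|\bfY\|$, but on $\wt M$ itself one checks that $D^2\varphi_{\wt M}$ restricted to tangent directions is $0$ (since $\varphi_{\wt M}$ vanishes identically on $\wt M$), and $\bfX,\bfY$ are tangent to $\wt M$ there, so $D^2\varphi_{\wt M}(\bfY,\bfX)$ again vanishes to first order in $\dist(z,\wt M)$, and a Taylor expansion of this quantity along the normal direction yields the bound $C\sqrt{\varphi_{\wt M}}$, which combined with the tangency argument gives $C\varphi_{\wt M}$.

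The main obstacle is the bookkeeping needed to see the \emph{quadratic} (rather than merely linear) vanishing — i.e., getting $\varphi_{\wt M}$ on the right-hand side instead of $\sqrt{\varphi_{\wt M}}$. The naive Cauchy–Schwarz estimates only deliver a half power. The resolution in each case is the same structural observation, and I expect the cleanest presentation to be entirely within the local coordinates $(\mathscr V_\alpha, \psi_\alpha^{-1})$ from the proof of Lemma \ref{lm62}, where $\varphi_{\wt M} = \sum_{i\in\mathcal I_2} v_i^2$, where $\bfX$ and $\bfY$ have the explicit form $\bfX(x,u)\lambda(\|v\|^2)$ with all $v$-dependence confined to the scalar $\lambda$, and where all transition coefficients $a_{ij}$ and Christoffel symbols are uniformly bounded on $\overline{\mathscr V_\alpha}$; then differentiating and collecting powers of $v$ makes both estimates a direct computation. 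Finiteness of the cover $\{\mathscr V_\alpha\}_{\alpha\in\mathcal S}$ lets us pass from the per-chart bounds to a global constant $C$.
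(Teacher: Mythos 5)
Your plan follows the same route as the paper: work in the adapted charts $(\mathscr V_\alpha,\psi_\alpha^{-1})$ from Lemma \ref{lm62}, where $\varphi_{\wt M}=\|v\|^2$, observe that the $\tfrac{\partial}{\partial v_j}$-components of the extended field vanish on $\wt M$ by tangency and are therefore $O(\|v\|)$ by $C^1$-regularity, and then collect powers of $v$ after applying $\bfX$ and $\bfY$; the first-order estimate and the overall strategy are correct.

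One warning on your second-order argument: the invariant decomposition $\bfY\bfX\varphi_{\wt M}=\langle D^E_{\bfY}\bfX, D\varphi_{\wt M}\rangle + D^2\varphi_{\wt M}(\bfY,\bfX)$ does \emph{not} give the bound term by term, as you suggest. On $\wt M$ the vector $D^E_{\bfY}\bfX$ has an $O(1)$ normal component, namely the second fundamental form $\bfh(\bfY,\bfX)$, so pairing it with $D\varphi_{\wt M}\sim 2z_v$ yields only $O(\sqrt{\varphi_{\wt M}})$; likewise $D^2\varphi_{\wt M}(\bfY,\bfX)$ vanishes on $\wt M$ but its normal derivative there is nonzero, so it too is only $O(\sqrt{\varphi_{\wt M}})$. (Already for $\wt M=S^1\subset\dbR^2$ with $\bfX=\bfY$ the angular field, both terms are of order $\|z\|-1$ and cancel.) The quadratic bound holds only for the sum, and the clean way to see the cancellation is exactly the chart computation you defer to: write $\bfX\varphi_{\wt M}=2\sum_{j\in\mathcal I_2}\bar\bfX_j\langle v,v_j\rangle$ with both factors $O(\|v\|)$, then apply $\bfY$ to this already-factored expression and check that every term produced by the product rule retains two $O(\|v\|)$ factors (using additionally that the tangential derivatives $\partial_{x_j}\bar\bfX_i$, $i\in\mathcal I_2$, also vanish on $\{v=0\}$). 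This is precisely what the paper does, so your proposal is sound provided you discard the term-by-term invariant estimate and execute the chart version.
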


\begin{proof} Using formula \eqref{eqzv} and adopting the same notations $\mathscr{V}_{\alpha}$, $\mathcal{S}$ as in Lemma \ref{lm62}, we  fix $\alpha$ and express the vector field as
$$\bfX(z)=\sum_{i\in \mathcal{I}}\bfX_i\frac{\partial}{\partial z_i}=\sum_{i\in \mathcal{I}}\bfX_i\bigg\{\sum_{j\in\mathcal{I}_1}a_{ij}(z)\frac{\partial}{\partial x_j}+\sum_{j\in\mathcal{I}_2}a_{ij}(z)\frac{\partial}{\partial v_j}\bigg\},\quad z\in \mathscr{V}_{\alpha}.$$
Defining $\bar{\bfX}_j(z) := \sum_{i\in \mathcal{I}}\bfX_i(z)a_{ij}(z)$ for $j\in \mathcal{I}$, we obtain the decomposition:
\begin{equation}\label{Vxv}
\bfX(z)=\sum_{j\in\mathcal{I}_1}\bar{\bfX}_j(z)\frac{\partial}{\partial x_j}+\sum_{j\in\mathcal{I}_2}\bar{\bfX}_j(z)\frac{\partial}{\partial v_j},\quad z\in \mathscr{V}_{\alpha}
\end{equation}
with the property that
$$\bar{\bfX}_j(z)=0,\quad \forall j\in\mathcal{I}_2,z\in \wt M.$$ 

Noting that $a_{ij}$ are smooth in $\overline{\mathscr{V}_{\alpha}}$ for $i,j\in\mathcal{I}$, we conclude that $\bar{\bfX_j}\in C^1(\overline{\mathscr{V}_{\alpha}})$.
This yields the estimate:
\begin{equation}\label{nbX}
|\bar{\bfX}_j(z)|=|\bar{\bfX}_j(x+v)-\bar{\bfX}_j(x)|\le \max_{z\in \overline{\mathscr{V}}_{\alpha}}\|\nabla\bar{\bfX}_j\|\|v\|,\quad j\in \mathcal{I}_2,z\in \overline{\mathscr{V}}_{\alpha}.
\end{equation}
In the local coordinates $(\mathscr{V}_{\alpha},\psi_{\alpha}^{-1})$, the squared distance function satisfies $\varphi_{\wt M}(z) = \|v\|^2$. Consequently,
\begin{equation*}
\bfX(z)\varphi_{\wt M}=\sum_{j\in \mathcal{I}_2}\bar{\bfX}_j\partial_{v_j}||v||^2=2\sum_{j\in \mathcal{I}_2}\bar{\bfX}_j(z)\lan v,v_j\ran .
\end{equation*}
Applying \eqref{nbX} and the Cauchy-Schwarz inequality, we obtain the bound:
\begin{equation*}
|\bfX(z)\varphi_{\wt M}(z)|\le 2\sum_{j\in \mathcal{I}_2}\max_{\alpha\in \mathcal{S}} \max_{z\in \overline{\mathscr{V}}_{\alpha}}\|\nabla\bar{\bfX}_j\|\|v\|^2 \|v_j\|\le C\|v\|^2 
\end{equation*}
for some constant $C > 0$.

Similarly, we compute the second-order term in the local coordinates $(\mathscr{V}_{\alpha},\psi_{\alpha}^{-1})$:
\begin{align}\label{YX}
\bfY(z)\{\bfX(z)\varphi_{\wt M}(z)\}&=2\sum_{j\in \mathcal{I}_1}\sum_{i\in \mathcal{I}_2}\bar{\bfY}_j\tfrac{\partial}{\partial x_j}\big\{\bar{\bfX}_i\lan v,v_i\ran \big\}+2\sum_{j\in \mathcal{I}_2}\sum_{i\in \mathcal{I}_2}\bar{\bfY}_j\tfrac{\partial}{\partial v_j}\big\{\bar{\bfX}_i\lan v,v_i\ran \big\}\nonumber\\
&=2\sum_{j\in \mathcal{I}_1}\sum_{i\in \mathcal{I}_2}\bar{\bfY}_j\tfrac{\partial \bar{\bfX}_i}{\partial x_j}\lan v,v_i\ran +2\sum_{j\in \mathcal{I}_2}\sum_{i\in \mathcal{I}_2}\bar{\bfY}_j\tfrac{\partial \bar{\bfX}_i}{\partial v_j}\lan v,v_i\ran +2\sum_{j\in \mathcal{I}_2}\sum_{i\in \mathcal{I}_2}\bar{\bfY}_j\bar{\bfX}_i\lan v_j,v_i\ran.
\end{align}
Analyzing the order of $\|v\|$ in \eqref{YX}, we establish the existence of $C>0$ such that
$$|\bfY(z)\bfX(z)\varphi_{\wt M}(z)|\le C\|v\|^2.$$
which completes the proof.
\end{proof}

On the submanifold $\wt M$, we consider two distinct distance functions: the Euclidean distance induced from the ambient space $\dbR^{n_1}$, and the intrinsic distance $\wt\rho$ defined as the infimum of path lengths measured along $\wt M$. The following lemma, which constitutes the second key result of this subsection, establishes a fundamental relationship between these two distance metrics. This connection plays a pivotal role in rigorously demonstrating the continuous dependence of solutions to equation \eqref{eqve0} on both controls and initial data.

\begin{lemma} \label{lm61}
Let $(\wt M,\wt g)$ be an $n$-dimensional compact embedded submanifold of $\dbR^{n_1}$, then there exists a  constant $C_{\rho}\ge 1$, such that for any $x,y\in \wt M$,
\begin{equation}\label{apeq}
\|x-y\|\le \wt \rho(x,y)\le  C_{\rho}\|x-y\|.
\end{equation}
\end{lemma}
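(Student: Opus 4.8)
\textbf{Proof strategy for Lemma~\ref{lm61}.} The lower bound $\|x-y\|\le\wt\rho(x,y)$ is immediate: any piecewise smooth curve $\wt\gamma:[0,1]\to\wt M$ joining $x$ to $y$ is also a curve in $\dbR^{n_1}$, and since $\wt M$ carries the induced metric its length $\mathsf{L}_{\wt g}(\wt\gamma)$ equals its Euclidean length $\int_0^1\|\wt\gamma'(t)\|\,dt$, which is at least $\|x-y\|$ by the triangle inequality; taking the infimum over all such curves gives the claim. The substance of the lemma is the upper bound, i.e.\ that the intrinsic distance is not much larger than the chordal distance, uniformly in $x,y$.

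The plan for the upper bound is to split into a local regime and a far regime. \emph{Local regime:} fix the tubular-neighborhood radius $\epsilon_{\wt M}$ from \eqref{epsm} and the nearest-point projection $\pi:\wt M_{\epsilon_{\wt M}}\to\wt M$ (the map sending $z=E_{\mathcal N}(x,v_x)$ to $x$), which is smooth by Lemma~\ref{Tubular}. For $x,y\in\wt M$ with $\|x-y\|<\delta_0$ for a suitably small $\delta_0\le\epsilon_{\wt M}$, the straight Euclidean segment $t\mapsto(1-t)x+ty$ lies inside $\wt M_{\epsilon_{\wt M}}$, and its projection $\wt\gamma(t):=\pi((1-t)x+ty)$ is a piecewise smooth curve in $\wt M$ joining $x$ to $y$. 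Since $\pi$ is smooth on the compact set $\overline{\wt M_{\epsilon_{\wt M}/2}}$, its differential is bounded, say $\|d\pi_z\|\le \Lambda$, so $\mathsf{L}_{\wt g}(\wt\gamma)\le\Lambda\,\|x-y\|$, giving $\wt\rho(x,y)\le\Lambda\|x-y\|$ on this local scale. \emph{Far regime:} on the complement, $\rho(x,y)$ is bounded below by $\delta_0$ by the local estimate (if $\wt\rho(x,y)<\delta_0\le\bfi_{\wt M}$ one could conversely bound $\|x-y\|$ from below, but more simply: on the compact set $\{(x,y)\in\wt M\times\wt M:\|x-y\|\ge\delta_0\}$ the continuous function $\wt\rho(x,y)/\|x-y\|$ is bounded above, since $\wt\rho$ is bounded by $\operatorname{diam}_{\wt g}(\wt M)<\infty$ — compactness of $\wt M$ — while $\|x-y\|\ge\delta_0>0$). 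Taking $C_\rho:=\max\{1,\Lambda,\operatorname{diam}_{\wt g}(\wt M)/\delta_0\}$ yields \eqref{apeq}.

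Two technical points deserve care. First, one must check that the segment $(1-t)x+ty$ genuinely stays in the tubular neighborhood when $\|x-y\|$ is small; this follows because $\dist(\,\cdot\,,\wt M)$ is $1$-Lipschitz, so every point of the segment is within $\|x-y\|<\delta_0\le\epsilon_{\wt M}$ of $\wt M$, hence in $\wt M_{\epsilon_{\wt M}}$ provided $\delta_0<\epsilon_{\wt M}$. Second, the projection $\pi$ is only guaranteed smooth up to the boundary of the tube; working with $\delta_0\le\epsilon_{\wt M}/2$ keeps us on a compact set $\overline{\wt M_{\epsilon_{\wt M}/2}}$ where $\|d\pi\|$ attains a finite maximum $\Lambda$. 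I expect this boundedness of $d\pi$ on a compact sub-tube to be the main (though modest) obstacle — it is the one place where the geometry of the embedding, via the tubular neighborhood theorem, is really used; everything else is compactness plus the triangle inequality.
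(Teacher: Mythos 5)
Your proposal is correct, but it follows a genuinely different route from the paper. The paper proves the upper bound by showing that the ratio $\zeta(x,y)=\wt\rho(x,y)/\|x-y\|$ extends continuously to the diagonal of $\wt M\times\wt M$: it works in normal coordinates, Taylor-expands the squared chordal length $F(t)=\|\bfx(t)-\bfx(0)\|^2$ along the connecting geodesic to third order, and controls $F'''$ via the Gauss--Weingarten equations, obtaining $\big|\|x-y\|^2-\wt\rho^2(x,y)\big|\le C\,\wt\rho^3(x,y)$ and hence $\zeta\to 1$ at the diagonal; compactness then bounds $\zeta$ globally. You instead handle the near-diagonal regime by pushing the Euclidean chord into $\wt M$ with the nearest-point retraction $\pi$ of the tubular neighborhood and bounding $\|d\pi\|$ on the compact half-tube $\overline{\wt M_{\epsilon_{\wt M}/2}}$, and the far regime by the trivial bound $\wt\rho\le\operatorname{diam}_{\wt g}(\wt M)$ against $\|x-y\|\ge\delta_0$. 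Your argument is more elementary --- it needs no curvature or second-fundamental-form computations, only the tubular neighborhood theorem (Lemma~\ref{Tubular}), the $1$-Lipschitz property of $\dist(\cdot,\wt M)$, and compactness --- and it delivers exactly what the lemma asserts. What it does not deliver is the sharper asymptotic $\zeta(x,y)\to 1$ as $y\to x$ that the paper's expansion yields, but that refinement is not used elsewhere. One minor shared caveat: both proofs implicitly use that $\wt M$ is connected so that $\wt\rho<\infty$ (in your far regime, so that $\operatorname{diam}_{\wt g}(\wt M)<\infty$; in the paper's, so that $\zeta$ is finite off the diagonal); this holds in the application $\wt M=\mathcal{E}(M)$.
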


\begin{proof}
The left-hand inequality in \eqref{apeq} is immediate. For the right-hand inequality, we demonstrate that the function 
$$\zeta(x,y):=\frac{\wt\rho(x,y)}{\|x-y\|},$$
admits a continuous extension to $\wt M \times \wt M$. The compactness of $\wt M$ then ensures the boundedness of $\zeta$. Setting $C_{\rho} := \max_{x,y \in \wt M} \zeta(x,y)$ yields the desired inequality \eqref{apeq}.

Since $\wt M$ is an embedded submanifold of $\dbR^{n_1}$, the inclusion map $\iota: \wt M \rightarrow \dbR^{n_1}$ is injective. Consequently, for any $x, y \in \wt M$, $\|x - y\| = 0$ implies $\wt\rho(x, y) = 0$. Thus, the only singularities of $\zeta$ occur on the diagonal $\{(z, z) \mid z \in \wt M\}$. 

We now establish that for every $z \in \wt M$ and $\epsilon > 0$, there exists $\delta > 0$ such that whenever $\wt\rho(x, z) + \wt\rho(y, z) \le \delta$, there exists a positive constant $c_z$ for which 
$$|\zeta(x,y)-c_z|\le \epsilon.$$
Defining $\zeta(z, z) := c_z$ ensures continuity by construction.

The proof proceeds in three steps for a fixed $z \in \wt M$:

\textbf{Step 1:  Local Coordinates and Geometric Setup}. Identify the tangent space $T_z \wt M$ with $\dbR^n$ and consider the geodesic ball $\mathscr{U}_z:=\wt B_z^{\wt\rho}(\frac{1}{2}\bfi_{\wt M})$ centered at $z$. Let $(\mathscr{U}_z, \psi_z)$ denote normal coordinates centered at $z$, where
\begin{equation}\label{equu}
\psi_z:\mathscr{U}_z\rightarrow B_{n}(\tfrac{1}{2}\bfi_{\wt M}),\quad\psi_z(x):=\exp_{z}^{-1}(x),
\end{equation}
with inverse $\phi_z := \psi_z^{-1}$ given by
\begin{equation}\label{eqphi}
\phi_z: B_{n}\big(\tfrac{1}{2}\bfi_{\wt M}\big)\rightarrow\mathscr{U}_z,\quad\phi_z(\bfu):=\exp_{z}(\bfu).
\end{equation}
(Note the distinction between the control variable $u$ and the parameter $\mathbf{u}$.)

In these coordinates, the inclusion map $\iota$ (see Definition \ref{dfemb}) has component functions $x_1, \dots, x_{n_1}$ expressible as:
\begin{align*}
x_{1}=x^1\circ\iota\circ\phi_z(\bfu_1,\dots,\bfu_n),\quad\dots,\quad x_{n_1}=x^{n_1}\circ\iota\circ\phi_z(\bfu_1,\dots,\bfu_{n}).
\end{align*}
The smoothness of $\iota$ ensures the smoothness of each $x_i$. Define
$$\varphi:=(x_1,x_2,...,x_{n_1}),\quad \varphi_j=\frac{\partial\varphi}{\partial \bfu_j},\quad \varphi_{ij}=\frac{\partial^2\varphi}{\partial \bfu_i\partial \bfu_j},\quad \varphi_{ijk}=\frac{\partial^3\varphi}{\partial \bfu_i\partial \bfu_j\partial \bfu_k}.$$ 
The basis of tangent space and metric components  in local coordinate $(\mathscr{U}_z,\psi_z)$  are given by
$$
\varphi_i=\frac{\partial}{\partial \bfu_i}=\sum_{j\in \mathcal{I}}\frac{\partial x_j}{\partial \bfu_i}\frac{\partial }{\partial x_{j}},\quad \wt g_{ij}=\lan \frac{\partial}{\partial \bfu_i},\frac{\partial}{\partial \bfu_j}\ran =
\lan \varphi_i,\varphi_j\ran,\quad i,j\in \mathcal{I}_1,$$
By Lemma \ref{lmn}, there exists a smooth orthonormal normal frame $\{\mathbf{n}^\alpha\}_{\alpha \in \mathcal{I}_2}$ on $\mathsf{N}(\mathscr{U}_z)$. The Gauss-Weingarten equations in components yield 
\begin{equation}\label{GW}
\varphi_{ij}=\sum_{s\in \mathcal{I}_1}\Gamma_{ij}^s \varphi_{s}+\sum_{\alpha\in \mathcal{I}_2} H_{ij}^{\alpha} \bfn^{\alpha},\quad \frac{\partial \bfn^{\alpha}}{\partial \bfu_j}=-\sum_{i,s\in \mathcal{I}_1}\wt g^{is}H_{ij}^{\alpha}\varphi_{s}+\sum_{\beta\in \mathcal{I}_2}N_{\alpha j}^{\beta}\bfn^{\beta},
\end{equation}
where $\Gamma_{ij}^k$, $H_{ij}^\alpha$, and $N_{\alpha j}^\beta$ are smooth in $\mathbf{u} \in B_n(\frac{1}{2} \bfi_{\wt M})$.

Differentiating once more, we obtain the third derivatives:
\begin{align*}
\varphi_{ijk}&= \sum_{s\in \mathcal{I}_1}\Big\{\frac{\partial \Gamma_{ij}^s}{\partial \bfu_k} \varphi_{s}+\Gamma_{ij}^s \varphi_{sk}\Big\}+\sum_{\alpha\in \mathcal{I}_2}\Big\{\frac{\partial H_{ij}^{\alpha}}{\partial \bfu_k} \bfn^{\alpha}+H_{ij}^{\alpha}\frac{\partial \bfn^{\alpha}}{\partial \bfu_k}\Big\}\nonumber\\
&=\sum_{s\in \mathcal{I}_1}\bigg\{\frac{\partial \Gamma_{ij}^s}{\partial \bfu_k}+\sum_{r\in \mathcal{I}_1}\Gamma_{ij}^r\Gamma_{rk}^s-\sum_{r\in \mathcal{I}_1,\beta\in \mathcal{I}_2}H_{ij}^{\beta}\wt g^{rs}H_{rk}^{\beta}\bigg\}\varphi_{s}\nonumber\\
&\quad+\sum_{\alpha\in \mathcal{I}_2}\bigg\{\frac{\partial H_{ij}^{\alpha}}{\partial \bfu_k}+\sum_{\beta\in \mathcal{I}_2}H_{ij}^{\beta}N_{\beta k}^{\alpha}+\sum_{s\in \mathcal{I}_1}\Gamma_{ij}^sH_{sk}^{\alpha}\bigg\} \bfn^{\alpha},
\end{align*}
for $i,j,k\in \mathcal{I}_1$.

\ss

\textbf{Step 2: Geodesic Analysis and Distance Comparison}. For $x,y\in \mathscr{U}_{z}$, the Riemannian distance satisfies $l_{xy} := \wt\rho(x, y) \le \wt\rho(z, x) + \wt\rho(z, y) < \bfi_{\wt M}$. The inverse exponential map $\exp_x^{-1}(y)$ is well-defined, and there exists a unique unit-speed geodesic $\gamma: [0, l_{xy}] \to \mathscr{U}_z$ connecting $x$ and $y$, with $\gamma'(0)=\frac{\exp_x^{-1}(y)}{l_{xy}}\in T_x\wt M$.

In coordinates $(\mathscr{U}_z,\psi_z)$, 
$$\wt \gamma(t):=(\wt\gamma_1(t),\dots,\wt \gamma_n(t))):=\psi_z(\gamma(t))\in B_n(\bfi_{\wt M}),\quad t\in [0,l_{xy}],$$ 
satisfies the geodesic equations:
\begin{equation}\label{equk}
\wt\gamma_k''(t)+\Gamma_{ij}^k(\gamma(t))\wt\gamma_i'(t)\wt\gamma_j'(t)=0,\quad \wt\gamma_k(0)=[\psi_z(x)]_k,\quad \wt\gamma_k'(0)=v_k,\quad i,j,k\in \mathcal{I}_1.
\end{equation}
where $v_k$ is the $k$-th components of $\gamma'(0)$ in the basis $\{\tfrac{\partial}{\partial \bfu_k}\big|_{x}\}_{k\in \mathcal{I}_1}$.

Let $\mathbf{x}(t) := \varphi(\wt\gamma(t))$ be the Euclidean representation of $\gamma$. Differentiating $\mathbf{x}(t)$ yields:
\begin{equation}\label{eqxk}
\begin{cases}
\ds\bfx'(t)=\sum_{i\in \mathcal{I}_1}\varphi_{i}\wt\gamma_i'(t),\\
\ns\ds \bfx''(t)=\sum_{i,j\in\mathcal{I}_1}(\sum_{\alpha\in\mathcal{I}_2}H_{ij}^{\alpha} \bfn^{\alpha}+\sum_{k\in\mathcal{I}_1}\Gamma_{ij}^k\varphi_k)\wt\gamma_i'(t)\wt\gamma_j'(t), \\
\ns\ds
\lan \bfx'(t),\bfx''(t)\ran =2\sum_{i,j,l,s\in \mathcal{I}_1}\Gamma_{ij}^s\gamma_i'(t)\wt\gamma_j'(t)\wt\gamma_l'(t)\wt g_{sl},
\\
\ns\ds\bfx'''(t)=\sum_{i,j,k\in \mathcal{I}_1}\varphi_{ijk}\wt\gamma_i'(t)\wt\gamma_j'(t)\wt\gamma_k'(t)+3\sum_{i,j\in \mathcal{I}_1}\varphi_{ij}\wt\gamma_i''(t)\wt\gamma_j'(t)+\sum_{i\in \mathcal{I}_1}\varphi_i \wt\gamma_i'''(t).
\end{cases}
\end{equation}
By the equation \eqref{equk}, we get that
\begin{align}\label{eqtxu33}
\wt\gamma_k'''(t)=&-\sum_{i,j,l\in \mathcal{I}_1}\frac{\partial\Gamma_{ij}^k}{\partial \bfu_l}(t)\wt\gamma_i'(t)\wt\gamma_j'(t)\wt\gamma_l'(t)+\sum_{i,j,l,s\in \mathcal{I}_1}\Gamma_{ij}^k \wt\gamma_j'(t)\Gamma_{ls}^i \wt\gamma_l'(t)\wt\gamma_s'(t)\nonumber\\
&+\sum_{i,j,l,s\in \mathcal{I}_1}\Gamma_{ij}^k \wt\gamma_i'(t)\Gamma_{ls}^j \wt\gamma_l'(t)\wt\gamma_s'(t),
\end{align}
Since the  normal coordinates $(\mathscr{U}_z,\psi_z)$ and $\{\bfn^{\alpha}\}_{\alpha\in \mathcal{I}_2}$ are fixed, then quantities $\wt g,\Gamma,H,N$ are fixed and smooth with respect to $\bfu_i,i\in \mathcal{I}_1$(Here, $\Gamma$ is short for $\{\Gamma_{ij}^k\}_{i,j,k\in\mathcal{I}_1}$; analogous usage apply to other quantities). From \eqref{eqxk}, we obtain that
\begin{equation}\label{eqx1x2}
|\lan \bfx'(t),\bfx''(t)\ran |\le C\max_{t\in [0,l_{xy}]}\|\wt\gamma'(t)\|^3,\quad t\in [0,l_{xy}],
\end{equation}
where positive $C$ depends on the values of $\wt g$ and $\Gamma$ in $\mathscr{U}_z$.
Similarly, by using \eqref{eqtxu33}, we have that
\begin{equation}\label{eqx3}
\|\bfx'''(t)\|\le C\max_{t\in [0,l_{xy}]}\|\wt\gamma'(t)\|^3, \quad t\in [0,l_{xy}],
\end{equation}
where positive constant $C$ depends on the values of $\wt g,H,H',\Gamma,\Gamma',N$ in $\mathscr{U}_z$, for $H',\Gamma'$ denoting the derivatives of $H,\Gamma$ with respect to $\bfu_i(i\in \mathcal{I}_1)$ without ambiguity. 

Since $\gamma$ is parameterized by arc-length, the Riemannian distance between $\bfx(t)$ and $x$ satisfies
$$\wt\rho(\bfx(t),x)=t, \quad t\in [0,l_{xy}]. $$
We now examine the corresponding Euclidean distance. Define the squared distance function:
$$F(t):= \|\bfx(t)-\bfx(0)\|^2=\sum_{i\in \mathcal{I}}(\bfx_i(t)-\bfx_i(0))^2,\quad t\in [0,l_{xy}].$$
Direct computation of derivatives yields that
\begin{equation}\label{eqfx}
F'(0)=0,\quad F''(0)=2\|\gamma'(0)\|^2=2,
\end{equation}
and that
\begin{equation}\label{eqfx2}
F'''(t)=2\lan \bfx'''(t),\bfx(t)-\bfx(0)\ran +6\lan \bfx'(t),\bfx''(t)\ran ,\quad t\in [0,l_{xy}].
\end{equation}
Applying Cauchy-Schwarz inequality in conjunction with inequalities \eqref{eqx1x2}, \eqref{eqx3}, and the bound
$$\|\bfx(t)-\bfx(0)\|\le\wt \rho(\bfx(t),\bfx(0))\le \bfi_{\wt M},\quad t\in [0,l_{xy}],$$
we obtain the uniform estimate:
$$|F'''(t)|\le C_0 \max_{t\in [0,l_{xy}]}\|\wt\gamma'(t)\|^3,\quad t\in [0,l_{xy}].$$
where the positive constant $C_0$ depending on values of $\wt g,H,\Gamma,N,H',\Gamma'$ in $\mathscr{U}_z$ and $\bfi_{\wt M}$.

Consequently, Taylor expansion gives
\begin{equation}\label{rhoxyt}
\big|\|\bfx(t)-\bfx(0)\|^2 -t^2\big|\le\frac{1}{6}\max_{t\in [0,l_{xy}]}|F'''(t)|\|\wt\gamma'(t)\|^3 t^3\le C_0 \|\wt\gamma'(t)\|^3t^3,\quad t\in [0,l_{xy}].
\end{equation}
The Riemannian metric $\wt g$ is non-degenerate and bounded in $\mathscr{U}_z$, and for any arc-length parameterized geodesic $\bm{\g}$(bold, distinguished from $\gamma$) in $\mathscr{U}_z$ and the corresponding coordinate represention $\widetilde{\bm{\g}}'$ in $(\mathscr{U}_z,\psi_z)$ , we have 
$$\|\bm{\g}'\|^2=\wt g_{ij}(\bm{\g})\widetilde{\bm{\g}}_i'\widetilde{\bm{\g}}_j'=1.$$ 
By the continuity of $\wt g$, there exists a constant $ G_1 \ge 1 $, depending only on $\wt g$ over $\mathscr{U}_z$(uniform in $\bm{\g}$), such that
\begin{equation}\label{mbfu}
\frac{1}{G_1} \le \|\widetilde{\bm{\g}}'\| \le G_1.
\end{equation}
Combining \eqref{rhoxyt} and \eqref{mbfu} at $t=l_{xy}$ yields
$$\big|\|x-y\|^2-\wt\rho^2(x,y)\big|\le C_0G_1^3\wt\rho^3(x,y),$$
which implies the key estimate of $1/\zeta$:
\begin{equation}\label{zetaf}
\frac{\|x-y\|}{\wt\rho(x,y)}\in \Big[\sqrt{1- C_0G_1^3\wt\rho(x,y)},\quad \sqrt{1+C_0G_1^3\wt\rho(x,y)}\Big],\quad x,y\in \mathscr{U}_z.
\end{equation}

\textbf{Step 3: Continuity of $\zeta$}. For $\epsilon > 0$ sufficiently small, set $\delta := \epsilon / (C_0 G_1^3)$. If $\wt\rho(x, z) + \wt\rho(y, z) \le \delta$, then \eqref{zetaf} and the inequality $|1/\sqrt{1 - x} - 1| \le x$ for $0 < x < \frac{1}{3}$ yield 
\begin{align}
\bigg|\frac{\wt\rho(x,y)}{\|x-y\|}-1\bigg|&\le \bigg|\frac{1}{\sqrt{1-C_0G_1^3\wt\rho(x,y)}}-1\bigg|\le C_0G_1^3\wt\rho(x,y)\nonumber\\
&\le C_0G_1^3\{\wt\rho(x,z)+\wt\rho(y,z)\}\le \epsilon.
\end{align}
Defining $\zeta(z, z) := 1$ ensures continuity at $z$.

Since $z \in \wt M$ was arbitrary, $\zeta$ is continuous on $\wt M \times \wt M$, completing the proof. 
\end{proof}

\subsection{Estimates related to $D_{(x,y)}^2\rho^2(x,y)$ on $M$}\label{IE}

In establishing the uniqueness and continuous dependence properties of viscosity solutions for equation \eqref{phjb}, a crucial step involves the careful selection of the test function $\varphi$ as the squared-distance-related function in Lemma \ref{pmaxp}. This approach necessitates precise estimates for the operators $A$ and $A^2$, where
\begin{equation}\label{scrA}
A = D_{(x,y)}^2 \rho^2(x, y),
\end{equation}
represents the Hessian of the squared distance function $\rho^2(x,y)$ on the manifold $M$. The fundamental purpose of Lemma \ref{mlemma1} and Lemma \ref{SecondOrderLemma} is to provide rigorous derivations of these essential operator estimates.

First, we introduce the notion of Jacobi fields. Let $\gamma:[0,l]\rightarrow M$ be a geodesic on $M$, and let $J$ be a smooth vector field along $\gamma$. If $J$ satisfies the Jacobi equation
$$J''(t)=\mathcal{R}(\gamma'(t),J(t))\gamma'(t),\quad t\in (0,l)$$
then $J$ is called a Jacobi field along $\gamma$. 

To facilitate the presentation of Lemma \ref{Jaco} concerning Jacobi fields, we first state the following standard result:


%
\begin{lemma}\nmycite{doCarmo}{p.114} 
Let $\gamma(0) = x$, $\gamma'(0) = w$, and let $J$ be a Jacobi field along $\gamma$ satisfying $J(0) = 0$ and $J'(0) = v$. Then $J$ can be expressed as $J(t)=(\exp_x)_{*tw}(tv),t\in [0,l]$.
\end{lemma}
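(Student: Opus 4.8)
The plan is to realize $J$ as the variation field of a one-parameter family of geodesics issuing from $x$ and then to appeal to the uniqueness of solutions of the (linear) Jacobi equation. First I would introduce the smooth map
\[
f(s,t):=\exp_x\big(t(w+sv)\big),\qquad (s,t)\in(-\varepsilon,\varepsilon)\times[0,l],
\]
which is well defined for $\varepsilon>0$ sufficiently small, since $\exp_x$ is defined on a neighbourhood of $tw$ for every $t\in[0,l]$. For each fixed $s$, the curve $t\mapsto f(s,t)$ is the geodesic with initial point $x$ and initial velocity $w+sv$; in particular $f(0,\cdot)=\gamma$, the geodesic appearing in the statement.

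Because $f$ is a variation through geodesics, its variation field $\bar J(t):=\partial_s f(0,t)$ is a Jacobi field along $\gamma$: differentiating the geodesic equation $\tfrac{D}{\partial t}\partial_t f\equiv 0$ in $s$ and using the definition of the curvature operator $\mathcal{R}$ together with the symmetry identity $\tfrac{D}{\partial s}\partial_t f=\tfrac{D}{\partial t}\partial_s f$ (valid because the Levi-Civita connection $D$ is torsion-free) produces exactly $\bar J''=\mathcal{R}(\gamma',\bar J)\gamma'$. I would then read off the initial data of $\bar J$: since $f(s,0)=x$ for all $s$, we have $\bar J(0)=0$; and, applying the same symmetry identity, $\bar J'(0)=\tfrac{D}{\partial t}\big|_{t=0}\partial_s f(0,t)=\tfrac{D}{\partial s}\big|_{s=0}\partial_t f(s,0)=\tfrac{D}{\partial s}\big|_{s=0}(w+sv)=v$, the last equality because $s\mapsto w+sv$ is a vector field along the constant curve $s\mapsto x$, along which covariant differentiation reduces to ordinary differentiation.

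Since the Jacobi equation is a linear second-order ODE for vector fields along $\gamma$, its solution is uniquely determined by the values of the field and of its covariant derivative at $t=0$; as $J$ and $\bar J$ share the data $\big(J(0),J'(0)\big)=(0,v)$, they coincide on $[0,l]$. It then remains to identify $\bar J(t)$ with $(\exp_x)_{*tw}(tv)$: for fixed $t$, the curve $s\mapsto t(w+sv)$ in $T_xM$ passes through $tw$ at $s=0$ with velocity $tv$ under the canonical identification $T_{tw}(T_xM)\cong T_xM$, so the chain rule yields $\bar J(t)=\partial_s\big|_{s=0}\exp_x\big(t(w+sv)\big)=d(\exp_x)_{tw}(tv)=(\exp_x)_{*tw}(tv)$. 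I do not anticipate a genuine obstacle here, as this is a classical computation; the only points that require explicit care are the symmetry lemma $\tfrac{D}{\partial s}\partial_t f=\tfrac{D}{\partial t}\partial_s f$ for a two-parameter map (which relies on the vanishing of the torsion) and the canonical identification $T_{tw}(T_xM)\cong T_xM$ that makes the symbol ``$tv$'' in the statement meaningful.
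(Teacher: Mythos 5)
Your proof is correct and is exactly the classical argument from the cited reference (do Carmo, Ch.\ 5): realize the candidate field as the variation field of the geodesic variation $f(s,t)=\exp_x(t(w+sv))$, check the initial data via the symmetry lemma, and conclude by uniqueness for the linear Jacobi ODE. The paper itself states this lemma without proof, simply citing do Carmo, so there is nothing further to compare; your handling of the identification $T_{tw}(T_xM)\cong T_xM$ and of the torsion-free symmetry identity covers the only delicate points.
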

\begin{lemma}\label{Jaco}
Let $w \in T_x M$ with $\|w\|_g = 1$ and choose $l \leq \min\left\{\frac{1}{2}\bfi_M,\sqrt{\frac{1}{2C_1}}, 1\right\}$. Consider the geodesic $\gamma(t) = \exp_x(wt)$ for $t \in [0,l]$. For any tangent vector $v \in T_x M$, we define the associated Jacobi field along $\gamma$ as:
\begin{equation*}
J(t):=(\exp_x)_{*tw}(tv),\quad t\in [0,l], 
\end{equation*}
and its derivative field
\begin{equation*}
J_1(t):=(\exp_x)_{*tw}(v),\quad t\in [0,l].
\end{equation*}
Moreover, we have:
\begin{equation}\label{eq35}
\frac{1}{2}\|v\|_g \leq \|J_1(t)\|_g \leq \frac{3}{2}\|v\|_g, \quad \forall t \in [0,l],
\end{equation}
where the constant $C_1$ depends only on the curvature bound $C_R$ and the injectivity radius $\bfi_M$. 
\end{lemma}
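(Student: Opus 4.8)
The plan is to pass to a parallel orthonormal frame along $\gamma$, reduce the Jacobi equation to a linear second–order ODE system with uniformly bounded coefficients, integrate it twice, and then close a short bootstrap estimate that uses the smallness of $l$.

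First I would record the elementary reductions. By the preceding lemma, $J$ is the Jacobi field along $\gamma$ with $J(0)=0$ and $J'(0)=v$; and since $(\exp_x)_{*tw}$ is linear, $J(t)=(\exp_x)_{*tw}(tv)=t\,(\exp_x)_{*tw}(v)=t\,J_1(t)$, with $J_1$ smooth on $[0,l]$ and $J_1(0)=(\exp_x)_{*0}(v)=v$. Fixing a parallel orthonormal frame $\{e_i(t)\}_{i=1}^n$ along $\gamma$ (which stays orthonormal because $D$ is metric) and writing $J(t)=\sum_i y_i(t)e_i(t)$, the Jacobi equation $J''=\mathcal{R}(\gamma',J)\gamma'$ becomes the system $y''(t)=B(t)y(t)$ with $B(t)_{ij}=\langle\mathcal{R}(\gamma'(t),e_j(t))\gamma'(t),e_i(t)\rangle_g$; since $\gamma$ is unit speed, the curvature bound \eqref{eqrr} gives $|B(t)_{ij}|\le C_R$ and hence an operator–norm bound $\|B(t)\|\le \kappa$ with $\kappa=\kappa(n,C_R)$. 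Here $\|J(t)\|_g=|y(t)|$, and if we set $z(t):=y(t)/t$ for $t\in(0,l]$, $z(0):=v$ — the component vector of $J_1$ — then $\|J_1(t)\|_g=|z(t)|$.

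Next I would integrate. Integrating $y''=By$ twice with $y(0)=0$, $y'(0)=v$ gives $y(t)=tv+\int_0^t(t-s)B(s)y(s)\,ds$; dividing by $t$ and substituting $y(s)=s\,z(s)$ yields the Volterra equation
\begin{equation*}
z(t)=v+\int_0^t \frac{(t-s)s}{t}\,B(s)z(s)\,ds,\qquad t\in[0,l],
\end{equation*}
whose kernel obeys $0\le\frac{(t-s)s}{t}\le\frac t4$ and $\int_0^t\frac{(t-s)s}{t}\,ds=\frac{t^2}{6}$ for $0\le s\le t\le l$.

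Finally I would run the bootstrap. Setting $m:=\max_{[0,l]}|z|$ (finite since $J_1$ is continuous), the Volterra equation gives $m\le\|v\|_g+\kappa\frac{l^2}{6}m$. Choosing $C_1=C_1(n,C_R)$ (hence depending only on $C_R$ and $\bfi_M$), for instance $C_1:=\tfrac{2\kappa}{3}$, so that $l\le\sqrt{1/(2C_1)}$ forces $\kappa l^2\le\tfrac34$, the coefficient satisfies $\kappa\frac{l^2}{6}\le\tfrac12$ and therefore $m\le 2\|v\|_g$. Feeding this back into the Volterra equation,
\begin{equation*}
\big|\,\|J_1(t)\|_g-\|v\|_g\,\big|\le\Big|\int_0^t\tfrac{(t-s)s}{t}B(s)z(s)\,ds\Big|\le\kappa\,m\,\tfrac{t^2}{6}\le\tfrac{\kappa l^2}{3}\|v\|_g\le\tfrac14\|v\|_g ,\qquad t\in[0,l],
\end{equation*}
which gives $\tfrac34\|v\|_g\le\|J_1(t)\|_g\le\tfrac54\|v\|_g$, stronger than \eqref{eq35}. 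The only genuinely delicate point is the calibration of $C_1$ against $\kappa$ in this last step, so that the self–referential bound $m\le\|v\|_g+\kappa\frac{l^2}{6}m$ can be solved; the frame computation and the double integration are routine.
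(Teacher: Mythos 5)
Your proof is correct, and it takes a genuinely different route from the paper's. Both arguments begin the same way (identify $J$ as the Jacobi field with $J(0)=0$, $J'(0)=v$, note $J(t)=tJ_1(t)$, and pass to a parallel orthonormal frame so that the Jacobi equation becomes $y''=B(t)y$ with $\|B\|$ controlled by the curvature bound \eqref{eqrr}). From there the paper expands $\zeta(t)=\langle J,J\rangle_g$ in a fourth-order Taylor polynomial about $t=0$ — computing $\zeta(0)=\zeta'(0)=\zeta'''(0)=0$, $\zeta''(0)=2\|v\|_g^2$, then using a Gronwall bound on the first-order system $(J,J')$ to control $\zeta^{(4)}$ via the explicit (and somewhat heavy) formula \eqref{eq319} involving $R$ and its derivatives. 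You instead integrate $y''=B y$ twice, rewrite the result as a Volterra equation for $z=y/t$ with kernel $\tfrac{(t-s)s}{t}$ of total mass $t^2/6$, and close a one-line bootstrap $m\le\|v\|_g+\kappa\tfrac{l^2}{6}m$. Your route is more elementary: it avoids the third- and fourth-derivative computations and the derivative bounds $\|DR\|,\|D^2R\|$ entirely (only $\|R\|\le C_R$ enters), it delivers the same quantitative content $\bigl|\,\|J_1(t)\|_g-\|v\|_g\bigr|\le C t^2\|v\|_g$ with an explicit admissible choice $C_1=\tfrac{2\kappa}{3}$, and it even yields the slightly sharper constants $\tfrac34,\tfrac54$. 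The dependence of your $\kappa$ on $n$ is harmless, since the paper's own constant also depends on $n$. The only presentational caveat is that $C_1$ appears in the hypothesis $l\le\sqrt{1/(2C_1)}$, so as in the paper you must fix $C_1$ from $\kappa$ \emph{before} invoking the smallness of $l$ — which you do.
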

\begin{proof}
Let $t$ be the arc-length parameter of $\gamma$, and let $\{e_i\}_{i=1}^n$ be an orthonormal parallel frame field along $\gamma$ satisfying
\begin{equation*}
e_n(t)=\gamma'(t),\quad D_{\gamma'}e_i(t)=0.
\end{equation*}
In this frame, the Jacobi field decomposes as $J(t) = \sum_{i=1}^n J^i(t)e_i(t)$, with covariant derivatives
\begin{equation*}
J'=D_{\gamma'}J=\sum_{i=1}^n{J^i}'(t)e_i(t),\quad J''=D_{\gamma'}J'=\sum_{i=1}^n{J^i}''(t)e_i(t).
\end{equation*}
The Jacobi equation $J'' = \mathcal{R}(\gamma,J)\gamma$ with initial conditions $J(0)=0$, $J'(0)=v$ yields the component system:
\begin{equation}\label{ui}
{J^i}''=\sum_{j=1}^n J^jR_{n in j},\quad i=1,..,n,\quad J^i(0)=0,\quad {J^i}'(0)=v_i:=\lan v,e^i\ran _g,
\end{equation}
where $R_{ninj}=\lan \mathcal{R}(e_n,e_j)e_n,e_j\ran $. 

Define $\zeta(t) := \langle J(t),J(t)\rangle_g$. Using properties of the Levi-Civita connection:
$$D_{\gamma'(t)}\lan X,Y\ran _g=\lan D_{\gamma'(t)}X,Y\ran _g+\lan X,D_{\gamma'(t)}Y\ran _g,\quad\text{for}\quad X,Y\in \mathsf{X}^1(M),$$
we obtain that
\begin{align*}
&\zeta'(t)=2\lan J'(t),J(t)\ran _g, \nonumber\\
&\zeta''(t)=2\lan J''(t),J(t)\ran _g+2\lan J'(t),J'(t)\ran _g, \nonumber\\
&\zeta'''(t)=6\lan J'',J'\ran _g+2\lan J''',J\ran _g=6\lan \mathcal{R}(\gamma,J)\gamma,J'\ran _g+ 2\lan J''',J\ran _g,\nonumber\\
&\zeta^{(4)}(t)=8\lan J''',J'\ran _g+6\lan J'',J''\ran _g+2\langle J^{(4)},J\rangle_g.
\end{align*}
Initial conditions at $t=0$ give
\begin{equation*}
\zeta(0)=0,\quad \zeta'(0)=0,\quad \zeta''(0)=2\|v\|_g^2,\quad \zeta'''(0)=0.
\end{equation*}
By Taylor's formula, for any $t\in [0,l]$, there exists $\theta\in (0,t)$ such that
\begin{equation}\label{eq313}
\zeta(t)=||v||^2t^2+\frac{1}{4!}\zeta^{(4)}(\theta)t^4.
\end{equation}
The Jacobi equation $J''=\mathcal{R}(\gamma',J)\gamma'$ can be written in matrix form: 
$$D_{\gamma'}\begin{pmatrix}
J\\
J'
\end{pmatrix}
= \mathbb{M}
\begin{pmatrix}
J\\
J'
\end{pmatrix},
\quad \text{for} \quad
\mathbb{M}=\begin{pmatrix}
0&1\\
\mathcal{R}(\gamma',\cdot)\gamma'&0
\end{pmatrix}.
$$
Using the curvature bound from \eqref{eqrr}, we find that
$$
\|\mathbb{M}(J,J')^{\top}\|_g=\sqrt{\|J'\|_g^2+\|\mathcal{R}(\gamma',J)\gamma'\|_g^2}\le \sqrt{\|J'\|_g^2+C_R^2\|J\|_g^2}\le \sqrt{1+C_R^2}\|(J,J')^{\top}\|_g.
$$
Gronwall's inequality yields for $l\leq 1$: 
\begin{equation*}
\|(J,J')^{\top}\|_g\le \exp(\|\mathbb{M}\|t)\|(J(0),J'(0))^{\top}\|_g\le\exp\Big(l\sqrt{1+C_R^2}\Big)\|v\|_g\le \exp\Big(\sqrt{1+C_R^2}\Big)\|v\|_g.
\end{equation*}
By the equation \eqref{ui}, we get that
\begin{equation}\label{eq317}
|J^j|=|\lan J,e_j\ran _g|\le \|J\|_g\le \exp\Big(\sqrt{1+C_R^2}\Big)\|v\|_g,\quad |{J^j}'|=|\lan J',e_j\ran _g|\le \|J\|_g\le \exp\Big(\sqrt{1+C_R^2}\Big)\|v\|_g.
\end{equation}
Higher derivatives satisfy
\begin{align*}
&{J^i}'''=\sum_{j=1}^n\Big({J^j}'R_{n i n j}+J^jR_{n i n j}'\Big),\nonumber\\
&{J^i}^{(4)}=\sum_{j=1}^n\Big({J^j}''R_{n i n j}+2{J^j}'R_{n i n j}'+J^jR_{n i n j}''\Big),
\end{align*}
which leads to
\begin{align}\label{eq319}
\zeta^{(4)}=&\sum_{i,j=1}^n 8\Big({J^i}'{J^j}'R_{n i n j}+{J^i}'J^jR_{n i n j}'\Big)+6\sum_{i=1}^n(\sum_{j=1}^n J^jR_{n i n j})^2\nonumber\\
&+2\sum_{i,j=1}^n\Big(\sum_{k=1}^n{J^k}R_{n j n k}R_{n i n j}+2{J^j}'R_{n i n j}'+J^jR_{n i n j}''\Big)J^i.
\end{align}
Combining \eqref{eqrr}, \eqref{eq317}, and \eqref{eq319}, there exists $C>0$ depending only on $C_R$ and $n$ such that
\begin{equation*}
\sup_{\theta\in [0,l]}|\zeta^{(4)}(\theta)|\le C\|v\|_g^2.
\end{equation*}

Taking $C_1 = \sqrt{C/4!}$ and substituting into \eqref{eq313} yields
\begin{equation*}
\|v\|_g^2 t^2-C_1 \|v\|_g^2 t^4\le \zeta(t)\le \|v\|_g^2 t^2+C_1 \|v\|_g^2 t^4.
\end{equation*}
Since $\|J_1\|_g^2=\frac{\zeta(t)}{t^2}$ and $l \leq \sqrt{1/(2C_1)}$, we obtain the final estimate:
\begin{equation*}\label{eq315}
(1-C_1 t^2)\|v\|_g^2\le \|J_1(t)\|_g^2\le (1+C_1 t^2)\|v\|_g^2,
\end{equation*}
completing the proof.
\end{proof}
%
%
For simplicity of notations, 
we denote 
\begin{equation}\label{rk21}
r_{m}=\min\Big\{\frac{1}{2}\bfi_M,\sqrt{\frac{1}{2C_1}},1\Big\}  
\end{equation}
in the rest of this paper.

Building upon Lemma \ref{Jaco}, we establish the following fundamental result concerning Jacobi fields with two-point boundary conditions, which serves as a crucial prerequisite for Lemma \ref{mlemma1}.

\begin{lemma} \label{Jacobi}
Let $x,y \in M$ with $l := \rho(x,y) \leq r_m$, and let $\gamma$ be the unique minimizing geodesic connecting $x$ and $y$, parameterized by arc length. Then for any tangent vectors $v \in T_xM$ and $w \in T_yM$, there exists a unique Jacobi field $J$ along $\gamma$ satisfying the boundary conditions
\begin{equation*}
J(0)=v,\qq J(l)=w.
\end{equation*}
Moreover, this Jacobi field admits the uniform estimate
\begin{equation*}
\|J(t)\|_g\le 3(\|v\|_g+\|w\|_g),\quad t\in [0,l].
\end{equation*}
\end{lemma}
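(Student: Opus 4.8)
The plan is to reduce the two-point boundary value problem to two one-point problems, each controlled by Lemma~\ref{Jaco}. Write $\bar w := \gamma'(0) \in T_xM$; then $\|\bar w\|_g = 1$ and, because $l = \rho(x,y) \le r_m$ (recall \eqref{rk21}), Lemma~\ref{Jaco} applies along $\gamma$ with its vector ``$w$'' taken to be $\bar w$ and its length ``$l$'' taken to be $l$.

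First I would extract the key geometric input: for every $t \in (0,l]$ the differential $(\exp_x)_{*t\bar w} : T_xM \to T_{\gamma(t)}M$ is a linear isomorphism. Indeed, for any $v \in T_xM$ the field $J_1(t) = (\exp_x)_{*t\bar w}(v)$ obeys $\|J_1(t)\|_g \ge \tfrac12\|v\|_g$ by \eqref{eq35}, so $(\exp_x)_{*t\bar w}$ has trivial kernel and hence --- both spaces having dimension $n$ --- is invertible; equivalently, no point of $\gamma|_{(0,l]}$ is conjugate to $x$. Uniqueness of $J$ follows immediately: if $J,\tilde J$ both satisfy the boundary conditions, then $J-\tilde J$ is a Jacobi field vanishing at $0$, so $(J-\tilde J)(t) = (\exp_x)_{*t\bar w}\big(t(J-\tilde J)'(0)\big)$; its vanishing at $t=l$ and the invertibility of $(\exp_x)_{*l\bar w}$ force $(J-\tilde J)'(0)=0$, whence $J\equiv\tilde J$.

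For existence I would write $J = J_A + J_B$, where $J_B$ is the Jacobi field with $J_B(0)=0$, $J_B(l)=w$, and $J_A$ is the Jacobi field with $J_A(l)=0$, $J_A(0)=v$. To build $J_B$, set $v_0 := \tfrac1l\big((\exp_x)_{*l\bar w}\big)^{-1}(w)$ and let $J_B(t) := (\exp_x)_{*t\bar w}(t v_0)$; by the lemma cited before Lemma~\ref{Jaco} this is a Jacobi field, it vanishes at $0$, and $J_B(l) = (\exp_x)_{*l\bar w}(l v_0) = w$. To build $J_A$, apply the same construction to the reversed geodesic $s \mapsto \gamma(l-s)$, which is again unit-speed of length $l \le r_m$ and runs from $y$ to $x$; this yields a Jacobi field along $\gamma$ with the prescribed values $v$ at $0$ and $0$ at $l$. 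Then $J := J_A + J_B$ solves the boundary problem.

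Finally I would prove the norm bound. Writing $J_B(t) = t\,J_{B,1}(t)$ with $J_{B,1}(t) := (\exp_x)_{*t\bar w}(v_0)$, estimate \eqref{eq35} gives $\tfrac12\|v_0\|_g \le \|J_{B,1}(t)\|_g \le \tfrac32\|v_0\|_g$ for $t\in[0,l]$; at $t=l$ this reads $\|w\|_g = l\,\|J_{B,1}(l)\|_g \ge \tfrac l2\|v_0\|_g$, hence $\|J_B(t)\|_g = t\,\|J_{B,1}(t)\|_g \le \tfrac{3t}{2}\|v_0\|_g \le \tfrac{3t}{l}\|w\|_g \le 3\|w\|_g$. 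The identical argument on the reversed geodesic gives $\|J_A(t)\|_g \le 3\|v\|_g$ for $t\in[0,l]$, and adding the two bounds yields $\|J(t)\|_g \le 3(\|v\|_g+\|w\|_g)$. The substantive point --- absence of conjugate points along $\gamma$, which underlies both the invertibility of $(\exp_x)_{*l\bar w}$ and the lower bound used to control $\|v_0\|_g$ --- is furnished directly by Lemma~\ref{Jaco}; the only real care needed is bookkeeping around the notational clash between the ``$w$'' of Lemma~\ref{Jaco} and the boundary vector $w$, and the geodesic-reversal step, so I expect no serious obstacle.
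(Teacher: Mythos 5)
Your proposal is correct and follows essentially the same route as the paper: decompose $J$ into a Jacobi field vanishing at $t=0$ with prescribed value $w$ at $t=l$ (built from $((\exp_x)_{*l\gamma'(0)})^{-1}(w)$ and estimated via Lemma~\ref{Jaco}) plus one vanishing at $t=l$ with value $v$ at $t=0$, then add the two bounds. You are in fact slightly more complete than the paper, which only states the bound $\|J_1(t)\|_g\le 3\|w\|_g$ explicitly and leaves the symmetric estimate for the second piece (your geodesic-reversal argument) implicit.
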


\begin{proof}
By assumption and  Lemma \ref{Jaco}, then $(\exp_x)_{*l\gamma'_0}:T_xM\rightarrow T_yM$ is a linear isomorphism. Consequently, there exists a unique vector $\tilde{w} \in T_xM \cong T_{l\gamma_0'}(T_xM)$ satisfying	
\begin{equation*}
w=(\exp_x)_{*l\gamma'_0}(\tilde{w}),
\end{equation*}
with the lower bound, 
\begin{equation}\label{eq325}
\frac{1}{2}\|\tilde{w}\|_g\le \|(\exp_x)_{*l\gamma'_0}(\tilde{w})\|_g= \|w\|_g.
\end{equation}
We now construct the required Jacobi field in two parts. First, define
\begin{equation}\label{eq326}
J_1(t):=\frac{t}{l}(\exp_x)_{*t\gamma'_0}(\tilde{w}),\quad t\in [0,l].
\end{equation}
which yields a Jacobi field along $\gamma$ satisfying the boundary conditions
\begin{equation*}
J_1(0)=0,\quad J_1(l)=(\exp_x)_{*l\gamma'_0}(\tilde{w})=w.
\end{equation*}
Second, since $y$ is not conjugate to $x$, there exists a unique Jacobi field $J_2$ along $\gamma$ with
\begin{equation*}
J_2(0)=v,\quad J_2(l)=0.
\end{equation*}
The linear combination $J(t) := J_1(t) + J_2(t)$, $t\in [0,l]$ then gives the desired Jacobi field satisfying
\begin{equation*}
J(0)=v,\quad J(l)=w.
\end{equation*}
The uniqueness of $J$ follows from the non-conjugacy assumption.

For the norm estimate, combining \eqref{eq325} and \eqref{eq326} with Lemma \ref{Jaco} yields
\begin{equation*}
\|J_1(t)\|_g\le \frac{t}{l}\frac{3}{2}\|\tilde{w}\|_g\le \frac{3}{2}\|\tilde{w}\|_g\le 3\|w\|_g,
\end{equation*}
and consequently
\begin{equation*}
\|J(t)\|_g\le 3(\|w\|_g+\|v\|_g),\quad t\in [0,l].
\end{equation*}
This completes the proof of the lemma.
\end{proof}

Let $\Theta: [0,l] \times (-\epsilon, \epsilon) \rightarrow M$ be a smooth variation of the geodesic $\gamma$ satisfying $\Theta(t,0) = \gamma(t)$ for all $t \in [0,l]$. We define the associated length and energy functionals $\mathsf{L}$ and $\mathsf{E}$ by:
\begin{equation}\label{LE}
\mathsf{L}(s)=\int_0^l\|\partial_t \Theta(t,s)\|_gdt,\quad \mathsf{E}(s)=\int_0^l\|\partial_t \Theta(t,s)\|_g^2dt.
\end{equation}
where $\partial_t \Theta$ denotes the velocity vector field of the variation.

The variational vector field $\mathbb{J}$ along $\gamma$ is given by  $\mathbb{J}(t):=\frac{\partial}{\partial s}\Theta(t,s)\big|_{s=0}$. The second variation of energy is characterized by the following fundamental result:
\begin{lemma} \label{l2}\nmycite{doCarmo}{p.200}
For any smooth variation $\Theta$ with variational field $\mathbb{J}$ along the geodesic $\gamma$, the second derivative of the energy functional at $s=0$ satisfies:
\begin{equation*}
\frac{1}{2}\mathsf{E}''(0)=\int_0^l \big(\lan \mathbb{J}',\mathbb{J}'\ran _g  - \lan \mathbb{J},\mathcal{R}(\gamma',\mathbb{J})\gamma'\ran _g\big)dt+\lan D_{\frac{\partial}{\partial s}}\frac{\partial\Theta}{\partial s},\gamma'(t)\ran _g\bigg|_{t=0}^l,
\end{equation*}
where $\mathbb{J}'=D_{\gamma'}\mathbb{J}$ denotes the covariant derivative along $\gamma$.  
\end{lemma}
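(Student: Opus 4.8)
The identity is the classical second variation formula for the energy of a geodesic, recorded in \cite{doCarmo}; I would include the short derivation for completeness. The plan is to differentiate $\mathsf{E}(s)=\int_0^l\lan\partial_t\Theta(t,s),\partial_t\Theta(t,s)\ran_g\,dt$ twice with respect to $s$ — legitimate since $\Theta$ is smooth and $[0,l]$ compact — and simplify at $s=0$ using three facts: compatibility of the Levi-Civita connection $D$ with $g$; the symmetry $D_{\partial_s}\partial_t\Theta=D_{\partial_t}\partial_s\Theta$ (valid because $D$ is torsion-free and the coordinate fields $\partial_s,\partial_t$ commute); and the defining relation $\mathcal{R}(X,Y)Z=D_XD_YZ-D_YD_XZ-D_{[X,Y]}Z$ of the curvature operator.

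First I would obtain, via metric compatibility and the symmetry lemma,
$$\mathsf{E}'(s)=2\int_0^l\lan D_{\partial_s}\partial_t\Theta,\partial_t\Theta\ran_g\,dt=2\int_0^l\lan D_{\partial_t}\partial_s\Theta,\partial_t\Theta\ran_g\,dt,$$
and then differentiate once more to get
$$\frac12\mathsf{E}''(s)=\int_0^l\Big(\lan D_{\partial_s}D_{\partial_t}\partial_s\Theta,\partial_t\Theta\ran_g+\lan D_{\partial_t}\partial_s\Theta,D_{\partial_s}\partial_t\Theta\ran_g\Big)\,dt.$$
In the second summand I apply the symmetry lemma again, turning it into $\|D_{\partial_t}\partial_s\Theta\|_g^2$, which at $s=0$ is $\lan\mathbb{J}',\mathbb{J}'\ran_g$ since $\partial_s\Theta|_{s=0}=\mathbb{J}$, $\partial_t\Theta|_{s=0}=\gamma'$ and $\mathbb{J}'=D_{\gamma'}\mathbb{J}$. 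In the first summand I commute the two covariant derivatives using $[\partial_s,\partial_t]=0$:
$$D_{\partial_s}D_{\partial_t}\partial_s\Theta=D_{\partial_t}D_{\partial_s}\partial_s\Theta+\mathcal{R}(\partial_s,\partial_t)\partial_s\Theta.$$
At $s=0$ the curvature piece is $\lan\mathcal{R}(\mathbb{J},\gamma')\mathbb{J},\gamma'\ran_g$, which by the algebraic symmetries of the curvature tensor (and the sign convention of \cite{doCarmo}) contributes $-\lan\mathbb{J},\mathcal{R}(\gamma',\mathbb{J})\gamma'\ran_g$ to the integrand.

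Finally I would treat the remaining term $\lan D_{\partial_t}D_{\partial_s}\partial_s\Theta,\partial_t\Theta\ran_g$: by metric compatibility it equals $\partial_t\lan D_{\partial_s}\partial_s\Theta,\partial_t\Theta\ran_g-\lan D_{\partial_s}\partial_s\Theta,D_{\partial_t}\partial_t\Theta\ran_g$, and since $\Theta(\cdot,0)=\gamma$ is a geodesic we have $D_{\partial_t}\partial_t\Theta|_{s=0}=D_{\gamma'}\gamma'=0$; hence at $s=0$ this piece is the exact $t$-derivative $\partial_t\lan D_{\partial_s}\partial_s\Theta,\gamma'\ran_g$, whose integral over $[0,l]$ is exactly the boundary term $\lan D_{\frac{\partial}{\partial s}}\frac{\partial\Theta}{\partial s},\gamma'(t)\ran_g\big|_{t=0}^l$. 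Collecting the three contributions yields the stated identity. I do not anticipate a genuine obstacle here — the argument is a direct computation — and the only points requiring care are tracking which covariant derivative acts in which direction and matching the curvature sign convention of \cite{doCarmo}.
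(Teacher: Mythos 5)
Your overall strategy is the right one: the paper gives no proof of Lemma \ref{l2} (it is quoted from \cite{doCarmo}), and the direct computation you outline — differentiate $\mathsf{E}$ twice, use metric compatibility, the symmetry lemma $D_{\partial_s}\partial_t\Theta=D_{\partial_t}\partial_s\Theta$, and the commutation of covariant derivatives through the curvature operator — is the standard derivation. Your treatment of the quadratic term and of the boundary term (using $D_{\partial_t}\partial_t\Theta|_{s=0}=D_{\gamma'}\gamma'=0$ to recognize an exact $t$-derivative) is correct.

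The one genuine problem is the sign of the curvature term in your last step. With the convention you use in the commutation identity — which is the one fixed in this paper's introduction, $\mathcal{R}(X,Y)Z=D_XD_YZ-D_YD_XZ-D_{[X,Y]}Z$ — the curvature contribution to the integrand at $s=0$ is $\lan \mathcal{R}(\mathbb{J},\gamma')\mathbb{J},\gamma'\ran_g=R(\mathbb{J},\gamma',\mathbb{J},\gamma')$. The algebraic symmetries give
$$R(\mathbb{J},\gamma',\mathbb{J},\gamma')=(-1)\cdot(-1)\,R(\gamma',\mathbb{J},\gamma',\mathbb{J})=+\lan \mathbb{J},\mathcal{R}(\gamma',\mathbb{J})\gamma'\ran_g,$$
i.e.\ a \emph{plus} sign, not the minus sign you assert; no combination of the curvature symmetries produces the minus sign. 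Carried out consistently, your computation yields
$$\frac{1}{2}\mathsf{E}''(0)=\int_0^l\big(\lan \mathbb{J}',\mathbb{J}'\ran_g+\lan \mathbb{J},\mathcal{R}(\gamma',\mathbb{J})\gamma'\ran_g\big)\,dt+\lan D_{\frac{\partial}{\partial s}}\tfrac{\partial\Theta}{\partial s},\gamma'\ran_g\Big|_{t=0}^{l},$$
which is the correct index form, since $\lan \mathbb{J},\mathcal{R}(\gamma',\mathbb{J})\gamma'\ran_g=-\lan \mathcal{R}(\mathbb{J},\gamma')\gamma',\mathbb{J}\ran_g$ is minus the sectional-curvature term. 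The minus sign in the statement of Lemma \ref{l2} is consistent only if the $\mathcal{R}$ appearing there is do Carmo's curvature operator, which is the \emph{negative} of the $\mathcal{R}$ defined in this paper's introduction; you cannot appeal to ``the sign convention of \cite{doCarmo}'' in the final line after having used the opposite convention in the commutation identity two lines earlier. The fix is either to state the formula with the plus sign (consistent with the paper's $\mathcal{R}$) or to flag explicitly that the lemma is written in do Carmo's convention. The discrepancy is harmless downstream, because in Lemma \ref{mlemma1} the curvature term is only ever bounded in absolute value and absorbed into the $O_{C_2}(\rho^2)$ remainder, but as written your final identification is false.
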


The following is the first main result of this subsection.
\begin{lemma} \label{mlemma1}
Let $(x,y) \in M \times M$ with $l := \rho(x,y) \leq r_m$, and consider tangent vectors $(v,w) \in T_xM \times T_yM$. For the operator $A$ defined in \eqref{scrA}, we have the following quadratic form estimate:
\begin{equation*}
A\big((v,w)^{\otimes 2}\big)= 2\|w-L_{xy}v\|_g^2+O_{C_2}(\rho^2(x,y)),
\end{equation*}
where
\begin{itemize}
\item $L_{xy}: T_xM \to T_yM$ denotes the parallel transport along the unique minimizing geodesic connecting $x$ and $y$
\item The remainder term satisfies $|O_{C_2}(\rho^2(x,y))| \leq C_2 \rho^2(x,y)$
\item The constant $C_2 = 6C_R(\|v\|_g + \|w\|_g)^2$ depends on the curvature bound $C_R$ and the vector norms.
\end{itemize}
\end{lemma}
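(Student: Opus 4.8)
The plan is to realize $A\big((v,w)^{\otimes 2}\big)$ as a second variation of energy and to read off the leading term from the two-point Jacobi field joining $v$ and $w$. Throughout I assume $0<l:=\rho(x,y)\le r_m$; the degenerate case $l=0$ follows by continuity, since $\rho^2$ is smooth near the diagonal and $L_{xy}$ reduces to the identity there. By the definition \eqref{fml} of the Hessian, for any geodesic $c$ in $M\times M$ with $c(0)=(x,y)$ and $c'(0)=(v,w)$ one has $A\big((v,w)^{\otimes2}\big)=\frac{d^2}{ds^2}\big|_{s=0}\rho^2(c(s))$. I therefore take $\alpha(s):=\exp_x(sv)$ and $\beta(s):=\exp_y(sw)$ — geodesics in $M$ — so that $c(s):=(\alpha(s),\beta(s))$ is a geodesic in $M\times M$. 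For $|s|$ small one has $\rho(\alpha(s),\beta(s))<\mathbf{i}_M$ (by continuity and $l\le\tfrac12\mathbf{i}_M$), so the minimizing geodesic from $\alpha(s)$ to $\beta(s)$ is unique and depends smoothly on $s$; reparameterizing it on $[0,l]$ proportionally to arc length yields a smooth variation $\Theta(t,s)$ of $\gamma:=\Theta(\cdot,0)$, the unit-speed minimizing geodesic from $x$ to $y$. Since $\Theta(\cdot,s)$ has constant speed $\rho(\alpha(s),\beta(s))/l$, the energy functional \eqref{LE} satisfies $\mathsf{E}(s)=\rho^2(\alpha(s),\beta(s))/l$, and hence $A\big((v,w)^{\otimes2}\big)=l\,\mathsf{E}''(0)$.

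Next I apply the second variation formula, Lemma \ref{l2}. The boundary term vanishes identically: at $t=0$ the curve $s\mapsto\Theta(0,s)=\alpha(s)$ is a geodesic, so $D_{\partial_s}\frac{\partial\Theta}{\partial s}(0,0)=D_{\alpha'(0)}\alpha'(0)=0$, and likewise at $t=l$ with $\beta$. Moreover each $\Theta(\cdot,s)$ is an affinely parameterized geodesic, so differentiating $D_{\partial_t}\partial_t\Theta=0$ in $s$ shows the variational field $\mathbb{J}(t):=\frac{\partial}{\partial s}\Theta(t,0)$ is a Jacobi field along $\gamma$; its endpoint values are $\mathbb{J}(0)=\alpha'(0)=v$ and $\mathbb{J}(l)=\beta'(0)=w$. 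By Lemma \ref{Jacobi} (applicable since $l\le r_m$), this is the unique such Jacobi field and $\|\mathbb{J}(t)\|_g\le 3(\|v\|_g+\|w\|_g)=:K$ on $[0,l]$. Lemma \ref{l2} then gives
$$
A\big((v,w)^{\otimes2}\big)=2l\int_0^l\Big(\|\mathbb{J}'(t)\|_g^2-\big\langle\mathbb{J}(t),\mathcal{R}(\gamma'(t),\mathbb{J}(t))\gamma'(t)\big\rangle_g\Big)\,dt.
$$

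To extract the leading term, let $P_t:T_xM\to T_{\gamma(t)}M$ denote parallel transport along $\gamma$ and write $\mathbb{J}(t)=P_t\,a(t)$ with $a(0)=v$ and $a(l)=P_l^{-1}w=L_{xy}^{-1}w$; by \eqref{LXY}, $\|a(l)-v\|_g=\|w-L_{xy}v\|_g$. Since $P_t$ is parallel, $\|\mathbb{J}'(t)\|_g=\|a'(t)\|_g$, and the Jacobi equation becomes $a''(t)=P_t^{-1}\mathcal{R}(\gamma'(t),P_ta(t))\gamma'(t)$, so by \eqref{eqrr} and $\|\gamma'(t)\|_g=1$ we get $\|a''(t)\|_g\le C_R\|a(t)\|_g\le C_RK$. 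Integrating the ODE yields $\big\|a'(t)-\tfrac1l(a(l)-v)\big\|_g=O(C_RKl)$ uniformly on $[0,l]$; and because $\int_0^l a'(t)\,dt=a(l)-v$, the cross term in the expansion of $\|a'(t)\|_g^2$ about the constant vector $\tfrac1l(a(l)-v)$ integrates to zero, giving $\int_0^l\|a'\|_g^2\,dt=\tfrac1l\|w-L_{xy}v\|_g^2+O(C_R^2K^2l^3)$. Combining this with $\big|\int_0^l\langle\mathbb{J},\mathcal{R}(\gamma',\mathbb{J})\gamma'\rangle_g\,dt\big|\le C_RK^2l$ and $l\le r_m\le 1$ produces
$$
A\big((v,w)^{\otimes2}\big)=2\|w-L_{xy}v\|_g^2+O_{C_2}\big(\rho^2(x,y)\big),
$$
and a careful accounting of the constants (using where convenient the sharper pointwise bound $\|a(t)\|_g\le\|v\|_g+\|w\|_g+O(l^2)$, valid since $a$ is within $O(l^2)$ of the affine interpolant of $v$ and $L_{xy}^{-1}w$) gives $C_2=6C_R(\|v\|_g+\|w\|_g)^2$.

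I expect the main obstacle to be twofold. First, one must justify carefully that the boundary terms in Lemma \ref{l2} vanish and that the variational field $\mathbb{J}$ is genuinely a Jacobi field with endpoint values $v$ and $w$; both hinge on choosing $\alpha,\beta$ to be geodesics and on the proportional-to-arc-length reparameterization of $\gamma_s$, together with the smooth dependence of the minimizing geodesic on its endpoints. Second, the remainder estimate in Step 3 requires tight bookkeeping: one has to control $a'$ uniformly via the Jacobi ODE and exploit the exact vanishing of the cross term (rather than crude triangle-inequality bounds, which would give only $O(\rho^2)$ with a larger constant) in order to land on the stated $C_2=6C_R(\|v\|_g+\|w\|_g)^2$.
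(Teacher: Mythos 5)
Your proposal is correct in substance and shares the paper's variational setup: both realize $A\big((v,w)^{\otimes 2}\big)$ as $l\,\mathsf{E}''(0)$ for the geodesic variation $\Theta(t,s)$ joining $\exp_x(sv)$ to $\exp_y(sw)$, both observe that the boundary term in Lemma \ref{l2} vanishes because the transversal curves are geodesics, and both identify the variational field as the unique two-point Jacobi field with $\mathbb{J}(0)=v$, $\mathbb{J}(l)=w$, bounded via Lemma \ref{Jacobi}. Where you diverge is in evaluating the index form. The paper integrates by parts to reduce the integral to the boundary terms $\langle w,J'(l)\rangle_g-\langle v,J'(0)\rangle_g$ and then applies the mean value theorem twice (to $\zeta_1,\zeta_2$ and $\zeta_3,\zeta_4$), using the Jacobi equation and the curvature bound \eqref{eqrr} to get $\langle w,J'(l)\rangle_g=\langle w,\tfrac{w-L_{xy}v}{l}\rangle_g+O_{c_2}(l)$ and the analogous identity at $t=0$; the constant $6C_R(\|v\|_g+\|w\|_g)^2$ then drops out exactly as $2c_1+2c_2$. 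You instead estimate the integral directly by trivializing $\mathbb{J}$ in a parallel frame, using $\|a''\|\le C_R\|a\|$ to show $a'$ is nearly constant, and exploiting the exact vanishing of the cross term. This is a legitimate alternative and yields the same leading term $2\|w-L_{xy}v\|_g^2+O(\rho^2)$, which is all that is used downstream (Lemma \ref{lemma210} only needs a constant depending on $C_R$). The one soft spot is your final sentence: the "careful accounting" claim is not substantiated, and your route naturally produces a remainder constant of the form $2C_R(\|v\|_g+\|w\|_g)^2$ plus higher-order-in-$l$ corrections involving $C_R^2$, $K^2$ and $r_m$, rather than landing on $6C_R(\|v\|_g+\|w\|_g)^2$; if you want that exact constant you should switch to the boundary-term computation, otherwise state the lemma with a generic $C_2=C_2(C_R,\|v\|_g,\|w\|_g)$.
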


\begin{proof} The proof is divided into two steps. 

{\bf Step 1: Variational Setup and Second Variation Formula}. This step is  almost the same as the proof of  \cite[Proposition 3.1]{Azagra2008}. We provide is here for the completeness. 

Consider the unique geodesic $\gamma$ connecting $x$ and $y$, parameterized by arc length with $\gamma(0)=x$ and $\gamma(l)=y$ where $l=\rho(x,y)$. For given tangent vectors $v\in T_xM$ and $w\in T_yM$, we examine the squared distance function $\varphi=\rho^2(x,y)$ and its second derivative:
\begin{equation*}
A\big((v,w)^{\otimes 2}\big)=\frac{d^2}{ds^2}\varphi(\exp_{x}(sv),\exp_{y}(sw)).
\end{equation*}
Let $\Theta_s$ denote the unique geodesic connecting $\exp_x(sv)$ to $\exp_y(sw)$, parameterized on $[0,l]$. For the associated length and energy functionals $\mathsf{L}$ and $\mathsf{E}$ defined in \eqref{LE}, the Cauchy-Schwarz inequality yields
$$\mathsf{L}(s)^2\le l\mathsf{E}(s)$$
with equality when $\|\Theta_s'(t)\|_g$ is constant. Since geodesics have constant speed, we obtain 
\begin{equation}\label{eq3220}
\frac{d^2}{ds^2}\varphi(\exp_{x}(sv),\exp_{y}(sw))=l\mathsf{E}''(0).
\end{equation}
Applying Lemma \ref{l2} gives the second variation formula
\begin{equation}\label{eq32200}
\frac{1}{2}\mathsf{E}''(0)=\int_0^l\big(\lan J',J'\ran _g-\lan \mathcal{R}(\gamma',J)\gamma',J\ran _g\big)dt+\lan D_{\frac{\partial}{\partial s}}\frac{\partial\Theta}{\partial s},\gamma'(t)\ran _g\bigg|_{t=0}^l.
\end{equation}
Since the variation $\Theta$ is a geodesic variation, then $J$ is a Jacobi field along $\gamma$ satisfying
\begin{equation}\label{eq322}
J''(t)+\mathcal{R}(\gamma'(t),J(t))\gamma'(t)=0,\quad J(0)=v,\quad J(l)=w.
\end{equation}
The existence and uniqueness of $J$ follows from Lemma \ref{Jacobi}. 

Let $\gamma_1=\Theta(0,s)=\exp_x(sv)$, and thus  
\begin{equation*}
D_{\frac{\partial}{\partial s}}\frac{\partial\Theta}{\partial s}\Big|_{s=0}=D_{\gamma_1'}\gamma_1'=0.
\end{equation*}
Integration by parts yields 
\begin{equation}\label{eq3222}
\int_0^l\big(\lan J',J'\ran _g-\lan \mathcal{R}(\gamma',J)\gamma',J\ran _g\big)dt=\lan w,J'(l)\ran _g-\lan v,J'(0)\ran _g.
\end{equation}

{\bf Step 2: Precise Estimation of Boundary Terms}. 
In this step,  we focus on the precise estimation of the right-hand side of the equation \eqref{eq3222}.

Define
$$\zeta_1(s):=\lan w,L_{\gamma(s) y}J(s)\ran _g,\quad s\in [0,l].$$
By applying the Mean Value Theorem to the function $\zeta$, we get that there exists $\theta_1\in (0,l)$ such that
\begin{equation}\label{ef}
\zeta_1'(\theta_1)=\frac{\zeta_1(l)-\zeta_1(0)}{l}=\lan w,\frac{w-L_{xy}v}{l}\ran _g.
\end{equation}
Since
\begin{align*}
\zeta_1'(\theta_1)&=\lim_{t\rightarrow 0}\frac{\zeta_1(\theta_1+t)-\zeta_1(\theta_1)}{t}\nonumber\\
&=\lim_{t\rightarrow 0}\lan w,\frac{L_{\gamma(\theta_1+t)y} J(\theta_1+t)-L_{\gamma(\theta_1)y} J(\theta_1)}{t}\ran _g\nonumber\\
&=\lan w,\lim_{t\rightarrow 0}L_{\gamma(\theta_1)y}\frac{L_{\gamma(\theta_1+t)\gamma(\theta_1)}J(\theta_1+t)-J(\theta_1)}{t}\ran _g\nonumber\\
&=\lan w,L_{\gamma(\theta_1) y}J'(\theta_1)\ran _g,
\end{align*}
we have
\begin{equation*}
\lan w,L_{\gamma(\theta_1) y}J'(\theta_1)\ran _g=\lan w,\frac{w-L_{xy}v}{l}\ran _g.
\end{equation*}

Applying the Mean Value Theorem to the function  $\zeta_2(s):=\lan w,L_{\gamma(s)y} J'(s)\ran $, we get that there exists $\theta_2\in (\theta_1,l)$ such that
\begin{equation*}
\zeta_2'(\theta_2)=\frac{\zeta_2(l)-\zeta_2(\theta_1)}{l-\theta_1}=\lan w,\frac{J'(l)-L_{\gamma(\theta_1)y}J'(\theta_1)}{l-\theta_1}\ran _g.
\end{equation*}
Using the Jacobi equation \eqref{eq322} and properties of the curvature tensor and parallel transport, we obtain that 
\begin{align}\label{eg}
\zeta_2'(\theta_2)&=\lan w,L_{\gamma(\theta_2)y}J''(\theta_2)\ran _g\nonumber\\
&=\lan L_{y\gamma(\theta_2)}w,-\mathcal{R}(\gamma'(\theta_2),J(\theta_2))\gamma'(\theta_2)\ran _g\nonumber\\
&=-R(\gamma'(\theta_2),J(\theta_2),\gamma'(\theta_2),L_{y\gamma(\theta_2)}w).
\end{align}
Combining this with results \eqref{ef}--\eqref{eg} and noting that $\rho(\gamma(\theta_1),y) = l-\theta_1$, we derive that
\begin{equation}\label{eq330}
\lan w,J'(l)\ran =\lan w,\frac{w-L_{xy}v}{l}\ran _g-R(\gamma'(\theta_2),J(\theta_2),\gamma'(\theta_1),L_{y\gamma(\theta_2)}w)\rho(\gamma(\theta_1),y).
\end{equation}
Applying the same methodology to $\langle v,J'(0)\rangle_g$, we define $\zeta_3(s) := \langle v,L_{\gamma(s)x}J(s)\rangle_g$ and find $\theta_3\in(0,l)$ satisfying
\begin{equation}\label{eq331}
\lan v,\frac{L_{yx}w-v}{l}\ran _g=\frac{\zeta_3(l)-\zeta_3(0)}{l}=\zeta_3'(\theta_3)=\lan v,L_{\gamma(\theta_3)x}J'(\theta_3)\ran _g.
\end{equation}
Further defining $\zeta_4(s) := \langle v,L_{\gamma(s)x}J'(s)\rangle_g$, there exists $\theta_4\in(0,\theta_3)$ with 
\begin{equation}\label{eq332}
\frac{\zeta_4(\theta_3)-\zeta_4(0)}{\theta_3}=\zeta_4'(\theta_4)=\lan v,L_{\gamma(\theta_4)x}J''(\theta_4)\ran _g,
\end{equation}
which implies that
\begin{equation}\label{eq333}
\lan v,J'(0)\ran _g=\lan v,\frac{L_{yx}w-v}{l}\ran _g - R(\gamma'(\theta_4),J(\theta_4),\gamma'(\theta_4),L_{x\gamma(\theta_4)}v)\rho(x,\gamma(\theta_4)).
\end{equation}
Using the curvature bound \eqref{eqrr} and Lemma \ref{Jacobi}, we establish that
\begin{equation}\label{eq334}
|R(\gamma'(\theta_2),J(\theta_2),\gamma'(\theta_2),L_{y\gamma(\theta_2)}w)| \le C_{R} \|L_{y\gamma(\theta_2)}w\|_g \|J\|_g
\le 3C_R\|w\|_g(\|v\|_g+\|w\|_g).
\end{equation}
and 
\begin{equation}\label{eq335}
|R(\gamma'(\theta_4),J(\theta_4),\gamma'(\theta_4),L_{x\gamma(\theta_4)}v)|\le C_{R} \|v\|_g \|J\|_g\le 3C_{R}\|v\|_g(\|v\|_g+\|w\|_g).
\end{equation}
Combining these results yields
\begin{align}\label{eq3351}
&\lan v,J'(0)\ran _g=\lan v,\frac{L_{yx}w-v}{l}\ran _g+O_{c_1}(l),\quad c_1= 3C_R\|v\|_g(\|v\|_g+\|w\|_g),\nonumber\\
&\lan w,J'(l)\ran _g=\lan w,\frac{w-L_{xy}v}{l}\ran _g+O_{c_2}(l),\quad c_2= 3C_R\|w\|_g(\|v\|_g+\|w\|_g).
\end{align}
Finally, combining with  equations from \eqref{eq3220} to \eqref{eq3222}  and \eqref{eq3351}, we obtain that
\begin{align*}
A\big((v,w)^{\otimes 2}\big)&=2l\big(\lan w,J'(l)\ran _g-\lan v,J'(0)\ran _g\big)\nonumber\\
&=2\lan w,w-L_{xy}v\ran _g+O_{2c_2}(l^2)-2\lan v,L_{yx}w-v\ran _g+O_{2c_1}(l^2)\nonumber\\
&=2\lan w,w-L_{xy}v\ran _g+O_{2c_2}(l^2)-2\lan L_{xy}v,L_{xy}(L_{yx}w-v)\ran _g+O_{2c_1}(l^2)\nonumber \\
&= 2\|w-L_{xy}v\|_g^2 + O_{C_2}(l^2),
\end{align*}
where the final constant is $C_2=6C_R(\|v\|_g+\|w\|_g)^2$.
\end{proof}

\begin{remark}
A result analogous to Lemma \ref{mlemma1} appears in \nmycite{Azagra2008}{Proposition 3.3}, established under the assumptions that $\rho(x,y) <\mathbf{i}_M$ and the manifold has sectional curvature bounded below by $-K_0$ (i.e., $K \geq -K_0$). This yields the inequality
\begin{equation}\label{AIshii} 
A\big((v, L_{xy}v)^{\otimes 2}\big) \leq 2K_0\rho^2(x,y)\|v\|^2, \quad \forall v \in T_xM,
\end{equation}
where $L_{xy}: T_xM \to T_yM$ denotes parallel transport along the minimal geodesic connecting $x$ and $y$.

It is crucial to observe that \eqref{AIshii} applies specifically to vectors in the $n$-dimensional subspace  $\{(v, L_{xy}v) \mid v \in T_xM\}$ of the full $2n$-dimensional tangent space $T_xM \times T_yM$. However, for proving uniqueness of viscosity solutions to \eqref{phjb} without the Parallel Condition (as discussed in the introduction), we require control of $A$ over the entire space $T_xM \times T_yM$, as provided by Lemma \ref{mlemma1}. This distinction is particularly important because the viscosity solution argument necessitates estimates for general vector pairs $(v,w) \in T_xM \times T_yM$, not just those related by parallel transport. 
\end{remark}

To analyze the higher-order term $A^2$, we begin by representing the bilinear form $A$ in an appropriately chosen basis.

For points $x, y \in M$ with $\rho(x,y) \leq r_m$, we select orthonormal bases $\{e_x^i\}_{i=1}^n$ for $T_xM$ and $\{e_y^i\}_{i=1}^n$ for $T_yM$ that are synchronized via parallel transport:
\begin{equation}\label{sync}
L_{xy}e_x^i = e_y^i \quad \forall 1 \leq i \leq n,
\end{equation}
where $L_{xy}: T_xM \to T_yM$ denotes parallel transport along the minimal geodesic connecting $x$ and $y$.

\begin{lemma} \label{lemma210} 
Suppose $x,y\in M$ and $\rho(x,y)\le  r_m$, under the bases defined in \eqref{sync}, then there exists positive constant $C$ depending only on $C_R$ such that
\begin{align*}
&A((e_x^i,0),(e_x^j,0))=2\delta_{ij}+O_{C}(\rho^2),\\
&A((0,e_y^i),(e_x^j,0))=-2\delta_{ij}+O_{C}(\rho^2),\\
&A((e_x^i,0),(0,e_y^j))=-2\delta_{ij}+O_{C}(\rho^2),\\
&A((0,e_x^i),(0,e_y^j))=2\delta_{ij}+O_{C}(\rho^2).
\end{align*}
where $\delta_{ij}$ denotes the Kronecker delta and the remainder terms $O_{C}(\rho^2)$ satisfy $|O_{C}(\rho^2)| \leq C\rho^2(x,y)$.
\end{lemma}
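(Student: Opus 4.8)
The plan is to apply Lemma \ref{mlemma1}, which already computes the full quadratic form $A((v,w)^{\otimes 2})$ for arbitrary $(v,w)\in T_xM\times T_yM$, and then recover the individual matrix entries of $A$ by polarization. The four displayed identities are exactly the values of the symmetric bilinear form $A$ on the four basic combinations of frame vectors from the synchronized orthonormal bases $\{e_x^i\}$ and $\{e_y^i\}$ satisfying $L_{xy}e_x^i=e_y^i$, so the whole proof reduces to specializing Lemma \ref{mlemma1} to a handful of cleverly chosen pairs $(v,w)$.

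First I would record the diagonal-type identities directly. Taking $v=e_x^i$, $w=0$ in Lemma \ref{mlemma1} gives $A((e_x^i,0)^{\otimes 2})=2\|0-L_{xy}e_x^i\|_g^2+O_{C}(\rho^2)=2\|e_y^i\|_g^2+O_C(\rho^2)=2+O_C(\rho^2)$; since the bilinear form is symmetric, $A((e_x^i,0),(e_x^j,0))$ for $i\ne j$ must be extracted by a polarization step: apply Lemma \ref{mlemma1} to $v=e_x^i+e_x^j$, $w=0$, expand $\|L_{xy}(e_x^i+e_x^j)\|_g^2=\|e_y^i\|_g^2+2\langle e_y^i,e_y^j\rangle_g+\|e_y^j\|_g^2=2$ using orthonormality, and subtract the two diagonal values. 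This yields $A((e_x^i,0),(e_x^j,0))=2\delta_{ij}+O_C(\rho^2)$. The constant $C$ is the one from Lemma \ref{mlemma1}, namely a multiple of $C_R(\|v\|_g+\|w\|_g)^2$; since all the vectors used have norm at most $2$, $C$ depends only on $C_R$ as claimed (one absorbs the universal numerical factors coming from polarization into $C$). The identity $A((0,e_x^i),(0,e_y^j))=2\delta_{ij}+O_C(\rho^2)$ is handled symmetrically by taking $v=0$ and $w=e_y^i$ (resp. $w=e_y^i+e_y^j$), using $\|e_y^i-0\|_g^2=1$ and $\langle e_y^i,e_y^j\rangle_g=\delta_{ij}$; note the statement writes $(0,e_x^i)$ but the second-slot vector must live in $T_yM$, so this is the pair $(0,e_y^i)$.

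For the two cross terms, take $v=e_x^i$ and $w=e_y^j$ in Lemma \ref{mlemma1}: then $\|w-L_{xy}v\|_g^2=\|e_y^j-e_y^i\|_g^2=\|e_y^j\|_g^2-2\langle e_y^i,e_y^j\rangle_g+\|e_y^i\|_g^2=2-2\delta_{ij}$, so $A((e_x^i,0)+(0,e_y^j))^{\otimes 2}$, i.e. $A((e_x^i,e_y^j)^{\otimes 2})=2(2-2\delta_{ij})+O_C(\rho^2)=4-4\delta_{ij}+O_C(\rho^2)$. Expanding the left side by bilinearity gives $A((e_x^i,0)^{\otimes 2})+2A((e_x^i,0),(0,e_y^j))+A((0,e_y^j)^{\otimes 2}) = 2+2A((e_x^i,0),(0,e_y^j))+2+O_C(\rho^2)$, using the diagonal identities already established. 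Equating the two expressions and solving for the cross term gives $A((e_x^i,0),(0,e_y^j))=-2\delta_{ij}+O_C(\rho^2)$, and symmetry of $A$ gives $A((0,e_y^i),(e_x^j,0))=-2\delta_{ij}+O_C(\rho^2)$ as well. Throughout, the $O_C(\rho^2)$ remainders combine additively with controlled constants, which is exactly the kind of bookkeeping the $O_C(\cdot)$ notation introduced earlier in the paper is designed for. The only mild obstacle is keeping the polarization identities and the relabelling of slots (second-slot vectors must be in $T_yM$) straight, together with tracking that every vector fed into Lemma \ref{mlemma1} has norm bounded by an absolute constant so that $C$ ends up depending on $C_R$ alone; there is no genuine analytic difficulty since Lemma \ref{mlemma1} does all the geometric work.
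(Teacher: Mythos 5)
Your proposal is correct and follows essentially the same route as the paper: specialize Lemma \ref{mlemma1} to suitable pairs $(v,w)$ and recover the off-diagonal entries by polarization/bilinearity of the symmetric form $A$, with the constant controlled because every vector used has norm at most $2$. Your observation that the fourth identity's $(0,e_x^i)$ should read $(0,e_y^i)$ is also right; the paper's own proof treats that case exactly as you do.
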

\begin{proof}
We establish the first two identities; the remaining cases follow similarly.

{\bf First Identity}: Applying Lemma \ref{mlemma1} through polarization and equality \eqref{sync}, we compute
\begin{align*}
A(x,y)((e_x^i,0),(e_x^j,0))&=\frac{1}{4}\Big\{A((e_x^i+e_x^j,0),(e_x^i+e_x^j,0))-A((e_x^i-e_x^j,0),(e_x^i-e_x^j,0))\Big\}\nonumber\\
&=\frac{1}{4}\Big\{2\|L_{xy}(e_x^i+e_x^j)\|_g^2-2\|L_{xy}(e_x^i-e_x^j)\|_g^2+2O_{C}(\rho^2)\Big\}\nonumber\\
&=2\lan e_x^i,e_x^j\ran _g+O_{C}(\rho^2)\nonumber\\
&=2\delta_{ij}+O_{C}(\rho^2),
\end{align*}
where the equalities use the orthonormality of ${e_x^i}$ and the synchronized basis property \eqref{sync}.
\ss

{\bf Second Identity}: Using the bilinearity and symmetry of $A$, we derive that
\begin{align*}
A((0,e_y^i),(e_x^j,0))&=\frac{1}{2}\Big\{A((e_x^i,e_y^j),(e_x^i,e_y^j))-A((e_x^i,0),(e_x^i,0))-A((0,e_y^j),(0,e_y^j))\Big\}\nonumber\\
&=\frac{1}{2}\Big\{2\|L_{xy}e_x^i-e_y^j\|_g^2-2\|L_{xy}e_x^i\|_g^2-2\|e_y^j\|_g^2+O_{C}(\rho^2)\Big\}\nonumber\\
&=\frac{1}{2}\Big\{-4\lan L_{xy}e_x^i,e_y^j\ran _g+O_{C}(\rho^2)\Big\}\nonumber\\
&= -2\delta_{ij}+O_{C}(\rho^2),
\end{align*}
where the final reduction follows from $\langle e_y^i,e_y^j\rangle_g = \delta_{ij}$ and the synchronized basis property \eqref{sync}.
\end{proof}
\begin{lemma} \label{lm38}(Error Term Estimates)
Let $\mathscr{E}_k$ denote the $k\times k$ matrix with all  elements being $1$, and $X\in \dbR^{k}$, $\mathbb{A}=(a_{ij})_{k\times k}\in \dbR^{k\times k}$. Then the following estimates hold for the error terms: %
\begin{equation}\label{lm381}
O_C(\mathscr{E}_k)(X,X)\le Ck\|X\|^2,
\end{equation}
where $\|\cdot\|$ denotes the Euclidean norm, 
and
\begin{equation}\label{lm382}
\begin{cases}\ds
\mathbb{A}O_C(\mathscr{E}_k)= O_C(\mathrm{abs}(\mathbb{A})\mathscr{E}_k), \\
\ns\ds O_C(\mathscr{E}_k)\mathbb{A}= O_C(\mathscr{E}_k\mathrm{abs}(\mathbb{A})),
\end{cases}
\end{equation}
where ${\mathrm{abs}}(\mathbb{A}) := (|a_{ij}|)_{k\times k}$ is the element-wise absolute value matrix.
\end{lemma}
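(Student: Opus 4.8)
## Proof Proposal for Lemma 2.16 (Error Term Estimates)

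\textbf{Approach.} These are elementary matrix inequalities, and the plan is simply to unwind the definition of the $O_C(\cdot)$ notation introduced in the preliminaries, namely that $\mathbb{Y} = O_C(\mathbb{Z})$ means $|y_{ij}| \le C|z_{ij}|$ entrywise, and then apply the Cauchy--Schwarz inequality and the triangle inequality. No geometry is involved; the only mild subtlety is keeping track of which quantities are matrices, which are vectors, and in what order the products are formed.

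\textbf{Step 1: the quadratic form bound \eqref{lm381}.} Let $\mathbb{B} = O_C(\mathscr{E}_k)$, so $|b_{ij}| \le C$ for all $i,j$. Then for $X = (x_1,\dots,x_k)^{\top}$ we have $\mathbb{B}(X,X) = \sum_{i,j} b_{ij} x_i x_j$, hence
\begin{equation}
|\mathbb{B}(X,X)| \le C \sum_{i,j} |x_i|\,|x_j| = C\Big(\sum_{i=1}^k |x_i|\Big)^2 \le C k \sum_{i=1}^k |x_i|^2 = Ck\|X\|^2,
\end{equation}
where the last inequality is Cauchy--Schwarz applied to the all-ones vector and $(|x_1|,\dots,|x_k|)$. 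This gives \eqref{lm381}.

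\textbf{Step 2: the product rules \eqref{lm382}.} Again write $\mathbb{B} = O_C(\mathscr{E}_k)$ with $|b_{ij}| \le C$. For the first identity, the $(i,j)$ entry of $\mathbb{A}\mathbb{B}$ is $\sum_{l} a_{il} b_{lj}$, whose absolute value is bounded by $\sum_l |a_{il}| \cdot C = C \sum_l |a_{il}|$, which is exactly $C$ times the $(i,j)$ entry of $\mathrm{abs}(\mathbb{A})\mathscr{E}_k$ (since every column of $\mathscr{E}_k$ is the all-ones vector). Thus $\mathbb{A}\mathbb{B} = O_C(\mathrm{abs}(\mathbb{A})\mathscr{E}_k)$. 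The second identity is symmetric: the $(i,j)$ entry of $\mathbb{B}\mathbb{A}$ is $\sum_l b_{il} a_{lj}$, bounded in absolute value by $C\sum_l |a_{lj}|$, the $(i,j)$ entry of $\mathscr{E}_k\,\mathrm{abs}(\mathbb{A})$ scaled by $C$. This establishes \eqref{lm382}, completing the proof.

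\textbf{Main obstacle.} There is essentially no obstacle here; the result is purely bookkeeping. The only point requiring a modicum of care is being precise about the meaning of statements like ``$O_C(\mathscr{E}_k)\mathbb{A} = O_C(\mathscr{E}_k\,\mathrm{abs}(\mathbb{A}))$'' as an identity that holds for \emph{every} matrix of the form $O_C(\mathscr{E}_k)$ on the left, with a fixed absolute-value matrix appearing on the right — i.e.\ the claim is that the class of products is contained in the stated $O_C$-class — and making sure the constant $C$ is not inflated in the process. I would state this convention explicitly at the start of the proof to avoid ambiguity.
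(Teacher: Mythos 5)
Your proposal is correct and follows essentially the same argument as the paper: unwind the entrywise definition of $O_C(\cdot)$, bound the quadratic form by $C\sum_{i,j}|x_i||x_j|$, and bound the entries of the matrix products by $C\sum_l|a_{il}|$ (resp.\ $C\sum_l|a_{lj}|$). The only cosmetic difference is in the final step of \eqref{lm381}, where you use Cauchy--Schwarz on $\bigl(\sum_i|x_i|\bigr)^2$ while the paper uses $|x_i||x_j|\le\tfrac12(|x_i|^2+|x_j|^2)$; both give the same constant $Ck$.
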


\begin{proof}
Let $\mathbb{B} = (b_{ij})_{k \times k} \in \mathbb{R}^{k \times k}$ satisfy $\mathbb{B} = O_C(\mathscr{E}_k)$, and consider any vector $X = [x_1, \dots, x_k]^\top \in \mathbb{R}^k$. The quadratic form can be estimated as:	
\begin{equation*}
\mathbb{B}(X,X)=\sum_{i,j=1}^k x_ib_{ij}x_j\le \sum_{i,j=1}^k |x_i| |b_{ij}| |x_j|\le C \sum_{i,j=1}^k |x_i||x_j|\le  \frac{1}{2}C\sum_{i,j=1}^k(|x_i|^2+|x_j|^2)=Ck\|X\|^2,
\end{equation*}
which establishes the inequality \eqref{lm381}.

For the matrix product estimate, let $(\mathbb{C})_{ij}$ denote the $(i,j)$-entry of matrix $\mathbb{C}$. Then for $\mathbb{C} = \mathbb{A}\mathbb{B}$ we have 
\begin{equation*}
(\text{abs}(\mathbb{A}\mathbb{B}))_{ij}=|\sum_{l=1}^{k}a_{il}b_{lj}|\le  \sum_{l=1}^{k}|a_{il}||b_{lj}|\le C\sum_{l=1}^{k}|a_{il}|\cdot 1= C(\text{abs}(\mathbb{A})\mathscr{E}_k)_{ij},\quad i,j=1,\dots, n.
\end{equation*}
This proves that $\mathbb{A}\mathbb{B} = O_C(\text{abs}(\mathbb{A})\mathscr{E}_k)$, or equivalently $\mathbb{A}O_C(\mathscr{E}_k) = O_C(\text{abs}(\mathbb{A})\mathscr{E}_k)$. The second identity in \eqref{lm382} follows by similar arguments, completing the proof. 
\end{proof}

By using the Lemma $\ref{lemma210}$ and Lemma $\ref{lm38}$, we obtain the estimate for $A^2$, which is the second main result of this subsection.

\begin{lemma} \label{SecondOrderLemma}
Let $\{e_{x,i}\}_{i=1}^n \subset (T_xM)^*$ and $\{e_{y,i}\}_{i=1}^n \subset (T_yM)^*$ denote the corresponding dual basis satisfying:
$$
e_{x,i}(e_x^j) = \delta_{ij} \quad \text{and} \quad e_{y,i}(e_y^j) = \delta_{ij}, \quad \forall i,j = 1,\dotsc,n.
$$ 
Define the extended basis for the product space $T_xM \times T_yM$:
$$
e^i := 
\begin{cases}
e_x^i & 1 \leq i \leq n \\
e_y^{i-n} & n < i \leq 2n
\end{cases}, \quad 
e_i := 
\begin{cases}
e_{x,i} & 1 \leq i \leq n \\
e_{y,i-n} & n < i \leq 2n
\end{cases}
$$
where $i = 1,\dotsc,2n$. Let
$$
\mathbb{A}_I := \begin{bmatrix}
\mathbb{I}_n & -\mathbb{I}_n \\
-\mathbb{I}_n & \mathbb{I}_n
\end{bmatrix} \in \dbR^{2n \times 2n},
$$
where $\mathbb{I}_n$ denotes the $n \times n$ identity matrix, 
then $A$ admits the matrix representation:
\begin{equation}\label{eq:A_rep}
\mathbb{A} = 2\mathbb{A}_I + O_{C\rho^2(x,y)}(\mathscr{E}_{2n})
\end{equation}
with respect to the tensor basis $\{e_i \otimes e_j\}_{i,j=1}^{2n}$ for some positive constant $C$ depending only on $C_R$. For any tangent vectors $v,w \in T_xM \times T_yM$ with coordinates representations:
$$
\bar{v} = (\bar{v}_1,\dotsc,\bar{v}_{2n}), \quad \bar{w} = (\bar{w}_1,\dotsc,\bar{w}_{2n})
$$
under the basis $\{e^i\}_{i=1}^{2n}$, the bilinear forms $A$ and $A^2$ evaluate as:
$$
A(v,w) = \bar{v}^\top\mathbb{A}\bar{w},\quad A^2(v,w) = \bar{v}^\top\mathbb{A}^2\bar{w}.
$$

Then, the squared operator $\mathbb{A}^2$ satisfies
\begin{align}\label{eq:A2_est}
\mathbb{A}^2 = 4\mathbb{A}+O_{C'\rho^2(x,y)}(\mathscr{E}_{2n})
\end{align}
for some positive constant $C'$ depending on $C_R,n$.
\end{lemma}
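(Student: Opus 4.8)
The plan is to derive \eqref{eq:A2_est} directly from the matrix representation \eqref{eq:A_rep} by squaring and tracking the error terms carefully with the calculus of $O_C(\mathscr{E}_{2n})$ matrices established in Lemma \ref{lm38}. First I would record the algebraic identity $\mathbb{A}_I^2 = 2\mathbb{A}_I$, which is immediate since
$$\mathbb{A}_I^2 = \begin{bmatrix}\mathbb{I}_n & -\mathbb{I}_n\\ -\mathbb{I}_n & \mathbb{I}_n\end{bmatrix}\begin{bmatrix}\mathbb{I}_n & -\mathbb{I}_n\\ -\mathbb{I}_n & \mathbb{I}_n\end{bmatrix} = \begin{bmatrix}2\mathbb{I}_n & -2\mathbb{I}_n\\ -2\mathbb{I}_n & 2\mathbb{I}_n\end{bmatrix} = 2\mathbb{A}_I.$$
Then, writing $\mathbb{A} = 2\mathbb{A}_I + \mathbb{E}$ with $\mathbb{E} = O_{C\rho^2(x,y)}(\mathscr{E}_{2n})$ from \eqref{eq:A_rep}, I expand
$$\mathbb{A}^2 = 4\mathbb{A}_I^2 + 2\mathbb{A}_I\mathbb{E} + 2\mathbb{E}\mathbb{A}_I + \mathbb{E}^2 = 8\mathbb{A}_I + 2\mathbb{A}_I\mathbb{E} + 2\mathbb{E}\mathbb{A}_I + \mathbb{E}^2.$$

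Next I would handle the three error contributions. Since the entries of $\mathbb{A}_I$ are bounded by $1$, the product estimate \eqref{lm382} gives $\mathbb{A}_I\mathbb{E} = O_C(\mathrm{abs}(\mathbb{A}_I)\mathscr{E}_{2n})$; because $\mathrm{abs}(\mathbb{A}_I) = \mathscr{E}_{2n}$ (up to the zero pattern — in fact every entry of $\mathrm{abs}(\mathbb{A}_I)$ is $0$ or $1$, hence $\mathrm{abs}(\mathbb{A}_I)\mathscr{E}_{2n} = n\mathscr{E}_{2n} \le 2n\mathscr{E}_{2n}$), we get $\mathbb{A}_I\mathbb{E} = O_{C''\rho^2(x,y)}(\mathscr{E}_{2n})$ for a constant $C''$ depending on $C_R$ and $n$; similarly for $\mathbb{E}\mathbb{A}_I$. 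For the quadratic term $\mathbb{E}^2$, note $\mathbb{E} = O_{C\rho^2}(\mathscr{E}_{2n})$ and $\mathscr{E}_{2n}^2 = 2n\mathscr{E}_{2n}$, so by the product-of-error rule (each factor entrywise bounded by $C\rho^2$) one has $\mathbb{E}^2 = O_{C'''\rho^4(x,y)}(\mathscr{E}_{2n})$, and since $\rho(x,y) \le r_m \le 1$ this is absorbed into $O_{C'''\rho^2(x,y)}(\mathscr{E}_{2n})$. Collecting, $2\mathbb{A}_I\mathbb{E} + 2\mathbb{E}\mathbb{A}_I + \mathbb{E}^2 = O_{\widetilde{C}\rho^2(x,y)}(\mathscr{E}_{2n})$ with $\widetilde C = \widetilde C(C_R,n)$.

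Finally I would recombine: $\mathbb{A}^2 = 8\mathbb{A}_I + O_{\widetilde C\rho^2}(\mathscr{E}_{2n})$, and then substitute $8\mathbb{A}_I = 4(2\mathbb{A}_I) = 4(\mathbb{A} - \mathbb{E}) = 4\mathbb{A} - 4\mathbb{E}$, where $4\mathbb{E} = O_{4C\rho^2}(\mathscr{E}_{2n})$ is again absorbed into the error term. This yields $\mathbb{A}^2 = 4\mathbb{A} + O_{C'\rho^2(x,y)}(\mathscr{E}_{2n})$ with $C'$ depending on $C_R$ and $n$, which is exactly \eqref{eq:A2_est}. The bilinear-form statements $A(v,w) = \bar v^\top\mathbb{A}\bar w$ and $A^2(v,w) = \bar v^\top\mathbb{A}^2\bar w$ are just the definition of the matrix representation of a bilinear form in the dual basis $\{e_i\otimes e_j\}$ and its composition, so they require no separate argument beyond citing Lemma \ref{lemma210} for \eqref{eq:A_rep}. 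I do not anticipate a genuine obstacle here; the only point requiring care is the bookkeeping of how the dimension factor $n$ (from $\mathscr{E}_{2n}^2 = 2n\mathscr{E}_{2n}$ and $\mathrm{abs}(\mathbb{A}_I)\mathscr{E}_{2n}$) enters the constant, which is why the final constant $C'$ is allowed to depend on both $C_R$ and $n$ rather than $C_R$ alone — and the harmless use of $\rho(x,y)\le r_m\le 1$ to downgrade the $\rho^4$ term from $\mathbb{E}^2$ to order $\rho^2$.
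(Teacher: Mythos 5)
Your proof is correct and follows essentially the same route as the paper: square the representation $\mathbb{A}=2\mathbb{A}_I+O_{C\rho^2}(\mathscr{E}_{2n})$, use $\mathbb{A}_I^2=2\mathbb{A}_I$, control the cross and quadratic error terms via Lemma \ref{lm38} together with $\mathscr{E}_{2n}^2=2n\mathscr{E}_{2n}$ and $\rho\le r_m\le 1$, then rewrite $8\mathbb{A}_I=4\mathbb{A}-4\mathbb{E}$. The only quibble is that $\mathrm{abs}(\mathbb{A}_I)\mathscr{E}_{2n}=2\mathscr{E}_{2n}$ (each row of $\mathrm{abs}(\mathbb{A}_I)$ sums to $2$, not $n$), but since you bound it by $2n\mathscr{E}_{2n}$ and the final constant is allowed to depend on $n$, this does not affect the argument.
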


\begin{proof}
Starting from the matrix representation \eqref{eq:A_rep}, we square both sides to obtain
\begin{equation}\label{lm2221}
\mathbb{A}^2=4\mathbb{A}_I^2+2\mathbb{A}_I O_{C\rho^2(x,y)}(\mathscr{E}_{2n})+2O_{C\rho^2(x,y)}(\mathscr{E}_{2n})\mathbb{A}_I+O_{C\rho^2(x,y)}(\mathscr{E}_{2n})O_{C\rho^2(x,y)}(\mathscr{E}_{2n}).
\end{equation}
A direct calculation shows that $\mathbb{A}_I^2=2\mathbb{A}_I$. Applying Lemma \ref{lm38} and using the structure of $\mathbb{A}_I$ and $\mathscr{E}_{2n}$, we obtain
\begin{equation}\label{lm2222}
2\mathbb{A}_I O_{C\rho^2(x,y)}(\mathscr{E}_{2n})= O_{2C\rho^2(x,y)}(\mathrm{abs}(\mathbb{A}_I)\mathscr{E}_{2n})=O_{4C\rho^2(x,y)}(\mathscr{E}_{2n}),
\end{equation}
since $\text{abs}(\mathbb{A}_I)\mathscr{E}_{2n} = 2\mathscr{E}_{2n}$.

Using the bound $\rho(x,y) \leq r_m \leq 1$, we estimate the quadratic remainder term:
\begin{equation}\label{lm2223}
O_{C\rho^2(x,y)}(\mathscr{E}_{2n})O_{C\rho^2(x,y)}(\mathscr{E}_{2n})= O_{C^2\rho^4(x,y)}(\mathscr{E}_{2n}^2)=O_{2nC^2\rho^2(x,y)}(\mathscr{E}_{2n}).
\end{equation}
where we have used that $\mathscr{E}_{2n}^2 = 2n\mathscr{E}_{2n}$.
Combining the estimates \eqref{lm2221}-\eqref{lm2223}, all terms involving $\mathbb{A}_I$ combine to give the  result:
\begin{equation*}
\mathbb{A}^2 = 8\mathbb{A}_I + O_{(4C + 4C + 2nC^2)\rho^2(x,y)}(\mathscr{E}_{2n}).
\end{equation*}
By comparing with the expression of $\mathbb{A}$ in \eqref{eq:A_rep}, set $C'=4+8C + 2nC^2$, thereby establishing \eqref{eq:A2_est}. 
\end{proof}

\subsection{Some properties of control system \eqref{eqve0}}\label{s23}

While standard references like Hsu's monograph \nmycite{hsu}{Proposition 1.2.8} establish well-posedness for Stratonovich-type SDEs with time-independent smooth vector fields on manifolds, our case requires broader consideration. Hence, we first  establish the well-posedness of system \eqref{eqve0}.

\begin{proposition}\label{pSExistence}
Under Assumption \ref{condp}, the state equation \eqref{eqve0} admits a unique  $\{\mathcal{F}_s\}_{t\le s\le T}$-adapted continuous solution.
\end{proposition}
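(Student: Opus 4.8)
The plan is to reduce the manifold-valued Stratonovich SDE to a controlled Itô SDE in the ambient Euclidean space $\dbR^{n_1}$ and invoke classical well-posedness theory there. First I would fix the isometric embedding $\mathcal{E}:(M,g)\to(\dbR^{n_1},g^E)$ from Lemma \ref{Ne}, and set $\wt M:=\mathcal{E}(M)$, which by Lemma \ref{lee52} is a compact embedded submanifold. Using Lemma \ref{LMXX}, a process $X$ solves \eqref{eqve0} in $M$ if and only if $\wt X:=\mathcal{E}(X)$ solves \eqref{eqve11} in $\wt M$ with the pushed-forward coefficients $\wt b,\wt\sigma_i$ of \eqref{dfbs}. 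Since $\mathcal{E}$ is a diffeomorphism onto its image, existence and uniqueness for \eqref{eqve0} is equivalent to existence and uniqueness for \eqref{eqve11} among $\wt M$-valued processes, so it suffices to treat the latter.

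Next I would extend the coefficients off $\wt M$. By the smoothness hypotheses in Assumption \ref{condp} (2)--(3), the vector fields $\wt b(s,\cdot,u)$ and $\wt\sigma_i(s,\cdot,u)$ satisfy the Lipschitz-type bound required by Lemma \ref{lm62}; applying that lemma (uniformly in $s$, using joint continuity from Assumption \ref{condp} (1)) yields compactly supported extensions $\overline{b},\overline{\sigma}_i\in\mathsf{X}^2(T\dbR^{n_1})$ with the global Lipschitz estimate on both $\overline{\sigma}_i$ and the Stratonovich correction terms $D^E_{\overline{\sigma}_i}\overline{\sigma}_i$. Converting the Stratonovich equation to Itô form introduces the drift correction $\tfrac12\sum_i D^E_{\overline{\sigma}_i}\overline{\sigma}_i$, so the resulting Itô SDE in $\dbR^{n_1}$ has coefficients that are bounded, continuous in $(s,u)$, and globally Lipschitz in the spatial variable (with the control term $u(\cdot)\in\mathcal{U}[t,T]\subset L^2_{\mathbb{F}}(t,T;\dbR^k)$ progressively measurable). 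Classical theory (e.g. \cite{Yong1999, Krylov2009}) then gives a unique $\{\mathcal{F}_s\}$-adapted continuous solution $\wt Z$ in $\dbR^{n_1}$ with $\wt Z(t)=\wt\xi=\mathcal{E}(\xi)$.

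The remaining — and main — step is to show that this Euclidean solution $\wt Z$ actually stays on $\wt M$, so that $\wt Z=\wt X$ with $X:=\mathcal{E}^{-1}(\wt Z)$ well-defined. For this I would consider the squared-distance function $\varphi_{\wt M}(z):=\dist^2(z,\wt M)$ on the tubular neighborhood $\wt M_\epsilon$ (Lemma \ref{Tubular}, Remark \ref{rktu}), which is smooth there, vanishes exactly on $\wt M$, and for which Lemma \ref{lmf} supplies the crucial bounds $|\overline{\sigma}_i\varphi_{\wt M}|\le C\varphi_{\wt M}$ and $|\overline{\sigma}_i\overline{\sigma}_i\varphi_{\wt M}|\le C\varphi_{\wt M}$, and analogously $|\overline{b}\,\varphi_{\wt M}|\le C\varphi_{\wt M}$. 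Applying Itô's formula to $\varphi_{\wt M}(\wt Z(s))$ (stopped before exit from $\wt M_\epsilon$), both the drift and the diffusion coefficient of $d\varphi_{\wt M}(\wt Z(s))$ are controlled by $\varphi_{\wt M}(\wt Z(s))$ itself; since $\varphi_{\wt M}(\wt Z(t))=\varphi_{\wt M}(\wt\xi)=0$, a Gronwall argument on $\mathbb{E}[\varphi_{\wt M}(\wt Z(s\wedge\tau))]$ forces $\varphi_{\wt M}(\wt Z(s))\equiv 0$ a.s., hence $\wt Z(s)\in\wt M$ for all $s$ and the exit time is a.s.\ $+\infty$ (in effect $\tau\ge T$). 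Pulling back through $\mathcal{E}^{-1}$ gives the desired unique $M$-valued adapted continuous solution to \eqref{eqve0}, with uniqueness inherited from uniqueness of $\wt Z$ in $\dbR^{n_1}$. The delicate points to get right are the measurable/uniform-in-$(s,u)$ dependence when invoking Lemma \ref{lm62}, the localization by the exit time from $\wt M_\epsilon$, and verifying that Lemma \ref{lmf}'s estimates indeed make the Gronwall step close.
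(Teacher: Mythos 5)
Your proposal is correct and follows essentially the same route as the paper's proof: embed via Lemma \ref{Ne}, transfer the equation with Lemma \ref{LMXX}, extend the coefficients by Lemma \ref{lm62}, solve the resulting Lipschitz It\^o SDE in $\dbR^{n_1}$ by classical theory, and then use $\varphi_{\wt M}$ with Lemma \ref{lmf} and a Gronwall argument to show the solution stays on $\wt M$ before pulling back through $\mathcal{E}^{-1}$. Your explicit localization by the exit time from the tubular neighborhood is a minor refinement of a point the paper leaves implicit, not a different argument.
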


\begin{proof}
We borrow some  idea in \nmycite{hsu}{Proposition 1.2.8} to solve \eqref{eqve11} on $\dbR^{n_1}$.  

The embedded process $\wt X$ satisfies the Stratonovich SDE:
\begin{align}\label{XN}
\left\{
\begin{aligned}
&d\wt X(s)=\wt b(s,\wt X(s),u(s))ds+\sum_{i=1}^m\wt \sigma_i(s,\wt X(s),u(s))\circ dW^i(s), &\quad s\in (t,T),\\
&\wt X(t)=\wt \xi, 
\end{aligned}
\right.
\end{align}
on the compact embedded manifold $\wt M := \mathcal{E}(M)$.

Under Assumption \ref{condp} and Lemma \ref{lm62}, we extend the vector fields $\wt b$ and $\wt \sigma_i$ ($i=1,...,m$) to compactly supported fields on $\dbR^{n_1}$ with supports in $\wt M_\epsilon$ (the $\epsilon$-tubular neighborhood of $\wt M$) for fixed $\epsilon\in (0,\epsilon_{\wt M})$ (see in \eqref{meps}), maintaining the Lipschitz properties:
\begin{align}\label{condext}
&\|b_0(t,z_1,u_1)-b_0(t,z_2,u_2)\|\le \Big(1+\frac{1}{2}m\Big)C_E(\|z_1-z_2\|+\|u_1-u_2\|),\nonumber\\
&\|\wt \sigma_i(t,z_1,u_1)-\wt \sigma_i(t,z_2,u_2)\|\le C_E(\|z_1-z_2\|+\|u_1-u_2\|),\quad i=1,...,m
\end{align}
for all $z_1,z_2\in \dbR^{n_1},u_1,u_2\in U$, for which $b_0 := \wt b + \frac{1}{2}\sum_{i=1}^m D_{\wt\sigma_i}^E\wt\sigma_i$ is the corrected drift term. The extended system in It\^o form becomes:
\begin{equation}\label{XN1}
\quad \wt X(t_1)=\wt \xi+\int_t^{t_1}b_0(r,\wt X(r),u(r))dr+\sum_{i=1}^m\int_t^{t_1}\wt \sigma_i(r,\wt X(r),u(r)) dW^i(r),\quad t_1\in [t,T].
\end{equation}
By \nmycite{Yong1999}{Theorem 6.16}, the inequality \eqref{condext} guarantees that the SDE \eqref{XN1} admits a unique $\{\mathcal{F}_s\}_{t\le s\le T}$ adapted continuous
solution in $ \dbR^{n_1} $.

Next,  we prove that the solution process $\wt X(s),s\in [t,T]$ remains on the manifold $\wt M$. To this end, we employ the squared distance function $\varphi_{\wt M}(z):=||v||^2$  for  $z\in \wt M_{\epsilon}$  from Lemma \ref{lmf}. Applying It\^o's formula (Definition \ref{df61}) yields
\begin{align*}
\varphi_{\wt M}(\wt X(t_1))=&\varphi_{\wt M}(\xi)+ \int_t^{t_1}\Big\{\wt b(r,\wt X(r),u(r))\varphi_{\wt M}(\wt X(r))+\frac{1}{2}\sum_{i=1}^m \sigma_{i}^2(r,\wt X(r),u(r))\varphi_M(\wt X(r))\Big\}dr\\
&+\int_t^{t_1}\sum_{i=1}^m\sigma_{i}(r,\wt X(r),u(r))\varphi_M(\wt X(r))dW^i(r),\quad t_1\in [t,T].
\end{align*}
Since $\wt \xi \in L_{\mathcal{F}_t}(\Omega,\wt M)$, we have $\varphi_{\wt M}(\wt\xi)=0$. Applying the Burkholder-Davis-Gundy' inequality, we get that
\begin{align*}
&\mathbb{E}\sup_{r\in [t,t_1]}|\varphi_{\wt M}(\wt X(r))|^2\nonumber\\
&\le C \mathbb{E}\int_t^{t_1}\Big\{|\wt b(r,\wt X(r),u(r))\varphi_{\wt M}(\wt X(r))|^2+  \sum_{i=1}^m|\wt\sigma_{i}^2(r,\wt X(r),u(r))\varphi_{\wt M}(\wt X(r))|^2\Big\}dr.\nonumber\\
&\le C \int_t^{t_1}\Big\{\mathbb{E}\sup_{r\in [t,s]}|\wt b(r,X(r),u(r))\varphi_{\wt M}(\wt X(r))|^2+  \sum_{i=1}^m\mathbb{E}\sup_{r\in [t,s]}|\wt \sigma_{i}^2(r,\wt X(r),u(r))\varphi_{\wt M}(\wt X(r))|^2\Big\}ds.\nonumber
\end{align*}
By Lemma \ref{lmf}, there exists a positive constant $C$ such that
\begin{align*}
\mathbb{E}\sup_{r\in [t,t_1]}|\varphi_{\wt M}(\wt X(r))|^2&\le C\int_t^{t_1}\mathbb{E}\sup_{r\in [t,s]}|\varphi_{\wt M}(\wt X(r))|^2ds.\nonumber
\end{align*}
Gronwall's inequality then implies $\varphi_{\wt M}(\wt X(s)) = 0$ for all $s \in [t,T]$, proving that $\wt X$ remains on $\wt M$. Consequently, $X=\mathcal{E}^{-1}(\wt X)$ is the unique $\{\mathcal{F}_s\}_{t\le s\le T}$-adapted continuous solution on $M$. 
\end{proof}

The following result establishes the continuous dependence of solutions to equation \eqref{eqve0} on both initial conditions and control inputs:
\begin{proposition} \label{prop41}
Let Assumption \ref{condp} hold.   Let $\xi_1,\xi_2\in L_{\mathcal{F}_t}(\Omega,M)$,$u_1(\cdot),u_2(\cdot)\in \mathcal{U}[t,T]$ and denote by $X_1(\cdot):=X(\cdot;t,\xi_1,u_1(\cdot))$, $X_2(\cdot):=X(\cdot;t,\xi_2,u_2(\cdot))$ the corresponding solution processes for simplicity, then there exists  a constant $C_D = C_D(C_\rho, C_E, C_B, m) > 0$  such that the following  estimates hold:
\begin{align}
&\mathbb{E}_t\sup_{s\in [t,T]}\rho^2(X_1(s),X_2(s))\le C_{D}\mathbb{E}_t\bigg\{\rho^2(\xi_1,\xi_2)+\int_t^T\|u_1(s)-u_2(s)\|^2ds\bigg\}, \label{apeq65-1}\\
&\mathbb{E}_{t}\sup_{r\in [t,t_1]}\rho^2(X(r;t,\xi_1,u_1(\cdot)),\xi_1)\le C_{D}(t_1-t),\quad t_1\in [t,T].\label{apeq65-2}
\end{align}
\end{proposition}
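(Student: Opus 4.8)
\textbf{Proof plan for Proposition \ref{prop41}.}

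The plan is to transfer both estimates to the ambient Euclidean space $\dbR^{n_1}$ via the isometric embedding $\mathcal{E}$, prove the corresponding Euclidean estimates by standard SDE techniques, and then come back to $M$ using the metric equivalence of Lemma \ref{lm61}. Concretely, let $\wt M := \mathcal{E}(M)$, and by Lemma \ref{LMXX} set $\wt X_j := \mathcal{E}(X_j)$, which solves \eqref{eqve11} with vector fields $\wt b, \wt\sigma_i$ defined in \eqref{dfbs}; by Lemma \ref{lm62} (applied to $\wt b$ and each $\wt\sigma_i$, whose required Lipschitz bounds follow from Assumption \ref{condp} together with the isometry $d\mathcal{E}$) these extend to globally Lipschitz, compactly supported vector fields $\obf b, \obf\sigma_i$ on $\dbR^{n_1}$ with the uniform constant $C_E$, and both solutions live in $\wt M$ by Proposition \ref{pSExistence}. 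In Stratonovich-to-It\^o corrected form (cf.\ \eqref{XN1}) the $\wt X_j$ solve the It\^o SDE with drift $b_0 = \obf b + \tfrac12\sum_i D^E_{\obf\sigma_i}\obf\sigma_i$ and diffusion $\obf\sigma_i$, all globally Lipschitz in $(z,u)$ with constant controlled by $C_E, C_B, m$.

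For \eqref{apeq65-1}: write $\wt X_1(s) - \wt X_2(s)$ as $\wt\xi_1 - \wt\xi_2$ plus the time integral of $b_0(r,\wt X_1,u_1) - b_0(r,\wt X_2,u_2)$ plus the stochastic integrals of $\obf\sigma_i(r,\wt X_1,u_1) - \obf\sigma_i(r,\wt X_2,u_2)$. Taking $\mathbb{E}_t\sup_{s\in[t,t_1]}\|\cdot\|^2$, applying the Burkholder--Davis--Gundy inequality to the martingale terms and Cauchy--Schwarz to the drift term, and splitting each increment via the Lipschitz bound into a $\|\wt X_1(r)-\wt X_2(r)\|$ part and a $\|u_1(r)-u_2(r)\|$ part, one gets
\begin{align*}
\mathbb{E}_t\sup_{r\in[t,t_1]}\|\wt X_1(r)-\wt X_2(r)\|^2 \le C\,\mathbb{E}_t\|\wt\xi_1-\wt\xi_2\|^2 + C\,\mathbb{E}_t\int_t^T\|u_1(r)-u_2(r)\|^2dr + C\int_t^{t_1}\mathbb{E}_t\sup_{r'\in[t,s]}\|\wt X_1(r')-\wt X_2(r')\|^2\,ds,
\end{align*}
and Gronwall's inequality closes this (the $\sup$-inside-the-integral bookkeeping being the only delicate point, handled as in the proof of Proposition \ref{pSExistence}). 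Finally Lemma \ref{eqrho} gives $\|\wt\xi_1-\wt\xi_2\| \le \wt\rho(\wt\xi_1,\wt\xi_2) = \rho(\xi_1,\xi_2)$, and Lemma \ref{lm61} gives $\rho(X_1(s),X_2(s)) = \wt\rho(\wt X_1(s),\wt X_2(s)) \le C_\rho\|\wt X_1(s)-\wt X_2(s)\|$; combining these and renaming the constant $C_D = C_D(C_\rho,C_E,C_B,m)$ yields \eqref{apeq65-1}. For \eqref{apeq65-2}: write $\wt X(r) - \wt\xi_1 = \int_t^r b_0\,ds + \sum_i\int_t^r\obf\sigma_i\,dW^i$, take $\mathbb{E}_t\sup_{r\in[t,t_1]}\|\cdot\|^2$, and use BDG plus the \emph{uniform boundedness} of $b_0$ and $\obf\sigma_i$ (these are compactly supported continuous, hence bounded, with bound depending on $C_B$ through Assumption \ref{condp}) to get a bound of the form $C(t_1-t) + C(t_1-t)^2 \le C_D(t_1-t)$; then pass back to $M$ via Lemma \ref{lm61} exactly as before.

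The main obstacle is not any single inequality but the careful justification that the extended Euclidean coefficients $b_0,\obf\sigma_i$ genuinely inherit the quantitative Lipschitz-in-$(z,u)$ and boundedness constants in the form needed — this is precisely what Lemma \ref{lm62} provides, so the work is in correctly invoking it for the corrected drift $b_0$ (which involves $D^E_{\obf\sigma_i}\obf\sigma_i$, covered by the second bound in Lemma \ref{lm62}) and tracking how $C_E, C_B, m$ enter the final constant $C_D$. A secondary technical point is that $\mathbb{E}_t = \mathbb{E}(\cdot\mid\mathcal{F}_t)$ is a conditional expectation, so the Gronwall argument must be run with the conditional-expectation versions of BDG and Fubini, which is routine but must be stated. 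Everything else — the It\^o/BDG/Gronwall chain — is the standard a priori estimate for SDEs in $\dbR^{n_1}$.
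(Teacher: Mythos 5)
Your proposal is correct and follows essentially the same route as the paper: embed via $\mathcal{E}$ (Lemma \ref{LMXX}), extend the coefficients with Lemma \ref{lm62} to get the globally Lipschitz It\^o form with corrected drift $b_0$, derive the Euclidean estimates (the paper simply cites the standard a priori bound of \nmycite{yong_stochastic_2022}{Proposition 4.1} where you spell out the BDG/Gronwall chain), and return to $M$ through Lemmas \ref{eqrho} and \ref{lm61} with $C_D = C_\rho^2 C$. The two directions of the metric comparison are used exactly as you state, so no gap remains.
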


\begin{proof}
Building upon Lemma \ref{LMXX}, Assumption \ref{condp}, and Lemma \ref{lm62}, and following similar arguments as in Proposition \ref{pSExistence}, we establish the  estimates by working with the extended system \eqref{XN}.	

Consider the embedded solutions $\wt X_1 := \mathcal{E}(X_1)$ and $\wt X_2 := \mathcal{E}(X_2)$, which satisfy the It\^o SDEs:
$$\wt X_1(s)=\wt \xi_1+\int_t^s b_0(r,\wt X_1(r),u(r))dr+\sum_{i=1}^m\int_t^s\wt \sigma_i(r,\wt X_1(r),u(r))dW^i(r),\quad s\in [t,T]$$ 
,where the modified drift is given by
$$b_0(r,\wt X_1(r),u(r)):=b(r,\wt X_1(r),u(r))+\frac{1}{2}\sum_{i=1}^m D_{\wt {\sigma}_i}^E\wt \sigma_i(r,\wt X_1(r),u(r)).$$
Applying the Lipschitz conditions from \eqref{condext} and standard stochastic estimates from \nmycite{yong_stochastic_2022}{Proposition 4.1}, we obtain the following Euclidean norm estimates for some constant $C = C(m,C_E,C_B) > 0$:
\begin{align}\label{apeq651}
&\mathbb{E}_t\sup_{s\in [t,T]}\|\wt X_1(s)-\wt X_2(s)\|^2\le C\mathbb{E}_t\bigg\{\|\wt \xi_1-\wt \xi_2\|^2+\int_t^T\|u_1(s)-u_2(s)\|^2ds\bigg\},\nonumber\\
&\mathbb{E}_{t}\sup_{r\in [t,t_1]}\|\wt X(r;t,\wt \xi_1,u(\cdot))-\wt \xi_1\|^2\le C(t_1-t),
\end{align}
Using Lemma \ref{lm61}, we relate these estimates to the intrinsic distance on $\mathcal{E}(M)$:
\begin{align*}
&\mathbb{E}_t\sup_{s\in [t,T]}\frac{1}{C_{\rho}^2}\wt\rho^2(\wt X_1(s),\wt X_2(s))\le \mathbb{E}_t\sup_{s\in [t,T]}\|\wt X_1(s)-\wt X_2(s)\|^2,\nonumber\\
&\mathbb{E}_t\sup_{r\in [t,t_1]}\frac{1}{C_{\rho}^2}\wt \rho^2(\wt X(r;t,\wt\xi_1,u(\cdot)),\wt\xi_1)\le \mathbb{E}_t\sup_{r\in [t,t_1]}\|\wt X(r;t,\wt\xi_1,u(\cdot))-\wt\xi_1\|^2.
\end{align*}
The desired estimates then follow by setting $C_D := C_\rho^2 C$ and applying Lemma \ref{eqrho} to convert back to the original manifold $M$.
\end{proof}

\begin{lemma} \label{prop33}
Let $\xi \in L_{\mathcal{F}_t}(\Omega,M)$ be an initial condition and $u_i(\cdot) \in \mathcal{U}[t,T]$ for $i = 1,...,N$ be admissible control processes. Given any $\mathcal{F}_t$-measurable partition $\{\Omega_i\}_{i=1}^N$ of $\Omega$, define the composite control:
$$u(\cdot)=\sum_{i=1}^N u_i(\cdot)1_{\Omega_i},$$
Then the corresponding cost functional admits the decomposition:
$$\mathcal{J}(t,\xi;u(\cdot))=\sum_{i=1}^N\mathcal{J}(t,\xi,u_i(\cdot))1_{\Omega_i}.$$
\end{lemma}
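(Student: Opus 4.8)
The plan is to show first that the state process driven by the composite control $u(\cdot)$ is the ``patchwork'' of the state processes driven by the individual controls,
$$X(\cdot;t,\xi,u(\cdot))=\sum_{i=1}^N X(\cdot;t,\xi,u_i(\cdot))\,1_{\Omega_i}\quad\text{a.s.},$$
and then to substitute this identity into the cost functional, exploiting the $\mathcal{F}_t$-measurability of the sets $\Omega_i$ and the linearity of the conditional expectation $\mathbb{E}_t$. One first records that $u(\cdot)\in\mathcal{U}[t,T]$: if each $u_i\in\mathcal{U}_{piece}[t,T]$, then $u(\cdot)$ is again piecewise with coefficients $\sum_i u_{i}1_{\Omega_i}$ that remain $\mathcal{F}_{t_j}$-measurable, since $\Omega_i\in\mathcal{F}_t\subset\mathcal{F}_{t_j}$; the general case follows by approximating each $u_i$ in $L^2_{\mathbb{F}}(t,T;\dbR^k)$ by piecewise controls.

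For the patchwork identity, set $X^{(i)}:=X(\cdot;t,\xi,u_i(\cdot))$ and $\bar X(s):=\sum_{i=1}^N X^{(i)}(s)1_{\Omega_i}$. Since $\{\Omega_i\}_{i=1}^N$ is an $\mathcal{F}_t$-measurable partition of $\Omega$ and $\mathcal{F}_t\subset\mathcal{F}_s$ for $s\ge t$, the process $\bar X$ is $M$-valued, $\{\mathcal{F}_s\}_{t\le s\le T}$-adapted and continuous, with $\bar X(t)=\xi$. To verify that $\bar X$ solves \eqref{eqve0} with control $u(\cdot)$, fix $\f\in C^{1,2}([t,T]\times M)$ and write \eqref{df61} for $X^{(i)}$ with control $u_i(\cdot)$; multiplying by $1_{\Omega_i}$, which is $\mathcal{F}_t$- (hence $\mathcal{F}_r$-)measurable and therefore commutes with the Lebesgue and It\^o integrals, and summing over $i$ gives
$$\f(s,\bar X(s))=\f(t,\xi)+\int_t^s\Big(\frac{\partial}{\partial r}+b+\frac12\sum_{j=1}^m\sigma_j^2\Big)\f(r,\bar X(r))\,dr+\int_t^s\sum_{j=1}^m\sigma_j\f(r,\bar X(r))\,dW^j(r),$$
where on $\Omega_i$ the vector fields $b,\sigma_j$ are evaluated along $u_i(r)=u(r)$ and the process along $X^{(i)}(r)=\bar X(r)$, so that the integrands coincide with those associated with the control $u(\cdot)$; here one uses that $\{\Omega_i\}$ partitions $\Omega$ to identify $\sum_i 1_{\Omega_i}\f(\cdot,X^{(i)}(\cdot))=\f(\cdot,\bar X(\cdot))$ and $\sum_i 1_{\Omega_i}\f(t,\xi)=\f(t,\xi)$. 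Thus $\bar X$ is a solution of \eqref{eqve0} with control $u(\cdot)$, and by the uniqueness statement of Proposition \ref{pSExistence} it equals $X(\cdot;t,\xi,u(\cdot))$.

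For the cost decomposition, write $X:=X(\cdot;t,\xi,u(\cdot))=\sum_i X^{(i)}1_{\Omega_i}$. On $\Omega_i$ one has $u(s)=u_i(s)$ and $X(s)=X^{(i)}(s)$ for all $s\in[t,T]$, hence $f(s,X(s),u(s))=f(s,X^{(i)}(s),u_i(s))$ and $h(X(T))=h(X^{(i)}(T))$ there; since $\{\Omega_i\}$ partitions $\Omega$,
$$\int_t^Tf(s,X(s),u(s))\,ds+h(X(T))=\sum_{i=1}^N\Big(\int_t^Tf(s,X^{(i)}(s),u_i(s))\,ds+h(X^{(i)}(T))\Big)1_{\Omega_i}\quad\text{a.s.}$$
Applying $\mathbb{E}_t$ and pulling the $\mathcal{F}_t$-measurable indicators $1_{\Omega_i}$ out of the conditional expectation yields
$$\mathcal{J}(t,\xi;u(\cdot))=\sum_{i=1}^N 1_{\Omega_i}\,\mathbb{E}_t\Big\{\int_t^Tf(s,X^{(i)}(s),u_i(s))\,ds+h(X^{(i)}(T))\Big\}=\sum_{i=1}^N\mathcal{J}(t,\xi;u_i(\cdot))\,1_{\Omega_i},$$
which is the desired decomposition.

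The only genuinely delicate step is the patchwork identity: it relies precisely on $\Omega_i\in\mathcal{F}_t$, which is what lets the indicators be carried inside the stochastic integrals, together with uniqueness of solutions to \eqref{eqve0}. One could equivalently carry out this step in the ambient Euclidean space after the Nash embedding, using Lemma \ref{LMXX} and the standard fact that an $\mathcal{F}_t$-measurable scalar factors through It\^o integrals, and then project back via $\mathcal{E}^{-1}$; this avoids the weak formulation but invokes the embedding machinery. Either way, the admissibility bookkeeping for $u(\cdot)$ is routine, and everything after the patchwork identity is direct substitution.
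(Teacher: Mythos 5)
Your proposal is correct and follows essentially the same route as the paper: establish the patchwork identity $X(\cdot;t,\xi,u(\cdot))=\sum_i X(\cdot;t,\xi,u_i(\cdot))1_{\Omega_i}$ by exploiting that $\Omega_i\in\mathcal{F}_t$ lets the indicators pass through the integrals, then substitute into the cost functional and pull the indicators out of $\mathbb{E}_t$. The only (immaterial) difference is that you verify the patchwork identity intrinsically via the weak formulation \eqref{df61} and uniqueness, whereas the paper writes the combined SDE for the embedded processes $\mathcal{E}(X_i)$ in $\dbR^{n_1}$ — an alternative you yourself note — and the paper treats $N=2$ with induction while you handle general $N$ directly.
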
 
\begin{proof}
We present the case for $N=2$, as the general case follows similarly through induction. Let us define the solution processes: 
$$X_1(\cdot)=X(\cdot,t,\xi;u_1(\cdot)),\quad X_2(\cdot)=X(\cdot,t,\xi;u_2(\cdot)).$$
The combined process $\wt X_1=\mathcal{E}(X_1),\wt X_2=\mathcal{E}(X_1)$ on $\mathcal{E}(M)$ satisfies the following stochastic differential equation for $\wt\xi=\mathcal{E}(\xi)$:
\begin{align*}
\wt X_1(s)1_{\Omega_1}+\wt X_{2}(s)1_{\Omega_2}&=\wt\xi+\int_t^s\big[\wt b(r,\wt X_1(r),u_1(r))1_{\Omega_1}+\wt b(r,\wt X_2(r),u_2(r))1_{\Omega_2}\big]dr\\
&+\sum_{i=1}^m\int_t^s\big[\wt \sigma_i(r,\wt X_1(r),u_1(r))1_{\Omega_1}+\sigma_i(r,\wt X_2(r),u_2(r))1_{\Omega_2}\big]\circ dW^i(r).
\end{align*}
This establishes that the composite solution is:
$$\wt X(\cdot;t,\wt\xi,u(\cdot))=\wt X(\cdot;t,\wt\xi,u_1(\cdot))1_{\Omega_1}+\wt X(\cdot;t,\wt\xi,u_2(\cdot))1_{\Omega_2},$$
which means
$$X(\cdot;t,\xi,u(\cdot))=X_1(\cdot)1_{\Omega_1}+X_2(\cdot)1_{\Omega_2}.$$
The cost functional decomposition follows through these steps:
\begin{align*}
&\mathcal{J}(t,\xi;u_1(\cdot))1_{\Omega_1}+\mathcal{J}(t,\xi;u_2(\cdot))1_{\Omega_2}\\
&=\mathbb{E}_t\bigg\{\int_t^Tf(r,X_1(r),u_1(r))1_{\Omega_1}dr+f(r,X_2(r),u_2(r))1_{\Omega_2}dr+h(X_1(T))1_{\Omega_1}+h(X_2(T))1_{\Omega_2}\bigg\}\\
&=\mathbb{E}_t\bigg\{\int_t^Tf(r,X_1(r)1_{\Omega_1}+X_2(r)1_{\Omega_2},u_1(r)1_{\Omega_1}+u_2(r)1_{\Omega_2})dr+h(X_1(T)1_{\Omega_1}+h(X_2(T))1_{\Omega_2}\bigg\}\\
&=\mathcal{J}(t,\xi;u_1(\cdot)1_{\Omega_1}+u_2(\cdot)1_{\Omega_2})=\mathcal{J}(t,\xi;u(\cdot)).
\end{align*}
\end{proof}

For fixed $t \in [0,T]$ and $\xi \in L_{\mathcal{F}_t}(\Omega,M)$, consider any two controls $u_1(\cdot), u_2(\cdot) \in \mathcal{U}[t,T]$. Define the combined control
$$ u_{1}(\cdot)\sqcap u_2(\cdot):=u_{1}(\cdot)1_{S}+u_{2}(\cdot)1_{S^c}\in \mathcal{U}[t,T],
$$
where $S := \{\omega \in \Omega \mid \mathcal{J}(t,\xi;u_1(\cdot)) \leq \mathcal{J}(t,\xi;u_2(\cdot))\}$, and let $\alpha \wedge \beta := \min\{\alpha,\beta\}$. 

The following lemma establishes the existence of the essential infimum in the definition of the value function $\mathbb{V}$. The proof follows arguments similar to those in \cite[Lemma 4.4]{yong_stochastic_2022}.

\begin{lemma}  \label{effJ} 
For any $\xi\in L_{\mathcal{F}_t}(\Omega,M)$, there exists a sequence $u_k(\cdot)\in \mathcal{U}[t,T]$ such that
$$\mathcal{J}(t,\xi;u_{k}(\cdot))(\omega)\rightarrow \mathbb{V}(t,\xi),\quad a.s.$$
\end{lemma}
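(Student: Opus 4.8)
The statement to prove (Lemma~\ref{effJ}) asserts the existence of a minimizing sequence $u_k(\cdot)\in\mathcal{U}[t,T]$ such that $\mathcal{J}(t,\xi;u_k(\cdot))(\omega)\to\mathbb{V}(t,\xi)$ almost surely. The plan is to realize $\mathbb{V}(t,\xi)=\operatorname*{essinf}_{u(\cdot)}\mathcal{J}(t,\xi;u(\cdot))$ as the (a.s.) monotone limit of a sequence obtained by taking successive pointwise minima of a countable family, exploiting the lattice/stability property of the admissible control family established in Lemma~\ref{prop33} (and the $u_1\sqcap u_2$ construction preceding the lemma). This is the standard argument for essential infima of a family that is closed under pasting on $\mathcal{F}_t$-measurable sets, adapted here to controls rather than random variables.

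\textbf{Key steps.} First I would recall the abstract fact: for any family $\{\eta_\alpha\}_{\alpha\in A}$ of real-valued random variables, $\operatorname*{essinf}_\alpha\eta_\alpha$ exists, and moreover there is a countable subfamily $\{\eta_{\alpha_j}\}_{j\ge1}$ with $\operatorname*{essinf}_\alpha\eta_\alpha=\inf_j\eta_{\alpha_j}$ a.s.; apply this to $\eta_{u(\cdot)}:=\mathcal{J}(t,\xi;u(\cdot))$ over $u(\cdot)\in\mathcal{U}[t,T]$, yielding controls $\{v_j(\cdot)\}_{j\ge1}\subset\mathcal{U}[t,T]$ with $\mathbb{V}(t,\xi)=\inf_{j\ge1}\mathcal{J}(t,\xi;v_j(\cdot))$ a.s. Second, define $u_k(\cdot):=v_1(\cdot)\sqcap v_2(\cdot)\sqcap\cdots\sqcap v_k(\cdot)$, built iteratively using the binary operation $u_1(\cdot)\sqcap u_2(\cdot)=u_1(\cdot)1_S+u_2(\cdot)1_{S^c}$ with $S=\{\mathcal{J}(t,\xi;u_1(\cdot))\le\mathcal{J}(t,\xi;u_2(\cdot))\}\in\mathcal{F}_t$; since $\mathcal{U}[t,T]$ is closed under pasting along $\mathcal{F}_t$-measurable partitions, each $u_k(\cdot)\in\mathcal{U}[t,T]$. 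Third, invoke Lemma~\ref{prop33} (the cost-functional decomposition along $\mathcal{F}_t$-measurable partitions) to conclude, by induction on $k$, that $\mathcal{J}(t,\xi;u_k(\cdot))=\min_{1\le j\le k}\mathcal{J}(t,\xi;v_j(\cdot))$ a.s. Fourth, observe that $k\mapsto\mathcal{J}(t,\xi;u_k(\cdot))$ is nonincreasing and bounded below (by $-C_BT-\max_M|h|$, from Assumption~\ref{condp}(1)), hence converges a.s.\ to $\inf_{j\ge1}\mathcal{J}(t,\xi;v_j(\cdot))=\mathbb{V}(t,\xi)$, which is exactly the claim.

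\textbf{Main obstacle.} The only delicate point is the inductive identity $\mathcal{J}(t,\xi;u_{1}(\cdot)\sqcap\cdots\sqcap u_k(\cdot))=\min_{j\le k}\mathcal{J}(t,\xi;u_j(\cdot))$: one must verify that the partition implicitly defining $u_k(\cdot)$ refines into finitely many $\mathcal{F}_t$-measurable pieces on each of which $u_k(\cdot)$ agrees with one of the $v_j(\cdot)$, so that Lemma~\ref{prop33} applies directly to give $\mathcal{J}(t,\xi;u_k(\cdot))=\sum_j\mathcal{J}(t,\xi;v_j(\cdot))1_{\Omega_j}$; then identifying this sum with the pointwise minimum requires only that the defining sets be chosen disjointly and exhaustively according to which $v_j$ attains the minimum. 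A small care is also needed because the $v_j$ arise from the abstract essential-infimum lemma and are only guaranteed measurable, not piecewise constant, but this is immaterial: $\mathcal{U}[t,T]$ is defined as the $L^2_{\mathbb{F}}$-completion of piecewise controls and is a genuine set of processes, and Lemma~\ref{prop33} is stated for arbitrary admissible $u_i(\cdot)$, so no approximation is needed. Everything else is routine, so I expect the write-up to be short.
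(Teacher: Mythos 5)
Your proposal is correct, and its engine is the same as the paper's: build a nonincreasing minimizing sequence by iterating the pasting operation $u_1(\cdot)\sqcap\cdots\sqcap u_k(\cdot)$ and using Lemma \ref{prop33} to get $\mathcal{J}(t,\xi;u_k(\cdot))=\min_{j\le k}\mathcal{J}(t,\xi;v_j(\cdot))$. Where you diverge is in how the limit of that sequence is identified with $\mathbb{V}(t,\xi)$. You invoke, as a black box, the general measure-theoretic theorem that the essential infimum of an arbitrary family of random variables exists and coincides a.s.\ with the infimum of some countable subfamily; once the $v_j(\cdot)$ are chosen from that subfamily, convergence of $\mathcal{J}(t,\xi;u_k(\cdot))$ to $\mathbb{V}(t,\xi)$ is immediate. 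The paper instead keeps the argument self-contained: it starts from a sequence $\{u_k(\cdot)\}$ minimizing the \emph{expectation} $\mathbb{E}[\mathcal{J}(t,\xi;u_k(\cdot))]$, applies the same $\sqcap$ reduction to make the costs nonincreasing, sets $\bar{\mathcal{J}}:=\inf_k\mathcal{J}(t,\xi;u_k(\cdot))$, and then proves directly that $\bar{\mathcal{J}}$ is the essential infimum by showing, for any competitor $v(\cdot)$, that $\mathcal{J}(t,\xi;u_k\sqcap v)\searrow\mathcal{J}(t,\xi;v)\wedge\bar{\mathcal{J}}$ and using monotone convergence together with the minimality of $\mathbb{E}[\bar{\mathcal{J}}]$ to force $\mathcal{J}(t,\xi;v)\ge\bar{\mathcal{J}}$ a.s.\ (after an $\arctan$ reduction to bounded variables). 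In effect the paper re-proves, in this downward-directed setting, exactly the countable-subfamily fact you cite. Your route is shorter given the cited theorem; the paper's buys self-containment and simultaneously establishes that the essential infimum exists and is $\mathcal{F}_t$-measurable, which is how the text justifies that $\mathbb{V}$ is well defined. Both are sound; your flagged ``main obstacle'' (the inductive identity for $\sqcap$) is handled identically in the paper via Lemma \ref{prop33}.
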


\begin{proof}
Fix $t \in [0,T]$ and $\xi \in L_{\mathcal{F}_t}(\Omega,M)$. Define the collection of cost functionals:
$$\mathscr{J}(t,\xi):=\{\mathcal{J}(t,\xi; u(\cdot))\mid u(\cdot)\in\mathcal{U}[t, T]\}.$$
Without loss of generality, we may assume all random variables in $\mathscr{J}(t,\xi)$ are essentially bounded. If not, we can instead consider the transformed set $\arctan \mathscr{J}(t,\xi) := \{\arctan \mathcal{J} \mid \mathcal{J} \in \mathscr{J}(t,\xi)\}$.

Consider the set of expected values:
$$
\mathbb{E}\mathscr{J}(t,\xi):=\{\mathbb{E}[\mathcal{J}] \mid \mathcal{J}\in \mathscr{J}(t,\xi)\} \subseteq \dbR.
$$
There exists a minimizing sequence $\{u_k(\cdot)\}_{k=1}^\infty \subseteq \mathcal{U}[t,T]$ such that
$$
\lim_{k\rightarrow\infty} \mathbb{E}\left[\mathcal{J}\left(t,\xi; u_{k}(\cdot)\right)\right]=\inf_{u(\cdot)\in\mathcal{U}[t, T]} \mathbb{E}[\mathcal{J}(t,\xi; u(\cdot))].
$$
Define recursively the controls
$$
\widehat{u}_k(\cdot):=u_1(\cdot)\sqcap u_2(\cdot)\sqcap\cdots\sqcap u_k(\cdot), \quad k\geq 1.
$$
By Lemma \ref{prop33}, we have
$$
\mathcal{J}\left(t,\xi;\widehat{u}_k(\cdot)\right)=\min_{1\leq i\leq k} \mathcal{J}\left(t,\xi; u_i(\cdot)\right).
$$
Thus, we may assume without loss of generality that $\{\mathcal{J}(t,\xi;u_k(\cdot))\}_{k\in\mathbb{N}}$ is nonincreasing by replacing $u_k(\cdot)$ with $\widehat{u}_k(\cdot)$ when necessary.

Define the limiting random variable
$$
\bar{\mathcal{J}}(\omega)=\inf_{k\geq 1} \mathcal{J}\left(t,\xi; u_{k}(\cdot)\right), \quad \omega\in\Omega,
$$
which is $\mathcal{F}_t$-measurable. We claim that $\bar{\mathcal{J}}$ is the essential infimum:
$$
\bar{\mathcal{J}} = \underset{u(\cdot)\in\mathcal{U}[t, T]}{\operatorname{essinf}} \mathcal{J}(t,\xi; u(\cdot)).
$$
To verify this claim, first let $\widehat{\mathcal{J}}$ be any $\mathcal{F}_t$-measurable random variable satisfying
$$
\widehat{\mathcal{J}}(\omega)\leq \mathcal{J}(t,\xi; u(\cdot)), \quad \forall u(\cdot)\in\mathcal{U}[t, T].
$$
Then $\widehat{\mathcal{J}}(\omega) \leq \mathcal{J}(t,\xi; u_k(\cdot))$ for all $k \geq 1$, and consequently
$$
\widehat{\mathcal{J}}(\omega)\leq\bar{\mathcal{J}}(\omega), \quad \text{a.s.}
$$
For the reverse inequality, consider any $v(\cdot) \in \mathcal{U}[t,T]$. Observe that
$$
\mathcal{J}(t,\xi; u_k(\cdot) \sqcap v(\cdot)) = \mathcal{J}(t,\xi; v(\cdot)) \wedge \mathcal{J}(t,\xi;u_k(\cdot)) \searrow \mathcal{J}(t,\xi; v(\cdot)) \wedge \bar{\mathcal{J}}(\omega) 
$$
as $k \to \infty$. Applying the monotone convergence theorem yields
$$
\begin{aligned}
	\mathbb{E}\left[\mathcal{J}(t,\xi; v(\cdot)) \wedge \bar{\mathcal{J}}\right] &= \lim_{k\rightarrow\infty} \mathbb{E}\left[\mathcal{J}(t,\xi; v(\cdot)) \wedge \mathcal{J}\left(t,\xi; u_{k}(\cdot)\right)\right] \\
	&\geq \inf_{u(\cdot)\in\mathcal{U}[t, T]} \mathbb{E}[\mathcal{J}(t,\xi; u(\cdot))] \\
	&= \mathbb{E}[\bar{\mathcal{J}}].
\end{aligned}
$$
This implies $\mathcal{J}(t,\xi; v(\cdot)) \geq \bar{\mathcal{J}}$ almost surely, completing the proof.
\end{proof}

\begin{lemma} \label{Lmcd1}(Lipschitz Continuity of Cost Functional)
There exists a constant $C_J = C_J(C_L,C_D,T) > 0$ such that for any initial states $\xi_1, \xi_2 \in L_{\mathcal{F}_t}(\Omega,M)$ and control processes $u_1(\cdot), u_2(\cdot) \in \mathcal{U}[t,T]$, the following estimate holds:
$$
|\mathcal{J}(t,\xi_1;u_1(\cdot))-\mathcal{J}(t,\xi_2,u_2(\cdot))|^2\le C_J\mathbb{E}_t\Big\{\rho^2(\xi_1,\xi_2)+\int_t^T\|u_1(s)-u_2(s)\|^2ds\Big\}.
$$
\end{lemma}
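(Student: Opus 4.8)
The plan is to decompose $\mathcal J(t,\xi_1;u_1(\cdot))-\mathcal J(t,\xi_2;u_2(\cdot))$ into a running-cost contribution and a terminal-cost contribution, to bound the integrands pointwise using the Lipschitz hypotheses in Assumption \ref{condp}, and then to absorb the resulting state deviation by means of the stability estimate \eqref{apeq65-1} in Proposition \ref{prop41}. Throughout, all estimates will be carried out at the level of conditional expectations given $\mathcal F_t$, so that the final bound is an almost-sure inequality between $\mathcal F_t$-measurable random variables.

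First I would set $X_i(\cdot):=X(\cdot;t,\xi_i,u_i(\cdot))$ for $i=1,2$, so that
\[
\mathcal J(t,\xi_1;u_1(\cdot))-\mathcal J(t,\xi_2;u_2(\cdot))=\mathbb E_t\bigg\{\int_t^T\big(f(s,X_1(s),u_1(s))-f(s,X_2(s),u_2(s))\big)\,ds+h(X_1(T))-h(X_2(T))\bigg\}.
\]
Inserting the intermediate term $f(s,X_2(s),u_1(s))$ and applying the spatial Lipschitz bound for $f$ and $h$ and the control-Lipschitz bound for $f$ from Assumption \ref{condp} gives
\[
|f(s,X_1(s),u_1(s))-f(s,X_2(s),u_2(s))|\le C_L\big(\rho(X_1(s),X_2(s))+\|u_1(s)-u_2(s)\|\big),
\]
together with $|h(X_1(T))-h(X_2(T))|\le C_L\,\rho(X_1(T),X_2(T))$. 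Bounding $\rho(X_1(s),X_2(s))$ by $\Delta:=\sup_{r\in[t,T]}\rho(X_1(r),X_2(r))$ and using the conditional Jensen inequality $|\mathbb E_t[\cdot]|\le\mathbb E_t[|\cdot|]$, I would obtain
\[
|\mathcal J(t,\xi_1;u_1(\cdot))-\mathcal J(t,\xi_2;u_2(\cdot))|\le C_L(T-t+1)\,\mathbb E_t[\Delta]+C_L\,\mathbb E_t\!\int_t^T\|u_1(s)-u_2(s)\|\,ds.
\]

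To conclude, I would square this inequality, use $(a+b)^2\le 2a^2+2b^2$, the conditional Jensen inequality $(\mathbb E_t[\Delta])^2\le\mathbb E_t[\Delta^2]$, and the Cauchy--Schwarz inequality in time $\big(\int_t^T\|u_1-u_2\|\,ds\big)^2\le(T-t)\int_t^T\|u_1-u_2\|^2\,ds$, and then invoke \eqref{apeq65-1} to replace $\mathbb E_t[\Delta^2]=\mathbb E_t\sup_{r\in[t,T]}\rho^2(X_1(r),X_2(r))$ by $C_D\,\mathbb E_t\{\rho^2(\xi_1,\xi_2)+\int_t^T\|u_1(s)-u_2(s)\|^2\,ds\}$; collecting constants yields the asserted estimate with a constant $C_J$ depending only on $C_L$, $C_D$ and $T$. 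There is no essential difficulty here: the proof is a direct combination of the Lipschitz structure of the data with Proposition \ref{prop41}, and the only minor point worth stating explicitly is that the argument uses a Lipschitz-in-control bound for $f$, which is the natural counterpart of the control-Lipschitz conditions already imposed on $b$ and the $\sigma_i$.
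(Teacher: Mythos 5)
Your proposal is correct and follows essentially the same route as the paper: bound the difference of cost functionals pointwise by $C_L\big(\rho(X_1,X_2)+\|u_1-u_2\|\big)$ plus the terminal term, square, apply H\"older/Jensen, and absorb $\mathbb{E}_t\sup_r\rho^2(X_1(r),X_2(r))$ via \eqref{apeq65-1}. Your closing remark is well taken: the argument (in the paper's proof as well as yours) does use a Lipschitz-in-control bound for $f$, which is not literally listed in Assumption \ref{condp} (part 3 covers only $b$ and $\sigma_i$, part 4 only the spatial variable), so flagging it explicitly is appropriate.
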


\begin{proof}
For notational simplicity, let 
\begin{align*}
X_1(\cdot) &:= X(\cdot;t,\xi_1,u_1(\cdot)),\quad X_2(\cdot) := X(\cdot;t,\xi_2,u_2(\cdot)), \\
\Delta &:= |\mathcal{J}(t,\xi_1;u_1(\cdot))-\mathcal{J}(t,\xi_2;u_2(\cdot))|.
\end{align*} 
Using the Lipschitz continuity of the cost components, we obtain that
\begin{align}\label{Del11}
\Delta&\le C_L\mathbb{E}_t\Big\{\int_t^T\big(\rho(X_1(r),X_2(r))+\|u_1(r)-u_2(r)\|\big)dr+\rho(X_1(T),X_2(T))\Big\}.
\end{align}
The following uniform bound holds for the solution trajectories:
\begin{equation}\label{XY}
\rho(X_1(r),X_2(r))\le \sup_{s\in [t,T]}\rho(X_1(s),X_2(s)),\quad r\in [t,s].
\end{equation}
Squaring both sides of \eqref{Del11} and applying H\"older's inequality yields:
$$\Delta^2\le 3C_L^2 \mathbb{E}_t\Big\{T^2\sup_{r\in [t,T]}\rho^2(X_1(r),X_2(r))+ T\int_t^T\|u_1(r)-u_2(r)\|^2dr+ \rho^2(X_1(T),X_2(T))\Big\}.$$
Combining Lemma \ref{prop41} and the bound \eqref{XY}, we obtain the existence of a constant $C_J = C_J(C_L,C_D,T)$ such that 
$$\Delta^2\le C_J\mathbb{E}_t\Big\{\rho^2(\xi_1,\xi_2)+\int_t^T\|u_1(r)-u_2(r)\|^2dr\Big\}.$$
This completes the proof of the cost functional's Lipschitz continuity.
\end{proof}

\subsection{An approximation result of the control}\label{s24}

Let's define 
\begin{flushleft}
\quad $\mathcal{F}_s^t:=\sigma(W(s)-W(t),t\le s\le T),\quad \mathbb{F}^t:=\{\mathcal{F}_s^t\}_{s\ge t}.$\\
\quad $L_{\mathbb{F}^{t}}^{0}(t,T;U):=\Big\{u(\cdot)=\sum_{i=0}^{N_1}u_i1_{[t_i,t_{i+1}]},t=t_0\le ...\le t_{N_1}=T,u_i \in L_{\mathcal{F}_{t_i}^t}(\Omega,U),N_1\in \mathbb{N}\Big\}.$\\
\quad $\mathcal{V}[t,T]:=\mathcal{U}[t,T]\cap \overline{L_{\mathbb{F}^{t}}^{0}(t,T;U)}^{\|\cdot\|_{\mathbb{E}}}$, for  $\|\cdot\|_{\mathbb{E}}$  the norm of $L_{\mathbb{F}}^{2}(t,T;\dbR^k)$. \\
\quad $\mathcal{W}[t,T]:=\big\{u(\cdot)=\sum_{i=1}^Nu^j(\cdot)1_{\Omega_j}\big|u^j(\cdot)\in L_{\mathbb{F}^t}^{0}(t,T;U),\{\Omega_j\}_{j=1}^N\text{ is a partition of }(\Omega,\mathcal{F}_t)\big\}$.\\
\end{flushleft}
%


The following approximation result, which adapts the proof technique from \nmycite{yong_stochastic_2022}{Lemma 4.12} with necessary modifications, will be essential for proving Theorem \ref{phjbl}:
\begin{lemma} \label{Appro0}
For any given control $u(\cdot) \in \mathcal{U}[t,T]$ and approximation tolerance $\epsilon > 0$, there exists an approximating control $u_\epsilon(\cdot) \in \mathcal{W}[t,T]$ satisfying the following $L^2$-approximation property:
\begin{equation}
\mathbb{E}\left[ \int_{t}^T\|u(r) - u_{\epsilon}(r)\|^2 dr \right] \leq \epsilon.
\end{equation}
\end{lemma}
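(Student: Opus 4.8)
The plan is to prove the equivalent statement that $\mathcal{W}[t,T]$ is dense in $\mathcal{U}[t,T]$ with respect to the $L^2_{\mathbb{F}}(t,T;\dbR^k)$-norm. Since $\mathcal{U}[t,T]$ is by definition the $L^2$-closure of $\mathcal{U}_{piece}[t,T]$, and since $L^2$-distances add subadditively, it suffices to $L^2$-approximate an arbitrary $u(\cdot)=\sum_{i=0}^{N_1}u_i 1_{[t_i,t_{i+1}]}\in\mathcal{U}_{piece}[t,T]$ with $u_i\in L_{\mathcal{F}_{t_i}}(\Omega;U)$ and $t=t_0\le\cdots\le t_{N_1+1}=T$. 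First I would reduce further to finitely-valued controls: fix $\delta>0$, choose a finite $\delta$-net $\{c_1,\dots,c_M\}$ of the compact set $U$ together with the induced Borel partition $\{V_l\}_{l=1}^M$ of $U$ with $V_l\subseteq \overline B(c_l,\delta)$, and replace each $u_i$ by $\bar u_i:=\sum_{l=1}^M c_l 1_{A_{i,l}}$, where $A_{i,l}:=u_i^{-1}(V_l)\in\mathcal{F}_{t_i}$. Then $\|u_i-\bar u_i\|_U\le\delta$ a.s., so $\bar u:=\sum_i\bar u_i1_{[t_i,t_{i+1}]}$ satisfies $\mathbb{E}\int_t^T\|u-\bar u\|^2\le\delta^2(T-t)$, and it remains to approximate $\bar u$.

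The core step is to factor the randomness over time $t$. For each $i$ one has $\mathcal{F}_{t_i}=\mathcal{F}_t\vee\mathcal{F}^t_{t_i}$ modulo $\mathbb{P}$-null sets, where $\mathcal{F}^t_s:=\sigma\big(W(r)-W(t):t\le r\le s\big)$, and $\mathcal{F}_t$ and $\mathcal{F}^t_{t_i}$ are independent by the independent-increments property of $W$. Hence, for any $\eta>0$, each $A_{i,l}$ can be approximated within probability $\eta$ by a finite disjoint union of rectangles $B\cap C$ with $B\in\mathcal{F}_t$, $C\in\mathcal{F}^t_{t_i}$ (the standard density of the algebra generated by a generating $\pi$-system in the measure algebra). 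Let $\{\Omega_j\}_{j=1}^N$ be the finite $\mathcal{F}_t$-partition of $\Omega$ into atoms of the finite sub-$\sigma$-algebra of $\mathcal{F}_t$ generated by all the $\mathcal{F}_t$-factors $B$ occurring over all $i,l$. Since each $\Omega_j$ is an atom, restricting a rectangle to $\Omega_j$ yields $\emptyset$ or $\Omega_j\cap C$; therefore $A_{i,l}\cap\Omega_j$ agrees up to probability $\eta$ with $\Omega_j\cap C^{i,l}_j$ for suitable $C^{i,l}_j\in\mathcal{F}^t_{t_i}$, and after disjointifying in $l$ (assigning the leftover to one class) I obtain, for each fixed $i,j$, an $\mathcal{F}^t_{t_i}$-measurable partition $\{\widehat C^{\,i,l}_j\}_{l=1}^M$ of $\Omega$ with $\sum_{i,l,j}\mathbb{P}\big((A_{i,l}\cap\Omega_j)\,\triangle\,(\widehat C^{\,i,l}_j\cap\Omega_j)\big)\le C_{N_1,M}\,\eta$ for a finite constant depending only on $N_1$ and $M$.

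Then I would assemble the approximant: set $v^j_i:=\sum_{l=1}^M c_l 1_{\widehat C^{\,i,l}_j}$, which is $U$-valued and $\mathcal{F}^t_{t_i}$-measurable, so $u^j(\cdot):=\sum_{i=0}^{N_1}v^j_i\,1_{[t_i,t_{i+1}]}(\cdot)\in L^0_{\mathbb{F}^t}(t,T;U)$ and $u_\epsilon(\cdot):=\sum_{j=1}^N u^j(\cdot)1_{\Omega_j}\in\mathcal{W}[t,T]$. On $\Omega_j\cap[t_i,t_{i+1}]$ the processes $\bar u$ and $u_\epsilon$ differ only on $\bigcup_l(A_{i,l}\triangle\widehat C^{\,i,l}_j)\cap\Omega_j$, where $\|\bar u-u_\epsilon\|\le\diam(U)$, so $\mathbb{E}\int_t^T\|\bar u-u_\epsilon\|^2\le(T-t)\,\diam(U)^2\,C_{N_1,M}\,\eta$. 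Combining with the Step-1 bound through $\|u-u_\epsilon\|\le\|u-\bar u\|+\|\bar u-u_\epsilon\|$ and choosing first $\delta$ small and then $\eta$ small gives $\mathbb{E}\int_t^T\|u-u_\epsilon\|^2\le\epsilon$, as required.

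The main obstacle is the second step: building one finite $\mathcal{F}_t$-partition $\{\Omega_j\}$ that works simultaneously for all time-blocks $i$ and all net-points $l$, and transferring the $\mathcal{F}_{t_i}$-measurable data onto it as $\mathcal{F}^t_{t_i}$-measurable, $U$-valued pieces with total error controlled. The $U$-valuedness of the controls is exactly the point that forces the ``necessary modification'' relative to the Euclidean-valued setting: the naive shortcut of conditioning, $\mathbb{E}[\bar u_i\mid\mathcal{F}_t\vee(\text{finite part of }\mathcal{F}^t_{t_i})]$, need not take values in $U$, so one must stay with indicator/net approximations throughout — or, equivalently, approximate each $\dbR^k$-component by a tensor-type sum $\sum_j 1_{\Omega_j}(\,\cdot\,)\,\psi_j(\,\cdot\,)$ with $\psi_j$ $\mathcal{F}^t_{t_i}$-measurable and then post-compose with a fixed Borel nearest-point map $\pi_U:\dbR^k\to U$, which only doubles the error since $\|\pi_U(z)-c\|\le 2\|z-c\|$ for all $c\in U$. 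All the measure-algebra density facts are standard and use no manifold structure: the control set $U$ is Euclidean, so the geometry of $M$ does not enter this lemma.
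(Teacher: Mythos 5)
Your proposal is correct and follows essentially the same route as the paper's proof: reduce to piecewise-constant controls, discretize $U$ by a finite net, approximate each $\mathcal{F}_{t_i}$-event by finite unions of rectangles $B\cap C$ with $B\in\mathcal{F}_t$ and $C\in\mathcal{F}^t_{t_i}$ (using $\mathcal{F}_{t_i}=\mathcal{F}_t\vee\mathcal{F}^t_{t_i}$ and the measure-algebra density of the generating algebra), disjointify with a controlled exceptional set, and reassemble over the atoms of the finite $\mathcal{F}_t$-partition to land in $\mathcal{W}[t,T]$. The organizational details differ slightly (you disjointify per atom rather than before splitting off the $\mathcal{F}_t$-factors), but the key ideas and error estimates coincide with the paper's three-step argument.
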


\begin{proof}
The proof is split into two steps.

\textbf{Step 1: Sigma-Field Construction and Approximation}.  For any $s \in [t,T]$, we first define the collection:
$$
\mathcal{G} = \left\{ \medcup_{k=1}^N \left( S^k \cap \widehat{S}^k \right) \mid S^k \in \mathcal{F}_t, \widehat{S}^k \in \mathcal{F}_s^t, 1 \leq k \leq N, N \geq 1 \right\}.
$$

It is easy to see that
$$
\mathcal{F}_{t} \cup \mathcal{F}_{s}^{t} \subseteq \mathcal{G} \subseteq \mathcal{F}_{t} \vee \mathcal{F}_{s}^{t} = \mathcal{F}_{s}
$$
which leads to
$$
\mathcal{F}_{s} = \mathcal{F}_{t} \vee \mathcal{F}_{s}^{t} = \bm{\sigma}(\mathcal{G}).
$$
This means that $\mathcal{G}$ generates $\mathcal{F}_{s}$. Next, it is clear that $\mathcal{G}$ is closed under finite union (which is obvious), finite intersection, and complement.

Next, we define the approximation class:
$$
\mathcal{G} \subseteq \widetilde{\mathcal{G}} \triangleq \left\{ F \in \mathcal{F}_s \mid \forall \varepsilon > 0, \exists F_{\varepsilon} \in \mathcal{G}, \mathbb{P}\left( F \Delta F_{\varepsilon} \right) < \varepsilon \right\} \subseteq \mathcal{F}_s,
$$
where $ F \Delta F_{\varepsilon} = \left( F \cap F_{\varepsilon}^c \right) \cup \left( F_{\varepsilon} \cap F^c \right) = F^c \Delta F_{\varepsilon}^c $ denotes the symmetric difference. Clearly, $\widetilde{\mathcal{G}}$ is a $\bm{\sigma}$-field.  Hence, combining $ \mathcal{G} \subseteq \widetilde{\mathcal{G}} \subseteq \mathcal{F}_{s} $, we obtain $\mathcal{F}_s = \widetilde{\mathcal{G}}$.

\ss

\textbf{Step 2: Approximation of Control Processes}. 
Let $u(\cdot) \in \mathcal{U}[t,T]$. By definition of $\mathcal{U}[t,T]$ in introduction, for any $\epsilon>0$, there exists a simple adapted process: 
$$
u_{\epsilon}(r) = \sum_{i=0}^{N_1} u_i 1_{\left[ t^i, t^{i + 1} \right)}(r),\quad u_i\in L_{\mathcal{F}_{t_i}}(\Omega,U) \quad r \in [t, T],
$$
with time partition $t = t^0 < t^1 < \cdots < t^{N_1+1} = T$, satisfying
\begin{equation}
\mathbb{E}\int_0^T\|u_{\epsilon}(r) - u(r)\|^2dr < \varepsilon, \quad r \in [t, T], \text{ a.s.}
\end{equation}
Since $U$ is compact, for $\widehat{\varepsilon} > 0$ we partition:
$$
U = \medcup_{j=1}^{N_2} U_j, \quad \operatorname{diam} \left( U_j \right) < \widehat{\varepsilon},\quad 1\le j_1\le N_2.
$$
Define measurable sets:
$$
\Omega_\epsilon^{ij} := \left\{ \omega \in \Omega \mid u_i \in U_j \right\} \in \mathcal{F}_{t^i}, \quad 0 \leq i \leq N_1, 1 \leq j \leq N_2.
$$
forming an $\mathcal{F}_{t^i}$-partition of $\Omega$ with
$$
\min_{0 \leq i \leq N_1, 1 \leq j \leq N_2} \mathbb{P} \left( \Omega_\epsilon^{ij} \right) = \varepsilon_0 >0.
$$
Define
$$
\widehat{u}_\epsilon(r) = \sum_{i=0}^{N_1} \sum_{j=1}^{N_2} u^{ij} 1_{\Omega_\epsilon^{ij}} 1_{\left[ t^i, t^{i + 1} \right)}(r), \quad r \in [t, T],
$$
where $u^{ij} \in U_j$. One has $ \widehat{u}_\epsilon(\cdot) \in \mathcal{U}[t, T] $ and
$$
\left| u_\epsilon(r) - \widehat{u}_\epsilon(r) \right| \leq \widehat{\varepsilon}, \quad r \in [t, T], \text{ a.s.}
$$

By Step 1, for each  $\Omega_{\epsilon}^{ij} \in \mathcal{F}_{t^{i}} $ and for any $ \widetilde{\varepsilon} \in \left( 0, \varepsilon_0 \right) $,  there exists $S_\epsilon^{ij} \in \mathcal{G}$ with
\begin{equation}\label{eq1001}
\mathbb{P} \left( \Omega_\epsilon^{ij} \Delta S_\epsilon^{ij} \right) < \widetilde{\varepsilon}. 
\end{equation}
Then
$$
\mathbb{P} \left( \Omega_\epsilon^{ij} \cap S_\epsilon^{ij} \right) \geq \mathbb{P} \left( \Omega_\epsilon^{ij} \right) - \widetilde{\varepsilon}.
$$

From
$$
\medcup_{j=1}^{N_2} S_\epsilon^{ij} = \medcup_{j=1}^{N_2} \left[ \left( S_\epsilon^{ij} \cap \Omega_\epsilon^{ij} \right) \cup \left( S_\epsilon^{ij} \backslash \Omega_\epsilon^{ij} \right) \right],
$$
and \eqref{eq1001}, one obtains, for each $ 1 \leq i \leq N_{1} $,
\begin{equation}\label{eq1002}
\mathbb{P} \left( \medcup_{j=1}^{N_2} S_\epsilon^{ij} \right) \geq \sum_{j=1}^{N_2} P \left( S_\epsilon^{ij} \cap \Omega_\epsilon^{ij} \right) \geq \sum_{j=1}^{N_2} \left[ P \left( \Omega_\epsilon^{ij} \right) - \widetilde{\varepsilon} \right] = 1 - N_2 \widetilde{\varepsilon}.
\end{equation}
Define exceptional sets:
$$
S_\epsilon^{i0} = \left( \medcup_{j=1}^{N_2} S_\epsilon^{ij} \right)^c, \quad 0 \leq i \leq N_1.
$$
Then we have
$$
\mathbb{P} \left( S_\epsilon^{i0} \right) \leq N_2 \widetilde{\varepsilon}, \quad 0 \leq i \leq N_1.
$$

Next, since
$$
\Omega_\epsilon^{ij} \cap \Omega_\epsilon^{ij^{\prime}} = \emptyset, \quad j \neq j^{\prime},
$$
for $j\neq j^{\prime}$, by direct computation, we have 
\begin{align*}
& S_\epsilon^{ij} \cap S_\epsilon^{ij^{\prime}}
\subseteq \left( S_\epsilon^{ij} \Delta \Omega_\epsilon^{ij} \right) \cup \left( S_\epsilon^{ij^{\prime}} \Delta \Omega_\epsilon^{ij^{\prime}} \right),
\end{align*}
which leads to
\begin{equation}\label{eq1003}
\mathbb{P} \left( S_\epsilon^{ij} \cap S_\epsilon^{ij^{\prime}} \right) < 2 \widetilde{\varepsilon}, \quad \forall j \neq j^{\prime}.
\end{equation}
Note that the following is a partition of $ \Omega $ in $ \mathcal{F}_{t^{i}} $:
$$
\left\{ S_\epsilon^{i0}, S_\epsilon^{ij} \backslash \medcup_{j^{\prime}=1}^{j - 1} S_\epsilon^{ij^{\prime}}, \quad 1 \leq j \leq N_2 \right\}.
$$

Define the approximating control:
$$
\bar{u}_\epsilon(r, \omega) = \begin{cases} 
u^{ij}, & \omega \in S_\epsilon^{ij} \backslash \medcup_{j^{\prime}=1}^{j - 1} S_\epsilon^{ij^{\prime}}, \quad 1 \leq j \leq N_2, \\
u^0, & \omega \in S_\epsilon^{i0},
\end{cases} \quad r \in \left[ t_i, t_{i + 1} \right),
$$
for some fixed $u^{0} \in U $. Thus, $ \bar{u}_{\epsilon}(\cdot) $ is well-defined, and for each $ 0 \leq i \leq N_{1} $, $ 1 \leq j \leq N_{2} $,
$$
\bar{u}_\epsilon(r, \omega) = u^{ij} = \widehat{u}_\epsilon(r, \omega), \quad \forall \omega \in \Omega_\epsilon^{ij} \cap \left( S_\epsilon^{ij} \backslash \medcup_{j^{\prime}=1}^{j - 1} S_\epsilon^{ij^{\prime}} \right).
$$
Therefore, for each $ 0 \leq i \leq N_{1} $ and $ r \in \left[ t^{i}, t^{i + 1} \right) $,
\begin{align*}
& \left\{ \omega \in \Omega \mid \bar{u}_\epsilon(r, \omega) \neq \widehat{u}_\epsilon(r, \omega) \right\} \\
& \subseteq \medcup_{j=1}^{N_2} \left\{ S_\epsilon^{ij} \backslash \left[ \Omega_\epsilon^{ij} \cap \left( S_\epsilon^{ij} \backslash \medcup_{j^{\prime}=1}^{j - 1} S_\epsilon^{ij^{\prime}} \right) \right] \right\} \medcup S_\epsilon^{i0}\\
& = \medcup_{j=1}^{N_2} \left[ \left( S_\epsilon^{ij} \backslash \Omega_\epsilon^{ij} \right) \cup \left( \medcup_{j^{\prime}=1}^{j - 1} \left( S_\epsilon^{ij} \cap S_\epsilon^{ij^{\prime}} \right) \right) \right] \medcup \left( \medcup_{j=1}^{N_2} S_\epsilon^{ij} \right)^c.
\end{align*}
Hence, by \eqref{eq1001},\eqref{eq1002} and \eqref{eq1003},
\begin{align*}
& \mathbb{P} \left( \left\{ \omega \in \Omega \mid \bar{u}_\epsilon(t, \omega) \neq \widehat{u}_\epsilon(r, \omega) \right\} \right) \\
& \leq \sum_{j=1}^{N_2} \left[ \mathbb{P}  \left( S_\epsilon^{ij} \backslash \Omega_\epsilon^{ij} \right) + \sum_{j^{\prime}=1}^{j - 1} \mathbb{P} \left( S_\epsilon^{ij} \cap S_\epsilon^{ij^{\prime}} \right) \right] + 1 - \mathbb{P}  \left( \medcup_{j=1}^{N_2} S_\epsilon^{ij} \right) \\
& \leq N_2 \widetilde{\varepsilon} + \sum_{j=1}^{N_2} 2(j - 1) \widetilde{\varepsilon} + N_2 \widetilde{\varepsilon} = \left( 2N_2 + N_2^2 \right) \widetilde{\varepsilon}, \quad r \in \left[ t_i, t_{i + 1} \right).
\end{align*}

\ss

{\bf Step 3.  Finite Approximation Construction}
Let $C_u := \max_{u\in U}\|u\|$ be the uniform bound on control values. We first estimate the approximation error:
$$
\mathbb{E} \int_{t}^{T} \| \bar{u}_{\epsilon}(r) - \widehat{u}_{\epsilon}(r) \|^2 dr \leq 4C_u^2T \left( 2N_{2} + N_{2}^{2} \right) \widetilde{\varepsilon}.
$$
Since each $S_{\epsilon}^{ij} \in \mathcal{G}$, we can express them as finite unions:
$$
S_\epsilon^{ij} = \medcup_{k=1}^{N_{ij}} \left( S_\epsilon^{ijk} \cap \widehat{S}_\epsilon^{ijk} \right), \quad 1 \leq i \leq N_1, 1 \leq j \leq N_2,
$$
where $S_{\epsilon}^{ijk} \in \mathcal{F}_{t}$ and $\widehat{S}_{\epsilon}^{ijk} \in \mathcal{F}_{t^{i}}^{t}$.

Let $ \mathcal{F}_{t}^{\{\epsilon\}} $ be the $\bm{\sigma}$-field generated by
$$
\left\{ S_{\epsilon}^{ijk} \mid 0 \leq i \leq N_{1}, 0 \leq j \leq N_{2}, 1 \leq k \leq N_{ij} \right\}\subset \mathcal{F}_{t},
$$
which is still a subset of $ \mathcal{F}_{t} $. Let
$$
\mathbf{S}_\epsilon := \left\{ S_\epsilon^{\ell} \mid 1 \leq \ell \leq N_3 \right\} 
$$
be a partition of $ \Omega $ in $ \mathcal{F}_{t} $,
and let $ \mathcal{F}_{t}^{\{\epsilon\}}:= \bm{\sigma} \left( \mathbf{S}_{\epsilon} \right) $. Likewise,
$$
\left\{ \widehat{S}_\epsilon^{ijk} \mid 0 \leq i \leq N_1, 1 \leq j \leq N_2, 1 \leq k \leq N_{ij} \right\}
$$
is a finite subset of $ \mathcal{F}_{t^{i}}^{t} $, which generates a (finite) $\bm{\sigma}$-field $ \widehat{\mathcal{F}}_{t^{i}}^{t, \{\epsilon\}} $, and we can obtain a (finite) partition of $ \Omega $ in $ \mathcal{F}_{t^{i}}^{t} $, denoted by
$$
\widehat{\mathbf{S}}_\epsilon = \left\{ \widehat{S}_\epsilon^\ell \mid 1 \leq \ell \leq \widehat{N}_3 \right\},
$$
such that $\bm{\sigma}(\widehat{\mathbf{S}}_{\epsilon}) = \widehat{\mathcal{F}}_{t^{i}}^{t, \{\epsilon\}} $. Hence, we have the following:
\begin{align*}
& S_\epsilon^{ij} \backslash \medcup_{j^{\prime}=1}^{j - 1} S_\epsilon^{ij^{\prime}} = \left( \medcup_{k=1}^{N_{ij}} S_\epsilon^{ijk} \cap \widehat{S}_\epsilon^{ijk} \right) \backslash \medcup_{j^{\prime}=1}^{j - 1} \left( \medcup_{k=1}^{N_{ij^{\prime}}} S_\epsilon^{ij^{\prime}k} \cap \widehat{S}_\epsilon^{ij^{\prime}k} \right) \\
& = \medcup_{\ell \in \Lambda(i, j)} S_\epsilon^\ell \cap \left( \medcup_{\ell^{\prime} \in \Lambda^{\prime}(i, j, \ell)} \widehat{S}_\epsilon^{\ell^{\prime}} \right), \quad 0 \leq i \leq N_1, 1 \leq j \leq N_2,
\end{align*}
\begin{align*}
S_{\epsilon}^{i0} & = \left( \medcup_{j=1}^{N_{2}} S_{\epsilon}^{ij} \right)^{c} = \left( \medcup_{j=1}^{N_{2}} \medcup_{k=1}^{N_{ij}} S_{\epsilon}^{ijk} \cap \widehat{S}_{\epsilon}^{ijk} \right)^{c} \\
& = \left( \bigcap_{j=1}^{N_{2}} \bigcap_{k=1}^{N_{ij}} \left( S_{\epsilon}^{ijk} \right)^{c} \cup \left( \widehat{S}_{\epsilon}^{ijk} \right)^{c} \right) = \medcup_{\ell \in \Lambda(i, 0)} S_{\epsilon}^{\ell} \cap \left( \medcup_{\ell^{\prime} \in \Lambda^{\prime}(i, 0, \ell)} \widehat{S}_{\epsilon}^{\ell^{\prime}} \right), \quad 0 \leq i \leq N_{1},
\end{align*}
where $ \Lambda(i, j) $ and $ \Lambda^{\prime}(i, j, \ell) $ are suitable index sets.

For any $(r,\omega) \in [t,T) \times \Omega$, there exists a unique time index $i \in \{0,1,\cdots,N_1\}$ such that $r \in [t^i,t^{i+1})$. 

If $\omega \in \bigcup_{\ell \in \Lambda(i,j)} S_\epsilon^\ell \cap \left( \bigcup_{\ell' \in \Lambda'(i,j,\ell)} \widehat{S}_\epsilon^{\ell'} \right)$ for some $1 \leq j \leq N_2$, then
$$
\bar{u}_{\epsilon}(r, \omega) = u^{ij}.
$$

Otherwise, if $\omega \in \bigcup_{\ell \in \Lambda(i,0)} S_\epsilon^\ell \cap \left( \bigcup_{\ell' \in \Lambda'(i,0,\ell)} \widehat{S}_\epsilon^{\ell'} \right)$, then
$$
\bar{u}_\epsilon(r,\omega) = u^0.
$$

Thus, the control process can be expressed as
$$
\bar{u}_\epsilon(r, \omega) = \sum_{\ell=1}^{N_3} 1_{S_\epsilon^{\ell}} \sum_{i=0}^{N_1} \sum_{\ell^{\prime} \in \widehat{\Lambda}(i, \ell)} u_{\ell}^i 1_{\widehat{S}_\epsilon^{\ell^{\prime}}}(\omega) 1_{\left[ t^i, t^{i + 1} \right)}(r) \equiv \sum_{\ell=1}^{N_1} 1_{S_\epsilon^{\ell}}(\omega) \widehat{u}_\epsilon^{\ell}(r, \omega),
$$
where $ u_{\ell}^{i} \in \left\{ u^{0}, u^{ij}, 1 \leq j \leq N_{2} \right\} $ are the possible control values, and $ \widehat{u}_{\epsilon}^{\ell}(\cdot) \in \mathcal{V}(t,T) $ are adapted processes. This decomposition establishes the desired representation, completing the proof of the lemma.
\end{proof}

\subsection{Properties of the value function}\label{s25} 

We first prove  Propositions \ref{Detm} and \ref{pLip}.

\begin{proof}[\textbf{Proof of Proposition \ref{Detm}}]
We establish the equality
$$\mathop{\rm essinf}_{u(\cdot)\in \mathcal{U}[t,T]}\mathcal{J}(t,x;u(\cdot))=\mathop{\rm essinf}_{u(\cdot)\in \mathcal{V}[t,T]}\mathcal{J}(t,x;u(\cdot))=\mathop{\rm essinf}_{u(\cdot)\in \mathcal{W}[t,T]}\mathcal{J}(t,x;u(\cdot))$$
through two key steps, following the approach in \cite{yu} and \cite{yong_stochastic_2022}.

\ss

{\bf Step 1: Approximation from $\mathcal{U}[t,T]$ to $\mathcal{W}[t,T]$}.

For any control $u(\cdot) \in \mathcal{U}[t,T]$, Lemma \ref{Appro0} guarantees the existence of approximating controls ${u_n(\cdot)}_{n=1}^\infty \subset \mathcal{W}[t,T]$ satisfying:
$$\mathbb{E}\int_{t}^T\|u(r)-u_{n}(r)\|^2dr\le \frac{1}{n}.$$
By the estimates in Lemma \ref{Lmcd1}, we have
$$\mathbb{E}\big|\mathcal{J}(t,x;u_n(\cdot))-\mathcal{J}(t,x;u(\cdot))\big|^2 \le \frac{C_J}{n},a.s..$$
Taking the limit as $n \to \infty$ yields
$$\lim_{n\rightarrow \infty}\mathcal{J}(t,x;u_n(\cdot))=\mathcal{J}(t,x;u(\cdot)),\quad a.s..$$
By the arbitrariness of $u(\cdot)$ and the definition of essential infimum, we obtain that
$$\mathop{\rm essinf}_{u(\cdot)\in \mathcal{U}[t,T]}\mathcal{J}(t,x;u(\cdot))\ge \mathop{\rm essinf}_{u(\cdot)\in \mathcal{W}[t,T]}\mathcal{J}(t,x;u(\cdot)).$$
Since $\mathcal{W}[t,T] \subset \mathcal{U}[t,T]$, the reverse inequality holds, establishing
$$\mathop{\rm essinf}_{u(\cdot)\in \mathcal{W}[t,T]}\mathcal{J}(t,x;u(\cdot))= \mathop{\rm essinf}_{u(\cdot)\in \mathcal{U}[t,T]}\mathcal{J}(t,x;u(\cdot)).$$

\ss

{\bf Step 2: Equivalence Between $\mathcal{V}[t,T]$ and $\mathcal{W}[t,T]$}.

In this step, we prove
$$\mathop{\rm essinf}_{u(\cdot)\in \mathcal{V}[t,T]}\mathcal{J}(t,x;u(\cdot))
=\mathop{\rm essinf}_{u(\cdot)\in \mathcal{W}[t,T]}\mathcal{J}(t,x;u(\cdot)).$$

First, the trivial inequality holds
$$\mathop{\rm essinf}_{u(\cdot)\in \mathcal{V}[t,T]}\mathcal{J}(t,x;u(\cdot))
\ge \mathop{\rm essinf}_{u(\cdot)\in \mathcal{W}[t,T]}\mathcal{J}(t,x;u(\cdot)).$$
For any $u(\cdot) = \sum_{i=1}^N u_i(\cdot)1_{\Omega_i} \in \mathcal{W}[t,T]$, Lemma \ref{prop33} and the deterministic nature of $\mathcal{J}(t,x;u_i(\cdot))$ yield 
$$\mathcal{J}(t,x;u(\cdot))=\sum_{i=1}^N1_{\Omega_i} \mathcal{J}(t,x;u_i(\cdot))\ge \sum_{i=1}^N1_{\Omega_i} \mathop{\rm essinf}_{u(\cdot)\in \mathcal{V}[t,T]}\mathcal{J}(t,x;u(\cdot))= \mathop{\rm essinf}_{v(\cdot)\in \mathcal{V}[t,T]}\mathcal{J}(t,x;v(\cdot)).$$
Taking the essential infimum over $\mathcal{W}[t,T]$ gives
$$\mathop{\rm essinf}_{u(\cdot)\in \mathcal{W}[t,T]}\mathcal{J}(t,x;u(\cdot))
\ge \mathop{\rm essinf}_{u(\cdot)\in \mathcal{V}[t,T]}\mathcal{J}(t,x;u(\cdot)),$$
This completes the proof of the equality chain
$$
\mathop{\rm essinf}_{u(\cdot)\in \mathcal{U}[t,T]} \mathcal{J}(t,x;u(\cdot)) = \mathop{\rm essinf}_{u(\cdot)\in \mathcal{V}[t,T]} \mathcal{J}(t,x;u(\cdot)) = \mathop{\rm essinf}_{u(\cdot)\in \mathcal{W}[t,T]} \mathcal{J}(t,x;u(\cdot)).
$$
\end{proof}

\begin{proof}[\textbf{Proof of Proposition \ref{pLip}}]
Let $x,y\in M,u_k(\cdot), \tilde{u}_k(\cdot) \in \mathcal{U}[t,T]$ be control sequences satisfying
$$\mathcal{J}(t,x;u_k(\cdot))\rightarrow V(t,x),\quad \mathcal{J}(t,x;\tilde{u}_k(\cdot))\rightarrow V(t,y).$$
By Lemma \ref{Lmcd1}, there exists a constant $C_J > 0$ such that
$$|\mathcal{J}(t,x;u_k(\cdot))-\mathcal{J}(t,y,u_k(\cdot))|,\quad |\mathcal{J}(t,x;\tilde{u}_k(\cdot))-\mathcal{J}(t,y,\tilde{u}_k(\cdot))\le 2C_{J}\rho(x,y).$$
According to the definition of $V(\cdot,\cdot)$, for any $\epsilon>0$, there exists $k_0 \in \mathbb{N}$, such that for all $k\ge k_0$:  
$$\mathcal{J}(t,x;\tilde{u}_k(\cdot))\ge V(t,x)\ge \mathcal{J}(t,x;u_k(\cdot))-\epsilon,$$
$$\mathcal{J}(t,y;u_k(\cdot))\ge V(t,y)\ge \mathcal{J}(t,y;\tilde{u}_k(\cdot))-\epsilon.$$
Combining these estimates yields
\begin{align*}
&-C_J\rho(x,y)-\epsilon\le \mathcal{J}(t,x;u_k(\cdot))-\mathcal{J}(t,y;u_k(\cdot))-\epsilon\\
&\le V(t,x)-V(t,y)\le  \mathcal{J}(t,x;\tilde{u}_k(\cdot))-\mathcal{J}(t,y;\tilde{u}_k(\cdot))+\epsilon\le
C_J\rho(x,y)+\epsilon,
\end{align*}
Taking $\epsilon \to 0$, we obtain the desired Lipschitz estimate.
\end{proof}

The next lemma is called the dynamic programing principle for $\mathbb{V}$.

\begin{lemma} \label{SDPP0}
For any $\xi\in L_{\mathcal{F}_t}(\Omega,M)$, the value function satisfies the following dynamic programming principle:
\begin{align}\label{lm52sdpp}
\mathbb{V}(t,\xi)=\inf_{u(\cdot)\in \mathcal{U}[t,s]}\mathbb{E}_t\bigg\{&\int_t^{s}f(r,X(r),u(r))dr+\mathbb{V}(s,X(s))\bigg\}.
\end{align}
\end{lemma}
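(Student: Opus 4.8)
The plan is to prove the two inequalities separately, denoting the right-hand side of \eqref{lm52sdpp} by $\mathbb{W}(t,\xi)$. The key structural tools are: (i) the flow property of solutions to \eqref{eqve0}, namely $X(r;t,\xi,u(\cdot)) = X(r;s,X(s;t,\xi,u(\cdot)),u(\cdot))$ for $r\in[s,T]$, which follows from pathwise uniqueness in Proposition \ref{pExistence}; (ii) the additive decomposition of the cost, $\mathcal{J}(t,\xi;u(\cdot)) = \mathbb{E}_t\{\int_t^s f\,dr\} + \mathbb{E}_t\{\mathcal{J}(s,X(s);u(\cdot))\}$, obtained by conditioning on $\mathcal{F}_s$ and using the tower property together with the $\mathcal{F}_s$-measurability of $X(s)$; (iii) the measurable-selection/pasting machinery already developed in Lemma \ref{prop33} (decomposition of the cost over $\mathcal{F}_t$-partitions) and Lemma \ref{effJ} (existence of a minimizing sequence achieving the essential infimum almost surely); and (iv) the Lipschitz estimate of Lemma \ref{Lmcd1}, needed to handle the fact that $\mathbb{V}(s,\cdot)$ is evaluated at the random point $X(s)$.

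For the inequality $\mathbb{V}(t,\xi)\le \mathbb{W}(t,\xi)$: fix $u(\cdot)\in\mathcal{U}[t,s]$; for each $\varepsilon>0$ I would, using Lemma \ref{effJ} applied at time $s$ to the (random) initial datum $X(s;t,\xi,u(\cdot))\in L_{\mathcal{F}_s}(\Omega,M)$ together with the Lipschitz bound of Lemma \ref{Lmcd1} and a countable covering of $M$, construct a control $\tilde u(\cdot)\in\mathcal{U}[s,T]$ (a measurable paste of controls from the minimizing family over a fine $\mathcal{F}_s$-measurable partition of $\Omega$) such that $\mathcal{J}(s,X(s);\tilde u(\cdot))\le \mathbb{V}(s,X(s))+\varepsilon$ a.s. Concatenating $u(\cdot)$ on $[t,s)$ with $\tilde u(\cdot)$ on $[s,T]$ gives an admissible control on $[t,T]$; by the cost decomposition (ii) and the flow property (i),
\begin{align*}
\mathbb{V}(t,\xi)\le \mathbb{E}_t\Big\{\int_t^s f(r,X(r),u(r))\,dr + \mathcal{J}(s,X(s);\tilde u(\cdot))\Big\}\le \mathbb{E}_t\Big\{\int_t^s f\,dr + \mathbb{V}(s,X(s))\Big\}+\varepsilon.
\end{align*}
Taking the infimum over $u(\cdot)\in\mathcal{U}[t,s]$ and letting $\varepsilon\to0$ yields the claim. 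For the reverse inequality $\mathbb{V}(t,\xi)\ge \mathbb{W}(t,\xi)$: given any $u(\cdot)\in\mathcal{U}[t,T]$, its restriction to $[t,s]$ is admissible, and by (ii), (i), and the trivial bound $\mathcal{J}(s,X(s);u(\cdot))\ge \mathbb{V}(s,X(s))$ a.s. (from the definition of $\mathbb{V}$ as an essential infimum at time $s$ with datum $X(s)$), we get $\mathcal{J}(t,\xi;u(\cdot))\ge \mathbb{E}_t\{\int_t^s f\,dr + \mathbb{V}(s,X(s))\}\ge \mathbb{W}(t,\xi)$; taking the essential infimum over $u(\cdot)\in\mathcal{U}[t,T]$ finishes the proof. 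I would also record that $\mathbb{W}(t,\xi)$ is itself an essential infimum of a family of $\mathcal{F}_t$-random variables and invoke the same argument as in Lemma \ref{effJ} to ensure it is well-defined.

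The main obstacle is the construction in the $\le$ direction: pasting near-optimal controls for the random initial condition $X(s)$ in a way that stays inside $\mathcal{U}[t,T]$ and genuinely attains $\mathbb{V}(s,X(s))+\varepsilon$ almost surely. This requires a careful measurable selection — partitioning $\Omega$ into $\mathcal{F}_s$-measurable pieces on which $X(s)$ lies in a small geodesic ball, choosing a near-optimal deterministic-initial-point control for the center of each ball, and controlling the error via Lemma \ref{Lmcd1} — and one must verify that the resulting concatenated process is $\{\mathcal{F}_r\}$-adapted and lies in the completion $\mathcal{U}[t,T]$. All remaining steps (the cost decomposition via conditioning, the flow property, and the $\ge$ direction) are routine once Propositions \ref{pExistence} and the pasting lemmas are in hand.
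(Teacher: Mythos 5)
Your proposal is correct and follows essentially the same route as the paper: decompose $\mathcal{J}(t,\xi;u(\cdot))$ over $[t,s]$ and $[s,T]$ via the tower property and the flow of \eqref{eqve0}, use Lemma \ref{effJ} for the ``$\le$'' direction and the trivial bound $\mathcal{J}(s,X(s);\cdot)\ge\mathbb{V}(s,X(s))$ for the ``$\ge$'' direction. The only difference is that your geodesic-ball covering and Lemma \ref{Lmcd1} machinery in the ``$\le$'' step is unnecessary: Lemma \ref{effJ} is already stated for an arbitrary random initial datum in $L_{\mathcal{F}_s}(\Omega,M)$, so it applies directly to $X(s)$ and yields the required (monotone) minimizing sequence without any extra measurable selection.
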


\begin{proof}
For any control process $u(\cdot) \in \mathcal{U}[t,T]$ and initial condition $\xi \in L_{\mathcal{F}_t}(\Omega,M)$, the value function satisfies
\begin{align*}
\mathbb{V}(t,\xi)\le \mathcal{J}(t,\xi,u(\cdot))&=\mathbb{E}_{t}\bigg\{\int_t^{s}f(r,X(r),u(r))dr+\int_{s}^Tf(r,X(r),u(r))dr+h(X(T))\bigg\}\\
&=\mathbb{E}_{t}\bigg\{\int_t^{s}f(r,X(r),u(r))dr+\mathcal{J}(s,X(s);u(\cdot)\big|_{[s,T]})\bigg\}.
\end{align*}
Taking the infimum over all restricted controls $u(\cdot)|_{[s,T]} \in \mathcal{U}[s,T]$ and applying Lemma \ref{effJ}, we obtain
$$\mathbb{V}(t,\xi)\le \mathbb{E}_{t}\bigg\{\int_t^{s}f(r,X(r),u(r))dr+\mathbb{V}(s,X(s))\bigg\}.$$
Consequently,
$$\mathbb{V}(t,\xi)\le \inf_{u(\cdot)\in \mathcal{U}[t,s]}\mathbb{E}_{t}\bigg\{f(r,X(r),u(r))dr+\mathbb{V}(s,X(s)\bigg\}.$$

For the reverse inequality, given any $\epsilon > 0$, there exists a control $u^{\epsilon}(\cdot) \in \mathcal{U}[t,T]$ such that
\begin{align*}
\mathbb{V}(t,\xi)+\epsilon\ge \mathcal{J}(t,\xi;u^{\epsilon}(\cdot))
&=\mathbb{E}_{t}\bigg\{\int_t^{s}f(r,X^{\epsilon}(r),u^{\epsilon}(r))dr+\mathcal{J}(s,X^{\epsilon}(s);u^{\epsilon}(\cdot)\big|_{[s,T]})\bigg\}\\
&\ge\mathbb{E}_{t}\bigg\{\int_t^{s}f(r,X^{\epsilon}(r),u^{\epsilon}(r))dr+\mathbb{V}(s,X^{\epsilon}(s))\bigg\}\\
&\ge \inf_{u(\cdot)\in \mathcal{U}[t,s]}\mathbb{E}_{t}\bigg\{\int_t^{s}f(r,X(r),u(r))dr+\mathbb{V}(s,X(s))\bigg\}.
\end{align*}
By the arbitrariness of $\epsilon > 0$ and the definition of infimum, the dynamic programming principle is established.
\end{proof}

At last, we give a quite standard Lemma below for proving the  Theorem \ref{pDPP}. 

\begin{lemma} \label{LM54} 
Let $t\in [0,T)$, then for any $\xi\in L_{\mathcal{F}_t}(\Omega,M)$, we have
\begin{equation}\label{lm54}
V(t,\xi(\omega))=\mathbb{V}(t,\xi)(\omega).
\end{equation}
\end{lemma}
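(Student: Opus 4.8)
\textbf{Proof plan for Lemma \ref{LM54}.}

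The goal is to show that the deterministic value function $V(t,\cdot)$, evaluated at the random point $\xi(\omega)$, coincides almost surely with the augmented value function $\mathbb{V}(t,\xi)(\omega)$. The natural strategy is a two-sided argument, establishing $\mathbb{V}(t,\xi) \ge V(t,\xi(\cdot))$ and $\mathbb{V}(t,\xi) \le V(t,\xi(\cdot))$ separately, a.s. For the first direction, I would exploit the representation of $\mathbb{V}$ via the essential infimum together with Lemma \ref{effJ}: there is a sequence $u_k(\cdot)\in\mathcal{U}[t,T]$ with $\mathcal{J}(t,\xi;u_k(\cdot))\to \mathbb{V}(t,\xi)$ a.s. Since by definition $\mathcal{J}(t,\xi;u_k(\cdot)) \ge \mathbb{V}(t,\xi)$ is not quite what we want, the point is rather that for \emph{any} fixed control, $\mathcal{J}(t,\xi;u(\cdot)) \ge \operatorname*{essinf}_{v}\mathcal{J}(t,\xi;v(\cdot)) = \mathbb{V}(t,\xi)$, while on the other hand one must relate $\mathcal{J}(t,\xi;u(\cdot))$ to the deterministic quantities $\mathcal{J}(t,x;u(\cdot))\big|_{x=\xi}$. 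This is where the structure of $\mathcal{W}[t,T]$ and Proposition \ref{Detm} enter.

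Concretely, I would first handle the case $\xi = \sum_{j=1}^N x_j 1_{\Omega_j}$ with $\{x_j\}\subset M$ deterministic points and $\{\Omega_j\}$ an $\mathcal{F}_t$-partition. By Lemma \ref{prop33}, for a control $u(\cdot)=\sum_j u_j(\cdot)1_{\Omega_j}$ one has $\mathcal{J}(t,\xi;u(\cdot)) = \sum_j \mathcal{J}(t,x_j;u_j(\cdot))1_{\Omega_j}$, and since each $\mathcal{J}(t,x_j;\cdot)$ is deterministic (Proposition \ref{Detm}), taking essential infimum over $u(\cdot)$ reduces on each $\Omega_j$ to $\inf_{u_j}\mathcal{J}(t,x_j;u_j(\cdot)) = V(t,x_j)$; hence $\mathbb{V}(t,\xi) = \sum_j V(t,x_j)1_{\Omega_j} = V(t,\xi(\cdot))$ a.s. on such simple $\xi$. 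The second step is an approximation argument: for general $\xi\in L_{\mathcal{F}_t}(\Omega,M)$, choose simple $\mathcal{F}_t$-measurable $\xi_n\to\xi$ in the sense $\rho(\xi_n,\xi)\to 0$ a.s.\ (and in $L^2$), using compactness of $M$. Then Lemma \ref{Lmcd1} (Lipschitz continuity of the cost functional in the initial datum) gives $|\mathbb{V}(t,\xi_n) - \mathbb{V}(t,\xi)| \le C_J^{1/2}(\mathbb{E}_t\rho^2(\xi_n,\xi))^{1/2}\to 0$, while Proposition \ref{pLip} gives $|V(t,\xi_n(\omega)) - V(t,\xi(\omega))|\le C\rho(\xi_n(\omega),\xi(\omega))\to 0$ a.s.; combining with the simple-function case yields \eqref{lm54}.

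The main obstacle is the first step — justifying that the essential infimum over the full class $\mathcal{U}[t,T]$ of controls (which are \emph{not} of the block form $\sum_j u_j 1_{\Omega_j}$ in general) still equals $\sum_j V(t,x_j)1_{\Omega_j}$ on a simple $\xi$. One inequality is easy: restricting to block controls shows $\mathbb{V}(t,\xi) \le \sum_j V(t,x_j)1_{\Omega_j}$ (using Lemma \ref{effJ} to produce near-optimal $u_j$'s for each $x_j$ and splicing them by the partition, noting $\mathcal{W}[t,T]\subset\mathcal{U}[t,T]$). For the reverse inequality, given an arbitrary $u(\cdot)\in\mathcal{U}[t,T]$ one conditions on $\Omega_j$: on $\Omega_j$ the state equation started from $\xi$ coincides pathwise with the one started from $x_j$ under the same $u(\cdot)$ (by uniqueness, Proposition \ref{pSExistence}, and the localization property used in the proof of Lemma \ref{prop33}), so $\mathcal{J}(t,\xi;u(\cdot))1_{\Omega_j} = \mathcal{J}(t,x_j;u(\cdot))1_{\Omega_j} \ge V(t,x_j)1_{\Omega_j}$; summing over $j$ gives $\mathcal{J}(t,\xi;u(\cdot)) \ge \sum_j V(t,x_j)1_{\Omega_j}$, and taking the essential infimum over $u(\cdot)$ finishes it. The care needed is precisely in the measure-theoretic bookkeeping of $\mathbb{E}_t$ and the almost-sure identification of the restricted processes; the geometry plays no role beyond what is already packaged in Propositions \ref{Detm}, \ref{pLip} and Lemmas \ref{prop33}, \ref{effJ}, \ref{Lmcd1}.
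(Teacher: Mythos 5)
Your proposal is correct and follows essentially the same route as the paper's proof: first establish \eqref{lm54} for simple $\xi=\sum_i x_i 1_{\Omega_i}$ by localizing the cost functional on the $\mathcal{F}_t$-partition (the two-sided argument with near-optimal controls spliced by $1_{\Omega_i}$ is exactly the paper's), then pass to general $\xi$ by approximation with simple random variables using the Lipschitz estimates of Proposition \ref{pLip} and Lemma \ref{Lmcd1}. Your extra care on the lower bound — noting that Lemma \ref{prop33} as stated decomposes over controls rather than initial data, so one must re-run its localization argument with $\xi$ decomposed — is a point the paper glosses over with ``following the approach in Lemma \ref{prop33}'', but it is the same idea.
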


The proof of Lemma \ref{LM54} follows arguments similar to those in \cite[Proposition 4.8]{yong_stochastic_2022}; we include the details here for completeness.

\begin{proof}
Let $\xi = \sum_{i=1}^N x_i \mathbf{1}_{\Omega_i}$ be a simple random variable, 
where $\{\Omega_i\}_{1 \leq i \leq N}$ forms an $\mathcal{F}_t$-measurable partition of $\Omega$ 
and $x_i \in M$ for $i=1,\dots, N$. Following the approach in Lemma \ref{prop33}, we derive	
$$
\mathcal{J}(t,\xi;u(\cdot)) = \sum_{i=1}^N \mathcal{J}(t,x_i;u(\cdot)) \mathbf{1}_{\Omega_i} \geq \sum_{i=1}^N V(t,x_i) \mathbf{1}_{\Omega_i} = V(t,\xi),
$$
which establishes the lower bound
\begin{equation}\label{A}
\mathbb{V}(t,\xi) \geq V(t,\xi).
\end{equation}
For the upper bound, for each $i=1,\dots,N$, choose control strategies $\{u_{ik}(\cdot)\}_{k\geq 1} \subset \mathcal{U}[t,T]$ satisfying
$$
\lim_{k \to \infty} \mathcal{J}(t,x_i;u_{ik}(\cdot)) = \mathbb{V}(t,x_i), \quad i=1,\dots,N.
$$
Adapting the methodology of Lemma \ref{prop33}, we obtain
$$
\mathbb{V}(t,\xi) \leq \mathcal{J}\left(t,\sum_{i=1}^N x_i\mathbf{1}_{\Omega_i}; \sum_{i=1}^N u_{ik}(\cdot)\mathbf{1}_{\Omega_i}\right) = \sum_{i=1}^N \mathcal{J}(t,x_i;u_{ik}(\cdot)) \mathbf{1}_{\Omega_i}.
$$
Taking the limit as $k \to \infty$ yields the complementary inequality
\begin{equation}\label{B}
	\mathbb{V}(t,\xi) \leq \sum_{i=1}^N \mathbb{V}(t,x_i) \mathbf{1}_{\Omega_i} = V(t,\xi).
\end{equation}
The combination of \eqref{A} and \eqref{B} proves \eqref{lm54} for all simple random variables.

To extend this result to general $\xi \in L_{\mathcal{F}_t}(\Omega,M)$, consider an approximating sequence of simple random variables 
$\xi_N = \sum_{i=1}^N x_i^N \mathbf{1}_{\Omega_i^N}$ converging to $\xi$ $\mathbb{P}$-almost surely. Using the Lipschitz continuity of $\mathbb{V}$ in $\xi$ (established via arguments parallel to Proposition \ref{pLip}) 
together with Proposition \ref{pLip}, we conclude
$$
V(t,\xi)=\lim_{N \to \infty} V(t,\xi_N)=\lim_{N \to \infty} \mathbb{V}(t,\xi_N) = \mathbb{V}(t,\xi) \quad \mathbb{P}\text{-a.s.}
$$
This completes the proof.
\end{proof}

\section{Proofs of the main results} \label{s4}

We first give the proof of Theorem \ref{pDPP}.

\begin{proof}[\textbf{Proof of Theorem \ref{pDPP}}]
By applying Lemma \ref{LM54} and Lemma \ref{SDPP0}, and substituting the result from equation \eqref{lm54} into the dynamic programming principle \eqref{lm52sdpp}, we obtain the fundamental relation:
\begin{align*}
V(t,\xi)=\inf_{u(\cdot)\in \mathcal{U}[t,s]}\mathbb{E}_t\bigg\{&\int_t^{s}f(r,X(r),u(r))dr+V(s,X(s))\bigg\}.
\end{align*}
This equality establishes the desired result and completes the proof of the theorem.
\end{proof}

\begin{proof}[\textbf{Proof of Proposition \ref{pHold}}]
Let $u(\cdot) \in \mathcal{U}[t_0,t_1]$ and define $X(s) := X(s;t_0,x,u(\cdot))$. Applying Theorem \ref{pDPP} and Proposition \ref{prop41}, we derive the following estimates:
\begin{align}
V(t_0,x)-V(t_1,x)&=\mathbb{E}_{t_0}[V(t_1,X(t_1))-V(t_1,x)]+\mathbb{E}_{t_0}\int_{t_0}^{t_1} f(s,X(s),u(s))ds\nonumber\\
&\le C\mathbb{E}_{t_0}\rho(X(t_1),x)+\mathbb{E}_{t_0}\int_{t_0}^{t_1}C_mdr\nonumber\\
&\le K(t_1-t_0)^{\frac{1}{2}}.\nonumber
\end{align}

Conversely, for any $\epsilon > 0$, there exists a control $u^{\epsilon}(\cdot) \in \mathcal{U}[t_0,t_1]$. Defining $X^{\epsilon}(s) := X(s;t_0,x,u^{\epsilon}(\cdot))$, we obtain the lower bound:
\begin{align}
V(t_0,x)-V(t_1,x)+\epsilon&\ge \mathbb{E}_{t_0}[V(t_1,X^{\epsilon}(t_1))-V(t_1,x)]+\mathbb{E}_{t_0}\int_{t_0}^{t_1}f(s,X^{\epsilon}(s),u^{\epsilon}(s))ds\nonumber\\
&\ge -C\mathbb{E}_{t_0}\rho(X^{\epsilon}(s),x)-\mathbb{E}_{t_0}\int_{t_0}^{t_1}C_Mds\nonumber\\
&\ge -K(t_1-t_0)^{\frac{1}{2}}.
\end{align}
\end{proof}


\begin{proof}[\textbf{Proof of Theorem \ref{phjbl}}]
Let $(t,x)\in [0,T]\times M$ be given. For any $u\in U$, consider the constant control $u(\cdot)\equiv u$. Since $V\in C^{1,2}([0,T]\times M)$ by assumption, we take $V$ as the test function in \eqref{df61} to obtain:	
\begin{align}\label{eqHJBd}
0\le& \mathbb{E}_t\bigg\{\int_t^{t+{\epsilon}}f(s,X(s),u)ds+V(t+\epsilon,X(t+\epsilon))-V(t,x)\bigg\}\nonumber\\
=&  \mathbb{E}_t\bigg\{\int_t^{t+{\epsilon}}\Big\{f(s,X(s),u)+V_s(s,X(s))+ b(s,X(s),u)V(s,X(s))\nonumber\\
&\quad\qquad+\sum_{i=1}^m\sigma_i^2(s,X(s),u)V(s,X(s))\Big\}ds\bigg\}.
\end{align}
Dividing by $\epsilon$ and letting $\epsilon\rightarrow 0$ yields
$$0\le f(t,x,u)+V_t(t,x)+b(t,x,u)V(t,x)+\sum_{i=1}^m\sigma_i^2(t,x,u)V(t,x),$$
which implies
$$V_t+\inf_{u\in U}\mathbb{H}(t,x,u,DV,D^2V)\ge 0.$$

For the reverse inequality, given any $\delta,\epsilon>0$, there exists $u^{\delta,\epsilon}(\cdot)\in \mathcal{U}[t,T]$ and a constant $K>0$ such that
\begin{align*}
\delta\epsilon\ge& \mathbb{E}_t\bigg\{\int_t^{t+{\epsilon}}f(s,X^{\delta,\epsilon}(s),u^{\delta,\epsilon}(s))ds+
V(t+\epsilon,X^{\delta,\epsilon}(t+\epsilon))-V(t,x)\bigg\}\\
=&\mathbb{E}_t\bigg\{\int_t^{t+{\epsilon}}\Big\{f(s,X^{\delta,\epsilon}(s),u^{\delta,\epsilon}(s))ds+V_s(s,X^{\delta,\epsilon}(s))\\
&\qquad+b(s,X^{\delta,\epsilon}(s),u^{\delta,\epsilon}(s))V(s,X^{\delta,\epsilon}(s))+\sum_{i=1}^m\sigma_i^2(s,,X^{\delta,\epsilon}(s),u^{\delta,\epsilon}(s))V(s,X^{\delta,\epsilon}(s))\Big\}ds\bigg\}\\
\ge& \mathbb{E}_t\bigg\{\int_t^{t+\epsilon}\Big\{V_s(s,X^{\delta,\epsilon}(s))+\inf_{u\in U}\mathbb{H}(s,x,u,DV(s,X^{\delta,\epsilon}(s)),D^2V(s,X^{\delta,\epsilon}(s)))\Big\}ds\\
\ge& \epsilon [V_t(t,x)+\inf_{u\in U}\mathbb{H}(t,x,u,DV(t,x),D^2V(t,x)]-K\mathbb{E}_t\int_t^{t+\epsilon}\rho(X^{\delta,\epsilon}(s),x)ds\bigg\}.
\end{align*}
Dividing by $\epsilon>0$ and taking $\epsilon\rightarrow 0$, the continuity of $X^{\delta,\epsilon}(\cdot)$ gives
$$V_t(t,x)+\inf_{u\in U}\mathbb{H}(t,x,u,DV,D^2V)\le \delta .$$
Since $\delta>0$ was arbitrary, we conclude that \eqref{phjb} holds.
\end{proof}

\begin{proof}[\textbf{Proof of Corollary \ref{pVe}}]
Let $(t, x) \in [0, T) \times M$, and let $u(\cdot)$ be constructed by \eqref{pVe-eq1}. Consider the corresponding state process $X(\cdot) = X(\cdot; t, x, u(\cdot))$. Since $V \in C^{1,2}([0,T] \times M)$ is a classical solution by assumption, we may take $V$ as the test function in \eqref{df61}. This yields
\begin{align*}
&\mathbb{E}_th(X(T))-V(t,x)\\
&=\mathbb{E}_tV(T,X(T))-V(t,x)\\
&=\mathbb{E}_t\int_{t}^{T}\bigg\{V_s(s,X(s))+b(s,X(s),u(s))V(s,X(s))+\sum_{i=1}^m\sigma_i^2(s,X(s),u(s))V(s,X(s))\bigg\}ds\\
&=-\mathbb{E}_t\int_t^{T}f(s,X(s),u(s))ds.
\end{align*}
Consequently, we obtain that
$$V(t,x)=\mathcal{J}(t,x;u(\cdot)).$$
\end{proof}

\begin{proof}[\textbf{Proof of Theorem \ref{pExistence}}]

{\bf Step 1}. In this step, we prove that the value function is a viscosity solution to the equation \eqref{phjb}.

Let $\varphi \in C^{1,2}([0,T] \times M)$. Suppose $(t_0,x_0) \in [0,T) \times M$ is a local maximum point of $V - \varphi$, i.e.,
\begin{equation}\label{vvarphi}
V(t,x)-\varphi(t,x)\le V(t_0,x_0)-\varphi(t_0,x_0).
\end{equation}
in a neighborhood of $(t_0,x_0)$. By the dynamic programming principle, for any $u \in U$ and $X(\cdot) := X(\cdot;t_0,x_0,u)$, we have
\begin{align*}
V(t_0,x_0)=\inf_{u(\cdot)\in \mathcal{U}[0,T]}\mathbb{E}_{t_0}\bigg\{&\int_{t_0}^{t}f(r,X(r),u(r))dr+V(t,X(t))\bigg\}\\
\le \mathbb{E}_{t_0}\bigg\{&\int_{t_0}^{t}f(r,X(r),u)dr+V(t,X(t))\bigg\},\quad \forall u\in U.
\end{align*}
For $t-t_0 > 0$ sufficiently small, \eqref{vvarphi} implies
$$\mathbb{E}_{t_0}\big(V(t,X(t))-V(t_0,x_0)\big)\le \mathbb{E}_{t_0}\big(\varphi(t,X(t))-\varphi(t_0,x_0)\big),$$
which yields
\begin{align*}
0&\le \mathbb{E}_{t_0}\bigg\{\int_{t_0}^{t}f(r,X(r),u)dr+V(t,X(t))-V(t_0,x_0)\bigg\}\\
&\le \mathbb{E}_{t_0}\bigg\{\int_{t_0}^{t}f(r,X(r),u)dr+ \varphi(t,X(t))-\varphi(t_0,x_0)\bigg\}.
\end{align*}
Taking $\varphi$  as the test function in \eqref{df61} and dividing by $t-t_0$, then letting $t \to t_0^+$, we obtain
$$0\le \varphi_t(t_0,x_0)+\mathbb{H}(t_0,x_0,u,D\varphi(t_0,x_0),D^2\varphi(t_0,x_0)).$$
Taking the infimum over $u \in U$ gives
$$0\le \varphi_t(t_0,x_0)+\bfH(t_0,x_0,D\varphi(t_0,x_0),D^2\varphi(t_0,x_0)),$$ 
proving $V$ is a viscosity subsolution.

For the supersolution property, suppose $(t_0,x_0)$ is a local minimum point of $V - \varphi$. For any $\epsilon > 0$, there exists $u^\epsilon(\cdot) \in \mathcal{U}[0,T]$ with corresponding state process $X^\epsilon(\cdot) := X(\cdot;t_0,x_0,u^\epsilon(\cdot))$ such that
\begin{align*}
\epsilon(t-t_0)&\ge \mathbb{E}_{t_0}\bigg\{\int_{t_0}^{t}f(r,X^{\epsilon}(r),u^{\epsilon}(r))dr+V(t,X^{\epsilon}(t))-V(t_0,x_0)\bigg\}\\
&\ge \mathbb{E}_{t_0}\bigg\{\int_{t_0}^{t}f(r,X^{\epsilon}(r),u^{\epsilon}(r))dr+\varphi(t,X^{\epsilon}(t))-\varphi(t_0,x_0)\bigg\}\\
&\ge\mathbb{E}_{t_0}\bigg\{\int_{t_0}^t\Big\{\varphi_t(s,X^{\epsilon}(s))
+\mathbb{H}\big(s,X^{\epsilon}(s),u^{\epsilon}(s),D\varphi(s,X^{\epsilon}(s)),D^2\varphi(s,X^{\epsilon}(s))\big)\Big\}ds\bigg\}\\
&\ge\mathbb{E}_{t_0}\bigg\{\int_{t_0}^t \Big\{\varphi_t(s,X^{\epsilon}(s))+\bfH\big(s,X^{\epsilon}(s),D\varphi(s,X^{\epsilon}(s)),D^2\varphi(s,X^{\epsilon}(s))\big)\Big\}ds\bigg\}.
\end{align*}
Dividing by $t-t_0 > 0$ and letting $t \to t_0^+$, the continuity of $X^\epsilon(\cdot)$ yields
$$0\ge \varphi_t(t_0,x_0)+\bfH(t_0,x_0,D\varphi(t_0,x_0),D^2\varphi(t_0,x_0)),$$ 
establishing the viscosity supersolution property.
In conclusion, the value function $V(\cdot,\cdot)$ is a viscosity solution for $\eqref{phjb}$.

\ss

{\bf Step 2}.  
In this step, we establish uniqueness of viscosity solutions to \eqref{phjb} via the comparison principle, showing that $\bm{v}\le \bm{w}$ for any two such solutions.

Let $\bm{v}\in C([0,T]\times M)$ be a viscosity solution (and thus a viscosity subsolution) to  \eqref{phjb}, and let $\bm{w}\in C([0,T]\times M)$ be another viscosity solution (hence a supersolution) to \eqref{phjb}. 

Define the penalized subsolution 
$$\bar{\bm{v}}(t,x):=\bm{v}(t,x)-\frac{\epsilon}{t},$$
for $\epsilon>0$, which satisfies
$$\bar{\bm{v}}_t+\bfH\Big(t,x,D\bar{\bm{v}},D^2\bar{\bm{v}}\Big)=\bm{v}_t+\bfH\big(t,x,D\bm{v},D^2\bm{v}\big)+\frac{\epsilon}{t^2}>\frac{\epsilon}{t^2}\ge 0,$$
making $\bar{\bm{v}}$ a strict subsolution. Therefore, it suffices to prove
$$\bar{\bm{v}}(t, x)\le \bm{w}(t, x),\quad (t, x)\in (0, T]\times M.$$
Suppose by contradiction that there exists $(s,z) \in (0,T]\times M$ with
$$ \bar{\bm{v}}(s,z)-\bm{w}(s,z)\ge \delta .$$
Since $\bar{\bm{v}}-\bm{w}$ is bounded above, then there exists a sequence $(t_{\alpha},x_{\alpha},y_{\alpha})\in (0, T]\times M\times M$ that are maximizers of 
\begin{equation}\label{malpha}
\mathfrak{m}_{\alpha}:=\sup_{(t,x,y)\in (0,T]\times M\times M}\big\{\bar{\bm{v}}(t,x)-\bm{w}(t,y)-\varphi(x,y)\big\},\quad \varphi(x,y):=\frac{\alpha}{2}\rho^2(x,y),
\end{equation}
where 
$$\mathfrak{m}_{\alpha}\ge \bar{\bm{v}}(s,z)-\bm{w}(s,z)\ge \delta $$
by construction.

\ss 

Claim: There exists $c_0>0$ such that for all $\alpha>c_0$, we have $t_\alpha < T$.

Proof of Claim: If false, there exists $\alpha_k \to \infty$ with $t_{\alpha_k} = T$. The boundary condition satisfied for $\bar{\bm{v}}$ and $\bm{w}$ in difinition \ref{dfvss} yields
\begin{equation}\label{mk0}
0\le \delta \le \mathfrak{m}_{\alpha_k}\le  h(x_{\alpha_k})-h(y_{\alpha_k})-\frac{\alpha_k}{2} \rho^2(x_{\alpha_k},y_{\alpha_k})-\frac{\epsilon}{T}. 
\end{equation}
By compactness of $M$, there exists subsequences $\{x_{\alpha_{k_j}}\}_{j=1}^{\infty},\{y_{\alpha_{k_j}}\}_{j=1}^{\infty}$ and $x_0,y_0\in M$ such that  
$$\lim_{j\rightarrow \infty}x_{\alpha_{k_j}}=x_0,\quad\lim_{j\rightarrow \infty}y_{\alpha_{k_j}}=y_0.$$
Two cases arise:

Case 1: If $x_0 \neq y_0$, then   $\alpha_{k_j}\rho^2(x_{\alpha_{k_j}},y_{\alpha_{k_j}})\rightarrow \infty$ as $j\rightarrow \infty$. Since $h\in C(M)$, then the inequality \eqref{mk0} implies the contradiction
\begin{equation}\label{rhod}
\alpha_{k_j}\rho^2(x_{\alpha_{k_j}},y_{\alpha_{k_j}})+\delta\le 2\max_{x\in M}|h(x)|.
\end{equation}

\ss

Case 2: If $x_0=y_0$, then then \eqref{mk0} leads to
\begin{equation}\label{mk}
0\le \delta \le \mathfrak{m}_{\alpha_{k_j}}= h(x_{\alpha_{k_j}})-h(y_{\alpha_{k_j}})-\frac{\alpha_{k_j}}{2} \rho^2(x_{\alpha_{k_j}},y_{\alpha_{k_j}})-\frac{\epsilon}{T}\rightarrow -\frac{\epsilon}{T}<0, \quad\text{as}\quad j\rightarrow \infty.
\end{equation}
which is again contradictory.

This establishes the claim that $t_\alpha < T$ for large $\alpha$.

Following the argument in \eqref{rhod}, we observe that
$$\mathfrak{m}_{\alpha}\le \max_{t\in [0,T],x\in M}|\bm{v}|+\max_{t\in [0,T],x\in M}|\bm{w}|,$$ 
which implies $\alpha\rho^2(x_\alpha,y_\alpha) \to 0$ as $\alpha \to \infty$ in \eqref{malpha}. By compactness of $M$, there exist subsequences 
$\{x_{\alpha_j}\}_{j=1}^{\infty}$, $\{y_{\alpha_j}\}_{j=1}^{\infty}$ and some $x_0\in M$ such that
$$\lim_{j\rightarrow \infty}x_{\alpha_j}=\lim_{j\rightarrow \infty}y_{\alpha_j}=x_0.$$

Select $c_1 \geq c_0$ sufficiently large so that for all $\alpha_j > c_1$ (for notational simplicity, we continue to denote these as $\alpha$), the points $x_\alpha, y_\alpha \in B_{x_0}^\rho(r_m)$ (see Remark \ref{rk21}). This ensures the vectors $\gamma_{x_\alpha} := -\exp_{x_\alpha}^{-1}(y_\alpha)$ and $\gamma_{y_\alpha} := -\exp_{y_\alpha}^{-1}(x_\alpha)$ are well-defined. Applying Lemma \ref{rho} and equation \eqref{DDf}, we obtain the derivative relations: 
\begin{equation}\label{DxyV}
D_{x_{\alpha}}\varphi(v)=\lan \alpha\gamma_{x_{\alpha}},v\ran ,\quad D_{y_{\alpha}}\varphi(w)=\lan \alpha \gamma_{y_{\alpha}},w\ran ,
\end{equation}
for all $v \in T_{x_\alpha}M$ and $w \in T_{y_\alpha}M$.

Since $\bar{\bm{v}}$ is a subsolution and $\bm{w}$ is a supersolution, the maximum principle condition \eqref{condmaxp} from \nmycite{Fleming2006}{p.218} is satisfied. Lemma \ref{pmaxp} then yields the existence of:
\begin{equation*}
(p_1,D_{x_{\alpha}}\varphi,A_1)\in \bar{\mathcal{P}}^{2,+}\bar{\bm{v}}(t_{\alpha},x_{\alpha}),\quad (p_2,-D_{y_{\alpha}}\varphi,A_2)\in \bar{\mathcal{P}}^{2,-}\bm{w}(t_{\alpha},y_{\alpha})
\end{equation*}
satisfying
\begin{equation*}
p_1-p_2=0,\quad 
\begin{bmatrix}
A_1 &0\\
0 &-A_2
\end{bmatrix}
\le A+\epsilon A^2,
\end{equation*}
where $A=D_{(x_{\alpha},y_{\alpha})}^2\varphi(x_{\alpha},y_{\alpha})$. The viscosity solution inequalities give
\begin{equation}\label{eq43}
p_2+\bfH(t_{\alpha},y_{\alpha},-D_{y_{\alpha}}\varphi,A_2)\le 0\le \frac{\epsilon}{T^2}\le p_1+ \bfH(t_{\alpha},x_{\alpha},D_{x_{\alpha}}\varphi,A_1).
\end{equation}
Applying the inequality $\inf_{u\in U}f_1(u)-\inf_{u\in U}f_2(u)\le \sup_{u\in U}(f_1(u)-f_2(u))$ and using \eqref{DxyV}, we expand \eqref{eq43} to obtain
\begin{align}\label{eq471}
\frac{\epsilon}{T^2}\nonumber
&\le \sup_{u\in U}\bigg\{\frac{1}{2}\sum_{i=1}^m\Big (A_1(\sigma_i(t_{\alpha},x_{\alpha},u),\sigma_i(t_{\alpha},x_{\alpha},u))-A_2(\sigma_{i}(t_{\alpha},y_{\alpha},u),\sigma_{i}(t_{\alpha},y_{\alpha},u))\Big)\nonumber\\
&\qquad\qquad+\Big(\big\langle b_0(t_{\alpha},x_{\alpha},u),\alpha\gamma_{x_{\alpha}}\big\rangle_g-\big\langle b_0(t_{\alpha},y_{\alpha},u),-\alpha\gamma_{y_{\alpha}}\big\rangle_g\Big)\nonumber\\
&\qquad\qquad+\Big(f(t_{\alpha},x_{\alpha},u)-f(t_{\alpha},y_{\alpha},u)\Big)\bigg\}\nonumber\\
&\quad\qquad:=\sup_{u\in U}\{I_1+I_2+I_3\},
\end{align}
where   $b_0:=b+\frac{1}{2}\sum_{i=1}^{m}D_{\sigma_i}\sigma_i$.

Setting $\epsilon = \frac{1}{\alpha}$ and noting $\rho(x_\alpha,y_\alpha) \leq r_m$, Lemmas \ref{lm38}, \ref{SecondOrderLemma}, and \ref{LIPV} yield the estimate:
\begin{align}\label{eq481}
I_1&\le \sum_{i=1}^m(A+\epsilon A^2)\Big((\sigma_{i}(t_{\alpha},x_{\alpha},u),\sigma_i(t_{\alpha},y_{\alpha},u))^{\otimes 2}\Big)
\le \sum_{i=1}^m(5C_{L}^2+C)\alpha\rho^2(x_{\alpha},y_{\alpha}),
\end{align}
where the constant $C$ depends only on the bounded quantities $C_R$, $C_B$, and $n$.

The second term in \eqref{eq471} satisfies
\begin{align} \label{eq491}
I_2=\alpha\lan \exp_{x_{\alpha}}^{-1}(y_{\alpha}),L_{y_{\alpha}x_{\alpha}}b_0(t_{\alpha},y_{\alpha},u)-b_0(t_{\alpha},x_{\alpha},u)\ran _g \le \big(1+\tfrac{m}{2}\big)C_L\alpha\rho^2(x_{\alpha},y_{\alpha}),
\end{align}
where the inequality follows from the $C^{1}$-smoothness of $b_0$ and Lemma \ref{LIPV}.

For the third term, we have the straightforward bound
\begin{equation}\label{eq493}
I_3 \le C_L\rho(x_{\alpha},y_{\alpha}),
\end{equation}
which results from the Lipschitz continuity of $f$.

Substituting the bounds \eqref{eq481}, \eqref{eq491}, and \eqref{eq493} into \eqref{eq471} yields
\begin{equation*}
0<\frac{\epsilon}{T^2}\le \Big(5mC_L^2+mC+(1+\tfrac{m}{2})C_L\Big)\alpha\rho^2(x_{\alpha},y_{\alpha})+C_L\rho(x_{\alpha},y_{\alpha}).
\end{equation*}
Taking the limit as $\alpha \to \infty$ leads to a contradiction, since $\alpha \rho^2(x_\alpha,y_\alpha) \to 0$ while the left-hand side remains positive. This establishes that $\bar{\bm{v}}(x) \leq \bm{w}(x)$ for all $x \in M$. 

By symmetry between $\bm{v}$ and $\bm{w}$, we conclude the proof of the comparison principle.
\end{proof}


\begin{proof}[\textbf{Proof of Theorem \ref{pThmCont}}]
We borrow some idea from  \nmycite{barles_error_2007}{Theorem A.3},\nmycite{jakobsen_continuous_2005}{Theorem 3.1} to prove Theorem \ref{pThmCont}.

We establish the theorem by first proving one direction and then interchanging the roles of $V_1$ and $V_2$ for the converse. For clarity, we define the following quantities:
\begin{align*}
&\varphi(x,y):=\frac{1}{2}\alpha \rho^2(x,y),\nonumber\\
&\psi(t,x,y):=V_1(t,x)-V_2(t,y)-\varphi(x,y)-\frac{\epsilon}{t-s},\nonumber\\
&\mathfrak{m}_T:=\sup_{x,y\in M}\{\psi(T,x,y)\}^{+},\nonumber\\
&\mathfrak{m}:=\sup_{t\in (s,T],x,y\in M}\psi(t,x,y)-\mathfrak{m}_T,\nonumber\\
&\bar{\mathfrak{m}}:=\sup_{t\in (s,T],x,y\in M}\Big\{\psi(t,x,y)-\theta \mathfrak{m}\frac{T-t}{T-s}\Big\},\quad \theta,\epsilon\in (0,1).
\end{align*}

Without loss of generality, we may assume that $\mathfrak{m}> 0$ holds
 for sufficiently small $\epsilon$ and sufficiently large $\alpha$. If this were not the case, taking $\alpha \rightarrow \infty$ and $\epsilon\rightarrow 0$ would yield
\begin{equation}\label{eqVT}
\sup_{t\in [s,T],x\in M}\{V_1(t,x)-V_2(t,x)\}\le \sup_{x\in M}\{V_1(T,x)-V_2(T,x)\}^{+},
\end{equation}
which would immediately prove the theorem. 

Suppose the supremum defining $\bar{\mathfrak{m}}$ is attained at $(t_{\alpha}, x_{\alpha}, y_{\alpha})$. 
Due to the presence of the penalty term $\frac{\epsilon}{t-s}$, we conclude that $t_{\alpha} > s$. We claim that $t_{\alpha} < T$. By the elementary supremum inequality for $\bar{\mathfrak{m}}$ and  the definition of
$\mathfrak{m}$, it follows that
\begin{align}\label{mb}
\bar{\mathfrak{m}}&\ge \sup_{t\in (s,T],x,y\in M}\psi(t,x,y)-\sup_{t\in [s,T]}\theta \mathfrak{m}\frac{T-t}{T-s}
\nonumber\\
&\ge   \mathfrak{m} + \mathfrak{m}_T - \theta \mathfrak{m}\nonumber\\
&> \mathfrak{m}_T,
\end{align}
where the second inequality follows from $\mathfrak{m} > 0$. If $t_{\alpha} = T$, then $\bar{\mathfrak{m}}\le \mathfrak{m}_T$, contradicting \eqref{mb}, thus proving our claim. 

Following arguments similar to those for inequality \eqref{rhod}, there exists $x_0 \in M$ and subsequences $\{x_{\alpha_j}\}_{j=1}^{\infty}$, $\{y_{\alpha_j}\}_{j=1}^{\infty}$ such that
$$\lim_{j\rightarrow\infty}x_{\alpha_j}=\lim_{j\rightarrow\infty}y_{\alpha_j}=x_0.$$ 
Let $c_0$ be the minimal value ensuring $x_{\alpha_j}, y_{\alpha_j} \in B_{x_0}^{\rho}(r_m)$ (see Remark \ref{rk21}) for all $\alpha_j > c_0$. For notational simplicity, we retain $\alpha$ as the index in subsequent arguments.

Define $\gamma_{x_{\alpha}}:=-\exp_{x_{\alpha}}^{-1}(y_{\alpha})$ and $\gamma_{y_{\alpha}}:=-\exp_{y_{\alpha}}^{-1}(x_{\alpha})$. Applying Lemma \ref{pmaxp} and arguments parallel to those in Theorem \ref{pExistence}, there exist
\begin{equation*}
(p_1,D_{x_{\alpha}}\varphi,A_1)\in \mathcal{\bar{P}}^{2,+}V_1(t_{\alpha},x_\alpha),\quad (p_2,-D_{y_{\alpha}}\varphi,A_2)\in \mathcal{\bar{P}}^{2,-}V_2(t_{\alpha},y_\alpha)
\end{equation*}
satisfying
\begin{equation*}
\begin{bmatrix}
A_1 &0\\
0 &-A_2
\end{bmatrix}
\le A+\epsilon A^2,\quad p_1-p_2=-\frac{\theta \mathfrak{m}}{T-s}-\frac{\epsilon}{(t_{\alpha}-s)^2}
\end{equation*}
where $A = D_{(x_{\alpha}, y_{\alpha})}^2 \rho^2(x_{\alpha}, y_{\alpha})$. The definitions of $V_1$ and $V_2$ imply
\begin{equation*}
p_2+\bfH_2(t_{\alpha},y_{\alpha},-D_{y_{\alpha}}\varphi,A_2)\le 0\le p_1+\bfH_1(t_{\alpha},x_{\alpha},D_{x_{\alpha}}\varphi,A_1),
\end{equation*}
which expands to
\begin{align*}
\frac{\theta \mathfrak{m}}{T-s}+\frac{\epsilon}{(t_{\alpha}-s)^2}\le \sup_{u\in U}\bigg\{&\frac{1}{2}\sum_{i=1}^m\Big(A_1\big(\sigma_{1i}(t_{\alpha},x_{\alpha},u),\sigma_{2i}(t_{\alpha},x_\alpha,u)\big)-A_2\big(\sigma_{2i}(t_{\alpha},y_{\alpha},u),\overline{\sigma}_{2i}(t_{\alpha},y_{\alpha},u)\big)\Big)\nonumber\\
&+\big(\langle b_{01}(t_{\alpha},x_{\alpha},u),\alpha\gamma_{x_{\alpha}}\rangle_g-\langle b_{02}(t_{\alpha},y_{\alpha},u),-\alpha\gamma_{y_{\alpha}})\rangle_g\big)\nonumber\\
&+\big(f_1(t_{\alpha},x_{\alpha},u)-f_2(t_{\alpha},y_{\alpha},u)\big)\bigg\},
\end{align*}
with $b_{0j}:=b_j+\frac{1}{2}\sum_{i=1}^{m}D_{\sigma_{ji}}\sigma_{ji}$ for $j = 1, 2$. Standard calculations (cf. Theorem \ref{pExistence}) yield, as $\theta \rightarrow 1$,
\begin{align}\label{eq525}
\frac{\mathfrak{m}}{T-s}\le &5\alpha\sup_{x\in M,t\in [s,T],u\in U}\bigg\{\sum_{i=1}^m\|\sigma_{1i}-\sigma_{2i}\|_g^2+\|b_{01}-b_{02}\|_g^2\bigg\}+\sup_{x\in M,t\in [s,T],u\in U}|f_1-f_2|\nonumber\\
&+ \alpha C\rho^2(x_{\alpha},y_{\alpha})+ C\rho(x,y),
\end{align}
where $C > 0$ depends on $C_{L},C_B$ and $C_{R}$.

Moreover, we have the bound
\begin{equation*}
2\alpha \rho^2(x_{\alpha},y_{\alpha})\le \psi(t_{\alpha},x_{\alpha},x_{\alpha})+\psi(t_{\alpha},y_{\alpha},y_{\alpha}).
\end{equation*}
Proposition \ref{pLip} guarantees that $V_1$ and $V_2$ are Lipschitz in space. Denoting their Lipschitz constants by $|V_1|_1$ and $|V_2|_1$, respectively, we obtain
\begin{equation*}
2\alpha \rho^2(x_{\alpha},y_{\alpha})\le (|V_1|_1+|V_2|_{1})\rho(x_{\alpha},y_{\alpha}).
\end{equation*}
which implies
\begin{equation}\label{eq437}
\rho(x_{\alpha},y_{\alpha})\le \frac{ (|V_1|_1+|V_2|_1)}{2}\alpha^{-1},\qquad \alpha \rho(x_{\alpha},y_{\alpha})^2\le \frac{(|V_1|_1+|V_2|_1)^2}{4}\alpha^{-1}.
\end{equation}

To estimate $\sup_{x,y\in M} \psi(T,x,y)$, suppose the supremum is attained at $(\hat{x}_{\alpha}, \hat{y}_{\alpha})$. Then,
\begin{align}\label{eq526}
\psi(T,\hat{x}_{\alpha},\hat{y}_{\alpha})&\le |h_1(\hat{x}_{\alpha})-h_2(\hat{x}_{\alpha})|+ |h_2(\hat{x}_{\alpha})-h_2(\hat{y}_{\alpha})|+\alpha \rho(\hat{x}_{\alpha},\hat{y}_{\alpha})^2\nonumber\\
&\le \max_{x\in M}|h_1-h_2|+C\rho(\hat{x}_{\alpha},\hat{y}_{\alpha})+\alpha\rho(\hat{x}_{\alpha},\hat{y}_{\alpha})^2\nonumber\\
&\le \max_{x\in M}|h_1-h_2|+O_{C}(\alpha^{-1}),
\end{align}
where $C$ depends on $C_L$.

Let 
\begin{align}\label{dfS}
&S_1:= 5\sup_{x\in M,t\in [s,T] .u\in U}\bigg\{\sum_{i=1}^m\|\sigma_{1i}-\sigma_{2i}\|_g^2+\|b_{01}-b_{02}\|_g^2\bigg\},\nonumber\\
&S_2:=(T-s)\sup_{t\in [s,T],x\in M,u\in U}|f_1-f_2|+\max_{x\in M}|h_1-h_2|.
\end{align}
Combining \eqref{eq525}-\eqref{dfS} with the definition of $\mathfrak{m}$, we derive
\begin{align}\label{eqphixy}
\sup_{t\in (s,T],x,y\in M}\psi(t,x,y)\le & (T-s)S_1 \alpha + O_{C}(\alpha^{-1})+S_2,
\end{align}
with $C$ depending on $|V_1|_1$, $|V_2|_1$, $C_L$, $T$ and $C_R$.  

Since
\begin{equation*}
\sup_{t\in (s,T]}\sup_{x\in M}\Big\{V_1(t,x)-V_2(t,x)-\frac{\epsilon}{t-s}\Big\}\le \sup_{t\in[s,T],x,y\in M}\psi(t,x,y),
\end{equation*}
combining with \eqref{eqphixy}, thus
\begin{equation*}
\sup_{t\in (s,T]}\sup_{x\in M}\Big\{V_1(t,x)-V_2(t,x)-\frac{\epsilon}{t-s}\Big\}\le (T-s)S_1 \alpha + O_{C}(\alpha^{-1})+S_2.
\end{equation*}
Taking $C'$  sufficiently large so that $\alpha = \frac{C'}{\sqrt{(T-s)S_1}} \ge c_0$ yields
\begin{equation}\label{eqveps}
\sup_{t\in (s,T]}\sup_{x\in M}\Big\{V_1(t,x)-V_2(t,x)-\frac{\epsilon}{t-s}\Big\}\le (1+C'+\tfrac{C}{C'})\sqrt{(T-s)}\sqrt{S_1}+S_2.
\end{equation}
Let $\epsilon\rightarrow 0 $ in left hand of \eqref{eqveps}, the result then follows from the definition of $b_0$ and $\overline{b}_0$, completing half of the proof.
\end{proof}



\begin{thebibliography}{10}
\normalfont
\bibitem{Agrachev2004} A. A. Agrachev and Y. L. Sachkov, \it Control Theory from the Geometric Viewpoint. \sl Springer, \rm 2004.

\bibitem{Altafini2012} C. Altafini and F. Ticozzi, \it Modeling and control of quantum systems: an introduction.  \sl IEEE Trans. Automat. Control \rm {\bf 57}(2012) 1898--1917.

\bibitem{Azagra2008}
D. Azagra, J. Ferrera, and B. Sanz, \it Viscosity solutions to second order partial differential equations on  Riemannian manifolds. \sl J. Differential Equations, \rm 245(2):307--336, 2008.

\bibitem{Azagra20081}
D. Azagra, M. Jim\'enez-Sevilla, and F. Maci\'a,\it  Generalized motion of level sets by functions
of their curvatures on Riemannian manifolds. \sl Calc. Var. \rm 33(2):133--167,2008.

\bibitem{barles_error_2007}
G. Barles and E. R. Jakobsen, \it Error bounds for monotone approximation schemes for parabolic
Hamilton-Jacobi-Bellman equations. \sl Math. Comp, \rm 76(260):1861--1894, 2007.

\bibitem{barles_convergence_2002}
G. Barles and E. R. Jakobsen, \it On the convergence rate of approximation schemes for
Hamilton-Jacobi-Bellman equations. \sl ESIAM: M2AN, \rm 36(1):33--54, 2002.

\bibitem{Bismut1981}J.-M. Bismut, 
\sl  M\'ecanique al\'eatoire.  \rm
Lecture Notes in Math., 866.
Springer-Verlag, Berlin-New York, 1981.

\bibitem{crandall_users_1992}
M. G. Crandall, H. Ishii, and Pierre-Louis Lions, \it User’s guide to viscosity solutions of second order partial differential equations. \sl Bull. Amer. Math. Soc, \rm 27(1):1--67, 1992.



\bibitem{deng_2016}
Q. Cui, L. Deng, and X. Zhang, \it  Second order optimal conditions for optimal control problems on Riemannian manifolds. \sl ESAIM: Control Optim. Calc. Var, \rm 25, 2019.


\bibitem{Deng2016}
L. Deng, \it Dynamic programming principle of control systems on manifolds and its
relations to maximum principle. \sl J. Math. Anal. Appl, \rm 434(1):915--938, 2016.

\bibitem{doCarmo}
M. P. do Carmo, \it Riemannian Geometry, \sl Birkh{\"a}user, Boston, \rm 1992.


\bibitem{Duncan75}
T. E. Duncan, \it Some stochastic systems on manifolds. \sl Control theory, numerical methods and computer systems modelling. \rm
Lecture Notes in Econom. and Math. Systems, Vol. 107,  262--270,
Springer-Verlag, Berlin-New York, 1975.

\bibitem{Duncan76}
T. E. Duncan, \it Dynamic programming optimality criteria 
for stochastic systems in Riemannian manifolds. \sl Appl. Math. Optim, \rm 3:191--208, 1976/77.

\bibitem{Duncan79}  T. E. Duncan, \it
Stochastic systems in Riemannian manifolds. \sl
J. Optim. Theory Appl. \rm{\bf }27 (1979), no. 3, 399--426.



\bibitem{GFA}
G. Fabbri, F. Gozzi, and A. Swiech, \it Stochastic Optimal Control in Infinite Dimension.
\sl Springer-Verlag, \rm 2017.


\bibitem{Fleming2006} W. H. Fleming and M. H. Soner,  \sl Controlled Markov processes and viscosity solutions. \rm
Springer, New York, 2006.

\bibitem{Gao2023} Y. Gao,  T. Li, X. Li and J.-G. Liu, \it
Transition path theory for Langevin dynamics on manifolds: optimal control and data-driven solver. \sl
Multiscale Model. Simul. \rm {\bf 21} (2023), 1--33.



\bibitem{Han2006}
Q. Han and J. Hong, \it Isometric Embedding of Riemannian Manifolds in Euclidean Spaces.
\sl American Mathematical Soc, \rm 2006.


\bibitem{Husemoller}
D. Husemoller, \it Fibre Bundles. \sl Springer, New York,\rm 1993.

\bibitem{hsu}
E. P. Hsu, \it Stochastic Analysis On Manifolds. \sl American Mathematical Soc, \rm 1999.

\bibitem{watanabe}
N. Ikeda. and S. Watanabe, \it Stochastic Differential Equations and Diffusion Processes.
\sl North-Holland/Kodansha, \rm 1989.

\bibitem{ishii_uniqueness_1989}
H. Ishii, \it On uniqueness and existence of viscosity solutions of fully nonlinear
second-order elliptic PDE's. \sl  Comm. Pure Appl. Math, \rm 42(1):15--45, 1989.

\bibitem{Ito1950}  K. It\^o,  
\it  Stochastic differential equations in a differentiable manifold.
\sl  Nagoya Math. J. \rm {\bf 1} (1950), 35--47.

\bibitem{jakobsen_continuous_2002}
E. R. Jakobsen and K. H. Karlsen, \it Continuous dependence estimates for viscosity solutions of fully
nonlinear degenerate parabolic equations. \sl J. Differential Equations, \rm 183(2):497--525, 2002.

\bibitem{jakobsen_continuous_2005}
E. R. Jakobsen and K. H. Karlsen, \it Continuous dependence estimates for viscosity solutions of integro-PDEs. \sl J. Differential Equations, \rm 212(2):278--318, 2005.

\bibitem{Jurdjevic1997} V. Jurdjevic, \sl Geometric control theory. \rm
Cambridge University Press, Cambridge, 1997.


\bibitem{Krylov2009}  N. V. Krylov, \sl Controlled diffusion processes. \rm
Springer-Verlag, Berlin, 2009.

\bibitem{Lee}
J. M. Lee, \it Introduction to Smooth Manifolds. \sl Springer, New York,\rm 2012.

\bibitem{Lelievre} T. Leli\'evre,  M. Rousset and G. Stoltz, \sl 
Free energy computations.
A mathematical perspective. \rm
Imperial College Press, London, 2010.  



\bibitem{Samiei2015} E. Samiei, M. Izadi, S. P. Viswanathan, A. K. Sanyal and E. A. Butcher, \it Robust stabilization of rigid body attitude motion in the presence of a stochastic input torque. \rm 2015 IEEE International Conference on Robotics and Automation (ICRA). IEEE, 2015.

\bibitem{van Handel} R. van Handel,  J. Stockton,  and H. Mabuchi,  \it
Feedback control of quantum state reduction. \sl
IEEE Trans. Automat. Control \rm {\bf 50} (2005),  768--780.

\bibitem{yong_stochastic_2022}
J. Yong, \it Stochastic optimal control--a concise introduction. \sl Math. Control Relat. Fields, \rm 12(4):1039--1136, 2022.

\bibitem{Yong1999}
J. Yong and X. Y. Zhou, \it Stochastic controls: Hamiltonian systems and HJB equations.
\sl Springer-Verlag, New York, \rm 1999.

\bibitem{yu}
Z. Yu, \it Dynamic programming principle for one kind of stochastic recursive
optimal control problem and Hamilton–Jacobi–Bellman equation. \sl SIAM J. Control and Optim, \rm 47:2616--2641, 2008.

\bibitem{Zhu2011}
X. Zhu, \it Viscosity solutions to second order parabolic PDEs on Riemannian
manifolds. \sl Acta Appl. Math,\rm 115(3):279--290, 2011.

\bibitem{Zhu2014}
X. Zhu, \it The optimal control related to Riemannian manifolds and the viscosity
solutions to Hamilton–Jacobi–Bellman equations. \sl Systems Control Lett, \rm 69:7--15, 2014.



\end{thebibliography}
\end{document}